\documentclass[a4paper,reqno,oneside]{amsart}
\pdfoutput=1 %on one occasion arXiv did not produce a PDF without this

\usepackage[T1]{fontenc}
\usepackage[utf8]{inputenc}
\usepackage[main=english,german]{babel}
\usepackage{quiver}
\usepackage{dsfont}
\usepackage{trimclip}
\usepackage{amscd,amssymb,amsthm,amsfonts}
\usepackage{mathtools}
\usepackage{stmaryrd} % needed for \sslash
\usepackage{graphicx}
\usepackage[inkscapepath=svgsubdir]{svg}
\usepackage{tikz}
\usepackage{tikz-cd}
\usetikzlibrary{calc,matrix,arrows,arrows.meta,decorations.pathmorphing,positioning,braids}
\usepackage[a4paper]{geometry}
\usetikzlibrary{babel}
\usepackage{hyperref}
\hypersetup{colorlinks=true,pageanchor=false,linkcolor=blue, citecolor=brown, urlcolor=red, breaklinks=true} %arXiv needs breaklinks=true explicitly
\usepackage{extarrows}

\allowdisplaybreaks

\tikzset{
  strand/.style={line width=1.2pt, line cap=round},
  unit/.style={circle, draw, line width=0.9pt, inner sep=1.2pt, fill=white},
  eq/.style={font=\Large},
  open dot/.style={circle,draw,fill=white,inner sep=1pt},
  closed dot/.style={circle,fill=black,inner sep=1pt},
coupon/.style={draw,fill=white,rounded corners=1pt,minimum width=6pt,minimum height=4pt,inner sep=1pt}
}

\newtheorem{theorem}{Theorem}[section]
\newtheorem{corollary}[theorem]{Corollary}
\newtheorem{proposition}[theorem]{Proposition}
\newtheorem{lemma}[theorem]{Lemma}

\theoremstyle{definition}
\newtheorem{definition}[theorem]{Definition}
\newtheorem{example}[theorem]{Example}

\theoremstyle{remark}
\newtheorem{remark}[theorem]{Remark}

\newcommand{\Z}{\mathbb{Z}}

\newcommand{\C}{\mathbb{C}}

\renewcommand{\epsilon}{\varepsilon}
\renewcommand{\theta}{\vartheta}
\renewcommand{\phi}{\varphi}
\renewcommand{\Gamma}{\varGamma}
\renewcommand{\Sigma}{\varSigma}
\newcommand{\mathbbm}{\mathds}
\newcommand{\Aut}{\mathrm{Aut}}
\newcommand{\Hom}{\mathrm{Hom}}
\newcommand{\End}{\mathrm{End}}
\newcommand{\id}{\mathrm{id}}

\newcommand\arxiv[2]{\href{https://arXiv.org/abs/#1}{\texttt{arXiv:\allowbreak #1} #2}}
\newcommand\doi[2]{\href{https://doi.org/#1}{#2}}

\numberwithin{equation}{section}

\begin{document}

\vspace*{-2em}

\title{Crossed-module crossed braided categories}

\author[A.M.~Gainutdinov]{Azat M.~Gainutdinov$^{1}$}
\author[I.~Runkel]{Ingo Runkel$^{2}$}
\author[B.~Wang]{Bangxin Wang$^{2}$}

\vspace*{-1em}
\thanks{$^{1}$Institut Denis Poisson, CNRS, Universit\'e de Tours, Parc de Grandmont,
37200 Tours, France. \texttt{azat.gainutdinov@cnrs.fr}}
\thanks{$^{2}$Fachbereich Mathematik, Universit\"at Hamburg, Bundesstr.\ 55,
20146 Hamburg, Germany. \texttt{ingo.runkel@uni-hamburg.de}, \;  \texttt{bangxin.wang@uni-hamburg.de}}

\begin{abstract}
For a crossed module $\chi\colon G \to H$, we introduce the notion of $\chi$-crossed braided (resp.\ ribbon) categories, where the categories are graded by group $G$ and carry an $H$-action. 
Our definition unifies and generalises several familiar notions: taking $\chi = \mathrm{id}\colon G \to G$ with the conjugation action recovers $G$-crossed braided categories; taking $\chi\colon G \to \{*\}$ for abelian $G$ yields $G$-graded braided categories; taking $\chi\colon \{*\} \to G$ leads to braided categories equipped with a $G$-action. 
The equivalence relation between $\chi$-crossed braided categories is typically finer than that between $G$-crossed braided ones.
We classify $\chi$-crossed braided structures on the category of $G$-graded vector spaces in terms of cohomological data, and give explicit examples for cyclic groups.
Given a doubly central algebra with $G$- and $H$-actions in a braided monoidal category, we define a notion of twisted-local modules and show how they give rise to $\chi$-crossed braided categories.
We furthermore give sufficient conditions so that these categories are additionally $\chi$-crossed ribbon or admit an orthogonal $G$-decomposition.
\end{abstract}

\maketitle

\vspace*{-1.5em}

\setcounter{tocdepth}{2}
{\small
\tableofcontents
}

\thispagestyle{empty}

\section{Introduction}

\subsection{Background and motivation}

Braided monoidal categories and their specialisations, particularly modular tensor categories, provide a unifying formalism to describe various structures arising in quantum topology, conformal field theory, and condensed matter theory~\cite{MS89, RT91, T94, FRS02, Hu05, Ki05, KZ22}. In various extensions of these settings, it becomes natural to consider $G$-crossed braided categories. They were first introduced in the context of homotopy quantum field theory as a generalisation of modular tensor categories adapted to manifolds with a principal $G$-bundle~\cite{Tu00, T10}, and were later studied in conformal field theory, where they describe $G$-twisted modules over
vertex operator algebras and conformal nets~\cite{Ki01a, Mu02, Ki04, Mu04, Mc19}. 
In condensed matter physics, they model symmetry-enriched topological phases, where the group $G$ encodes global symmetries that act non-trivially on anyon types~\cite{BBCW14,CGPW15}. These applications have motivated extensive study of $G$-crossed braided fusion categories~\cite{ENOM09,DGNO10,EGNO15}.

A $G$-crossed braided category is a $G$-graded monoidal category
$\mathcal C=\bigoplus_{g\in G}\mathcal C_g$ equipped with a monoidal action of $G$
by autoequivalences ${}^g(-)$, for $g\in G$,
permuting the graded components by conjugation, together with crossed
braiding isomorphisms $X\otimes Y \xrightarrow{\sim} {}^{g}Y\otimes X$ for
$X\in\mathcal C_g$ satisfying certain coherence conditions.
However, in certain applications discussed below it is important to distinguish the grading and the action groups.
Given a group $H$ acting on $G$,
one can define an $H$-crossed action on a $G$-graded monoidal category,
and one can
ask whether there is a corresponding notion of crossed braiding in this setting.
This question is the main motivation for the present paper, and it leads us to introduce
the notion of $\chi$-crossed braided (resp.\ ribbon) categories for a crossed module $\chi \colon G\to H$. Interestingly, these new structures result in finer equivalence relations compared to the $G$-crossed case, e.g.\ equivalent $G$-crossed braidings may become inequivalent if regarded as $\chi$-crossed.

\subsection{Main results}
For a fixed group action of $H$ on $G$, we introduce the notion of an $H$-crossed $G$-graded monoidal category.
Given in addition a crossed module $\chi\colon G \to H$ with the prescribed $H$-action on $G$, we further define a $\chi$-crossed braided (resp.\ ribbon) structure.
Our definition unifies and generalises several familiar notions: taking $\chi = \mathrm{id}\colon G \to G$ with the conjugation action recovers $G$-crossed braided categories; taking $\chi\colon G \to \{*\}$ for abelian $G$ yields $G$-graded braided categories; taking $\chi\colon \{*\} \to G$ leads to braided categories equipped with a $G$-action.

For a fixed group action of $H$ on $G$ (resp.\ a crossed module $\chi$), we classify all $H$-crossed $G$-graded monoidal (resp.\ $\chi$-crossed braided) structures on the category of $G$-graded vector spaces in terms of cohomological data.

Given an algebra $A$ with two central structures in a monoidal category,
we define a notion of twisted-local $A$-modules, extending the framework in~\cite{Ki01a,Ki01b}. If $A$ is separable, and simple as an $A$-module, we show that this yields a graded decomposition of the category of $A$-modules.
In the braided setting,
we show that categories of these twisted-local modules give rise to $\chi$-crossed braided categories and present explicit examples. Furthermore, if the ambient category is ribbon and $A$ is $\Delta$-separable Frobenius, the twisted-local modules form a $\chi$-crossed ribbon category.

\medskip

In the following we introduce the main players and describe the above results in more detail.

\subsubsection{$H$-Crossed $G$-graded monoidal categories}
Let $G$ and $H$ be groups. Fix an action of $H$
on $G$ by group automorphisms,
and denote the action of $h \in H$ on $g \in G$ by $h\rhd g$.  
An \textit{$H$-crossed $G$-graded monoidal category $\mathcal{C}$} is an additive $G$-graded monoidal category
\begin{equation}
    \mathcal{C} \; = \; \bigoplus_{g \in G}\mathcal{C}_g
\end{equation}
equipped with a monoidal action of the discrete category $\underline{H}$ associated to $H$ via additive monoidal autoequivalences
\begin{equation}
    \phi\colon \underline{H} \longrightarrow \mathrm{Aut}_\otimes(\mathcal C)
\end{equation}
such that
\begin{equation}
    ^h\mathcal{C}_g \coloneqq \phi(h)\big(\mathcal{C}_g\big) \subset \mathcal{C}_{h \rhd g} \qquad \textup{ for any } (h,g) \in H \times G \ ;
\end{equation}
see Definition~\ref{H-crossed action def} and the unpacked formulation below it. 
If $H=G$ acts by conjugation, this recovers the notion of $G$-crossed categories as defined e.g.\ in~\cite[Ch.\,8.24]{EGNO15} or~\cite{Ga16}.

We define functors and natural transformations between $H$-crossed $G$-graded monoidal categories in Definitions~\ref{crossed functor} and~\ref{chi-crossed natural transformation}, respectively,
and give a strictification result in Remark~\ref{rem: coherence}.
Fix a ground field $\mathbb{K}$. In Section~\ref{H-action on VecG}, for a fixed $H$-action on $G$,
we classify equivalence classes of
$H$-crossed $G$-graded monoidal structures on $\mathrm{vec}_G$, the category of finite-dimensional $G$-graded $\mathbb{K}$-vector spaces, 
in terms of group cohomology. Denote by
\begin{equation}
    \mathbf{C}^{p,q} \coloneqq C^p\big(H;C^q(G;\mathbb{K}^\times)\big)
\end{equation}
the Lyndon–Hochschild–Serre double complex associated to the prescribed $H$-action on $G$~\cite[Ch.\,7]{Br82}. Delete the $q=0$ row of $\mathbf{C}^{p,q}$. We denote the total complex of the truncated bicomplex and its cohomology by $C^n_{\mathrm{tot};1}(G \rtimes_\rhd H;\mathbb{K}^\times)$ and $H^n_{\mathrm{tot};1}(G \rtimes_\rhd H;\mathbb{K}^\times)$.

\begin{theorem}[Thm.~\ref{thm:VecG-Hcrossed-classification-H3tot}]\label{thm: introduction}
For a fixed action $\rhd$ of $H$ on $G$,
$H$-crossed $G$-graded monoidal structures on $\mathrm{vec}_G$ are classified up to equivalence by
$H^3_{\mathrm{tot};1}(G \rtimes_\rhd H;\mathbb{K}^\times)$.
\end{theorem}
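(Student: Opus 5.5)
We would first reduce to skeletal data. Using the strictification of Remark~\ref{rem: coherence}, we may take $\mathrm{vec}_G$ with simple objects $\mathbb{K}_g$, $g\in G$, and the underlying monoidal product $\mathbb{K}_g\otimes\mathbb{K}_{g'}=\mathbb{K}_{gg'}$. The grading condition ${}^h\mathcal{C}_g\subset\mathcal{C}_{h\rhd g}$, together with invertibility of simples, forces $\phi(h)(\mathbb{K}_g)\cong\mathbb{K}_{h\rhd g}$. Hence all the structure is encoded by four pieces of scalar data:
\begin{itemize}
\item an associator $\omega\in C^3(G;\mathbb{K}^\times)=\mathbf{C}^{0,3}$;
\item the monoidal structure of each functor $\phi(h)$, giving $\gamma\colon H\to C^2(G;\mathbb{K}^\times)$, i.e.\ $\gamma\in\mathbf{C}^{1,2}$;
\item the composition isomorphisms $\phi(h)\circ\phi(h')\Rightarrow\phi(hh')$, which are monoidal natural isomorphisms and hence give $\mu\in C^2(H;C^1(G;\mathbb{K}^\times))=\mathbf{C}^{2,1}$;
\item possibly unit data for $\phi(e)$, which we normalise away.
\end{itemize}
The triple $(\omega,\gamma,\mu)$ is exactly a $3$-cochain in the total complex with the $q=0$ row deleted. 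The deletion is natural here: a $\mathbf{C}^{3,0}$ component would be a scalar $3$-cocycle on $H$ alone, and it cannot appear because the composition constraints of the action are natural transformations between functors on objects $\mathbb{K}_g$. Every component therefore carries at least one $G$-argument.

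Next we translate the axioms into cocycle conditions, component by component.
\begin{itemize}
\item The pentagon for $\omega$ is $d_G\omega=1$, the $(0,4)$ component.
\item $\phi(h)$ being monoidal, i.e.\ compatibility of $\gamma_h$ with $\omega$, reads $(h^*\omega)/\omega=d_G\gamma_h$ up to sign conventions. This is the $(1,3)$ component $d_H\omega\cdot d_G\gamma^{\pm1}=1$.
\item Monoidality of $\mu_{h,h'}$ gives the $(2,2)$ component relating $d_H\gamma$ and $d_G\mu$.
\item Associativity of the action (the coherence for $\mu$ over triples $h,h',h''$) gives the $(3,1)$ component $d_H\mu=1$.
\end{itemize}
Together these say $D(\omega,\gamma,\mu)=1$ for the total differential $D$ of the truncated complex. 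The only real work is fixing sign and inverse conventions so that the horizontal and vertical differentials anticommute in the multiplicative sense; we will fix the $H$-action on $C^q(G;\mathbb{K}^\times)$ as $(h.f)(g_i)=f(h^{-1}\rhd g_i)$ and adjust $\gamma$ by an inverse if needed.

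We then match equivalences with coboundaries. An equivalence in the sense of Definition~\ref{crossed functor} between two such structures is an $H$-crossed monoidal functor, with natural isomorphisms between functors given by Definition~\ref{chi-crossed natural transformation}. Up to composing with a graded automorphism, we restrict to functors that are the identity on objects. Such a functor consists of three pieces of data:
\begin{itemize}
\item a monoidal structure $\beta\in\mathbf{C}^{0,2}$;
\item isomorphisms $F\circ\phi(h)\Rightarrow\phi'(h)\circ F$, giving $\alpha\in\mathbf{C}^{1,1}$;
\item again no $\mathbf{C}^{2,0}$ term, which is absent in the truncation.
\end{itemize}
The compatibility conditions state precisely that $(\omega',\gamma',\mu')=(\omega,\gamma,\mu)\cdot D(\beta,\alpha)$.

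Two points need care, and this is the step I expect to be the main obstacle. First, we must justify restricting to functors that are the identity on the grading. The classification is "up to equivalence" with the $H$-action on $G$ fixed, so equivalences are required to be $G$-graded with the identity on $G$. If instead the paper allows automorphisms of $G$ commuting with $\rhd$, we quotient further by that action. Second, surjectivity: every total cocycle defines a structure. For this we define $\phi(h)$ on objects by $\mathbb{K}_g\mapsto\mathbb{K}_{h\rhd g}$, with the data $\gamma,\mu$, and verify that the cocycle identities are exactly the remaining coherence diagrams. Since all diagrams reduce to scalar identities on one-dimensional spaces, this is bookkeeping once the dictionary above is fixed. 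Combining well-definedness, injectivity and surjectivity gives the bijection with $H^3_{\mathrm{tot};1}(G\rtimes_\rhd H;\mathbb{K}^\times)$.
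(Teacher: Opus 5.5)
Your proposal follows essentially the same route as the paper. You parametrise the structure by a normalised cochain in $\mathbf{C}^{0,3}\oplus\mathbf{C}^{1,2}\oplus\mathbf{C}^{2,1}$ (associator, monoidal structure of each ${}^h(-)$, and the $T^{h_1,h_2}$), identify the coherence diagrams with the four components of $\mathrm{d}^3_{\mathrm{tot};1}$, and match degree-preserving equivalences, normalised to have identity underlying functor, with coboundaries of normalised cochains in $\mathbf{C}^{0,2}\oplus\mathbf{C}^{1,1}$. One small correction: the reduction to skeletal data does not come from the strictification of Remark~\ref{rem: coherence}, which would trivialise the associator and so erase $\omega$; it comes from the classification of $G$-graded monoidal structures on $\mathrm{vec}_G$ by $3$-cocycles, together with transport of structure along (monoidal) natural isomorphisms as in Remarks~\ref{free choice up to monoidal natural equiv} and~\ref{equivalent action are indeed equivalent}.
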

We further compute several concrete examples with $G = \Z/4\Z$ in Section~\ref{Z/4 example}.

\medskip

In the case that the categories are linear over a field, we define fibre products of $H$-crossed $G$-graded monoidal categories $\mathcal C$ and $\mathcal D$ in Definition~\ref{fibre product def}, with the underlying $G$-graded category structure given by
\begin{equation}
    \mathcal C \boxtimes_G \mathcal D
    \coloneqq
    \bigoplus_{g\in G}
    \bigl(\mathcal C_g \boxtimes \mathcal D_g\bigr) \ .
\end{equation}
We show that $\mathcal C \boxtimes_G \mathcal D$ is again $H$-crossed $G$-graded monoidal. 
The point of introducing this product is that any $H$-crossed $G$-graded monoidal category
$\mathcal C$ can now be twisted by a class in $H^3_{\mathrm{tot};1}(G \rtimes_\rhd H;\mathbb{K}^\times)$ by forming its fibre product with
$\mathcal D=\mathrm{vec}_G$, equipped with the
$H$-crossed $G$-graded monoidal structure from 
Theorem~\ref{thm: introduction}.

\subsubsection{$\chi$-Crossed ribbon categories}

Let $\chi\colon G \to H$
be a crossed module with the prescribed $H$-action $\rhd$ on $G$, that is, $\chi$ is a group homomorphism and for all $h \in H$, $g,g' \in G$, 
\begin{equation}
    h \chi(g) h^{-1} = \chi({h \rhd g}) \ , \qquad gg'g^{-1} = \chi(g) \rhd g' \ ,
\end{equation}
see Definition~\ref{crossed module def}.
A \textit{$\chi$-crossed braided category} is an $H$-crossed $G$-graded monoidal category $\mathcal{C}$ endowed with a family of $\chi$-crossed braidings
\begin{equation}
    \{C_{U,V}\colon U \otimes V \to {^{\chi(|U|)}V} \otimes U\}_{U, V \in \mathcal{C}_{\mathrm{hom}}}
\end{equation}
satisfying appropriate compatibility conditions (Definition~\ref{crossed braiding def}). Here, $\mathcal{C}_{\mathrm{hom}}$ denotes the full subcategory consisting of homogeneous objects, and $|U|$ denotes the $G$-degree of $U$.
For $\mathcal{C}$ rigid, a $\chi$-crossed ribbon structure is given by a family of balancing isomorphisms
\begin{equation}
    \{\vartheta_U\colon U \to {^{\chi(|U|)}U}\}_{U \in \mathcal{C}_{\mathrm{hom}}}
\end{equation}
subject to the conditions in Definition~\ref{crossed balanced}.
We introduce functors between $\chi$-crossed braided (resp.\ ribbon) categories in Definition~\ref{crossed braided functor}.
Moreover, we state in Proposition~\ref{prop:chi-crossed-br-strict} that every $\chi$-crossed braided (resp.\ ribbon) category is equivalent to a strict one.
As a result, we prove in Corollary~\ref{cor:pivotal} that every $\chi$-crossed ribbon category is pivotal.

Fix a crossed module $\chi$ and a ground field $\mathbb{K}$. In Section~\ref{VecG chi crossed classification} we classify $\chi$-crossed braided structures on $\mathrm{vec}_G$.
Similar results are obtained in~\cite{Na11} in the context of classifying $H$-crossed braided structures on pointed fusion categories.
We compute several concrete examples with $G = \Z/4\Z$ in Section~\ref{different equiv relation}. These examples show that the essential distinction between
$\chi$-crossed braided categories and the usual notion of $G$-crossed braided categories
does not lie in the form of the crossed braiding isomorphisms, but rather in
the corresponding notion of equivalence.
More precisely, let $\chi \colon G \to H$ be a surjective crossed module, and let $\mathcal C$ be a $G$-crossed category whose $G$-action factors through $H$.
By Proposition~\ref{chi crrossed implies G crossed}, every $G$-crossed braiding on $\mathcal C$ is also a $\chi$-crossed braiding.
However, it can then happen that equivalent $G$-crossed braidings on $\mathcal C$ become inequivalent when regarded as $\chi$-crossed braidings, discussed more in Remarks~\ref{equivalence remark} and~\ref{rmk: different equiv for chi}.

Finally, in Section~\ref{sec:equivar} we study $H$-equivariantisation $\mathcal{C}^H$ of a $\chi$-crossed braided/ribbon category $\mathcal{C}$ and Proposition~\ref{prop:equiv} shows that $\mathcal{C}^H$ is an ordinary braided/ribbon category.

\subsubsection{Twisted-local modules}

Let $\mathcal{C}$ be an abelian monoidal category with right exact tensor product, and let $A \in \mathcal{C}$ be an algebra object. Denote by $_A\mathcal C$ the category of $A$-modules in $\mathcal{C}$. Suppose that $A$ lifts to two commutative algebras $(\overline{A}, \overline{\sigma}), \, (\widetilde{A},\widetilde{\sigma})$ in the Drinfeld centre $Z(\mathcal{C})$.
A natural source of such algebras is provided by the full centre construction, as explained in Examples~\ref{ex:full-centre} and \ref{ex:sweedler}. Furthermore, let $\phi\colon G \to \mathrm{Aut}_\mathcal{C}(A)$ be a suitable $G$-action on $A$.

For each $g \in G$, an $A$-module $M$
with action morphism $\rho_M \colon A \otimes M \to M$
is called \textit{$g$-local} if
\begin{equation}\label{eq:g-local condition intro}
    \rho_M \circ \overline{\sigma}_M^{-1} = \rho_M \circ \widetilde{\sigma}_M^{-1} \circ (\mathrm{id}_M \otimes \phi_g) 
\end{equation}
as morphisms $M \otimes A \to M$,
see Definition~\ref{twisted local modules}. 
 If $\mathcal{C}$ is braided and we specialise $\overline{\sigma}$ and $\widetilde{\sigma}$ to be
\begin{equation}\label{eq:intro-halfbraid-special}
    \overline{\sigma}_U \coloneqq C_{A,U} \ ,\qquad
\widetilde{\sigma}_U \coloneqq C_{U,A}^{-1} \ ,
\end{equation}
where $C$ denotes the braiding of $\mathcal C$,  then~\eqref{eq:g-local condition intro} recovers the $g$-twisted condition for an $A$-module in~\cite{Ki01a,Ki01b}.

For each $g \in G$, denote by $_{A}\mathcal{C}_g$ the full subcategory of $g$-local $A$-modules in $\mathcal C$. 
We stress that an $A$-module $M$ may be both $g$-local and $k$-local for distinct $g, k \in G$, and it can happen that $_{A}\mathcal{C}_g$ is zero for some $g$. However, we can still define the external direct sum
\begin{equation}
    {}_{A}\mathcal{C}^\mathrm{gr} \coloneqq \bigoplus_{g \in G}{}_{A}\mathcal{C}_g \ ,
\end{equation}
where by definition there are no non-zero morphisms between different components.
We show in Lemma~\ref{twisted-local modules G-graded monoidal} that ${}_{A}\mathcal{C}^\mathrm{gr}$ is a $G$-graded monoidal category,
with the tensor product $\otimes_A$ defined via the half-braiding $\overline{\sigma}$; see Proposition~\ref{monoidal structure on A-mod}.

Since the subcategories $_{A}\mathcal{C}_g$ may intersect non-trivially in $_{A}\mathcal{C}$,  
the forgetful functor ${}_{A}\mathcal{C}^\mathrm{gr} \to {}_{A}\mathcal{C}$ is in general not full. This seems odd at first sight, but it is actually a feature: in Section~\ref{superalgebra example} we consider examples based on the superalgebra $\mathbb{K}[x]/(x^2)$ in the category of super vector spaces, where allowing different choices of half-braidings, as well as allowing modules to be $g$-local for different $g$ at the same time, leads to interesting results such as categories with different non-semisimple fibres.

In Theorem~\ref{thm:inner-direct-sum-for-algebra-modules} we show that if $A$ is a separable algebra and simple as an $A$-module, then, subject to some additional technical assumptions,
every object of $_A\mathcal C$ decomposes uniquely as a direct sum of $g$-local modules.
This generalises the constructions of~\cite{Ki01a,Ki01b,Mc19}.
In this case we have ${}_{A}\mathcal{C} = {}_{A}\mathcal{C}^\mathrm{gr}$.

\medskip

Let $\chi\colon G \to H$ be a crossed module and let $H$ 
act
on $A$ by $\upsilon\colon H \to \mathrm{Aut}_\mathcal{C}(A)$. 
In Proposition~\ref{structure morphism for h-action in AMod}, assuming some compatibility conditions, we define a strictly monoidal $H$-action on $_A\mathcal C$
\begin{equation}
    \psi\colon \underline{H} \to \mathrm{Aut}_\otimes({_{A}\mathcal{C}}) \ , \qquad h \mapsto {}^h(-)
\end{equation} 
by twisting the $A$-action by $\upsilon$.
Moreover, for all $g \in G$, $h \in H$, and $M \in {}_{A}\mathcal{C}_g$, we have $^hM \in {}_{A}\mathcal{C}_{h \rhd g}$.

\begin{theorem}[{Thm.\,\ref{twisted local module crossed braiding},\; Prop.\,\ref{twisted local module crossed balanced},\; Thm.\,\ref{twisted local module crossed ribbon}}]
If $\mathcal{C}$ is braided (resp.\ balanced), and if certain compatibility conditions are satisfied, then $_{A}\mathcal{C}^\mathrm{gr}$ can be endowed with a $\chi$-crossed braided (resp.\ balanced) structure.
Moreover, if $\mathcal{C}$ is ribbon and $A$ is $\Delta$-separable Frobenius, then $_{A}\mathcal{C}^\mathrm{gr}$ is $\chi$-crossed ribbon.
\end{theorem}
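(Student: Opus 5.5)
The plan is to build the structure on ${}_A\mathcal C^{\mathrm{gr}}$ in layers. The $G$-graded monoidal structure from Lemma~\ref{twisted-local modules G-graded monoidal} and the strict $H$-action $\psi$ from Proposition~\ref{structure morphism for h-action in AMod} are taken as given. The first step is to check that these combine into an $H$-crossed $G$-graded monoidal category, i.e.\ that ${}^h(-)$ maps ${}_A\mathcal C_g$ into ${}_A\mathcal C_{h\rhd g}$. This uses the stated compatibility $\upsilon_h\circ\phi_g\circ\upsilon_h^{-1}=\phi_{h\rhd g}$ together with the invariance of $\overline\sigma,\widetilde\sigma$ under $\upsilon$.

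The candidate $\chi$-crossed braiding for $M\in{}_A\mathcal C_g$ and $N\in{}_A\mathcal C$ is obtained by descending the braiding $C_{M,N}$ of $\mathcal C$ through the coequalisers defining $\otimes_A$. Concretely, one shows that the composite
\begin{equation*}
M\otimes N \xrightarrow{C_{M,N}} N\otimes M \longrightarrow {}^{\chi(g)}N\otimes_A M
\end{equation*}
equalises the two actions of $A$ on $M\otimes N$. Here ${}^{\chi(g)}N$ has the same underlying object as $N$ but with the $A$-action twisted by $\upsilon_{\chi(g)}$. The verification has two parts. The first is naturality of $C$ together with the specialisation~\eqref{eq:intro-halfbraid-special}, which moves the $A$ acting on $M$ across $N$. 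The second is $g$-locality of $M$, which converts the half-braiding $\overline\sigma_M$ into $\widetilde\sigma_M\circ(\mathrm{id}\otimes\phi_g)$. The crucial compatibility condition needed here is that $\phi_g$ and $\upsilon_{\chi(g)}$ agree, or differ by something absorbed into the module action. This is exactly the assumption identifying the twist of the $A$-action on $N$ with $\chi(g)$. The induced map is then a morphism of $A$-modules, and it is invertible because the inverse braiding descends in the same way.

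Next come the axioms of Definition~\ref{crossed braiding def}. Naturality is inherited from $C$. The two hexagon-type identities reduce, after composing with the epimorphisms $M\otimes N\otimes K\to M\otimes_A N\otimes_A K$, to the hexagon identities of $\mathcal C$ plus strictness of $\psi$. Compatibility with the $H$-action, ${}^h(C_{M,N})=C_{{}^hM,{}^hN}$, holds because $\psi$ does not change underlying morphisms and $\chi(h\rhd g)=h\chi(g)h^{-1}$. The main obstacle is the well-definedness (descent) step in the previous paragraph, since this is where all the compatibility conditions between $\phi$, $\upsilon$, $\chi$ and the two half-braidings enter. I would first carry it out string-diagrammatically with $\overline\sigma=C_{A,-}$ and $\widetilde\sigma=C_{-,A}^{-1}$, and then note that only the stated identities were used.

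For the balanced case, I define $\vartheta_M$ as the twist $\theta_M$ of $\mathcal C$, regarded as a map $M\to{}^{\chi(g)}M$. To see that it is $A$-linear, I use $\theta_{A\otimes M}=C_{M,A}C_{A,M}(\theta_A\otimes\theta_M)$ and the standing assumption relating $\theta_A$ to $\upsilon$ (typically $\theta_A=\mathrm{id}$). After this substitution the double braiding becomes, via $g$-locality, precisely the twist of the action by $\chi(g)$. The balancing axiom then follows from that of $\theta$ and the descended braiding.

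In the ribbon case, with $A$ $\Delta$-separable Frobenius, I use the standard duality on ${}_A\mathcal C$: $M^\vee$ carries the $A$-action induced by the Frobenius form, and evaluation and coevaluation are built from the counit and the separability idempotent. It must be checked that $M^\vee$ is $g^{-1}$-local. This follows by taking the dual of the locality identity and using that $\phi_g$ preserves the Frobenius structure. Finally, the condition $\vartheta_{M^\vee}=(\vartheta_M)^\vee$, up to the canonical identifications in Definition~\ref{crossed balanced}, is inherited from $\theta_{M^\vee}=\theta_M^\vee$ in $\mathcal C$. The $\Delta$-separability normalisation is what guarantees that the projector onto $\otimes_A$ commutes with the dualities.
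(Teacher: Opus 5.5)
Your architecture matches the paper's. You descend $C_{M,N}$ through the coequaliser to $\widetilde C_{M,N}\colon M\otimes_A N\to {}^{\chi_g}N\otimes_A M$. You check \eqref{B1}--\eqref{B4} on the epimorphisms $\pi$ using strictness of $\psi$. You take $\widetilde\theta_M=\theta_M$, using $\theta_A=\id_A$ and $\theta_{A\otimes M}=C_{M,A}C_{A,M}(\theta_A\otimes\theta_M)$. Finally you reduce \eqref{RB1} to the ribbon property of $\mathcal C$.

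You differ in what drives the descent, and likewise the $A$-linearity of $\theta_M$. In Theorem~\ref{twisted local module crossed braiding} the input is the monodromy hypothesis \eqref{crossing change'}, namely $\rho_M\circ C_{M,A}\circ C_{A,M}=\rho_M\circ(\upsilon_{\chi(g)}^{-1}\otimes\id_M)$ for $M\in{}_A\mathcal C_g$. It is combined with naturality of $C$ and $\overline\sigma_N=C_{A,N}$ in the coequaliser for ${}^{\chi_g}N\otimes_A M$. Here $g$-locality, $\widetilde\sigma$ and $\phi_g$ play no role.

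You instead derive this identity from $g$-locality, and that only works for $\widetilde\sigma=C^{-1}_{-,A}$. In that case \eqref{eq:g-local-2} reads $\rho_M\circ C_{M,A}C_{A,M}=\rho_M\circ(\phi_g^{-1}\otimes\id_M)$. Your requirement that $\phi_g$ and $\upsilon_{\chi(g)}$ act alike on $g$-local modules then turns it into \eqref{crossing change'}. For general $\widetilde\sigma$, locality says nothing about the monodromy: in case (ii) of Section~\ref{superalgebra example}, $\widetilde\sigma$ is the flip and $C_{M,A}C_{A,M}=\id$.

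So your argument proves the Kirillov-type special case. The paper decouples the grading (governed by $\widetilde\sigma$ and $\phi$) from the crossed braiding (governed by the monodromy and $\upsilon\circ\chi$). You recover that generality by taking \eqref{crossing change'} itself as the hypothesis, which is what your remark that ``only the stated identities were used'' should amount to.

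The ribbon part differs in two places.

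First, $g^{-1}$-locality of $M^\vee$ is proved in Proposition~\ref{prop: G-graded rigid twisted mod} for any separable $A$. The proof uses only $g$-locality of $M$ and naturality and centrality of both half-braidings. You do not need $\phi_g$ to preserve the Frobenius structure; the $\phi_g$ need not even be algebra maps. Also, the left action on $M^*$ is built from $C_{A,M^*}$, and the Frobenius structure enters only through $\Delta\circ\eta$ in the evaluation.

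Second, your phrase ``up to the canonical identifications'' skips the substantive step of \eqref{RB1}. You must show that $\nu_{\chi_g,M}$ from Remark~\ref{rem:canoncial-iso-duals} has underlying morphism $\id_{M^*}$. In that computation the factor $\upsilon_{\chi_g}$ coming from $(\chi_g)_A^{-1}$ cancels against the twisted action on $({}^{\chi_g}M)^*$, and $\Delta$-separability removes the remaining coproduct and actions. Only after this does $\theta_{M^*}=\theta_M^*$ in $\mathcal C$ finish the proof.
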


This theorem generalises results in \cite{Ki01a,Ki01b,Mc19} on the $G$-crossed braided structure on ${}_A\mathcal{C}$ for $A$
in a braided category $\mathcal C$ with half-braidings as in \eqref{eq:intro-halfbraid-special}.
In Section~\ref{twisted local module examples}, we consider the example $\mathcal{C}=\mathrm{vec}_K$ for an abelian group $K$ and $A$ the algebra object associated with a finite subgroup $B \subset K$.

\subsection{Potential applications to quantum topology}
In~\cite{SV22}, the authors introduced \emph{$\chi$-graded monoidal categories} in the context of state-sum construction of 3d homotopy quantum field theories with target a homotopy 2-type. In their setting, objects are graded by $H$, and there is an extra $G$-grading for morphisms. This has recently been extended to the 4d setting with target a homotopy 3-type, which in particular takes a \emph{$\chi$-graded fusion 2-category} as input and produces an invariant of homotopy classes of maps from 4-manifolds to the classifying space of the crossed module 
$\chi$~\cite{Ce26}. This construction generalises the state-sum approach of 4-manifold invariants based on fusion 2-categories in~\cite{DR18}.

A notable feature that we anticipate for $\chi$-crossed braided categories is
that every $(\chi\colon G\to H)$-crossed braided category $\mathcal C$ gives
rise to a $\chi$-graded monoidal 2-category $\mathbf C$ in the sense
of~\cite{Ce26}. For simplicity, assume here that $\mathcal C$ is strictly
monoidal, so that $\mathbf C$ is semistrict. The 2-category $\mathbf C$ is
defined as follows:
\begin{itemize}
    \item $\mathrm{Ob}(\mathbf{C}) = H$,
    \item $\displaystyle \mathrm{Hom}_\mathbf{C}(h, h')
    \,= \hspace{-.5em} \bigoplus_{\,g \in \chi^{-1}(h'h^{-1})} \hspace{-1em} \mathcal{C}_g$.
\end{itemize}
The composition functors on $\mathrm{Hom}$-categories are induced by the
$G$-graded tensor product on~$\mathcal C$. The monoidal structure of
$\mathbf C$ is induced on objects by multiplication in $H$, and on
$\mathrm{Hom}$-categories by the $H$-action on $\mathcal C$, with
interchanger induced by the $\chi$-crossed braiding on $\mathcal C$. This
construction is the crossed-module analogue of the monoidal 2-category
associated to a $G$-crossed braided category, as considered
in~\cite[Sec.\,6.1]{Cu16}.
If $\mathcal{C}$ is a $\chi$-crossed braided fusion category for a finite crossed module $\chi$
(such as the examples based on $G$-graded vector spaces in Section~\ref{VecG chi crossed classification}), the resulting $\chi$-graded 2-category $\mathbf{C}$ is prefusion. This allows one to apply the 4-dimensional state-sum construction of~\cite{Ce26},
and it would be interesting to compute the homotopy-type 4-manifold invariants arising from the explicit examples of Section~\ref{different equiv relation}.
In the special case $\chi = \mathrm{id}\colon G \to G$, we expect this to reproduce the 4-manifold invariants obtained from $G$-crossed braided fusion categories in~\cite{Cu16}.

Since the approaches above rely heavily on the semisimplicity constraint, it is also natural to ask whether one can construct non-semisimple homotopy-type 4-manifold invariants directly via Kirby calculus from a $\chi$-crossed ribbon category based on the methods in~\cite{BD21}, without passing to its corresponding monoidal 2-category.

\smallskip

Our own motivation comes from two constructions of non-semisimple quantum invariants of 3-manifolds~\cite{CGP12, DGGPR19}, whose algebraic input categories exhibit non-semisimplicity of different nature: in \cite{CGP12}, the input is in particular a $G$-graded ribbon category
with generically semisimple components for an infinite abelian group $G$, whereas in \cite{DGGPR19}, the input is a modular tensor category, so in particular finite abelian and not necessarily graded.
Our goal is to bridge and generalise
these two constructions via a homotopy quantum field theory
based on a non-semisimple $G$-crossed ribbon category, where $G$ is not necessarily abelian and can be finite or infinite.
In~\cite{GRW} we will introduce the notion of $\chi$-crossed ribbon quasi-Hopf algebras and show that their representation categories produce $\chi$-crossed ribbon categories as defined in this paper. As a non-semisimple 
example, we construct a $\C/2\Z$-crossed quantum group based on $U_q(\mathfrak{sl}_2)$ at even orders of $q$. 
In a sequel~\cite{GGPRW}, we will use this example as algebraic data to construct a 3-manifold invariant
which recovers both constructions of~\cite{CGP12,DGGPR19} in special cases.

\subsection{Structure of the paper}
In Section~\ref{sec:crossed-actions}, we introduce and unpack the definitions of $H$-crossed $G$-graded monoidal categories, as well as their corresponding functors and natural transformations. We also introduce their fibre products.
In Section~\ref{H-action on VecG}, we classify $H$-crossed $G$-graded monoidal structures on $\mathrm{vec}_G$ in terms of group cohomological data, and compute concrete examples.
In Section~\ref{section crossed ribbon cat}, we introduce the notion of $\chi$-crossed braided (resp.\ ribbon) categories and functors between them for a given crossed module $\chi\colon G \to H$. We classify $\chi$-crossed braided structures on $\mathrm{vec}_G$ and compute concrete examples.
In Section~\ref{section twisted-local mod}, we study twisted-local modules over an algebra object in an abelian braided monoidal category.

\subsubsection*{\textbf{Acknowledgements}} The authors thank Matthew Cellot and Alexis Virelizier for helpful discussions.
AMG acknowledges support by C.N.R.S.\ and l'Agence nationale de la recherche
(ANR grant ``New algebraic structures in quantum integrability: towards 3D" NASQI3D ANR-24-CE40-7252). 
I.R. and B.W. acknowledge support by the Deutsche Forschungsgemeinschaft (DFG, German Research Foundation) under Germany's Excellence Strategy - EXC 2121 ``Quantum Universe'' - 390833306 and the Collaborative Research Centre - SFB 1624 ``Higher structures, moduli spaces and integrability'' - 506632645.

\section{Crossed actions on graded monoidal categories}\label{sec:crossed-actions}

Given an action of a group $H$ on a group $G$, we introduce the notion of an \textit{$H$-crossed $G$-graded monoidal category} (Definition~\ref{H-crossed action def}). This generalises and unifies the familiar notions of $G$-graded monoidal categories, monoidal categories equipped with a $G$-action, and $G$-crossed monoidal categories. We further introduce their fibre products in Definition~\ref{fibre product def}. For background literature on $G$-crossed categories, we refer the reader to~\cite{EGNO15, Ga16}.

\subsection{$H$-Crossed $G$-graded monoidal categories}

Let $G$ be a group, and let $\mathcal{C}$ be an additive monoidal category. $\mathcal{C}$ is called \textit{$G$-graded monoidal} if there exists a family of full subcategories $\{\mathcal{C}_g\}_{g \in G}$ in $\mathcal{C}$ such that

\begin{itemize}
    \item $\mathcal{C}=\bigoplus_{g \in G}\mathcal{C}_g$ (we allow $\mathcal{C}_g=0$);

    \item the tensor unit $\mathbbm{1}$ lies in $\mathcal{C}_{1_G}$;
    
    \item for any $a, b\in G$ and any $U \in\mathcal{C}_a$, $V\in\mathcal{C}_b$, one has $U\otimes V\in\mathcal{C}_{ab}$.
\end{itemize}
Here, by $\mathcal{C}=\bigoplus_{g \in G}\mathcal{C}_g$ we mean that any object in $\mathcal{C}$ is isomorphic to a finite direct sum of homogeneous objects, and that for any $U \in \mathcal{C}_a$ and $V \in \mathcal{C}_b$ with $a \neq b$, $\Hom_\mathcal{C}(U,V)=0$.
The category $\mathcal C$ is called \textit{faithfully} $G$-graded if $\mathcal{C}_g \neq 0$ for all $g \in G$.

Denote by $\mathcal{C}_{\mathrm{hom}}$ the full subcategory in $\mathcal{C}$ consisting of homogeneous objects. The degree of $U \in \mathcal{C}_{\mathrm{hom}}$ is denoted by $|U| \in G$.

\begin{remark}
    If $\mathcal{C}$ is moreover rigid, then for every $a\in G$ and every object $U\in \mathcal{C}_a$, its left and right duals satisfy $U^*,\,{}^*U \in \mathcal{C}_{a^{-1}}$.
\end{remark}

Let $H$ be a group acting on $G$ by group automorphisms, and we denote the action of an element $h \in H$ by $h\rhd (-)$. $H$ can be considered as a discrete strictly monoidal category $\underline{H}$ with the monoidal structure induced by group multiplication.

Let $\Aut_\otimes(\mathcal C)$ denote the category whose objects are monoidal additive autoequivalences of $\mathcal C$ and whose morphisms are monoidal natural isomorphisms. $\Aut_\otimes(\mathcal C)$ is strictly monoidal with the tensor product on objects given by composition of functors, and on morphisms given by horizontal composition of natural transformations. We denote vertical composition
of natural transformations by $\Box$ and horizontal composition by $*$.

\begin{definition}\label{H-crossed action def}
    Given an action of $H$ on $G$, an \textit{$H$-crossed action} on a $G$-graded monoidal category $\mathcal{C}$ is a monoidal functor
    \begin{equation}
        \phi\colon \underline{H} \longrightarrow \mathrm{Aut}_\otimes(\mathcal{C})
    \end{equation}
    such that for any $(h,g)\in H \times G$, the functor $\phi(h)$ maps $\mathcal{C}_g$ into $\mathcal{C}_{h\rhd g}$.
    
The category $\mathcal C$ with such an action is called an \textit{$H$-crossed $G$-graded monoidal category}.
\end{definition}

\begin{remark}
    If $G = \{*\}$, this recovers the notion of a monoidal category with an $H$-action; if $H = G$ and the action is given by conjugation, this recovers $G$-crossed monoidal categories.
\end{remark}

Let us unpack the definition above. We write $\lambda$, $\rho$, $\alpha$ for the left unitor, right unitor, and associator in $\mathcal{C}$, respectively.
\begin{itemize}
    \item For each $h \in H$, \textit{$\varphi(h) = \bigl( {}^h(-), h_{-,-}, h_{\mathbbm{1}}\bigr)$ is a monoidal autoequivalence of $\mathcal{C}$:} ${}^h(-)$ is an autoequivalence
    \begin{equation}\label{eq:H-crossed-phi-functor}
        ^h(-)\colon \mathcal{C} \to \mathcal{C} \ , \qquad ^h\mathcal{C}_g \subset \mathcal{C}_{h \rhd g}\ .
    \end{equation}
For all $U,V \in \mathcal{C}$, $h_{U,V}$ is a natural isomorphism
\begin{gather}\label{monoidal structure h-action}
    h_{U,V}\colon {^hU \otimes {^hV}} \xrightarrow[]{\ \sim \ } {^h(U \otimes V)}\ ,
\end{gather}
and $h_{\mathbbm{1}}$ is an isomorphism
    \begin{equation}\label{h-action unitor}
        h_{\mathbbm{1}}\colon \mathbbm{1} \xrightarrow{\ \sim\ }{}^h\mathbbm{1}\ ,
    \end{equation}
such that for any $U,V,W \in \mathcal{C}$, the following diagrams commute:
\begin{subequations}\label{action C'}
\begin{equation}\label{action C3}
\begin{tikzcd}
	{^hU \otimes \mathbbm{1}} && { {}^hU \otimes {}^h\mathbbm{1}} && {^h(U \otimes \mathbbm{1})} \\
	&& {^hU} \\
	{\mathbbm{1} \otimes {}^hU} && { {}^h\mathbbm{1} \otimes {}^h U} && {^h(\mathbbm{1} \otimes U)}
	\arrow["{\mathrm{id} \otimes h_{\mathbbm{1}}}", from=1-1, to=1-3]
	\arrow["{\rho_{^hU}}"{pos=0.6}, from=1-1, to=2-3]
	\arrow["{h_{U,\mathbbm{1}}}", from=1-3, to=1-5]
	\arrow["{^h\rho_U}"'{pos=0.6}, from=1-5, to=2-3]
	\arrow["{\lambda_{^hU}}"'{pos=0.6}, from=3-1, to=2-3]
	\arrow["{h_{\mathbbm{1}} \otimes \mathrm{id}}"', from=3-1, to=3-3]
	\arrow["{h_{\mathbbm{1},U}}"', from=3-3, to=3-5]
	\arrow["{^h\lambda_U}"{pos=0.6}, from=3-5, to=2-3]
\end{tikzcd}
\end{equation}

\begin{equation}\label{action C4}
\begin{tikzcd}
	{^hU \otimes ({^hV}\otimes {^hW})} &&& {(^hU \otimes {^hV})\otimes {^hW}} \\
	{^hU \otimes {}^h(V\otimes W)} &&& {^h(U \otimes V)\otimes {^hW}} \\
	{^h(U \otimes (V\otimes W))} &&& {^h((U \otimes V)\otimes W)}
	\arrow["{\alpha_{{}^hU, {}^hV, {}^hW }}", from=1-1, to=1-4]
	\arrow["{\mathrm{id}\otimes h_{V,W}}", from=1-1, to=2-1]
	\arrow["{h_{U,V}\otimes\mathrm{id}}"', from=1-4, to=2-4]
	\arrow["{h_{U, V \otimes W}}", from=2-1, to=3-1]
	\arrow["{h_{U\otimes V,W}}"', from=2-4, to=3-4]
	\arrow["{^h\alpha_{U,V,W}}", from=3-1, to=3-4]
\end{tikzcd}
\end{equation}
\end{subequations}

\item
\textit{$\varphi = (\varphi,T,\eta)$ is a monoidal functor:} For any two group elements $h_1, h_2 \in H$, $T^{h_1, h_2}$ is a natural isomorphism
    \begin{equation}\label{eq:phi-mon-T-def}
        T^{h_1, h_2}\colon {}^{h_1}({}^{h_2}(-))\xRightarrow{\ \sim \ } {}^{h_1h_2}(-) \ ,
    \end{equation}
    such that for any $h_1, h_2, h_3 \in H$, the following diagrams commute:
\begin{subequations}\label{action C}
\begin{equation}\label{action C2}
\begin{tikzcd}
	{^{h_1}(^{h_2}U)\otimes{}^{h_1}(^{h_2}V)} && {^{h_1}({}^{h_2}U \otimes {^{h_2}V})} && {^{h_1}({}^{h_2}(U \otimes V))} \\
	{^{h_1h_2}U \otimes {}^{h_1h_2}V} &&&& {^{h_1h_2}(U \otimes V)}
	\arrow["{{h_1}_{{}^{h_2}U, {^{h_2}V}}}", from=1-1, to=1-3]
	\arrow["{T^{h_1,h_2}_U\otimes T^{h_1,h_2}_V}", from=1-1, to=2-1]
	\arrow["{^{h_1}{h_2}_{U,V}}", from=1-3, to=1-5]
	\arrow["{T^{h_1,h_2}_{U \otimes V}}"', from=1-5, to=2-5]
	\arrow["{{h_1h_2}_{U,V}}", from=2-1, to=2-5]
\end{tikzcd}
\end{equation}
\begin{equation}\label{action C5}
\begin{tikzcd}[column sep = 6em, row sep = 1.5em]
	{\mathbbm{1}} & {^{h_1}\mathbbm{1}} \\
	{^{h_1h_2}\mathbbm{1}} & {^{h_1}(^{h_2}\mathbbm{1})}
	\arrow["{({h_1})_\mathbbm 1}", from=1-1, to=1-2]
	\arrow["{({h_1h_2})_\mathbbm{1}}", from=1-1, to=2-1]
	\arrow["{^{h_1}({h_2})_\mathbbm 1}"', from=1-2, to=2-2]
	\arrow["{T^{h_1,h_2}_\mathbbm 1}"', from=2-2, to=2-1]
\end{tikzcd}
\end{equation}
\begin{equation}\label{action C1}
\begin{tikzcd}
	{{}^{h_1}({}^{h_2}({}^{h_3}(-)))} && {{}^{h_1}({}^{h_2h_3}(-))} \\
	{{}^{h_1h_2}({}^{h_3}(-))} && {{}^{h_1h_2h_3}(-)}
	\arrow["{\mathrm{id}*T^{h_2,h_3}}", Rightarrow, from=1-1, to=1-3]
	\arrow["{T^{h_1,h_2}*\mathrm{id}}", Rightarrow, from=1-1, to=2-1]
	\arrow["{T^{h_1,h_2h_3}}"', Rightarrow, from=1-3, to=2-3]
	\arrow["{T^{h_1h_2,h_3}}", Rightarrow, from=2-1, to=2-3]
\end{tikzcd}
\end{equation}
\end{subequations}
where the first two diagrams express that each $T^{h_1,h_2}$ is a monoidal natural isomorphism, and the last diagram is the associativity coherence for the monoidal functor $\phi$.
Furthermore, $\eta$ is a natural isomorphism
\begin{equation}\label{eq:phi-monoidal-unit-iso}
    \eta\colon \mathrm{id}_\mathcal{C} \xRightarrow{\ \sim \ } {}^{1_H}(-)\ ,
\end{equation}
such that for any $h \in H$, $U,V \in \mathcal C$, the following conditions hold:
\begin{subequations}\label{action C''}
\begin{gather}
    T^{h,1}\Box(\mathrm{id}_{^h(-)}*\eta) = \mathrm{id}_{^h(-)} = T^{1,h}\Box(\eta*\mathrm{id}_{^h(-)})\ ,\\
    \eta_{U\otimes V} = (1_H)_{U,V} \circ (\eta_U \otimes \eta_V)\ ,\\
    \eta_\mathbbm 1 = (1_H)_\mathbbm 1\ . 
\end{gather}
\end{subequations}
\end{itemize}
\medskip

\begin{remark}\label{rem:canoncial-iso-duals}
If $\mathcal{C}$ is rigid, then for any $h \in H$, $U \in \mathcal{C}$, the objects $(^hU)^*$ and ${}^h(U^*)$ are both dual to ${}^hU$. Hence there is a unique isomorphism $\nu_{h,U}$ between them, compatible with the (co)evaluation maps. Explicitly, $\nu_{h,U}$ is the composition
\begin{align}
& {}^h(U^*) 
\xrightarrow{\sim} 
{}^h(U^*) \otimes \mathbbm{1}
\xrightarrow{\id \otimes \mathrm{coev}_{^hU}}
{}^h(U^*) 
\otimes
\big({}^hU \otimes ({}^hU)^* \big)
\xrightarrow{\sim}
\big({}^h(U^*)
\otimes
{}^hU \big) \otimes ({}^hU)^*
\nonumber \\
& \xrightarrow{h_{U^*,U} \otimes \id}
{}^h(U^* \otimes U ) \otimes ({}^hU)^*
\xrightarrow{{}^h\mathrm{ev}_{U} \otimes \id}
{}^h\mathbbm{1} \otimes ({}^hU)^*
\xrightarrow{h_{\mathbbm{1}}^{-1} \otimes \id}
\mathbbm{1} \otimes ({}^hU)^*
\xrightarrow{\sim}
({}^hU)^* 
\ .
\end{align}
There is a corresponding isomorphism for right duals, but we will not need it.
\end{remark}

\begin{remark}\label{free choice up to monoidal natural equiv}
Let $(\phi, T, \eta)\colon \underline{H}\to \mathrm{Aut}_\otimes(\mathcal C)$ be a monoidal functor and
suppose that we are given a family of monoidal autoequivalences and monoidal natural isomorphisms
\begin{equation}
    \{\phi'_h\colon \mathcal C\to \mathcal C\}_{h\in H}\ , \qquad \{\zeta^h\colon \phi(h)\xRightarrow{\sim}\phi'_h\}_{h\in H}\ .
\end{equation}
Then the monoidal structure on $\phi$ can be transported along the family
$\zeta=\{\zeta^h\}_{h\in H}$. Namely, we define 
\begin{subequations}
\begin{gather}
    \phi'(h)\coloneqq \phi'_h \ , \\
    T'^{h_1,h_2}
    \coloneqq
    \zeta^{h_1h_2}
    \Box T^{h_1,h_2}
    \Box
    \big((\zeta^{h_1})^{-1}*(\zeta^{h_2})^{-1}\big) \ ,\\
    \eta'
    \coloneqq
    \zeta^{1_H}\Box \eta
    \colon
    \id_{\mathcal C}
    \xRightarrow{\ \sim\ }
    \phi'(1_H) \ .
\end{gather}
\end{subequations}
With these definitions, $(\phi',T',\eta')$ is a monoidal functor, and $\zeta = \{\zeta^h\}_{h \in H}\colon \phi\xRightarrow{\sim}\phi'$ becomes a monoidal natural isomorphism.
As a consequence, 
up to monoidal natural isomorphism, we may assume, for all $h \in H$:
\begin{equation}\label{eq: strictness assumption}
    h_\mathbbm 1 = \mathrm{id}_\mathbbm 1 \ , \qquad \phi(1_H) = \mathrm{id}_\mathcal{C}\ , \qquad \eta = \mathrm{id}_{\mathrm{id}_\mathcal{C}}\ .
\end{equation}
\end{remark}

\subsection{Equivariant functors and natural transformations}

We introduce the notions of functors and natural transformations for $H$-crossed $G$-graded monoidal categories, which specialise to those given in~\cite[Ch.\,6.3.5]{T10} and~\cite{Ga16} in the case $H=G$ with the conjugation action.

\medskip

Note that an $H$-crossed action 
$(\phi,T,\eta)\colon \underline H \to \Aut_\otimes(\mathcal C)$
on a $G$-graded monoidal category~$\mathcal C$ naturally equips $\mathcal C$ with an $\underline{H}$-module category structure, with structural isomorphisms
(mixed associators and module unitors) given by~\eqref{eq:phi-mon-T-def} and~\eqref{eq:phi-monoidal-unit-iso}:
\begin{equation}
    T^{h_1,h_2}_U\colon
    {}^{h_1}({}^{h_2}U)
    \xrightarrow{\ \sim\ }
    {}^{h_1h_2}U \ , \qquad \eta_U\colon U\xrightarrow{\ \sim\ }{}^{1_H}U \ .
\end{equation}
These isomorphisms satisfy standard coherence conditions (see, e.g. \cite[Def.\,7.1.2]{EGNO15}); in particular, diagram \eqref{action C1} corresponds to the pentagon equation for the mixed associator, taking into account the strictness of
$\underline H$.

\begin{definition} \label{crossed functor}
Let $\mathcal{C}$ and $\mathcal{C}'$ be two $H$-crossed $G$-graded monoidal categories with the same underlying $H$-action on $G$. An \textit{$H$-equivariant $G$-graded monoidal functor} $\mathcal{C} \to \mathcal{C}'$ is a pair $(F,\omega)$, where
\begin{itemize}
    \item $F = (F, F_{-,-}, F_{\mathbbm 1})$ is a degree preserving 
    additive monoidal functor:
    \begin{equation}
        F\colon \mathcal{C} \to \mathcal{C}'\ , \qquad
        F_{-,-}: F(-) \otimes F(-) \Rightarrow F(-\otimes -) \ ,
        \qquad F_{\mathbbm 1}\colon \mathbbm{1}' \to F(\mathbbm{1}) \ ;
    \end{equation}
    \item $\omega = (\omega^h)_{h \in H}$ is a family of monoidal natural isomorphisms
    \begin{equation}\label{eq: omega^h}
        \omega^h \coloneqq \omega(h, -) \colon {}^hF(-) \xLongrightarrow{\sim} F({}^h(-))
    \end{equation}
which equips $F$ with the structure of an $\underline{H}$-module functor.
\end{itemize}    
If $F$ is an equivalence of categories, then $\mathcal{C}$ and $\mathcal{C}'$ are said to be \textit{$H$-crossed $G$-graded monoidally equivalent}.
\end{definition}

Let us unpack the conditions on the family of natural isomorphisms $\omega = (\omega^h)_{h \in H}$. Explicitly, for all $h,h_1,h_2 \in H$ and all $U,V \in \mathcal C$, the following diagrams must commute:
\begin{subequations}\label{crossed functor definition}
\begin{equation}\label{crossed functor definition a}
    \begin{tikzcd}[column sep = 4em, row sep = 1.5em]
	{{}^hF(U)\otimes {}^hF(V)} & {{}^h(F(U)\otimes F(V))} & {{}^hF(U \otimes V)} \\
	{F({}^hU)\otimes F({}^hV)} & {F({}^hU \otimes {}^hV)} & {F({}^h(U \otimes V))}
	\arrow["{h_{F(U),F(V)}}", from=1-1, to=1-2]
	\arrow["{\omega^h_U \otimes \omega^h_V}", from=1-1, to=2-1]
	\arrow["{{}^hF_{U,V}}", from=1-2, to=1-3]
	\arrow["{\omega^h_{U \otimes V}}"', from=1-3, to=2-3]
	\arrow["{F_{{}^hU,{}^hV}}", from=2-1, to=2-2]
	\arrow["{F(h_{U,V})}", from=2-2, to=2-3]
\end{tikzcd}
\end{equation}

\begin{equation}\label{crossed functor definition b}
\begin{tikzcd}[column sep = 4em, row sep = 1.5em]
	{\mathbbm 1'} & {F(\mathbbm 1)} & {F({}^h\mathbbm 1)} \\
	{{}^h\mathbbm 1'} && {{}^hF(\mathbbm 1)}
	\arrow["{F_{\mathbbm 1}}", from=1-1, to=1-2]
	\arrow["{h_{\mathbbm 1'}}", from=1-1, to=2-1]
	\arrow["{F(h_{\mathbbm 1})}", from=1-2, to=1-3]
	\arrow["{{}^hF_{\mathbbm 1}}", from=2-1, to=2-3]
	\arrow["{\omega^h_{\mathbbm 1}}", from=2-3, to=1-3]
\end{tikzcd}
\end{equation}

\begin{equation}\label{crossed functor definition c}
\begin{tikzcd}
	{{}^{h_2}({}^{h_1}F(-))} && {{}^{h_2}F({}^{h_1}(-))} && {F({}^{h_2}({}^{h_1}(-)))} \\
	{{}^{h_2h_1}(F(-))} &&&& {F({}^{h_2h_1}(-))}
	\arrow["{\mathrm{id}_{^{h_2}(-)} * \omega^{h_1}}", Rightarrow, from=1-1, to=1-3]
	\arrow["{T'^{h_2,h_1} * \mathrm{id}_F}", Rightarrow, from=1-1, to=2-1]
	\arrow["{\omega^{h_2} * \mathrm{id}_{^{h_1}(-)}}", Rightarrow, from=1-3, to=1-5]
	\arrow["{\mathrm{id}_F * T^{h_2,h_1}}"', Rightarrow, from=1-5, to=2-5]
	\arrow["{\omega^{h_2h_1}}", Rightarrow, from=2-1, to=2-5]
\end{tikzcd}
\end{equation}

\begin{equation}\label{crossed functor definition d}
\begin{tikzcd}[column sep = 4em, row sep = 1.5em]
	{{}^{1_H}F(-)} & {F({}^{1_H}(-))} \\
	{F(-)}
	\arrow["{\omega^{1_H}}", Rightarrow, from=1-1, to=1-2]
	\arrow["{\eta' * \mathrm{id}_F}", Rightarrow, from=2-1, to=1-1]
	\arrow["{\mathrm{id}_F * \eta}"'{pos=0.5}, Rightarrow, from=2-1, to=1-2]
\end{tikzcd}
\end{equation}
\end{subequations}
The first two diagrams express the condition that each $\omega^h$ is a monoidal natural isomorphism, while the last two diagrams are the coherence conditions for $F$ to be an $\underline{H}$-module functor (cf.~\cite[Def.\,7.2.1]{EGNO15}).

\begin{remark}\label{rem: coherence}
Analogously to Mac Lane's coherence theorem for monoidal categories, any $H$-crossed $G$-graded monoidal category is equivalent to a strict one,
i.e.\ one in which all coherence isomorphisms in Definition~\ref{H-crossed action def} are identity morphisms (namely -- in addition to \eqref{eq: strictness assumption} -- the associator and unitors of $\mathcal{C}$ and
$h_{U,V}$, $T^{h,h'}_U$ for all $h,h'\in H$, $U,V\in\mathcal{C}$). This is proven for $G=\{*\}$ in~\cite[Thm.\,4.3]{Ga16},
and for $H=G$ with the conjugation action in~\cite[Cor.\,5.3]{Ga16}.
The general case follows by essentially the same arguments as~\cite[Cor.\,5.3]{Ga16}. Moreover, in the spirit of the strictification results for monoidal functors~\cite{JS93}, we expect a similar coherence theorem to hold for $H$-equivariant $G$-graded monoidal functors, ensuring that any diagram built from the underlying structural isomorphisms commutes.
However, we do not pursue this further in the present paper.
\end{remark}

\begin{remark}\label{equivalent action are indeed equivalent}
As in Remark~\ref{free choice up to monoidal natural equiv},
suppose two $H$-crossed actions $\phi_1, \phi_2\colon \underline{H} \to \mathrm{Aut}_\otimes(\mathcal{C})$ are
monoidally natural isomorphic via $\zeta\colon
\phi_1 \xRightarrow{\sim} \phi_2$. Then their induced $H$-crossed $G$-graded monoidal structures on $\mathcal{C}$ are equivalent via $(\mathrm{id}, \omega = \zeta^{-1})$.
\end{remark}

\begin{definition}\label{chi-crossed natural transformation}
    Let $(F, \omega), (F', \omega')\colon \mathcal{C} \to \mathcal{C}'$ 
    be two $H$-equivariant $G$-graded monoidal functors. An \textit{$H$-equivariant monoidal natural transformation} $\xi\colon (F, \omega) \Rightarrow (F', \omega')$ is a monoidal natural transformation 
    that is also an $\underline{H}$-module natural transformation. If $\xi_U$ is an isomorphism for every $U \in \mathcal{C}$, then $\xi$ is called an \textit{$H$-equivariant monoidal natural isomorphism}.
\end{definition}

In particular, being an $\underline{H}$-module natural transformation
means that for any $h \in H$ and $U \in \mathcal{C}$, the following diagram commutes:
    \begin{equation}
        \begin{tikzcd}[column sep = 4em, row sep = 1.5em]
	{^hF(U)} & {F({}^hU)} \\
	{^hF'(U)} & {F'({}^hU)}
	\arrow["{\omega^h_U}", from=1-1, to=1-2]
	\arrow["{^h\xi_U}", from=1-1, to=2-1]
	\arrow["{\xi_{{}^hU}}"', from=1-2, to=2-2]
	\arrow["{\omega'^h_U}", from=2-1, to=2-2]
\end{tikzcd}
    \end{equation}
Recall the natural isomorphisms $\eta$ and $\eta'$ from \eqref{eq:phi-monoidal-unit-iso}. For any $U \in \mathcal{C}$, the naturality of $\xi$ with respect to $\eta_U$, together with~\eqref{crossed functor definition d}, ensures that the following diagram commutes:
\begin{equation}
\begin{tikzcd}
	& {^{1_H}F(U)} & {^{1_H}F'(U)} \\
	{F(U)} &&& {F'(U)} \\
	& {F(^{1_H}U)} & {F'(^{1_H}U)}
	\arrow["{^{1}\xi_U}", from=1-2, to=1-3]
	\arrow["{{\eta'_{F'(U)}}^{-1}}", from=1-3, to=2-4]
	\arrow["{\eta'_{F(U)}}", from=2-1, to=1-2]
	\arrow["{\xi_U}"{description}, from=2-1, to=2-4]
	\arrow["{F(\eta_U)}"', from=2-1, to=3-2]
	\arrow["{\xi_{^{1}U}}", from=3-2, to=3-3]
	\arrow["{F'(\eta_U^{-1})}"', from=3-3, to=2-4]
\end{tikzcd}
\end{equation}

\begin{remark}\label{unital assumption on functors}
Up to an $H$-equivariant monoidal natural isomorphism, we may always assume an $H$-equivariant $G$-graded monoidal functor $(F,\omega)\colon \mathcal{C} \to \mathcal{C}'$ to be strictly unital, i.e.\ $\mathbbm{1}' = F(\mathbbm{1})$. If $1_H$ acts by identity on both $\mathcal{C}$ and $\mathcal{C}'$, we may also assume $\omega^{1_H} = \mathrm{id}_F$.
\end{remark}

\begin{remark}\label{rmk: adjunction}
It is well-known that the adjoint equivalence of a monoidal functor is again monoidal, and the unit and counit are monoidal natural isomorphisms. Likewise,
by~\cite[Lem.\,2.11]{DSPS14} 
(the lemma assumes linearity, but it is not required in the proof), 
the adjoint equivalence of a module functor is again a module functor, with unit and counit being module natural isomorphisms. Therefore, for an $H$-equivariant $G$-graded monoidal functor $F\colon \mathcal C \to \mathcal C'$, the following are equivalent:
\begin{itemize}
\item $F$ is an equivalence of categories;
\item there exists an $H$-equivariant $G$-graded monoidal functor $F'\colon \mathcal C' \to \mathcal C$
together with $H$-equivariant monoidal natural isomorphisms
\begin{equation}
    \eta\colon \mathrm{id}_{\mathcal C} \xRightarrow{\sim} F'F \ ,
\qquad
\varepsilon\colon FF' \xRightarrow{\sim} \mathrm{id}_{\mathcal C'} \ .
\end{equation}
\end{itemize}
\end{remark}

\begin{definition}\label{def: Cr_H,G}
For fixed groups $H,\ G$ and a specified action $\rhd$ of $H$ on $G$, we define a strict 2-category $\mathsf{Cr}_{H,G,\rhd}$ as follows:
    \begin{itemize}
        \item objects are $H$-crossed $G$-graded monoidal categories whose underlying $H$-action on $G$ is a fixed action $\rhd$;
        \item 1-morphisms are $H$-equivariant $G$-graded monoidal functors,
        with composition
        \begin{equation}
            (F', \omega') \circ (F,\omega) \coloneqq \Big(F'\circ F, \,  (\id_{F'} * \omega)\Box
        \bigl(\omega' * \id_{\id_{\underline H}\times F}\bigr)\Big) \ ;
        \end{equation}
        \item 2-morphisms are $H$-equivariant monoidal natural transformations.
    \end{itemize}
\end{definition}

It is straightforward to verify that the above composition of $1$-morphisms is strictly unital and strictly associative, by the standard associativity of pasting in the strict $2$-category of categories. Hence, $\mathsf{Cr}_{H,G,\rhd}$ is indeed a strict $2$-category.

\subsection{Fibre products}\label{sec: fibre product}

Fix a ground field $\mathbb{K}$. Let \(\mathcal C\) and \(\mathcal D\) be $\mathbb{K}$-linear \(H\)-crossed \(G\)-graded monoidal categories with the same underlying action of \(H\) on \(G\), and with the respective monoidal products $\otimes_{\mathcal C}$ and $\otimes_{\mathcal D}$.
One can endow the 
fibre product of $G$-graded categories \cite{Ni18} with an $H$-crossed structure:

\begin{definition}\label{fibre product def}
The \emph{fibre product} $\mathcal C\boxtimes_G \mathcal D$ of \(\mathcal C\) and \(\mathcal D\) over \(G\) is the $H$-crossed \(G\)-graded monoidal category defined as follows:
\begin{itemize}
\item
The underlying $G$-graded additive category is
\begin{equation}
    \mathcal C\boxtimes_G \mathcal D = \bigoplus_{g \in G} (\mathcal C\boxtimes_G \mathcal D)_g \;\coloneqq\; \bigoplus_{g\in G}\big(\mathcal C_g\boxtimes \mathcal D_g\big)\ ,
\end{equation}
where $\boxtimes$ denotes the additive closure of the $\mathbb{K}$-linear cartesian product.
Explicitly, an object in $\mathcal C\boxtimes_G\mathcal D$ is a finite direct sum
of objects of the form $U\boxtimes V$, where $U\in\mathcal C_g$ and $V\in\mathcal D_g$ for some $g\in G$; and
morphisms for pure tensors are given by
\begin{align}
    \begin{split}
        \Hom_{\mathcal C\boxtimes_G\mathcal D}(U\boxtimes V,\,U'\boxtimes V') \; &\coloneqq  \;
\Hom_{\mathcal C}(U,U')\otimes_\mathbb{K} \Hom_{\mathcal D}(V,V')\ , \\
f \boxtimes f' \; &\coloneqq \;  f\otimes_\mathbb{K} f' \ .
    \end{split}
\end{align}

\item The tensor product on pure tensors is given by
\begin{equation}
    (U_1\boxtimes V_1)\otimes (U_2\boxtimes V_2)
\;\coloneqq\;
(U_1\otimes_\mathcal C U_2)\boxtimes (V_1\otimes_\mathcal D V_2)
\end{equation}
with the unit being
\begin{equation}
    \mathbbm 1_{\mathcal C\boxtimes_G\mathcal D}
\;\coloneqq\;
\mathbbm 1_{\mathcal C}\boxtimes \mathbbm 1_{\mathcal D} \ .
\end{equation}
The associator on pure tensors is given component-wise by
\begin{equation}
    \alpha_{(U_1\boxtimes V_1),(U_2\boxtimes V_2),(U_3\boxtimes V_3)}
\;\coloneqq\;
\alpha^{\mathcal C}_{U_1,U_2,U_3}\boxtimes \alpha^{\mathcal D}_{V_1,V_2,V_3} \ ,
\end{equation}
and the left and right unitors are given by
\begin{equation}
    \lambda_{U\boxtimes V}\coloneqq\lambda^{\mathcal C}_{U}\boxtimes \lambda^{\mathcal D}_{V} \ ,
\qquad
\rho_{U\boxtimes V}\coloneqq\rho^{\mathcal C}_{U}\boxtimes \rho^{\mathcal D}_{V} \ .
\end{equation}

\item The $H$-action on pure tensors is given by
\begin{equation}\label{eq:H-action-on-Kelly-product}
    {}^{h}(U\boxtimes V)\;\coloneqq\;{}^{h}U\boxtimes {}^{h}V \ ,
\end{equation}
with the structure isomorphisms componentwise given by
\begin{gather}
    h_{(U_1\boxtimes V_1),(U_2\boxtimes V_2)}
\; \coloneqq\; h^{\mathcal C}_{U_1,U_2}\boxtimes h^{\mathcal D}_{V_1,V_2} \ , \qquad
T^{h_1,h_2}_{U\boxtimes V} \; \coloneqq \; T^{\mathcal C; \, h_1, h_2}_U \boxtimes T^{\mathcal D; \, h_1, h_2}_V \ .
\end{gather}
\end{itemize}
\end{definition}

It is straightforward to check that this indeed defines an $H$-crossed $G$-graded monoidal structure.
The terminology is justified by the following observation. A $G$-graded category, or more precisely the full subcategory of homogeneous objects in the additive setting, can be regarded as a category fibred over the discrete category $\underline G$. The above construction first takes the fibrewise product over $\underline G$, and then completes with respect to direct sums.

\begin{remark}
If $\mathcal C$ and $\mathcal D$ are abelian with right exact tensor product,
one may instead take $\boxtimes$ to be 
the Deligne tensor product
as in~\cite{Ni18}. Therefore, the precise meaning of $\boxtimes_G$ may be adapted to the ambient setting of interest. We expect that in a suitable context, $\boxtimes_G$ equips the strict 2-category $\mathsf{Cr}_{H,G,\rhd}$ from Definition~\ref{def: Cr_H,G}
with the structure of a symmetric monoidal 2-category, but we do not pursue this in the present paper. 
\end{remark}

\section{Crossed actions on graded vector spaces}\label{H-action on VecG}

For given groups $G,H$ and a fixed action $\rhd$ of $H$ on $G$, in this section, we classify
$H$-crossed $G$-graded monoidal structures 
on the category of $G$-graded vector spaces in terms of group cohomology (Theorem~\ref{thm:VecG-Hcrossed-classification-H3tot}). Section~\ref{Z/4 example} demonstrates examples of this classification for $G = \Z/4\Z$.

\subsection{Group cohomology}\label{sec: group cohomology}

Let $G$ be a group and let $M$ be an abelian group
equipped with a right $G$-action; we write this action as $m^g$ for $m\in M$ and $g\in G$.
Consider the standard cochain complex $(C^{n}(G;M),\mathrm{d}^n)_{n\geq 0}$, where:
\begin{itemize}
\item $C^{n}(G;M)\coloneqq\mathrm{Map}(G^{n};M)$ with the convention $G^{0}\coloneqq\{*\}$;

\item the differential map $\mathrm{d}^{n}\colon C^{n}(G;M) \to  C^{n+1}(G;M)$ is
given by
\begin{equation}\label{eq:group-cohomology-differential}
    \begin{split}
        (\mathrm{d}^{n}f)(g_{1},\dots,g_{n+1})
\,=\,& f(g_{2},\dots,g_{n+1}) \\
& + \sum_{i=1}^{n}(-1)^{i}\,f(g_{1},\dots,g_{i}g_{i+1},\dots,g_{n+1}) \\
& + (-1)^{n+1}f(g_{1},\dots,g_{n})^{g_{n+1}} \ .
    \end{split}
\end{equation}
\end{itemize}
Each $C^{n}(G;M)$ is an abelian group under pointwise addition. For each $n \geq 0$, the
\textit{$n\text{-}\mathrm{th}$ cohomology group of $G$ with values in $M$} is
\begin{gather}
    H^n(G;M)\coloneqq \begin{cases}
\mathrm{Ker}(\mathrm{d}^n)/\mathrm{Im}(\mathrm{d}^{n-1}) \ , & n \geq 1  \ ,\\
\mathrm{Ker}(\mathrm{d}^0) \ , & n = 0 \ .
\end{cases}
\end{gather}

\medskip

Let $H$ be a group acting
on $G$ from the left by $\rhd$, and we write
\begin{equation}
    G \rtimes_\rhd H
\end{equation}
for the semidirect product
with multiplication $(g,h) \cdot (g',h') = (g (h\rhd g'),hh')$.
Let $A$ be an abelian group
with trivial $G \rtimes_{\rhd} H$-action. One can consider general $G \rtimes_{\rhd} H$-actions on $A$ but we will not need this.

For any integer $q \geq 0$, \(C^{q}(G;A)\) is a right $H$-module with the action of $h \in H$ on a cochain $f\in C^{q}(G;A)$ given by:
\begin{equation}\label{h-action notation}
    \begin{aligned}
(f^h)(g_1,\dots,g_q) &\coloneqq
f(h\rhd g_1,\dots,h\rhd g_q)
\ , \qquad &q>0\ ,\\
f^h &\coloneqq
f
\ , \qquad &q=0\ .
\end{aligned}
\end{equation}

Consider the Lyndon–Hochschild–Serre double complex $(\mathbf{C}^{p,q},\mathrm{d}^{p,q}_H,\mathrm{d}^{p,q}_G)_{p,q \geq 0}$, where
\begin{itemize}
    \item $\mathbf{C}^{p,q}\coloneqq C^{p}\!\bigl(H;C^{q}(G;A)\bigr)
\cong \mathrm{Map}(H^{p}\times G^{q}, A)$;
\item under this identification, the differentials \(\mathrm{d}_H^{p,q}:\mathbf{C}^{p,q}\to \mathbf{C}^{p+1,q}\) and
\(\mathrm{d}_G^{p,q}:\mathbf{C}^{p,q}\to \mathbf{C}^{p,q+1}\) are given by
\begin{subequations}\label{eq:double-complex-differentials}
\begin{align}
(\mathrm{d}_H^{p,q} f)(h_1,\dots,h_{p+1};\,g_1,\dots,g_q)
&= f(h_2,\dots,h_{p+1};\,g_1,\dots,g_q) \nonumber\\
&\quad +\sum_{i=1}^{p}\,(-1)^i\,
f(h_1,\dots,h_i h_{i+1},\dots,h_{p+1};\,g_1,\dots,g_q) \nonumber\\
&\quad +
(-1)^{p+1}\, f(h_1,\dots,h_p;\,h_{p+1}\rhd g_1,\dots,h_{p+1}\rhd g_q)
\ , \label{eq:dH}\\[0.5em]
(\mathrm{d}_G^{p,q} f)(h_1,\dots,h_p;\,g_1,\dots,g_{q+1})
&= f(h_1,\dots,h_p;\,g_2,\dots,g_{q+1}) \nonumber\\
&\quad +\sum_{i=1}^{q}\,
(-1)^{i}\,f(h_1,\dots,h_p;\,g_1,\dots,g_i g_{i+1},\dots,g_{q+1}) \nonumber\\
&\quad +
(-1)^{q+1}\,f(h_1,\dots,h_p;\,g_1,\dots,g_q)
\ . \label{eq:dG}
\end{align}
\end{subequations}
\end{itemize}
With these conventions, we have
\begin{equation}
    \mathrm{d}_H^{p,q+1} \mathrm{d}_G^{p,q} = \mathrm{d}_G^{p+1,q} \mathrm{d}_H^{p,q} \ .
\end{equation}
The associated total complex $(C^n_{\mathrm{tot}}(G \rtimes_\rhd H;A),\mathrm{d}^n_{\mathrm{tot}})_{n\ge 0}$ is defined by
\begin{gather}\label{total complex}
    C^n_{\mathrm{tot}}(G \rtimes_\rhd H;A)\coloneqq\bigoplus_{p+q=n}\mathbf C^{p,q}\ ,\qquad
\mathrm{d}^n_{\mathrm{tot}}\big|_{\mathbf C^{p,q}}(f)\coloneqq
\mathrm{d}_G^{p,q}f
-(-1)^{q}\,\bigl(\mathrm{d}_H^{p,q}f
\bigr)\ ,
\end{gather}
and we denote the cocycles and cohomology of the total complex by $Z^n_{\mathrm{tot}}(G \rtimes_\rhd H;A)$ and $H^n_{\mathrm{tot}}(G \rtimes_\rhd H;A)$, respectively. 

As explained in \cite{GGU13}, the total complex of the Lyndon-Hochschild-Serre double complex arises from a free 
$\mathbb{Z}[G \rtimes_\rhd H]$-resolution 
of $\mathbb{Z}$.
As it is quasi-isomorphic to the standard bar resolution, 
this results in an isomorphism of cohomology groups \cite[Lem.\,1.5]{GGU13}
\begin{equation}
  H^n_{\mathrm{tot}}(G \rtimes_\rhd H;A) \cong H^n(G \rtimes_\rhd H;A) \ .  
\end{equation}
The left hand side is computed via the total complex~\eqref{total complex}, and the right hand side via the usual group cohomology
as in~\eqref{eq:group-cohomology-differential}.

Deleting the $q=0$ row of the double complex $\mathbf{C}^{p,q}$, we denote the total complex of the truncated bicomplex and its cohomology by
$\bigl(C^n_{\mathrm{tot};1}(G \rtimes_\rhd H;A),\, \mathrm{d}^n_{\mathrm{tot};1}\bigr)$ and $H^n_{\mathrm{tot};1}(G \rtimes_\rhd H;A)$. The following proposition is shown in \cite[Lem.\,1.5,\,1.9]{GGU13}:

\begin{proposition}\label{group cohomology classification H-crossed action}
There is a decomposition
\begin{equation}
    H^n(G \rtimes_\rhd H;A) \; \cong \; H^n_{\mathrm{tot}}(G \rtimes_\rhd H;A) \; \cong \; H^n(H; A) \oplus H^n_{\mathrm{tot};1}(G \rtimes_\rhd H;A) \  .
\end{equation}
\end{proposition}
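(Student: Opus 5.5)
The first isomorphism $H^n(G\rtimes_\rhd H;A)\cong H^n_{\mathrm{tot}}(G\rtimes_\rhd H;A)$ has already been explained above via \cite[Lem.\,1.5]{GGU13}. The LHS total complex comes from a free resolution of $\mathbb Z$ over $\mathbb Z[G\rtimes_\rhd H]$, and any two such resolutions are chain homotopy equivalent. So the remaining task is the splitting of $H^n_{\mathrm{tot}}$. I would obtain it from a split short exact sequence of cochain complexes.

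\textbf{Step 1: a short exact sequence of complexes.} The $q=0$ row is $\mathbf C^{\bullet,0}=C^\bullet(H;C^0(G;A))=C^\bullet(H;A)$, with trivial $H$-action on $C^0(G;A)=A$ by \eqref{h-action notation}. On this row, $\mathrm d_H^{p,0}$ is the ordinary group cohomology differential \eqref{eq:group-cohomology-differential} of $H$ with coefficients in $A$. The truncated bicomplex (rows $q\ge1$) is a subcomplex of the total complex, since the differentials only raise $p$ or $q$. The quotient by it is the $q=0$ row, with total differential $-\mathrm d_H$; the component $\mathrm d_G^{p,0}$ lands in $q=1$ and is killed in the quotient. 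This gives
\[
0\to C^\bullet_{\mathrm{tot};1}\to C^\bullet_{\mathrm{tot}}\to C^\bullet(H;A)\to 0 .
\]

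\textbf{Step 2: a splitting at chain level.} The key point is that $\mathrm d_G^{p,0}=0$. Indeed, \eqref{eq:dG} with $q=0$ gives $(\mathrm d_G f)(h;g_1)=f(h)-f(h)=0$, because the $G$-action on $A$ is trivial. Hence the inclusion of the $q=0$ row $\mathbf C^{\bullet,0}\hookrightarrow C^\bullet_{\mathrm{tot}}$ is itself a chain map. It splits the projection, so the sequence of Step 1 is split as complexes and $C_{\mathrm{tot}}^\bullet\cong C^\bullet(H;A)\oplus C^\bullet_{\mathrm{tot};1}$. The sign $-\mathrm d_H$ does not change the cohomology. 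Taking cohomology then yields $H^n_{\mathrm{tot}}\cong H^n(H;A)\oplus H^n_{\mathrm{tot};1}$.

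As a sanity check, this is the splitting induced by the retraction $G\rtimes_\rhd H\to H$ with section $h\mapsto(1,h)$. Inflation $H^n(H;A)\to H^n(G\rtimes H;A)$ is therefore injective, with retraction given by restriction.

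\textbf{Main obstacle.} The only point needing care is getting the signs and boundary conventions of \eqref{eq:double-complex-differentials} and \eqref{total complex} right. In particular, one must confirm that the total differential has no further component out of the $q=0$ row, so that the row really is a direct summand subcomplex and not merely a quotient.
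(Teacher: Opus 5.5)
Your proof is correct, and it is more explicit than the paper's treatment: the paper gives no argument of its own and defers both isomorphisms to \cite[Lem.\,1.5,\,1.9]{GGU13}. Like the paper, you take the first isomorphism from the comparison of the total complex with a free $\mathbb{Z}[G\rtimes_\rhd H]$-resolution, but you prove the second one directly.

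Everything rests on your observation that \eqref{eq:dG} gives $(\mathrm d_G^{p,0}f)(h_1,\dots,h_p;g_1)=f(h_1,\dots,h_p)-f(h_1,\dots,h_p)=0$. Hence the total differential maps $\mathbf C^{p,0}$ into $\mathbf C^{p+1,0}$ by $-\mathrm d_H^{p,0}$. Up to a sign, this is the ordinary differential of $C^\bullet(H;A)$, and the sign changes neither cocycles nor coboundaries. Since the rows $q\geq 1$ also form a subcomplex, $C^\bullet_{\mathrm{tot}}=C^\bullet(H;A)\oplus C^\bullet_{\mathrm{tot};1}$ holds already as complexes, and the cohomology splits.

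What your route buys is a canonical splitting at the level of cochains. This is exactly what the paper implicitly uses in Section~\ref{sec:H-crossed-vec_G}, where it works with cocycles $(\omega,\Omega,t)$ and coboundaries $(\alpha,\beta)$ that have no $q=0$ components.

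Your argument does use in an essential way that $A$ carries the trivial action, which is the paper's standing hypothesis. Without it $\mathrm d_G^{p,0}$ need not vanish, and the $q=0$ row would only be a quotient complex rather than a direct summand.
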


\subsection{$H$-crossed actions on $\mathrm{vec}_G$}\label{sec:H-crossed-vec_G}

Fix a ground field $\mathbb{K}$ and denote by $\mathrm{vec}_G$ the category of finite-dimensional $G$-graded $\mathbb{K}$-vector spaces.
Throughout this subsection, we work in the $\mathbb K$-linear setting, where relevant functors are required to be $\mathbb K$-linear rather than merely additive.
For $V \in \mathrm{vec}_G$, denote by $V_g \subset V$ its homogeneous component of degree $g$, such that $V = \bigoplus_{g \in G} V_g$. By $v_g \in V$ we will denote an arbitrary element of $V_g$. Morphisms in $\mathrm{vec}_G$ are degree preserving linear maps, i.e.\ $f \colon V \to W$ such that $f(V_g) \subset W_g$, and we write $f_g$ for the restriction of $f$ on $V_g$. As representatives of the isomorphism classes of simple objects in $\mathrm{vec}_G$ we take $\{\mathbb{K}_g\}_{g\in G}$, where $\mathbb{K}_g$ is
the one-dimensional vector space concentrated in degree $g$.  We use the unit of $\mathbb{K}$ as a preferred basis $\mathbf{1}_g \in \mathbb{K}_g$ in each $\mathbb{K}_g$.

Every normalised $3$-cocycle $\omega:G^3 \to \mathbb{K}^\times$ gives rise to a $G$-graded monoidal structure on $\mathrm{vec}_G$ as follows:
\begin{itemize}
    \item the $\mathbb{K}$-bilinear
    functor $\otimes\colon \mathrm{vec}_G \times \mathrm{vec}_G \to \mathrm{vec}_G$ is the tensor product of the underlying vector spaces, with the grading given by $(V \otimes W)_g = \bigoplus_{k \in G} V_k \otimes W_{k^{-1}g}$;

    \item the tensor unit is $\mathbb{K}_1$;
    
    \item the associator $\alpha\colon \otimes \circ (\mathrm{id}\times\otimes) \Rightarrow \otimes \circ (\otimes \times \mathrm{id})$ is given by
    \begin{equation}
    \begin{split}
    \alpha_{U,V,W} \colon
    U \otimes (V \otimes W)
    &\longrightarrow
    (U \otimes V) \otimes W  \\
    u_g \otimes (v_l \otimes w_k)
    &\longmapsto \omega(g,l,k) \, (u_g \otimes v_l) \otimes w_k \ ;
    \end{split}
    \end{equation}   

    \item the left and right unitors are those of vector spaces.
\end{itemize}
The resulting $G$-graded monoidal category is denoted by $\mathrm{vec}_G^\omega$.

Up to $G$-graded monoidal equivalence, all $G$-graded monoidal structures on $\mathrm{vec}_G$ are of this form, and  $\mathrm{vec}_G^\omega$ is $G$-graded monoidally equivalent to $\mathrm{vec}_G^{\omega'}$ if and only if 
$[\omega] = [\omega']$ in $H^3(G;\mathbb{K}^\times)$, see~\cite[Prop.\,2.6.1]{EGNO15}.

\begin{remark}
Note that monoidal structures (with no grading considered) on $\mathrm{vec}_G$ are classified by $H^3(G;\mathbb{K}^\times) \slash \mathrm{Out} (G)$~\cite[Prop.\,2.6.1]{EGNO15}. In the ungraded setting, a monoidal equivalence may permute homogeneous components via an automorphism of $G$, whereas a $G$-graded equivalence is necessarily degree-preserving. 
\end{remark}

The category $\mathrm{vec}_G^\omega$ is also rigid. For $V\in \mathrm{vec}_G$, we set $^{*}V =V^{*}$ to be the dual vector space
with graded components
\begin{equation}
    ({}^{*}V)_{g} = (V^*)_{g}     
    \coloneqq (V_{g^{-1}})^* \ .
\end{equation}
For each $g\in G$, fix a basis $\{v_{g,i}\}_i$ of $V_g$ and the corresponding dual basis
$\{v^{g,i}\}_i$ of $(V_g)^{*}=({V}^{*})_{g^{-1}}$. Define evaluation and coevaluation by
\begin{equation}
\begin{aligned}
\mathrm{ev}_V\colon {V}^{*} \otimes V &\to \mathbb{K}_1\ ,\\
f_{g^{-1}} \otimes v_g &\mapsto \omega(g,g^{-1},g)\, f_{g^{-1}}(v_g) \ ,
\end{aligned}
\qquad\qquad
\begin{aligned}
\mathrm{coev}_V\colon \mathbb{K}_1 &\to V \otimes {V}^{*} \ ,\\
\mathbf{1}_{1} &\mapsto \sum_{g \in G}\sum_i v_{g,i} \otimes v^{g,i} \ ,
\end{aligned}
\end{equation}
and similarly for the right dual, see~\cite[Ex.\,2.10.14]{EGNO15}.

\medskip

Fix a left action of $H$ on $G$ by group automorphisms. We now classify $H$-crossed $G$-graded monoidal structures on $\mathrm{vec}_G$ up to $H$-equivariant $G$-graded monoidal equivalence. 
This is also considered in~\cite[Sec.\,5]{GJ21}, but without the cohomological interpretation we give here.

The first piece of data is a $G$-graded monoidal structure on $\mathrm{vec}_G$, which, as we saw above, is given by a normalised 3-cocycle
\begin{equation}\label{eq:vecG-total-cochain-cond-1}
\omega \colon
G^3 \to \mathbb{K}^\times \ .
\end{equation}

Next we give a monoidal functor 
$\phi(h) \coloneqq {}^h(-)\colon \mathrm{vec}_G^\omega \to \mathrm{vec}_G^\omega$ for a fixed $h \in H$ as in \eqref{eq:H-crossed-phi-functor}. Without loss of generality we may take the underlying linear functor to be
\begin{equation}\label{eq: degree shifting notation}
        ({}^hV)_g \coloneqq V_{h^{-1} \rhd g}\ , \qquad
        ({}^hf)_g \coloneqq f_{h^{-1} \rhd g} \ .
\end{equation}
For example, for $g,k \in G$, $h \in H$, this gives
$({}^h(\mathbb{K}_g))_k = (\mathbb{K}_g)_{h^{-1} \rhd k}$ which is nonzero only for $g = h^{-1} \rhd k$. So on simple objects we have ${}^h(\mathbb{K}_g) \cong \mathbb{K}_{h \rhd g}$.

For the monoidal structure, the coherence isomorphisms $h_{U,V}$ in \eqref{monoidal structure h-action} are necessarily of the form
\begin{equation}
   h_{U,V}\colon {^hU \otimes {^hV}} \xrightarrow[]{\sim} {^h(U \otimes V)} \ ,
    \qquad
    u_{g}\otimes v_{k} \mapsto \Omega(h,g,k)\;  u_{g}\otimes v_{k} \ .
\end{equation}
Here, the degree subscripts for $u$, $v$ refer to the degrees in the underlying vector spaces $U$, $V$, e.g.\ $u_{g} \in U_{g} = ({}^hU)_{h \rhd g}$, and $\Omega$ is
a function
    \begin{equation}
        \Omega\colon H \times G^2 \to \mathbb{K}^\times \ .
    \end{equation}
Since ${}^h\mathbbm{1} = \mathbbm{1}$, by Remark~\ref{free choice up to monoidal natural equiv} we can choose $h_{\mathbbm{1}}$ in \eqref{h-action unitor} to be $h_{\mathbbm{1}} = \mathrm{id}_\mathbbm 1$.
The conditions in \eqref{action C'} become, for all $g,k,l \in G$, $h \in H$,
\begin{subequations}\label{eq-system-ad}
\begin{align}
&\Omega(h,1_G,g) = \Omega(h,g,1_G) = 1 \ , \label{eq-system-d}\\
&\frac{\omega(g,k,l)}{\omega(h\rhd g, h\rhd k, h\rhd l)}
  = \frac{\Omega(h,gk,l)\;\Omega(h,g,k)}{\Omega(h,k,l)\;\Omega(h,g,kl)} \label{eq-system-a} \ .
\end{align}
\end{subequations}

Finally we need the monoidal structure of the functor
\begin{equation}
    \varphi\colon \underline{H} \to \mathrm{Aut}_\otimes(\mathrm{vec}_G^\omega) \ , \qquad h \mapsto \phi(h)\ .
\end{equation}
For any $h_1, h_2 \in H$, the isomorphism $T^{h_1, h_2}$ in \eqref{eq:phi-mon-T-def} is of the form
    \begin{equation}
        (T^{h_1, h_2})_U \colon {}^{h_1}({}^{h_2}U) \xrightarrow{\sim} {}^{h_1h_2}U \ ,
\qquad
        u_{g} \mapsto t(h_1,h_2,g)\; 
        u_{g}
    \end{equation}
for a function
    \begin{equation}
        t\colon H^2 \times G \to \mathbb{K}^\times \ .
    \end{equation}
We have ${}^{1_H}(-) = \mathrm{id}$, and    
by Remark~\ref{free choice up to monoidal natural equiv} we can choose $\eta_U = \mathrm{id}_U$ in \eqref{eq:phi-monoidal-unit-iso}. Conditions \eqref{action C} and \eqref{action C''} become, for all $g,k \in G$, $h_1, h_2, h_3 \in H$,
\begin{subequations}\label{eq-system-bce}
\begin{align}
& \frac{\Omega(h_2,g,k)\;\Omega(h_1,h_2\rhd g, h_2\rhd k)}{\Omega(h_1h_2,g,k)}
  = \frac{t(h_1,h_2,k)\;t(h_1,h_2,g)}{t(h_1,h_2,gk)} \ , \label{eq-system-b}\\
&\frac{t(h_2,h_3,g)\;t(h_1,h_2h_3, g)}{t(h_1h_2,h_3,g)\;t(h_1,h_2, h_3 \rhd g)}
  = 1 \ , \label{eq-system-c}\\
&t(1_H,h_2,g) = t(h_1,1_H,g) = t(h_1,h_2,1_G) = 1 \ ,
\qquad \Omega(1_H,g,k) = 1 \ . 
\label{eq-system-e}
\end{align}
\end{subequations}

\medskip

We can now make the connection to the total complex
introduced in \eqref{total complex}. We take $A = \mathbb{K}^\times$, which -- somewhat confusingly -- means that we now write the group operation multiplicatively,
rather than additively as in \eqref{total complex}.
The above data $\omega$, $\Omega$, $t$ can be assembled into a 3-cochain as
\begin{equation}
\kappa \coloneqq (\omega,\Omega,t)\in \mathbf C^{0,3}\oplus \mathbf C^{1,2}\oplus \mathbf C^{2,1}
= C^3_{\mathrm{tot};1}(G \rtimes_\rhd H;\mathbb{K}^\times) \ .
\end{equation}
The conditions in \eqref{eq:vecG-total-cochain-cond-1}, \eqref{eq-system-ad}, and \eqref{eq-system-bce} are equivalent to $\kappa$ being normalised and
\begin{equation}
    \mathrm{d}^3_{\mathrm{tot};1} \kappa = 1 \ ,
\end{equation}
i.e.\ to the simple statement that $\kappa$ is a normalised 3-cocycle in $C^3_{\mathrm{tot};1}(G \rtimes_\rhd H;\mathbb{K}^\times)$;
recall the truncated complex above Proposition~\ref{group cohomology classification H-crossed action}. For example, the component of $\mathrm{d}^3_{\mathrm{tot};1} \kappa$ in $\mathbf{C}^{2,2}$ is 
$(\mathrm{d}_G^{2,1}t)(h_1,h_2,g,k)\,(\mathrm{d}_H^{1,2}\Omega)(h_1,h_2,g,k)^{-1}$, which results precisely in \eqref{eq-system-b}.

In summary, a normalised 3-cocycle
\begin{equation}
    \kappa=(\omega,\Omega,t)
    \in Z^3_{\mathrm{tot};1}(G\rtimes_\rhd H;\mathbb K^\times)
\end{equation}
determines an $H$-crossed $G$-graded monoidal structure on
$\mathrm{vec}_G$. We denote the resulting category by
\begin{equation}\label{eq:vecG-kappa-def}
    \mathrm{vec}_G^\kappa \ .
\end{equation}

\medskip

Next we ask when two $H$-crossed $G$-graded monoidal structures on $\mathrm{vec}_G$, defined by normalised 3-cocycles
\begin{equation}
(\omega,\Omega,t),\ (\widetilde\omega,\widetilde\Omega,\widetilde t)\in Z^3_{\mathrm{tot};1}(G \rtimes_\rhd H;\mathbb{K}^\times) \ ,
\end{equation}
are equivalent in the sense of Definition~\ref{crossed functor}. We construct an $H$-equivariant $G$-graded monoidal functor
\begin{equation}
    (F,\xi):\ \mathrm{vec}_G^{(\omega,\Omega,t)}\longrightarrow \mathrm{vec}_G^{(\widetilde\omega,\widetilde\Omega,\widetilde t)} \ .
\end{equation}
Since $\omega$ is already used to denote the monoidal structure, here we use $\xi$ for the monoidal natural isomorphism denoted by $\omega$ in Definition~\ref{crossed functor}.
Up to natural isomorphism,
we may assume the underlying $\mathbb{K}$-linear functor $F=\mathrm{id}$. The monoidal structure on $F$ is then determined by a function
\begin{equation}
    \alpha:G^2\to \mathbb{K}^\times
\end{equation}
via
\begin{equation}
    F_{U,V}:U\otimes V\to U\otimes V \ ,\qquad
u_g\otimes v_k\longmapsto \alpha(g,k)\,u_g\otimes v_k \ ,
\qquad
F_{\mathbbm 1}=\mathrm{id} \ ,
\end{equation}
where $F_{\mathbbm 1}=\mathrm{id}$ uses Remark~\ref{unital assumption on functors}. The condition that $F$ is monoidal is equivalent to, for all $g,k,l\in G$,
\begin{equation}\label{eq:vecG-functor-monoidal-cond}
\frac{\widetilde{\omega}(g,k,l)}{\omega(g,k,l)}
=
\frac{\alpha(k,l)\,\alpha(g,kl)}{\alpha(gk,l)\,\alpha(g,k)} \ ,
\qquad
\alpha(1_G,g)=\alpha(g,1_G)=1 \ .
\end{equation}
In addition, we specify a family $(\xi^h)_{h \in H}$ of natural isomorphisms of the form
\begin{equation}
\xi^h \colon {}^hU \to {}^hU 
~,\qquad
u_{g} \mapsto \beta(h,g)\;u_{g}
\end{equation}
for a function
\begin{equation}
    \beta \colon H \times G \to \mathbb{K}^\times \ .
\end{equation}
Conditions~\eqref{crossed functor definition} become, for all  $g,k \in G$, $h,h_1,h_2 \in H$,
\begin{subequations}\label{coboundary condition}
    \begin{align}
         &
         \frac{\widetilde{\Omega}(h,g,k)}{\Omega(h,g,k)}=\frac{\beta(h,k)\; \beta(h,g)}{\beta(h,gk)}\frac{\alpha(h\rhd g, h \rhd k)}{\alpha(g, k)} \ , \label{eq:vecG-omegah-monoidal}\\
        &
        \frac{\widetilde{t}(h_1,h_2,g)}{t(h_1,h_2,g)} = \frac{\beta(h_2,g)\; \beta(h_1,h_2 \rhd g)}{\beta(h_1h_2,g)} \ ,\\
        &
        \beta(1_H, g) = \beta(h, 1_G) = 1 \ .
    \end{align}
\end{subequations}

If we collect $\alpha$ and $\beta$ into a 2-cochain
\begin{equation}
(\alpha,\beta)\in \mathbf C^{0,2}\oplus \mathbf C^{1,1}
= C^2_{\mathrm{tot};1}(G \rtimes_\rhd H;\mathbb{K}^\times) \ ,
\end{equation}
then conditions~\eqref{eq:vecG-functor-monoidal-cond} and~\eqref{coboundary condition} are equivalent to $(\alpha,\beta)$ being normalised and
\begin{equation}\label{d2tot-via-alpha-beta}
\mathrm{d}^2_{\mathrm{tot};1}(\alpha, \beta) = 
(\widetilde\omega,\widetilde\Omega,\widetilde t) \cdot (\omega,\Omega,t)^{-1} \ .  
\end{equation}

In summary, we have shown,
recall Proposition~\ref{group cohomology classification H-crossed action}:
\begin{theorem}\label{thm:VecG-Hcrossed-classification-H3tot}
For a fixed action $\rhd$ of $H$ on $G$,
$H$-crossed $G$-graded monoidal structures on $\mathrm{vec}_G$ are, up to equivalence, 
classified by $H^3_{\mathrm{tot};1}(G \rtimes_\rhd H;\mathbb{K}^\times)$.
\end{theorem}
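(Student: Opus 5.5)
The plan is to assemble the computations of Section~\ref{sec:H-crossed-vec_G} into a bijection. There are three steps: surjectivity onto the set of equivalence classes, well-definedness, and injectivity.

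\textbf{Step 1: every structure is of the form $\mathrm{vec}_G^\kappa$.} Start from an arbitrary $H$-crossed $G$-graded monoidal structure on $\mathrm{vec}_G$. By \cite[Prop.\,2.6.1]{EGNO15}, the underlying $G$-graded monoidal category is equivalent to some $\mathrm{vec}_G^\omega$. Transport the $H$-action along this equivalence, using Remark~\ref{rmk: adjunction}, so the transported data form an $H$-equivariant equivalence.

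Each $\phi(h)$ is a linear autoequivalence with ${}^h\mathcal C_g\subset\mathcal C_{h\rhd g}$. It is therefore naturally isomorphic to the degree-shift functor \eqref{eq: degree shifting notation}. By Remark~\ref{free choice up to monoidal natural equiv}, the monoidal structure can be moved onto these standard functors, normalised as in \eqref{eq: strictness assumption}. Remark~\ref{equivalent action are indeed equivalent} shows the resulting structure is equivalent to the original.

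Now all structure maps act by scalars on homogeneous components, because the endomorphisms of simple objects are $\mathbb K$. This yields functions $\Omega$ and $t$. The computation above shows that $\kappa=(\omega,\Omega,t)$ is a normalised cocycle in $Z^3_{\mathrm{tot};1}$.

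\textbf{Step 2: well-definedness.} Suppose $\kappa$ and $\widetilde\kappa$ differ by the total coboundary of a normalised $(\alpha,\beta)$. Then $(\mathrm{id},\xi)$, with $F_{-,-}$ given by $\alpha$ and $\xi^h$ given by $\beta$, satisfies \eqref{eq:vecG-functor-monoidal-cond} and \eqref{coboundary condition}. By \eqref{d2tot-via-alpha-beta} it is therefore an equivalence.

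One point remains: the cohomology uses all cochains, not just normalised ones. I will invoke the standard fact that normalised cochains compute the same cohomology. This applies to the truncated total complex as well, since normalisation is compatible with both differentials. Hence an arbitrary coboundary between normalised cocycles can be replaced by a normalised one.

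\textbf{Step 3: injectivity.} Let $(F,\xi)\colon\mathrm{vec}_G^\kappa\to\mathrm{vec}_G^{\widetilde\kappa}$ be an arbitrary equivalence. This is the step I expect to be the main obstacle, namely reducing $(F,\xi)$ to the normal form $F=\mathrm{id}$ while keeping control of the data.

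The functor $F$ preserves degree and is a linear equivalence, so it is naturally isomorphic to the identity. Transport the monoidal and module structure along this natural isomorphism; the result is an $H$-equivariant monoidal functor isomorphic to $(F,\xi)$, in the sense of Definition~\ref{chi-crossed natural transformation}. Apply Remark~\ref{unital assumption on functors} to get $F_{\mathbbm 1}=\mathrm{id}$ and $\xi^{1_H}=\mathrm{id}$.

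After these reductions, $F_{-,-}$ and $\xi^h$ act by scalars $\alpha$ and $\beta$, normalised by the unit axioms (using \eqref{crossed functor definition d}). Conditions \eqref{crossed functor definition} then translate exactly into \eqref{d2tot-via-alpha-beta}, which gives $[\kappa]=[\widetilde\kappa]$.

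The care needed here is in checking that the transport really preserves the scalar form, and that the normalisations of $\beta(h,1_G)$ and $\alpha$ follow from the unit diagrams \eqref{crossed functor definition b} together with $h_{\mathbbm 1}=\mathrm{id}$.

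Combining the three steps gives a bijection between equivalence classes of structures and $H^3_{\mathrm{tot};1}(G\rtimes_\rhd H;\mathbb K^\times)$. The decomposition in Proposition~\ref{group cohomology classification H-crossed action} then identifies this group as the complement of $H^3(H;\mathbb K^\times)$ in $H^3(G\rtimes_\rhd H;\mathbb K^\times)$.
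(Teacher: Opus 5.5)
Your proposal is correct and is essentially the paper's argument: you normalise the action to degree-shift functors with $h_{\mathbbm 1}=\mathrm{id}$, $\eta=\mathrm{id}$ and the equivalences to $F=\mathrm{id}$, read off $(\omega,\Omega,t)$ and $(\alpha,\beta)$ as scalars, and match the axioms with $\mathrm{d}^3_{\mathrm{tot};1}\kappa=1$ and \eqref{d2tot-via-alpha-beta}, so your surjectivity/well-definedness/injectivity split only makes explicit what the paper's ``in summary'' leaves implicit. You also correctly address normalised versus arbitrary cochains, which the paper skips; note only that closure under the differentials gives a subcomplex, and that the inclusion being a quasi-isomorphism comes from the standard row- and column-wise comparison, which still goes through after deleting the $q=0$ row.
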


The next lemma helps with the computation of $H^3_{\mathrm{tot};1}$. First note that there is a canonical map
\begin{equation}\label{res map}
    \mathrm{res}_G\colon H^3_{\mathrm{tot};1}(G \rtimes_\rhd H;\mathbb{K}^\times) \to H^3(G;\mathbb{K}^\times) \ , \qquad [\omega,\Omega,t] \mapsto [\omega] \ .
\end{equation}

\begin{lemma}\label{prop:direct prod simplify}
    If the $H$-action on $G$ is trivial, then $H^3_{\mathrm{tot};1}(G \times H;\mathbb{K}^\times)$ admits a canonical decomposition
\begin{equation}
    H^3_{\mathrm{tot};1}(G \times H;\mathbb{K}^\times)\cong H^3(G;\mathbb{K}^\times)\oplus \mathrm{Ker}(\mathrm{res}_G) \ ,
\end{equation}
and $\mathrm{Ker}(\mathrm{res}_G)$ fits into the short exact sequence
\begin{equation}\label{eq:HS-filtration}
    0\longrightarrow H^2(H;\widehat{G})\longrightarrow \mathrm{Ker}(\mathrm{res}_G)\longrightarrow
H^1\!\bigl(H;H^2(G;\mathbb{K}^\times)\bigr)\longrightarrow 0 \ ,
\end{equation}
where $\widehat{G}\coloneqq\Hom(G,\mathbb{K}^\times)$.
\end{lemma}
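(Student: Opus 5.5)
The plan is to work directly with the explicit normalised cocycles $\kappa=(\omega,\Omega,t)\in Z^3_{\mathrm{tot};1}(G\times H;\mathbb{K}^\times)$ and coboundaries $\mathrm{d}^2_{\mathrm{tot};1}(\alpha,\beta)$ described before Theorem~\ref{thm:VecG-Hcrossed-classification-H3tot}, using throughout that the $H$-action on $G$ is trivial.

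\textbf{Step 1: split $\mathrm{res}_G$.} I will construct a section $s\colon H^3(G;\mathbb{K}^\times)\to H^3_{\mathrm{tot};1}$ by $[\omega]\mapsto[(\omega,1,1)]$. This is a cocycle because the only nontrivial cross term is $\mathrm{d}_H^{0,3}\omega$. For a trivial action this term is $\omega\cdot\omega^{-1}=1$. The map $s$ is well defined on classes, since $\mathrm{d}^2_{\mathrm{tot};1}(\alpha,1)=(\mathrm{d}_G\alpha,\,\mathrm{d}_H\alpha,\,1)$, and $\mathrm{d}_H\alpha=1$ again by triviality of the action. Since $\mathrm{res}_G\circ s=\mathrm{id}$, this gives the canonical decomposition $H^3(G;\mathbb{K}^\times)\oplus\mathrm{Ker}(\mathrm{res}_G)$.

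\textbf{Step 2: normal form for the kernel.} If $[\omega]=1$, I replace $\kappa$ by $\kappa\cdot\mathrm{d}^2_{\mathrm{tot};1}(\alpha,1)^{-1}$ to reach a representative $(1,\Omega,t)$. The cocycle conditions then read as follows:
\begin{itemize}
\item $\mathrm{d}_G\Omega=1$, so each $\Omega_h\coloneqq\Omega(h,-,-)$ is a normalised 2-cocycle on $G$;
\item $\mathrm{d}_G t=\mathrm{d}_H\Omega$, which is \eqref{eq-system-b};
\item $\mathrm{d}_H t=1$, which is \eqref{eq-system-c}.
\end{itemize}
The residual gauge freedom is $(\alpha,\beta)$ with $\mathrm{d}_G\alpha=1$. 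Under it $\Omega_h$ changes by $\mathrm{d}_G\beta(h,-)$ and $t$ changes by $\mathrm{d}_H\beta$.

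\textbf{Step 3: the map to $H^1(H;H^2(G;\mathbb{K}^\times))$.} Define $\pi[(1,\Omega,t)]\coloneqq\bigl(h\mapsto[\Omega_h]\bigr)$. The second condition says $\Omega_{h_2}\Omega_{h_1}\Omega_{h_1h_2}^{-1}$ is a $G$-coboundary. Hence $h\mapsto[\Omega_h]$ is a homomorphism $H\to H^2(G;\mathbb{K}^\times)$, i.e.\ a 1-cocycle for the trivial $H$-module. It is independent of the representative by Step~2, and it is a group homomorphism.

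\textbf{Step 4: the kernel of $\pi$.} Suppose every $[\Omega_h]$ is trivial. Choose $\beta$ with $\Omega_h=\mathrm{d}_G\beta(h,-)$ and gauge $\Omega$ to $1$. Then $\mathrm{d}_G t=1$, so $t(h_1,h_2,-)\in\widehat G$ and $t$ is a 2-cocycle in $C^2(H;\widehat G)$. The remaining gauge is by $\beta$ with $\mathrm{d}_G\beta=1$, i.e.\ $\beta\in C^1(H;\widehat G)$, which changes $t$ by $\mathrm{d}_H\beta$. This identifies $\mathrm{Ker}(\pi)$ with $H^2(H;\widehat G)$. For injectivity of $H^2(H;\widehat G)\to\mathrm{Ker}(\mathrm{res}_G)$ I must check that a residual $\alpha$ with $\mathrm{d}_G\alpha=1$ does not interfere. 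It does not, because $\mathrm{d}_H\alpha=1$ by triviality of the action.

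\textbf{Step 5: surjectivity of $\pi$ (the main obstacle).} Given a homomorphism $f\colon H\to H^2(G;\mathbb{K}^\times)$, I pick normalised representatives $\Omega_h$ of $f(h)$. Then $\mathrm{d}_H\Omega$ is $G$-exact, so some $t$ with $\mathrm{d}_G t=\mathrm{d}_H\Omega$ exists. However, $\mathrm{d}_H t$ is only a 3-cocycle in $C^3(H;\widehat G)$. Its class is the transgression $d_2(f)\in H^3(H;\widehat G)$ of the Lyndon–Hochschild–Serre spectral sequence, and it must vanish, up to changing $t$ by $C^2(H;\widehat G)$.

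My first attempt will be to show this $d_2$ is zero for direct products. Consider the maps $G\times H\to G$ and $G\times H\to H$, with their sections. Naturality of the spectral sequence under these maps, together with the universal-coefficient/Künneth description of $H^3(G\times H;\mathbb{K}^\times)$, should force $d_2=0$. The key case is $H^1(H;H^2(G))\cong\mathrm{Hom}(H,H^2(G))$, whose classes should be realised explicitly by pairing, e.g.\ via cup/slant constructions.

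If a direct argument is needed instead, I will try to choose $\Omega_h$ coherently from a fixed transversal of $Z^2\to H^2$. If that fails, the alternative is to compare orders via Proposition~\ref{group cohomology classification H-crossed action} and the Künneth formula. I expect this step to require the most care, possibly an extra hypothesis on $\mathbb{K}^\times$ such as divisibility.
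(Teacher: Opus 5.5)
Your Steps 1–4 follow the paper's argument. The paper uses the same section $s_G\colon[\omega]\mapsto[\omega,1,1]$ and the same maps $i\colon[t]\mapsto[1,1,t]$ and $p\colon[1,\Omega,t]\mapsto[\Omega]$, and then simply asserts that exactness is straightforward. You correctly isolate surjectivity of $p$ as the one non-formal point, but Step 5 does not prove it, so the proposal has a gap there.

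None of the routes you list closes it as stated:
\begin{itemize}
\item Naturality along the projections $G\times H\to G$ and $G\times H\to H$ only shows that no differential leaves the column $p=0$ or enters the (deleted) row $q=0$. It says nothing about $E_2^{1,2}$.
\item Realising classes in $H^1(H;H^2(G;\mathbb{K}^\times))=\Hom(H,H^2(G;\mathbb{K}^\times))$ by products with classes pulled back from $H$ cannot work in general, since $H^1(H;\mathbb{Z})=0$ for finite $H$.
\item Comparing orders needs finite groups, whereas $G$ and $H$ are arbitrary here.
\end{itemize}
Also, no extra hypothesis on $\mathbb{K}^\times$ such as divisibility is needed.

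The missing idea is that, for trivial action, the obstruction $\mathrm{d}_H t$ can be made to vanish already at cochain level, because boundaries in the bar complex form a free abelian group. Write $\mathbb{K}^\times$ additively. Given a homomorphism $f\colon H\to H^2(G;\mathbb{K}^\times)$, pick normalised $\Omega_h\in Z^2(G;\mathbb{K}^\times)$ representing $f(h)$, and extend $\Omega$ linearly to $\mathbb{Z}[H]$. Then $\mathrm{d}_H\Omega=\Omega\circ\partial_2$, where $\partial_2[h_1|h_2]=[h_2]-[h_1h_2]+[h_1]$ is the bar differential $\mathbb{Z}[H^2]\to\mathbb{Z}[H]$. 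Since $f$ is a homomorphism, $\Omega$ maps $B_1(H)\coloneqq\partial_2(\mathbb{Z}[H^2])$ into $\mathrm{d}_G C^1(G;\mathbb{K}^\times)$.

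As a subgroup of a free abelian group, $B_1(H)$ is free. Choosing preimages on a basis gives $\tilde t\colon B_1(H)\to C^1(G;\mathbb{K}^\times)$ with $\mathrm{d}_G\circ\tilde t=\Omega|_{B_1(H)}$. To keep $t$ normalised, take a basis containing $[1_H]=\partial_2[1_H|1_H]$ and set $\tilde t([1_H])=0$; preimages of normalised coboundaries are automatically normalised. Now put $t\coloneqq\tilde t\circ\partial_2$. Then $\mathrm{d}_G t=\mathrm{d}_H\Omega$, and
\[
\mathrm{d}_H t=t\circ\partial_3=\tilde t\circ\partial_2\circ\partial_3=0
\]
identically. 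Hence $(1,\Omega,t)$ is a normalised total cocycle with $p[1,\Omega,t]=f$.

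Conceptually, the bar complexes of $G$ and $H$ are complexes of free abelian groups, so each splits into two-term complexes. For trivial action, the double complex $\mathbf{C}^{p,q}=\Hom(\mathbb{Z}[H^p]\otimes\mathbb{Z}[G^q],\mathbb{K}^\times)$ is then a product of double complexes each supported on a $2\times 2$ square. Consequently the spectral sequence degenerates at $E_2$ for any coefficient group.
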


\begin{proof}
    If the $H$-action on $G$ is trivial, then 
\begin{equation}
    s_G\colon H^3(G;\mathbb{K}^\times) \to H^3_{\mathrm{tot};1}(G \times H;\mathbb{K}^\times) \ , \qquad [\omega] \mapsto [\omega,1,1]
\end{equation}
is a well-defined section of $\mathrm{res}_G$, so there is a canonical decomposition
\begin{equation}
    H^3_{\mathrm{tot};1}(G \times H;\mathbb{K}^\times)\cong H^3(G;\mathbb{K}^\times)\oplus \mathrm{Ker}(\mathrm{res}_G) \ .
\end{equation}
Define
\begin{equation}
    i\colon H^2(H;\widehat{G})\longrightarrow \mathrm{Ker}(\mathrm{res}_G) \ , \qquad [t] \mapsto [1,1,t] \ ,
\end{equation}
and
\begin{equation}
    p\colon \mathrm{Ker}(\mathrm{res}_G) \longrightarrow H^1\!\bigl(H;H^2(G;\mathbb{K}^\times)\bigr) \ , \qquad [1, \Omega, t] \mapsto [\Omega] \ .
\end{equation}
It is straightforward to check that $i$ and $p$ are well-defined and make~\eqref{eq:HS-filtration} an exact sequence.
\end{proof}

\subsection{Fibre products for $\mathrm{vec}_G^\kappa$}\label{sec: fibre product 2}

Recall the fibre product of $H$-crossed $G$-graded categories from Section~\ref{sec: fibre product}. For the categories $\mathrm{vec}_G^{\kappa}$ as in \eqref{eq:vecG-kappa-def} we have:

\begin{proposition}\label{prop:Vec-addition}
For any normalised
$\kappa_1, \kappa_2 \in Z^3_{\mathrm{tot};1}(G \rtimes_\rhd H;\mathbb{K}^\times)$, there is a canonical equivalence of
$H$-crossed $G$-graded monoidal categories
\begin{equation}
    \mathrm{vec}_G^{\kappa_1} \boxtimes_G \mathrm{vec}_G^{\kappa_2} \;\cong\; \mathrm{vec}_G^{\kappa_1 \kappa_2} \ ,
\end{equation}
where $\kappa_1\kappa_2$ is given by pointwise multiplication.
\end{proposition}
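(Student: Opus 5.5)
The plan is to write down an explicit $H$-equivariant $G$-graded monoidal functor
\[
F\colon \mathrm{vec}_G^{\kappa_1}\boxtimes_G \mathrm{vec}_G^{\kappa_2}\to \mathrm{vec}_G^{\kappa_1\kappa_2}
\]
with all structure isomorphisms equal to identities, and then show it is an equivalence. Write $\kappa_i=(\omega_i,\Omega_i,t_i)$. On pure tensors with $U,V$ both concentrated in degree $g$, set $F(U\boxtimes V)\coloneqq U\otimes_{\mathbb K}V$, placed in degree $g$. On morphisms, set $F(f\boxtimes f')\coloneqq f\otimes_{\mathbb K} f'$. Extend additively to finite direct sums; this is well defined because morphisms in the fibre product are given by the tensor product of Hom spaces.

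First, $F$ is an equivalence of the underlying $G$-graded linear categories. It sends $\mathbb K_g\boxtimes\mathbb K_g$ to $\mathbb K_g\otimes\mathbb K_g\cong\mathbb K_g$, and it is fully faithful on these generators, since
\[
\Hom(\mathbb K_g,\mathbb K_g)\otimes_{\mathbb K}\Hom(\mathbb K_g,\mathbb K_g)\cong\mathbb K\cong \Hom(\mathbb K_g,\mathbb K_g).
\]
Both sides are additive with every object a finite sum of these generators, so fully faithfulness and essential surjectivity follow.

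Second, the monoidal structure. Take $F_{U_1\boxtimes V_1,U_2\boxtimes V_2}$ to be the canonical swap
\[
(U_1\otimes U_2)\otimes_{\mathbb K}(V_1\otimes V_2)\cong (U_1\otimes_{\mathbb K}V_1)\otimes(U_2\otimes_{\mathbb K}V_2),
\]
and take $F_{\mathbbm 1}$ to be the identification $\mathbb K_1\otimes\mathbb K_1\cong\mathbb K_1$. The associator of the fibre product acts on pure tensors with homogeneous entries, all of degrees $g,k,l$, by $\omega_1(g,k,l)\,\omega_2(g,k,l)$. This is exactly $(\omega_1\omega_2)(g,k,l)$, the associator of $\mathrm{vec}_G^{\kappa_1\kappa_2}$, so the hexagon for $F$ holds on homogeneous elements. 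The unit conditions are trivial, since the unitors are those of vector spaces.

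Third, the $H$-equivariance. Take $\omega^h$ to be the identity, using
\[
{}^h(U\boxtimes V)={}^hU\boxtimes{}^hV\;\mapsto\;{}^hU\otimes_{\mathbb K}{}^hV={}^h(U\otimes_{\mathbb K}V),
\]
which holds because both sides relabel degrees by $h\rhd-$. With $\omega^h=\mathrm{id}$, the conditions \eqref{crossed functor definition a} and \eqref{crossed functor definition c} reduce to comparing scalars. On $u_g\otimes v_g$ tensored with $u'_k\otimes v'_k$, the structure map $h_{-,-}$ of the fibre product acts by $\Omega_1(h,g,k)\,\Omega_2(h,g,k)$, and $T^{h_1,h_2}$ acts by $t_1(h_1,h_2,g)\,t_2(h_1,h_2,g)$. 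These are precisely the structure scalars of $\kappa_1\kappa_2$. Conditions \eqref{crossed functor definition b} and \eqref{crossed functor definition d} hold because $h_{\mathbbm 1}$, $\eta$ and $\eta'$ are identities. This would establish the equivalence.

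The main obstacle is bookkeeping rather than anything conceptual, and it has two parts. The first is the degree constraint: a pure tensor pairs elements of the same degree $g$, so the scalars multiply at equal arguments. This is what makes the product come out pointwise in $G$, rather than as some convolution. The second is ensuring canonicity. For this I would check that every constraint reduces to the multiplicativity of scalars under $\otimes_{\mathbb K}$, so that no choices were made. Finally, since both $\kappa_1$ and $\kappa_2$ are normalised cocycles, $\kappa_1\kappa_2$ is again a normalised cocycle in $Z^3_{\mathrm{tot};1}(G\rtimes_\rhd H;\mathbb K^\times)$, so the target category $\mathrm{vec}_G^{\kappa_1\kappa_2}$ is well defined.
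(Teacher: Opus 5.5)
Your proposal is correct and is essentially the paper's proof: the same functor $U_g\boxtimes V_g\mapsto U_g\otimes_{\mathbb K}V_g$ placed in degree $g$, the canonical shuffle isomorphism as monoidal structure, and identity equivariance data. Your remark that the associator, $h_{-,-}$ and $T^{h_1,h_2}$ scalars of the fibre product multiply at equal arguments, giving exactly $\omega_1\omega_2$, $\Omega_1\Omega_2$ and $t_1t_2$, spells out the coherence checks that the paper only calls straightforward.
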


\begin{proof}
Recall that an object of \(\mathrm{vec}_G^{\kappa_1}\boxtimes_G \mathrm{vec}_G^{\kappa_2}\) is a finite direct sum of objects
\(U_g\boxtimes V_g\) with \(g\in G\) and \(U_g,V_g\) vector spaces concentrated in degree \(g\).
We denote by \(\otimes\) the graded tensor product in \(\mathrm{vec}_G\) and by \(\bar\otimes\) the usual tensor product of
underlying vector spaces.

We want to define a strictly unital \(H\)-equivariant \(G\)-graded monoidal equivalence
\begin{equation}
(F,\xi):\ \mathrm{vec}_G^{\kappa_1}\boxtimes_G \mathrm{vec}_G^{\kappa_2}\longrightarrow \mathrm{vec}_G^{\kappa_1\kappa_2} \ .
\end{equation}

We start with a $\mathbb{K}$-linear functor $F\colon \mathrm{vec}_G\boxtimes_G \mathrm{vec}_G\to \mathrm{vec}_G$ given on pure tensors by
    \begin{equation}
        U_g \boxtimes V_g \mapsto U_g\bar\otimes V_g \ , \qquad f\boxtimes f' \coloneqq f\bar\otimes f' \ ,
    \end{equation}
    where $U_g\bar\otimes V_g$ is viewed as a vector space concentrated in degree $g$. It is clear that $F$ is a $\mathbb K$-linear equivalence.
    
For any homogeneous objects \(U_g\boxtimes V_g\) and \(U_l\boxtimes V_l\), define the monoidal structure
\begin{equation}
    F_{U_g\boxtimes V_g,\;U_l\boxtimes V_l}:\ F(U_g\boxtimes V_g)\otimes
    F(U_l\boxtimes V_l)\to
F\bigl((U_g\otimes U_l)\boxtimes (V_g\otimes V_l)\bigr)
\end{equation}
to be the canonical linear isomorphism
\begin{equation}
    (U_g\bar\otimes V_g)\bar\otimes (U_l\bar\otimes V_l)\ \xrightarrow{\cong}\ (U_g\bar\otimes U_l)\bar\otimes (V_g\bar\otimes V_l) \ ,
\qquad
(u_g\bar\otimes v_g)\bar\otimes(u'_l\bar\otimes v'_l)\mapsto (u_g\bar\otimes u'_l)\bar\otimes (v_g\bar\otimes v'_l) \ .
\end{equation}
It is straightforward to check that the coherence conditions are satisfied.

Finally, for any $h\in H$ and any homogeneous object $U_g\boxtimes V_g$, we define $\xi^h_{U_g\boxtimes V_g}$ to be the identity.
The $H$-equivariance conditions~\eqref{crossed functor definition} are immediate.
\end{proof}

 \begin{corollary}\label{cor:Vec-invertible}
 For every such $\kappa$, $\mathrm{vec}_G^\kappa$ is $\boxtimes_G$-invertible with the inverse
 $\mathrm{vec}_G^{\kappa^{-1}}$.
 \end{corollary}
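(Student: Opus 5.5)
The plan is to derive this directly from Proposition~\ref{prop:Vec-addition}, combined with the observation that $\mathrm{vec}_G^{\kappa_0}$ for the trivial cocycle $\kappa_0=(1,1,1)$ is a unit for $\boxtimes_G$. There are two points to settle. First, $\kappa^{-1}$ has to be a legitimate input to the construction~\eqref{eq:vecG-kappa-def}. Second, the fibre product with the trivial object should give back the original category.

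For the first point, recall that $Z^3_{\mathrm{tot};1}(G\rtimes_\rhd H;\mathbb K^\times)$ is the kernel of the homomorphism $\mathrm{d}^3_{\mathrm{tot};1}$ of abelian groups, written multiplicatively. So if $\kappa=(\omega,\Omega,t)$ is a cocycle, then so is its pointwise inverse $\kappa^{-1}=(\omega^{-1},\Omega^{-1},t^{-1})$. Normalisation is preserved as well, since the normalisation conditions \eqref{eq-system-d} and \eqref{eq-system-e} say that certain values equal $1$. Proposition~\ref{prop:Vec-addition} therefore applies to the pair $\kappa,\kappa^{-1}$ in either order. It yields equivalences
\begin{equation}
\mathrm{vec}_G^{\kappa}\boxtimes_G\mathrm{vec}_G^{\kappa^{-1}}\;\cong\;\mathrm{vec}_G^{\kappa_0}\;\cong\;\mathrm{vec}_G^{\kappa^{-1}}\boxtimes_G\mathrm{vec}_G^{\kappa} \ ,
\end{equation}
because pointwise multiplication is commutative.

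For the second point, we check that $\mathrm{vec}_G^{\kappa_0}$ is the $\boxtimes_G$-unit. Here $\mathrm{vec}_G^{\kappa_0}$ carries the trivial associator, the identity maps $h_{U,V}$ and identity $T$. Given any $\mathbb K$-linear $H$-crossed $G$-graded monoidal category $\mathcal C$, define $\mathcal C\boxtimes_G\mathrm{vec}_G^{\kappa_0}\to\mathcal C$ on pure tensors by $U\boxtimes V\mapsto U\otimes_{\mathbb K}V$ (a copower) for $U\in\mathcal C_g$ and $V$ concentrated in degree $g$. Choose the monoidal structure maps to be the canonical reordering isomorphisms and take all $\omega^h$ to be identities. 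This follows the same pattern as the proof of Proposition~\ref{prop:Vec-addition}. The coherence conditions \eqref{crossed functor definition} then reduce to the corresponding conditions in $\mathcal C$, because every $\mathrm{vec}_G^{\kappa_0}$-factor contributes only trivial scalars. Essential surjectivity holds because $U\boxtimes\mathbb K_g\mapsto U$. Full faithfulness holds because $\Hom(\mathbb K_g,\mathbb K_g)=\mathbb K$.

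The step that needs the most care is making sense of $U\otimes_{\mathbb K}V$ when $\mathcal C$ is merely additive $\mathbb K$-linear. One way is to define it as $U^{\oplus\dim V}$ after fixing bases, and this choice adds bookkeeping. For the statement as written, however, we only need the case $\mathcal C=\mathrm{vec}_G^{\kappa_0}$ itself, and there Proposition~\ref{prop:Vec-addition} with $\kappa_1=\kappa_2=\kappa_0$ already gives the identification. So I would state invertibility as the existence of the two equivalences above onto the unit $\mathrm{vec}_G^{\kappa_0}$, and only remark on the general unit property.
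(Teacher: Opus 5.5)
Your proposal is correct and follows the paper's approach. The paper gives no separate proof: it reads the corollary off Proposition~\ref{prop:Vec-addition} with $\kappa_1=\kappa$, $\kappa_2=\kappa^{-1}$, tacitly taking $\mathrm{vec}_G^{(1,1,1)}$ as the $\boxtimes_G$-unit. Your check that $\mathcal C\boxtimes_G\mathrm{vec}_G^{(1,1,1)}\simeq\mathcal C$ via $U\boxtimes\mathbb K_g\mapsto U$ is a harmless supplement that the paper leaves implicit. Note, though, that specialising that check to $\mathcal C=\mathrm{vec}_G^{\kappa_0}$ alone would not by itself identify the unit.
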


Recall that for fixed groups $G$ and $H$ together with a specified action
$\rhd$ of $H$ on $G$, we introduced in Definition~\ref{def: Cr_H,G} a
$2$-category $\mathsf{Cr}_{H,G,\rhd}$. Given $\kappa \in Z^3_{\mathrm{tot};1}(G \rtimes_\rhd H;\mathbb{K}^\times)$, we
can twist any object
$\mathcal C \in \mathsf{Cr}_{H,G,\rhd}$ by $\kappa$ by setting
\begin{equation}\label{eq:twisting-as-fibred-product}
    \mathcal C^\kappa
    \coloneqq
    \mathrm{vec}_G^\kappa \boxtimes_G \mathcal C \ .
\end{equation}
In this way we obtain an action of $H^3_{\mathrm{tot};1}(G \rtimes_\rhd H;\mathbb{K}^\times)$ on the equivalence classes in $\mathsf{Cr}_{H,G,\rhd}$.

\subsection{Example: $G = \mathbb{Z}/4\mathbb{Z}$}\label{Z/4 example}

We study concrete examples with fixed $G = \Z/4\Z = \{0,1,2,3\}$. Note that $\mathrm{Aut}(\Z/4\Z) \cong \Z/2\Z$, and an automorphism is uniquely determined by whether it sends $1$ to $1$ or $3$.
Consider the following 4 cases:
\begin{enumerate}
    \item $H_1 = \{*\}$ acts on $G$ trivially;
    \item $H_2 = \Z/2\Z$ acts on $G$ trivially;
    \item $H_3 = \Z/4\Z$ acts on $G$ trivially;
    \item $H_4 = \Z/4\Z$ acts on $G$ via the quotient map $q\colon \Z/4\Z \to \Z/2\Z \cong \mathrm{Aut}(\Z/4\Z)$;
\end{enumerate}
Fix an algebraically closed field $\mathbb{K}$ with $\mathrm{char}(\mathbb{K}) \neq 2$ and a primitive 4th root of unity
\begin{equation}\label{eq:example-i}
     i \in \mathbb{K}^\times \ .
\end{equation}
We classify $H_j$-crossed
$G$-graded monoidal structures
on $\mathrm{vec}_G$ for $j = 1, 2, 3, 4$.

\medskip

Note that for any finite cyclic group $K = \mathbb Z/n\mathbb Z$ with a chosen generator $t$, and any $\mathbb Z[K]$-module~$A$, the standard periodicity theorem for cyclic group cohomology yields~\cite[Thm.\,6.2.2]{We94}
\begin{equation}\label{eq:cyclic-group-coh-period}
    H^m(K;A) \cong
    \begin{cases}
        A^K \ , & m = 0 \ , \\[0.5em]
        \ker(N)/(t-1)A \ , & m \geq 1 \text{ odd} \ , \\[0.5em]
        A^K/N(A) \ , & m \geq 2 \text{ even} \ ,
    \end{cases}
\end{equation}
where $t$ is viewed as a linear map $t\colon A \to A$, and
\begin{equation}
    A^K \coloneqq \{a \in A \mid t\cdot a = a\} \ , 
    \qquad
    N \coloneqq 1+t+\cdots+t^{n-1} \ .
\end{equation}
In particular, for $K = G = \Z/4\Z$ with untwisted coefficient $\mathbb{K}^\times$, we have $H^2(\Z/4\Z;\mathbb{K}^\times) = 0$. Following Proposition~\ref{prop:direct prod simplify}, for the first three cases $j = 1,2,3$, the corresponding computation for $H^3_{\mathrm{tot};1}(G \times H_j;\mathbb{K}^\times)$ simplifies to
\begin{equation}
    H^3_{\mathrm{tot};1}(G \times H_j;\mathbb{K}^\times) \cong H^3(G; \mathbb{K}^\times) \oplus H^2(H_j;\widehat{G}) \ .
\end{equation}
We present below the classification results for the four cases described above. 

\medskip

\noindent
$\bullet$ \textit{Case (\romannumeral 1):} We have
\begin{equation}
    H^3_{\mathrm{tot};1}(G \times H_1;\mathbb{K}^\times)\;\cong\;H^3(\Z/4\Z;\mathbb{K}^\times)\;\cong\;\Z/4\Z \ .
\end{equation}
A generator $[\omega]$ for $H^3(\Z/4\Z; \mathbb{K}^\times)$ is given by the 3-cocycle (see e.g.~\cite[Prop.\,2.3]{HLY12})
\begin{equation}\label{generator Z/4Z-cocycle}
\omega \colon (\Z/4\Z)^3 \longrightarrow \mathbb{K}^\times,
\qquad
\omega(a,b,c)\;\coloneqq\;
\begin{cases}
i^c \ ,& a+b \geq 4 \ ,\\
1 \ ,& a+b <4\ .
\end{cases}
\end{equation}
A generator of $H^3_{\mathrm{tot};1}(G \times H_1;\mathbb{K}^\times)$ is then given by $[\omega,1,1]$.

\medskip

\noindent
$\bullet$ \textit{Case (\romannumeral 2):} We have
\begin{equation}
H^3_{\mathrm{tot};1}(G \times H_2;\mathbb{K}^\times)\;\cong\;H^3(\Z/4\Z;\mathbb{K}^\times)\;\oplus\;H^2(\Z/2\Z;\widehat{\Z/4\Z})
\;\cong\;\Z/4\Z\;\oplus\;\Z/2\Z \ .
\end{equation}
A representative of the generator for $H^2(\Z/2\Z;\widehat{\Z/4\Z})$ is given by
\begin{equation}\label{eq:ti}
    t\colon (\Z/2\Z)^2 \times \Z/4\Z \longrightarrow \mathbb{K}^\times \ ,
\qquad
t(h_1,h_2,c)\;\coloneqq\;
i^{h_1 h_2 c} \ .
\end{equation}
Therefore, $[\omega,1,1]$ and $[1,1,t]$ are two generators of
$H^3_{\mathrm{tot};1}(G \times H_2;\mathbb{K}^\times)$.

\medskip

\noindent
$\bullet$ \textit{Case (\romannumeral 3):}
We have
\begin{equation}
H^3_{\mathrm{tot};1}(G \times H_3;\mathbb{K}^\times)\;\cong\;H^3(\Z/4\Z;\mathbb{K}^\times)\;\oplus\;H^2(\Z/4\Z;\widehat{\Z/4\Z})
\;\cong\;\Z/4\Z\;\oplus\;\Z/4\Z \ .
\end{equation}
A generator for $H^2(\Z/4\Z;\widehat{\Z/4\Z})$ is represented by the following cocycle
\begin{equation}\label{generator ti Z/4Z-cocycle}
     t' \colon (\Z/4\Z)^2 \times (\Z/4\Z)  \longrightarrow \mathbb{K}^\times \ ,
 \qquad
 t'(h_1,h_2,c)\;\coloneqq\;
 \begin{cases}
 i^c \ ,& h_1+h_2 \geq 4 \ ,\\
 1 \ ,& h_1+h_2 <4\ .
 \end{cases}
\end{equation}
Therefore, $[\omega,1,1]$ and $[1,1,t']$ are two generators of $H^3_{\mathrm{tot};1}(G \times H_3;\mathbb{K}^\times)$.

\medskip

\noindent
$\bullet$ \textit{Case (\romannumeral 4):}
This case is more involved. Note that the $\Z/4\Z$-action on $\Z/4\Z$ is given by
\begin{equation}\label{non-trivial Z/4Z-action}
    h \rhd g =
\begin{cases}
g \ ,& h \equiv 0 \; \mathrm{mod} \; 2 \ ,\\
-g \ ,& h \equiv 1 \; \mathrm{mod} \; 2 \ .
\end{cases}
\end{equation}
Functoriality of group cohomology gives $\Z/4\Z$-actions on $H^1(\Z/4\Z;\mathbb K^\times)$ and $H^3(\Z/4\Z;\mathbb K^\times)$.
Recall that the $H_4$-action on cochains is given by~\eqref{h-action notation}. On the level of cohomology, the induced $\Z/4\Z$-action on $H^1(\Z/4\Z;\mathbb K^\times)$ is given by the same formula as~\eqref{non-trivial Z/4Z-action}, and the $\Z/4\Z$-action on $H^3(\Z/4\Z;\mathbb K^\times)$ is trivial.

Consider the truncated double complex $\mathbf C^{p,q>0}$ and its associated Lyndon–Hochschild–Serre spectral sequence~\cite[Ch.\,7]{Br82}
\begin{equation}
    E_2^{p,q} = H^p\big(\Z/4\Z; H^q(\Z/4\Z; \mathbb{K}^\times)\big) \Longrightarrow H^{p+q}_{\mathrm{tot};1}(G\rtimes_q H_4;\mathbb{K}^\times) \ .
\end{equation}
We first compute the $E_2$-page in total degree 3:
\begin{itemize}
    \item $E_2^{0,3} = H^0\big(\Z/4\Z; H^3(\Z/4\Z; \mathbb{K}^\times)\big) \cong  H^0(\Z/4\Z; \Z/4\Z) \cong \Z/4\Z$;
    \item $E_2^{1,2} = H^1\big(\Z/4\Z; H^2(\Z/4\Z; \mathbb{K}^\times)\big) \cong  0$;
    \item $E_2^{2,1} = H^2\big(\Z/4\Z; H^1(\Z/4\Z; \mathbb{K}^\times)\big) \cong  H^2(\Z/4\Z; \Z/4\Z) \cong \Z/2\Z$.
\end{itemize}
Since $H^2(\Z/4\Z;\mathbb{K}^\times) = 0$, the differential $\mathrm{d}_2^{0,2} = 0$, and since we are in the truncated bicomplex, $\mathrm{d}_r^{2,1} = 0$ for all $r \geq 2$. Therefore, we have $E_\infty^{2,1} = E_2^{2,1}$. Consequently, the order of $H^3_{\mathrm{tot};1}(G\rtimes_q H_4;\mathbb{K}^\times)$ can only be 2, 4, or 8. We show
%COMMENT: See https://groupprops.subwiki.org/wiki/Group_cohomology_of_groups_of_order_16 : this gives homology H_3(Z4 x-semi Z4) = Z4 + Z4 + Z2. Cohomology (maps into K^*) give the same abelian group. Furthermore, H3(Z4,K*) = Z4, and so Prop 3.1 confirms Z4 + Z2.
\begin{equation}
    H^3_{\mathrm{tot};1}(G\rtimes_q H_4;\mathbb{K}^\times) \; \cong \; \Z/4\Z \;\oplus\; \Z/2\Z
\end{equation}
by exhibiting two independent generators of order 4 and 2:
\begin{itemize}
    \item Define $\omega \colon (\Z/4\Z)^3 \to \mathbb{K}^\times$ as~\eqref{generator Z/4Z-cocycle}, and define
\begin{equation}\label{eq: Omega_i}
    \Omega \colon \Z/4\Z \times (\Z/4\Z)^2 \to \mathbb{K}^\times \ , \qquad \Omega(h,a,b) \; = \; \begin{cases}
        i^b \ ,& h \equiv 1 \; \mathrm{mod} \; 2 \ , \; a \neq 0 \ ,\\
 1 \ ,& \textup{otherwise}.
    \end{cases}
\end{equation}
One can check that, for any $h \in \Z/4\Z$,
\begin{equation}
    \frac{\omega}{\omega^h} = \mathrm{d}_G \big(\Omega(h,-,-)\big) \ , \qquad \mathrm{d}_{H_4}\Omega = 1 \ .
\end{equation}
Therefore, $[\omega,\Omega,1]$ is a class in $H^3_{\mathrm{tot};1}$ and clearly $[\omega^4,\Omega^4,1^4] = [1,1,1]$. Moreover, the image of $[\omega,\Omega,1]$ under the restriction map $\mathrm{res}_G$ introduced in~\eqref{res map} is $[\omega] \in H^3(\Z/4\Z;\mathbb{K}^\times)$, which is of order $4$. Therefore, $[\omega,\Omega,1]$ can not have order 1 or 2, so it has order 4.

\item Define a normalised cochain
\begin{equation}\label{eq:t_-}
    t_-\colon (\Z/4\Z)^2\times \Z/4\Z \longrightarrow \mathbb{K}^\times \ , \qquad t_-(h_1,h_2,c)\;\coloneqq\;
\begin{cases}
1 \ ,& h_1+h_2 <4 \ ,\\
(-1)^c \ ,& h_1+h_2 \geq 4 \ .
\end{cases}
\end{equation}
One can check that $[1,1,t_-]$ is indeed a class in $H^3_{\mathrm{tot};1}$ of order 2, 
which is independent of $[\omega, \Omega, 1]$ (by applying $\mathrm{res}_G$ from~\eqref{res map}).
\end{itemize}

\section{$\chi$-Crossed braided and ribbon structures}\label{section crossed ribbon cat}

For a crossed module $\chi\colon G \to H$, we introduce the notion of a $\chi$-crossed braiding and ribbon structure on an $H$-crossed $G$-graded monoidal category (Definitions~\ref{crossed braiding def} and~\ref{crossed balanced}).
We give a cohomological classification of $\chi$-crossed braided structures on the category $\mathrm{vec}_G$ of $G$-graded vector spaces (Proposition~\ref{prop:classification chi-crossed on vec_G}). This recovers two well-known classification results: when $\chi = \mathrm{id}:G \to G$ with the conjugation action, it recovers the classification of $G$-crossed braided structures on $\mathrm{vec}_G$ (Proposition~\ref{H_qa for id chi}); when $\chi = * \colon G \to \{*\}$ for an abelian $G$, it reduces to the classification of $G$-graded braided structures on $\mathrm{vec}_G$ (Proposition~\ref{compare to ab cohomology}). In Section~\ref{different equiv relation}, we revisit the examples from Section~\ref{Z/4 example} to further consider their crossed braidings,
which gives an explicit example of 
$\chi$-crossed braidings leading to finer equivalence relations as compared to $G$-crossed braidings (Remark~\ref{rmk: different equiv for chi}).
We also show that every $\chi$-crossed braided category canonically carries both a $G$-crossed braided structure and an $H$-crossed braided structure (Proposition~\ref{chi crrossed implies G crossed}).

\subsection{Crossed modules}
Crossed modules were introduced by Whitehead while studying homotopy 2-types~\cite{Wh41,Wh49}. It is also well-known that crossed modules are models for strict 2-groups~\cite{BL04,No05}. We start by recalling the definition.

\begin{definition}\label{crossed module def}
    A \textit{crossed module} $\chi$ is a tuple $(G, H, \chi, \gamma)$, where
\begin{itemize}
    \item $G$ and $H$ are groups,
    \item $\chi\colon G \to H$ and $\gamma\colon H \to \mathrm{Aut}(G)$ are group homomorphisms,
\end{itemize}
such that for any $g, g' \in G$, $h \in H$, the following two identities hold:
\begin{gather}\label{crossed module long}
    h\chi(g)h^{-1} = \chi(\gamma(h)(g)) \ , \qquad gg'g^{-1} = \gamma(\chi(g))(g') \ .
\end{gather}
\end{definition}

The second identity in~\eqref{crossed module long} is also called the \textit{Peiffer identity}. To simplify our notation, in the rest of this paper a crossed module will just be denoted by $\chi\colon G \to H$. The $H$-action on $G$ is left implicit in this notation, and we will denote the action of $h \in H$ on $g \in G$ by $h\rhd g$ as in Section~\ref{sec:crossed-actions}. Below we will furthermore write $\chi_g$ instead of $\chi(g)$, so that the identities in \eqref{crossed module long} read
\begin{equation}\label{crossed module}
    h \chi_g h^{-1} = \chi_{h \rhd g} \ , \qquad gg'g^{-1} = \chi_g \rhd g' \ .
\end{equation}

\begin{remark}
    For any group $G$, we have the following canonical crossed modules: $\mathrm{id}:G \to G$ with the conjugation action; $*\colon G \to \{*\}$ with trivial action ($G$ needs to be abelian in this case); $*\colon \{*\} \to G$ with trivial action.
\end{remark}

A morphism between two crossed modules $\chi\colon G \to H$ and $\chi'\colon G' \to H'$ is a pair of group homomorphisms $(\alpha\colon G \to G', \beta\colon H \to H')$ such that the following two diagrams commute:
\begin{equation}\label{usual crossed module morphism}
\begin{tikzcd}[column sep = 3em, row sep = 1.5em]
G \ar[r, "\chi"]\ar[d, "\alpha"]
&
H \ar[d, "\beta"']
&
H\times G \ar[r, "\beta \times \alpha"]\ar[d,"\rhd"]
&
H' \times G' \ar[d, "\rhd'"']\\
G' \ar[r, "\chi'"]
&
H'
&
G \ar[r, swap,"\alpha"']
&
G'
\end{tikzcd}
\end{equation}

\begin{lemma}\label{lem: K central I normal}
Let $\chi\colon G \to H$ be a crossed module, then we have
\begin{enumerate}
    \item $\mathrm{Ker}(\chi)$ is a central subgroup in $G$;
    \item $\mathrm{Im}(\chi)$ is a normal subgroup in $H$.
\end{enumerate}
\end{lemma}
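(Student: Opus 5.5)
Both parts follow directly from the two identities in~\eqref{crossed module}, so the plan is a short element-wise verification. The first identity gives equivariance of $\chi$, which yields normality of the image. The Peiffer identity ties conjugation in $G$ to the $H$-action through $\chi$, which yields centrality of the kernel.

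For (1), I would take $k \in \mathrm{Ker}(\chi)$ and an arbitrary $g' \in G$. The Peiffer identity gives $k g' k^{-1} = \chi_k \rhd g' = 1_H \rhd g' = g'$. The last equality holds because $\rhd$ is a group action, so $\gamma(1_H) = \id_G$. Hence $k$ commutes with every element of $G$. Since $\mathrm{Ker}(\chi)$ is already a subgroup, being the kernel of a homomorphism, it is a central subgroup.

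For (2), $\mathrm{Im}(\chi)$ is a subgroup because $\chi$ is a homomorphism. For normality, I would take $h \in H$ and $\chi_g \in \mathrm{Im}(\chi)$. The first identity in~\eqref{crossed module} gives $h\chi_g h^{-1} = \chi_{h\rhd g}$, which again lies in $\mathrm{Im}(\chi)$. Hence $h\,\mathrm{Im}(\chi)\,h^{-1} \subset \mathrm{Im}(\chi)$ for all $h \in H$, which is normality.

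There is no real obstacle. The only point needing care in (1) is that $\chi_k = 1_H$ acts trivially, which holds because $\gamma$ is a homomorphism to $\mathrm{Aut}(G)$ and so sends $1_H$ to $\id_G$.
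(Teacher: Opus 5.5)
Your proposal is correct and follows the paper's proof exactly: the Peiffer identity with $\chi_k = 1_H$ gives centrality of $\mathrm{Ker}(\chi)$, and the equivariance identity $h\chi_g h^{-1} = \chi_{h\rhd g}$ gives normality of $\mathrm{Im}(\chi)$. Your extra remarks, that both are subgroups and that $1_H$ acts trivially, are fine details the paper leaves implicit.
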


\begin{proof}
    If $k \in \mathrm{Ker}(\chi)$, then for any $g \in G$ it holds that $kgk^{-1} = \chi_k \rhd g = 1_H \rhd g = g$.
    If $h = \chi_g$ for some $g \in G$, then for any $h' \in H$ we have $h'hh'^{-1} = \chi_{h'\rhd g} \in \mathrm{Im(\chi)}$.
\end{proof}

Let $\chi\colon G \to H$ be a crossed module and denote
\begin{equation}
    K \coloneqq \mathrm{Ker}(\chi) \ , \qquad I \coloneqq \mathrm{Im}(\chi) \ , \qquad Q\coloneqq \mathrm{Coker}(\chi) \ .
\end{equation}
Then, $\chi$ gives rise to an exact sequence:
\begin{equation}
\begin{tikzcd}[column sep = 2.5em]
1 \ar[r, ""]
&
K \ar[r,"i"]
&
G \ar[r,"\chi"]
&
H \ar[r,"p"]
&
Q \ar[r, ""]
&
1 \ .
\end{tikzcd}
\end{equation}
We can decompose $\chi$ into $G \xrightarrow{\pi}G/K\xrightarrow{\cong}I\xrightarrow{\iota}H$ and then bend this exact sequence to obtain the following commutative diagram, where the top horizontal map is trivial:
\begin{equation}\label{crossed module extension}
\begin{tikzcd}[column sep = 4em, row sep = 1.5em]
K\ar[d,"i"]\ar[r, "p \circ \chi \circ i"]
&
Q\\
G \ar[r, "\chi"]\ar[d, "\pi"]
&
H\ar[u, "p"]\\
G/K \ar[r, "\mathrm{id}"]
&
G/K \cong I \ar[u, "\iota"]
\end{tikzcd}
\end{equation}
Note that each column in the diagram above is a group extension, and each row is a crossed module. Indeed, in the bottom row, $G/K$ acts on itself by conjugation, while in the top row, the action of $Q$ on $K$ is induced by the action of $H$ on $G$. Explicitly, this action is given by $[h]\rhd g \coloneqq h \rhd g$, which is well-defined due to
the fact that $I$ acts on $K$ trivially by Lemma~\ref{lem: K central I normal}.

\subsection{$\chi$-Crossed braided and ribbon
categories}\label{crossed ribbon cat}
Let $\chi\colon G \to H$ be a crossed module. Throughout, when we speak of an $H$-crossed $G$-graded monoidal category, we implicitly mean that the underlying action of $H$ on $G$ is the one in $\chi$.

\begin{definition}\label{crossed braiding def}
    Let $\mathcal{C}$ be an $H$-crossed $G$-graded monoidal category. A \textit{$\chi$-crossed braiding} on $\mathcal{C}$ is a 
family of isomorphisms
\begin{equation}\label{crossed braiding}
    \big\{\,C_{U,V}\colon U \otimes V \to 
    ({}^{\chi_{|U|}}V) \otimes U
    \,\big\}_{U,V \in \mathcal{C}_{\mathrm{hom}}}
\end{equation}
such that
\begin{itemize}
    \item for any morphisms $f\colon U \to U'$, $g\colon V \to V'$ between homogeneous objects, the following diagram commutes:
    \begin{equation}
    % https://q.uiver.app/#q=WzAsNCxbMCwwLCJVIFxcb3RpbWVzIFYiXSxbMywwLCJee1xcY2hpX3t8VXx9fVYgXFxvdGltZXMgVSJdLFswLDIsIlUnIFxcb3RpbWVzIFYnIl0sWzMsMiwiXntcXGNoaV97fFV8fX1WJyBcXG90aW1lcyBVJyJdLFswLDEsIkNfe1UsVn0iXSxbMCwyLCJmIFxcb3RpbWVzIGciLDJdLFsyLDMsIkNfe1UnLFYnfSIsMl0sWzEsMywiXntcXGNoaV97fFV8fX1nIFxcb3RpbWVzIGYiXV0=
\begin{tikzcd}\tag{B1} \label{B1}
	{U \otimes V} && {^{\chi_{|U|}}V \otimes U} \\
	{U' \otimes V'} && {^{\chi_{|U|}}V' \otimes U'}
	\arrow["{C_{U,V}}", from=1-1, to=1-3]
	\arrow["{f \otimes g}", from=1-1, to=2-1]
    \arrow["\big({}^{\chi_{|U|}}g \big) \otimes f"', from=1-3, to=2-3]
	\arrow["{C_{U',V'}}", from=2-1, to=2-3]
\end{tikzcd}
    \end{equation}

    \item for any $h\in H$, $a,b,c \in G$, and any homogeneous objects $U,V,W\in\mathcal{C}$ of degrees $a,b,c$,
respectively, the following diagrams commute:
    \begin{equation}\tag{B2} \label{B2}
    % https://q.uiver.app/#q=WzAsNyxbMCwwLCIoVSBcXG90aW1lcyBWKVxcb3RpbWVzIFciXSxbMiwwLCJ7fV57XFxjaGlfe2FifX1XIFxcb3RpbWVzIChVXFxvdGltZXMgVikiXSxbMCwxLCJVIFxcb3RpbWVzIChWXFxvdGltZXMgVykiXSxbMCwyLCJVIFxcb3RpbWVzICh7fV57XFxjaGlfYn1XXFxvdGltZXMgVikiXSxbMiwyLCIoVSBcXG90aW1lcyB7fV57XFxjaGlfYn1XKVxcb3RpbWVzIFYiXSxbNCwyLCIoe31ee1xcY2hpX2F9KF57XFxjaGlfe2J9fVcpIFxcb3RpbWVzIFUpXFxvdGltZXMgViJdLFs0LDAsIih7fV57XFxjaGlfe2FifX1XIFxcb3RpbWVzIFUpXFxvdGltZXMgViJdLFswLDEsIkNfe1VcXG90aW1lcyBWLCBXfSIsMl0sWzMsNCwiXFxhbHBoYV97VSx7fV57XFxjaGlfe2J9fVcsIFZ9Il0sWzIsMCwiXFxhbHBoYV97VSxWLFd9IiwyXSxbMiwzLCJcXG1hdGhybXtpZH1cXG90aW1lcyBDX3tWLFd9Il0sWzQsNSwiQ197VSx7fV57XFxjaGlfYn1XfVxcb3RpbWVzIFxcbWF0aHJte2lkfSJdLFsxLDYsIlxcYWxwaGFfe3t9XntcXGNoaV97YWJ9fVcsIFUsIFZ9IiwyXSxbNSw2LCIoVF57XFxjaGlfYSxcXGNoaV9ifV9XXFxvdGltZXNcXG1hdGhybXtpZH0pXFxvdGltZXNcXG1hdGhybXtpZH0iXV0=
\begin{tikzcd}[column sep = 2em, row sep = 1.5em]
	{(U \otimes V)\otimes W} && {{}^{\chi_{ab}}W \otimes (U\otimes V)} && {({}^{\chi_{ab}}W \otimes U)\otimes V} \\
	{U \otimes (V\otimes W)} \\
	{U \otimes ({}^{\chi_b}W\otimes V)} && {(U \otimes {}^{\chi_b}W)\otimes V} && {({}^{\chi_a}(^{\chi_{b}}W) \otimes U)\otimes V}
	\arrow["{C_{U\otimes V, W}}", from=1-1, to=1-3]
	\arrow["{\alpha_{{}^{\chi_{ab}}W, U, V}}", from=1-3, to=1-5]
	\arrow["{\alpha_{U,V,W}}"', from=2-1, to=1-1]
	\arrow["{\mathrm{id}\otimes C_{V,W}}", from=2-1, to=3-1]
	\arrow["{\alpha_{U,{}^{\chi_{b}}W, V}}", from=3-1, to=3-3]
	\arrow["{C_{U,{}^{\chi_b}W}\otimes \mathrm{id}}", from=3-3, to=3-5]
	\arrow["{(T^{\chi_a,\chi_b}_W\otimes\mathrm{id})\otimes\mathrm{id}}", from=3-5, to=1-5]
\end{tikzcd}
\end{equation}
\begin{equation}\tag{B3} \label{B3}
    % https://q.uiver.app/#q=WzAsNyxbMCwwLCJVIFxcb3RpbWVzIChWXFxvdGltZXMgVykiXSxbMCwxLCIoVSBcXG90aW1lcyBWKVxcb3RpbWVzIFciXSxbMCwyLCIoe31ee1xcY2hpX3thfX1WIFxcb3RpbWVzIFUpXFxvdGltZXMgVyJdLFsxLDAsInt9XntcXGNoaV97YX19KFYgXFxvdGltZXMgVykgXFxvdGltZXMgVSJdLFsyLDAsIih7fV57XFxjaGlfe2F9fVYgXFxvdGltZXMge31ee1xcY2hpX3thfX1XKSBcXG90aW1lcyBVIl0sWzIsMSwie31ee1xcY2hpX3thfX1WIFxcb3RpbWVzICh7fV57XFxjaGlfe2F9fVcgXFxvdGltZXMgVSkiXSxbMiwyLCJ7fV57XFxjaGlfe2F9fVYgXFxvdGltZXMgKFVcXG90aW1lcyBXKSJdLFswLDMsIkNfe1UsVlxcb3RpbWVzIFd9Il0sWzQsMywie1xcY2hpX1V9X3tWLFd9XFxvdGltZXMgXFxtYXRocm17aWR9IiwyXSxbMCwxLCJcXGFscGhhX3tVLFYsV30iXSxbMSwyLCJDX3tVLFZ9IFxcb3RpbWVzIFxcbWF0aHJte2lkfSJdLFs2LDIsIlxcYWxwaGFfe3t9XntcXGNoaV97YX19ViwgVSwgV30iLDJdLFs2LDUsIlxcbWF0aHJte2lkfVxcb3RpbWVzIENfe1UsV30iXSxbNSw0LCJcXGFscGhhX3t7fV57XFxjaGlfe2F9fVYsIHt9XntcXGNoaV97YX19VywgVX0iXV0=
\begin{tikzcd}[column sep = 4em, row sep = 1.5em]
	{U \otimes (V\otimes W)} & {{}^{\chi_{a}}(V \otimes W) \otimes U} & {({}^{\chi_{a}}V \otimes {}^{\chi_{a}}W) \otimes U} \\
	{(U \otimes V)\otimes W} && {{}^{\chi_{a}}V \otimes ({}^{\chi_{a}}W \otimes U)} \\
	{({}^{\chi_{a}}V \otimes U)\otimes W} && {{}^{\chi_{a}}V \otimes (U\otimes W)}
	\arrow["{C_{U,V\otimes W}}", from=1-1, to=1-2]
	\arrow["{\alpha_{U,V,W}}", from=1-1, to=2-1]
	\arrow["{({\chi_a})_{V,W}\otimes \mathrm{id}}"', from=1-3, to=1-2]
	\arrow["{C_{U,V} \otimes \mathrm{id}}", from=2-1, to=3-1]
	\arrow["{\alpha_{{}^{\chi_{a}}V, {}^{\chi_{a}}W, U}}", from=2-3, to=1-3]
	\arrow["{\mathrm{id}\otimes C_{U,W}}", from=3-3, to=2-3]
	\arrow["{\alpha^{-1}_{{}^{\chi_{a}}V, U, W}}", from=3-1, to=3-3]
\end{tikzcd}
\end{equation}
\begin{equation}\tag{B4} \label{B4}
% https://q.uiver.app/#q=WzAsNixbMCwwLCJeaFUgXFxvdGltZXMge31eaFYiXSxbMSwwLCJee1xcY2hpX3toIFxccmhkIGF9fShee2h9VikgXFxvdGltZXMge31eaFUiXSxbMCwxLCJeaChVIFxcb3RpbWVzIFYpIl0sWzEsMSwiXmgoe31ee1xcY2hpX3thfX1WIFxcb3RpbWVzIFUpIl0sWzMsMCwiXntcXGNoaV97aCBcXHJoZCBhIH1ofVYgXFxvdGltZXMge31eaFUiXSxbMywxLCJeaCh7fV57XFxjaGlfe2F9fVYpIFxcb3RpbWVzIHt9XmhVIl0sWzAsMSwiQ197e31eaFUse31eaFZ9Il0sWzAsMiwiaF97VSxWfSIsMl0sWzEsNCwiVF57XFxjaGlfe2ggXFxyaGQgYX0saH1fVlxcb3RpbWVzIFxcbWF0aHJte2lkfSJdLFsyLDMsIl5oQ197VSxWfSJdLFs1LDMsImhfe3t9XntcXGNoaV97YX19ViwgVX0iLDJdLFs1LDQsIlRee2gsIFxcY2hpX2F9X1YgXFxvdGltZXMgXFxtYXRocm17aWR9Il1d
\begin{tikzcd}
	{^hU \otimes {}^hV} & |[xshift=20mm]| {^{\chi_{h \rhd a}}(^{h}V) \otimes {}^hU} && {^{(\chi_{h \rhd a }h)}V \otimes {}^hU}
    \\
	{^h(U \otimes V)} & |[xshift=20mm]| {^h({}^{\chi_{a}}V \otimes U)} && {^h({}^{\chi_{a}}V) \otimes {}^hU}
	\arrow["{C_{{}^hU,{}^hV}}", from=1-1, to=1-2]
	\arrow["{h_{U,V}}", from=1-1, to=2-1]
	\arrow["{T^{\chi_{h \rhd a},h}_V \otimes \mathrm{id}}", from=1-2, to=1-4]
	\arrow["{^hC_{U,V}}", from=2-1, to=2-2]
	\arrow["{T^{h, \chi_a}_V \otimes \mathrm{id}}", from=2-4, to=1-4]
	\arrow["{h^{-1}_{{}^{\chi_{a}}V, U}}", from=2-2, to=2-4]
\end{tikzcd}
\end{equation}
\end{itemize}

Finally, an $H$-crossed $G$-graded monoidal category with a chosen $\chi$-crossed braiding is called \textit{$\chi$-crossed braided}.
\end{definition}

\begin{remark}
Let us highlight some aspects of the above definition.
    \begin{itemize}
        \item In order to have $\mathrm{Hom}_\mathcal{C}(U \otimes V, {^{\chi_a}V} \otimes U) \neq 0$, we need
        \begin{equation}
            ab = (\chi_a \rhd b)a \ ,
        \end{equation} 
which is the second identity in~\eqref{crossed module}.
        \item To make sense of~\eqref{B4}, the target objects of $T^{\chi_{h \rhd a},h}_V$ and $T^{h, \chi_a}_V$ must coincide, i.e.\ we must have
        \begin{equation}
            \chi_{h\rhd a} \,h \,=\, h \, \chi_a \ ,
        \end{equation}
which is equivalent to the first identity in~\eqref{crossed module}.

\item 
Under the assumption~\eqref{eq: strictness assumption}, $C_{\mathbbm 1, -}$ and $C_{-, \mathbbm 1}$ are given by compositions of the unitors, following arguments similar to those for braided categories in~\cite[Prop.\,13.1.2]{K95}.
    \end{itemize}
\end{remark}

\begin{definition}\label{crossed balanced}
    A $\chi$-crossed braided category $\mathcal{C}$ is called
    \begin{enumerate}
        \item \textit{$\chi$-crossed balanced} if there exists a family of $\chi$-crossed balancing isomorphisms
\begin{equation}\label{crossed balancing}
    \{\vartheta_U\colon U \to {^{\chi_{|U|}}U}\}_{U \in \mathcal{C}_{\mathrm{hom}}}
\end{equation}
satisfying the following conditions:
\begin{itemize}
    \item for any morphism $f\colon U \to V$ between homogeneous objects, we have
    \begin{equation}
        \vartheta_V \circ f = ({^{\chi_{|U|}}f}) \circ \theta_U \ ; \tag{BL1}\label{BL1}
    \end{equation}

    \item for any $a,b \in G$ and any $U,V \in \mathcal{C}$ of degrees $a$ and $b$, the following diagram commutes:
    \begin{equation}\tag{BL2}\label{BL2}
        % https://q.uiver.app/#q=WzAsNixbMCwwLCJVIFxcb3RpbWVzIFYiXSxbMCwxLCJee1xcY2hpX2F9VSBcXG90aW1lcyB7fV57XFxjaGlfYn1WIl0sWzAsMiwiXntcXGNoaV9hfShee1xcY2hpX3tifX1WKSBcXG90aW1lcyB7fV57XFxjaGlfYX1VIl0sWzIsMSwiXntcXGNoaV97YWJ9fVUgXFxvdGltZXMge31ee1xcY2hpX3thYn19ViJdLFsyLDAsIl57XFxjaGlfe2FifX0oVSBcXG90aW1lcyBWKSJdLFsyLDIsIl57XFxjaGlfe2FiYV57LTF9fX0oXntcXGNoaV97YX19VSkgXFxvdGltZXMge31ee1xcY2hpX2F9KF57XFxjaGlfYn1WKSJdLFswLDEsIlxcdGhldGFfVSBcXG90aW1lcyBcXHRoZXRhX1YiLDJdLFsxLDIsIkNfe157XFxjaGlfYX1VLHt9XntcXGNoaV9ifVZ9IiwyXSxbMCw0LCJcXHRoZXRhX3tVIFxcb3RpbWVzIFZ9Il0sWzMsNCwie1xcY2hpX3thYn19X3tVLFZ9IiwyXSxbMiw1LCJDX3tee1xcY2hpX2F9KF57XFxjaGlfe2J9fVYpLCB7fV57XFxjaGlfYX1VfSJdLFs1LDMsIlRfVV57XFxjaGlfe2FiYV57LTF9fSxcXGNoaV9hfSBcXG90aW1lcyBUX1Zee1xcY2hpX2EsIFxcY2hpX2J9IiwyXV0=
\begin{tikzcd}[column sep = 4em, row sep = 1.5em]
	{U \otimes V} && {^{\chi_{ab}}(U \otimes V)} \\
	{^{\chi_a}U \otimes {}^{\chi_b}V} && {^{\chi_{ab}}U \otimes {}^{\chi_{ab}}V} \\
	{^{\chi_a}(^{\chi_{b}}V) \otimes {}^{\chi_a}U} && {^{\chi_{aba^{-1}}}(^{\chi_{a}}U) \otimes {}^{\chi_a}(^{\chi_b}V)}
	\arrow["{\theta_{U \otimes V}}", from=1-1, to=1-3]
	\arrow["{\theta_U \otimes \theta_V}", from=1-1, to=2-1]
	\arrow["{C_{^{\chi_a}U,{}^{\chi_b}V}}", from=2-1, to=3-1]
	\arrow["{(\chi_{ab})_{U,V}}", from=2-3, to=1-3]
	\arrow["{C_{^{\chi_a}(^{\chi_{b}}V), {}^{\chi_a}U}}"', from=3-1, to=3-3]
	\arrow["{T_U^{\chi_{aba^{-1}},\chi_a} \otimes T_V^{\chi_a, \chi_b}}", from=3-3, to=2-3]
\end{tikzcd}
    \end{equation}

    \item for any $h \in H$, $a \in G$, and $U \in \mathcal{C}_{a}$, the following diagram commutes:
    \begin{equation}
        % https://q.uiver.app/#q=WzAsNCxbMCwwLCJeaFUiXSxbMCwxLCJeaChee1xcY2hpX2F9VSkiXSxbMiwwLCJee1xcY2hpX3toIFxccmhkIGF9fSheaFUpIl0sWzIsMSwiXntoXFxjaGlfe2F9fVUiXSxbMCwxLCJeaFxcdGhldGFfVSJdLFswLDIsIlxcdGhldGFfe15oVX0iXSxbMiwzLCJUX1Vee1xcY2hpX3toIFxccmhkIGF9LGh9IiwyXSxbMSwzLCJUX1Vee2gsXFxjaGlfYX0iXV0=
\begin{tikzcd}
	{^hU} && {^{\chi_{h \rhd a}}(^hU)} \\
	{^h(^{\chi_a}U)} && {^{h\chi_{a}}U}
	\arrow["{\theta_{^hU}}", from=1-1, to=1-3]
	\arrow["{^h\theta_U}", from=1-1, to=2-1]
	\arrow["{T_U^{\chi_{h \rhd a},h}}"', from=1-3, to=2-3]
	\arrow["{T_U^{h,\chi_a}}", from=2-1, to=2-3]
\end{tikzcd}\tag{BL3}\label{BL3}
    \end{equation}
    \end{itemize}

\item If $\mathcal{C}$ is moreover rigid, and for any $a \in G$, $U \in \mathcal{C}_{a}$, the following diagram commutes (recall $\nu$ from Remark~\ref{rem:canoncial-iso-duals}):
\begin{equation}
\begin{tikzcd}
	{}^{\chi_a}(U^*) 
    &&
    {^{\chi_{a}}(^{\chi_a^{-1}}(U^*))} 
    \\
	({}^{\chi_a}U)^* 
    &
    |[xshift=8mm]| U^* & 
    {^{1_H}(U^*)}
	\arrow["\nu_{\chi_a,U}", from=1-1, to=2-1]
	\arrow["{^{\chi_a}\theta_{U^*}}", from=1-1, to=1-3]
	\arrow["{(\theta_U)^*}", from=2-1, to=2-2]
	\arrow["{T^{\chi_a,\chi_a^{-1}}_{U^*}}"', from=1-3, to=2-3]
	\arrow["{\eta_{U^*}}", from=2-2, to=2-3]
\end{tikzcd}
\tag{RB1} \label{RB1}
\end{equation}
then $\mathcal{C}$ is called \textit{$\chi$-crossed ribbon}.
    \end{enumerate}
\end{definition}

\begin{remark}\label{rem:usual-ribbon-G-usual-Gcrossed}
    The neutral component $\mathcal{C}_{1_G}$ of a $\chi$-crossed braided (ribbon) category $\mathcal{C}$ is a usual braided (ribbon) category. If $\chi$ is $\mathrm{id}:G \to G$ with the conjugation action, our definition recovers the
    notion of $G$-crossed ribbon categories in~\cite{Tu00, GLM24b}.
\end{remark}

\begin{remark}
The notion of a $G$-crossed braided category can be regarded as a partial categorification of a crossed module. 
If we compare a crossed module 
$\big(\chi\colon K \to G,\, \gamma\colon G \to \mathrm{Aut}(K)\big)$ 
with a $G$-crossed braided category~$\mathcal{C}$,
then the group $K$ is
replaced by the monoidal category $\mathcal C$,
and $\Aut(K)$ is replaced by
$\Aut_\otimes(\mathcal C)$, while $G$ remains an ordinary group.
The $G$-grading on $\mathcal{C}$ categorifies $\chi$, 
and the action of $G$ on $\mathcal{C}$ categorifies $\gamma$. 
The condition
\begin{equation}
    {}^{g_1}\mathcal{C}_{g_2} \subset \mathcal{C}_{g_1 g_2 g_1^{-1}}
\end{equation}
categorifies the first crossed-module identity in~\eqref{crossed module}, 
and the relation
\begin{equation}
    M \otimes N \cong {^{|M|}N} \otimes M
\end{equation}
categorifies the Peiffer identity. 
Further details on $G$-crossed braided fusion categories for finite $G$ 
can be found, for example, in~\cite{ENOM09, DGNO10, EGNO15}.
\end{remark}

For $\mathbb{K}$-linear $H$-crossed $G$-graded monoidal categories $\mathcal{C}$ and $\mathcal{D}$, we introduced in Section~\ref{sec: fibre product} their fibre product $\mathcal{C}\boxtimes_G \mathcal{D}$. If $\mathcal{C}$ and $\mathcal{D}$ are further $\chi$-crossed ribbon, then $\mathcal{C}\boxtimes_G \mathcal{D}$ carries a canonical $\chi$-crossed ribbon structure:
\begin{itemize}
    \item The $\chi$-crossed braiding on pure tensors is given by
    \begin{equation}
        C_{U_1,U_2} \boxtimes C'_{V_1,V_2}\colon (U_1 \boxtimes V_1) \otimes (U_2 \boxtimes V_2) \longrightarrow {}^{\chi_{|U_1|}}(U_2 \boxtimes V_2) \otimes (U_1 \boxtimes V_1) \ ,
    \end{equation}
    where $C$ and $C'$ are $\chi$-crossed braidings in $\mathcal{C}$ and $\mathcal{D}$, respectively. Recall that by construction, $|U_1| = |V_1|$, and that the $H$-action on the product is as in \eqref{eq:H-action-on-Kelly-product}.

    \item The $\chi$-crossed balancing isomorphism on a pure tensor is given by
    \begin{equation}
        \theta_U \boxtimes \theta'_V\colon U \boxtimes V \longrightarrow {}^{\chi_{|U|}}(U \boxtimes V) \ ,
    \end{equation}
    where $\theta$ and $\theta'$ are $\chi$-crossed balancing isomorphisms in $\mathcal{C}$ and $\mathcal{D}$, respectively.
\end{itemize}

Let $\chi\colon G \to H$ be a crossed module, and let $\mathcal{C}$ be a $\chi$-crossed ribbon category. For another crossed module $\chi'\colon G' \to H'$, there are two possible ways to define a $\chi'$-crossed ribbon structure on $\mathcal{C}$:
\begin{enumerate}
    \item If there exist two group homomorphisms $\alpha\colon G \to G'$, $\beta:H' \to H$ such that 
    the following two diagrams commute:
\begin{equation}
\begin{tikzcd}[column sep = 4em, row sep = 1.5em]\label{crossed module morphism}
G \ar[r, "\chi"]\ar[d, "\alpha"]
&
H
&
H\times G \ar[d, "\rhd"]
&
H'\times G \ar[l, swap, "\beta \times \mathrm{id}_G"]\ar[r, "\mathrm{id}_{H'} \times \alpha"]
&
H' \times G' \ar[d, swap, "\rhd'"]\\
G' \ar[r, "\chi'"]
&
H' \ar[u, "\beta"]
&
G \ar[rr, "\alpha"]
&
{}
&
G'
\end{tikzcd}
\end{equation}
then $\mathcal{C}$ carries a canonical $\chi'$-crossed ribbon structure:
\begin{itemize}
    \item the $G'$-grading on $\mathcal{C}$ is given by
 \begin{equation}\label{G' grading}
            \mathcal{C} \; = \; \bigoplus_{g' \in G'}\Big(\bigoplus_{g \in \alpha^{-1}(g')}\mathcal{C}_g\Big) \ ;
    \end{equation}
    
    \item the $H'$-crossed action on $\mathcal{C}$ is given by the composition $\underline{H}' \xrightarrow{\underline{\beta}} \underline{H} \xrightarrow{\phi} \mathrm{Aut}_\otimes(\mathcal{C})$;
    \item thanks to~\eqref{crossed module morphism}, the isomorphisms in~\eqref{crossed braiding} and~\eqref{crossed balancing} defining the $\chi$-crossed ribbon structure on~$\mathcal{C}$ also yield a $\chi'$-crossed ribbon structure. For instance, the $\chi$-crossed braiding $\{C_{U,V}\colon U \otimes V \to {}^{\chi_{|U|}}V \otimes U\}$ is also a $\chi'$-crossed braiding, as the $\chi'$-crossed coherence conditions~\eqref{B1}--\eqref{B4} follow from
    the $\chi$-crossed ones by pushforward along $\alpha$ and pullback along $\beta$.
\end{itemize}

\item If there exists a crossed module morphism $(\alpha\colon G \to G', \, \beta\colon H \to H')$ with $\beta$ being surjective, such that the $H$-crossed action $\phi\colon \underline{H} \to \mathrm{Aut}_\otimes(\mathcal{C})$ factors through $\beta$, i.e.\ there exists a monoidal functor $\psi$
and a monoidal natural isomorphism $\delta$
making the following diagram commute:
\begin{equation}\label{eq: action factors}
    \begin{tikzcd}[column sep = 3em, row sep = 1.5em]
	{\underline{H}} & {\mathrm{Aut}_\otimes(\mathcal C)} \\
	{\underline{H}'}
	\arrow[""{name=0, anchor=center, inner sep=0}, "\varphi", from=1-1, to=1-2]
	\arrow["\beta"', from=1-1, to=2-1]
	\arrow["{\psi}"', dashed, from=2-1, to=1-2]
	\arrow[
	    Rightarrow,
	    shorten <= 0.4em,
	    shorten >= 0.8em,
	    "\delta",
	    from=0,
	    to=2-1
	]
\end{tikzcd}
\end{equation}
then $\mathcal C$ has a $\chi'$-crossed ribbon structure defined as follows:
\begin{itemize}
    \item the $G'$-grading is given by~\eqref{G' grading};

    \item the $H'$-crossed action is given by $\psi$;

    \item
    for objects $U\in\mathcal C_g$ and $V\in\mathcal C$,
    the $\chi'$-crossed braiding is given by
    \begin{equation}
        C'_{U,V}
        \coloneqq
        (\delta^{\chi_g}_V\otimes \id_U)\circ C_{U,V} \colon
        U\otimes V
        \longrightarrow {}^{\beta(\chi_g)}V\otimes U =
        {}^{\chi'_{\alpha(g)}}V\otimes U \ ,
    \end{equation}
    where $C_{U,V}$ is the $\chi$-crossed braiding on $\mathcal C$.
    Here we use~\eqref{usual crossed module morphism} that $\chi'_{\alpha(g)}=\beta(\chi_g)$. Similarly, the $\chi'$-crossed balancing is given by
    \begin{equation}
        \vartheta'_U
        \coloneqq
        \delta^{\chi_g}_U\circ \vartheta_U
        \colon
        U\longrightarrow {}^{\chi'_{\alpha(g)}}U \ ,
    \end{equation}
    where $\theta_U$ is the $\chi$-crossed balancing on $\mathcal C$. It is straightforward to check that all coherence conditions are satisfied.
\end{itemize}
\end{enumerate}

\medskip

Consider the following four special cases:
\begin{equation*}
\begin{tikzcd}
	G & H && G & H && G & H && G & G \\
	G & G && {\mathrm{Im}(\chi)} & {\mathrm{Im}(\chi)} && H & H && G & H
	\arrow["\chi", from=1-1, to=1-2]
	\arrow["{\mathrm{id}}", from=1-1, to=2-1]
	\arrow["\chi", from=1-4, to=1-5]
	\arrow["\chi", from=1-4, to=2-4]
	\arrow["\chi", from=1-7, to=1-8]
	\arrow["\chi", from=1-7, to=2-7]
	\arrow["{\mathrm{id}}", from=1-10, to=1-11]
	\arrow["{\mathrm{id}}", from=1-10, to=2-10]
	\arrow["\chi"', two heads, from=1-11, to=2-11]
	\arrow["{\mathrm{id}}", from=2-1, to=2-2]
	\arrow["\chi", from=2-2, to=1-2]
	\arrow["{\mathrm{id}}", from=2-4, to=2-5]
	\arrow[hook, from=2-5, to=1-5]
	\arrow["{\mathrm{id}}", from=2-7, to=2-8]
	\arrow["{\mathrm{id}}"', from=1-8, to=2-8]
	\arrow["\chi", from=2-10, two heads, to=2-11]
\end{tikzcd}
\end{equation*}
where  the first two satisfy~\eqref{crossed module morphism}, and the last two are obvious crossed module morphisms.
We obtain the following result:
\begin{proposition}\label{chi crrossed implies G crossed}
Let $\chi\colon G\to H$ be a crossed module.
\begin{enumerate}
\item Any $\chi$-crossed ribbon category $\mathcal C$ carries canonically
\begin{enumerate}
    \item a $G$-crossed ribbon structure,
    \item an $\mathrm{Im}(\chi)$-crossed ribbon structure, and
    \item an $H$-crossed ribbon structure.
\end{enumerate}

\item Conversely, if $\chi\colon G\to H$ is surjective, and
if $\mathcal{C}$ is a $G$-crossed ribbon category whose $G$-action factors through $\chi$, then $\mathcal{C}$ is also $\chi$-crossed ribbon.
\end{enumerate}
\end{proposition}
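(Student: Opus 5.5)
The plan is to read each item off the two general constructions described just before the statement, applied to the four displayed squares. The only work is to check that each square meets the hypotheses of the construction it is fed into.

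For part (1)(a), apply construction (1) to $\alpha=\mathrm{id}_G$ and $\beta=\chi\colon G\to H$, with target crossed module $\mathrm{id}\colon G\to G$ carrying the conjugation action. The hypotheses to check are:
\begin{itemize}
\item The left square: $\beta\circ\chi'\circ\alpha=\chi$, which is trivial.
\item The right square: for $g_1,g\in G$ we need $\chi_{g_1}\rhd g=g_1gg_1^{-1}$. This is exactly the Peiffer identity in~\eqref{crossed module}.
\end{itemize}
The resulting structure keeps the $G$-grading unchanged. The $G$-action is $\phi\circ\underline{\chi}$, and the braiding and balancing morphisms are unchanged, since ${}^{\chi'_{|U|}}$ pulled back along $\beta$ is ${}^{\chi_{|U|}}$.

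For part (1)(b), apply construction (1) again, now with $\alpha=\chi\colon G\to\mathrm{Im}(\chi)$ and $\beta$ the inclusion $\mathrm{Im}(\chi)\hookrightarrow H$. The target is $\mathrm{id}\colon\mathrm{Im}(\chi)\to\mathrm{Im}(\chi)$ with conjugation, which is legitimate because $\mathrm{Im}(\chi)$ is a subgroup (normality, Lemma~\ref{lem: K central I normal}, is not even needed). The hypotheses to check are:
\begin{itemize}
\item The left square commutes trivially.
\item The right square requires $\chi(\chi_{g_1}\rhd g)=\chi_{g_1}\chi_g\chi_{g_1}^{-1}$, i.e.\ the first crossed-module identity applied with $h=\chi_{g_1}$.
\end{itemize}
The grading becomes the pushforward grading~\eqref{G' grading} along $\chi$.

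For part (1)(c), apply construction (2) to the crossed module morphism $(\alpha,\beta)=(\chi,\mathrm{id}_H)$ from $\chi$ to $\mathrm{id}\colon H\to H$ with conjugation. Here we check:
\begin{itemize}
\item That $(\chi,\mathrm{id}_H)$ is a crossed module morphism in the sense of~\eqref{usual crossed module morphism}. This amounts to $\chi(h\rhd g)=h\chi_gh^{-1}$, which is the first identity.
\item That $\beta=\mathrm{id}_H$ is surjective.
\item That the action factors through $\beta$, which holds trivially with $\psi=\phi$ and $\delta=\mathrm{id}$.
\end{itemize}

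For part (2), apply construction (2) to the morphism $(\mathrm{id}_G,\chi)$ from $\mathrm{id}\colon G\to G$ to $\chi\colon G\to H$. The checks are:
\begin{itemize}
\item The square $\chi\circ\mathrm{id}=\chi\circ\mathrm{id}$ commutes.
\item Action compatibility requires $\chi_{g_1}\rhd g=g_1gg_1^{-1}$, which is again the Peiffer identity.
\item $\beta=\chi$ is surjective by assumption.
\item The factorisation $\psi,\delta$ of the $G$-action through $\chi$ is exactly the hypothesis.
\end{itemize}
The $\chi$-crossed braiding is then $(\delta^{g}_V\otimes\mathrm{id})\circ C_{U,V}$, landing in ${}^{\chi_{|U|}}V\otimes U$, and the balancing is $\delta^g_U\circ\vartheta_U$.

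The main obstacle is the coherence check hidden in construction (2), i.e.\ verifying~\eqref{B2}--\eqref{B4}, \eqref{BL2}, \eqref{BL3} and~\eqref{RB1} for the $\delta$-twisted braiding and balancing. I would first strictify using Remark~\ref{rem: coherence}, so that $T$, $h_{U,V}$ and $\eta$ become identities. Monoidality of $\delta$ then makes $\delta^{g}$ compatible with $h_{-,-}$ and with $T$; for example, $T'^{h_1',h_2'}$ is transported from $T$ as in Remark~\ref{free choice up to monoidal natural equiv}. Each diagram for $C'$ and $\vartheta'$ then reduces to the corresponding diagram for $C$ and $\vartheta$, using naturality of $C$ in its second argument~\eqref{B1} to slide the $\delta$ factors through. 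For \eqref{RB1} one additionally needs compatibility of $\delta$ with $\nu$, which holds because $\nu$ is characterised uniquely by the duality maps and monoidal isomorphisms preserve those. In (1)(c), $\delta=\mathrm{id}$, so the only remaining content there is the relabelling of the grading.
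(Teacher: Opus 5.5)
Your proposal is correct and is exactly the paper's argument: the paper obtains the proposition by feeding the same four displayed squares into the two preceding constructions, namely $(\mathrm{id}_G,\chi)$ and $(\chi,\iota)$ into construction (1) and $(\chi,\mathrm{id}_H)$ and $(\mathrm{id}_G,\chi)$ into construction (2), with the hypothesis checks you list. Your sketch of the coherence verification for the $\delta$-twisted braiding and balancing uses monoidality of $\delta$ and naturality \eqref{B1}, which is needed in both arguments and not only the second (e.g.\ for \eqref{B4} and \eqref{BL2}); this fills in what the paper simply calls straightforward.
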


\begin{remark}
    If $\mathcal C$ is a $\chi$-crossed ribbon category with faithful $G$-grading, then the induced $\mathrm{Im}(\chi)$-crossed ribbon structure on $\mathcal C$ also has a faithful $\mathrm{Im}(\chi)$-grading.
\end{remark}

\begin{remark}\label{equivalence remark}
    The $\mathrm{Im}(\chi)$-crossed ribbon structure on $\mathcal{C}$ only makes use of partial information from $\chi$, which is the bottom row in~\eqref{crossed module extension}. At first sight, the proposition above may suggest that the notion of a $\chi$-crossed ribbon category does not provide anything beyond the usual notion of a $G$-crossed ribbon category. This is however not the case. The essential difference lies in the notion of the corresponding functors, which we will introduce next, and it leads to substantially different equivalence relations. Namely, equivalent $G$-crossed braidings on $\mathcal{C}$ may become inequivalent when regarded as $\chi$-crossed braidings. We will see this in examples in Section~\ref{different equiv relation}.
\end{remark}

\subsection{$\chi$-Crossed braided and ribbon functors}
Let $\chi\colon G \to H$ be a crossed module, and let $\mathcal{C}_1$ and $\mathcal{C}_2$ be $\chi$-crossed braided categories.

\begin{definition}\label{crossed braided functor}
A \textit{$\chi$-crossed braided functor} $\mathcal{C} \to \mathcal{C}'$ is an $H$-equivariant $G$-graded monoidal functor $(F,\omega)$
in the sense of Definition~\ref{crossed functor}
such that, for any $a, b \in G$ and any $U \in \mathcal{C}_{a}$, $V \in \mathcal{C}_{b}$, the following diagram commutes:
\begin{equation}\label{crossed braided functor diagram}
\begin{tikzcd}
	{F(U)\otimes_{\mathcal C'}F(V)} && {{}^{\chi_a}F(V)\otimes_{\mathcal C'}F(U)} && {F({}^{\chi_a}V)\otimes_{\mathcal C'}F(U)} \\
	{F(U\otimes_{\mathcal C}V)} &&&& {F({}^{\chi_a}V\otimes_{\mathcal C}U)}
	\arrow["{C'_{F(U),F(V)}}", from=1-1, to=1-3]
	\arrow["{F_{U,V}}", from=1-1, to=2-1]
	\arrow["{\omega^{\chi_a}_V\otimes_{\mathcal C'}\id}", from=1-3, to=1-5]
	\arrow["{F_{{}^{\chi_a}V,U}}"', from=1-5, to=2-5]
	\arrow["{F(C_{U,V})}", from=2-1, to=2-5]
\end{tikzcd}
\end{equation}
where $C$ and $C'$ are the $\chi$-crossed braidings in $\mathcal{C}$ and $\mathcal{C}'$, respectively.

If $\mathcal{C}$ and $\mathcal{C}'$ are further $\chi$-crossed balanced/ribbon, then $(F, \omega)$ is said to be \textit{$\chi$-crossed balanced/ribbon} if, for any $a \in G$ and any $U \in \mathcal{C}_{a}$, the following diagram commutes:
\begin{equation}
\begin{tikzcd}[column sep = 4em, row sep = 1.5em]
F(U) \ar[r, "F\theta_U"]\ar[d,swap, "\theta'_{F(U)}"]
&
F({^{\chi_a}U})\\
^{\chi_a}F(U) \ar[ru, swap, "\omega_U^{\chi_a}"]
\end{tikzcd}
\end{equation}
where $\theta$ and $\theta'$ are the $\chi$-crossed balancing isomorphisms in $\mathcal{C}$ and $\mathcal{C}'$, respectively.

If $F$ is an equivalence of categories, then $\mathcal{C}$ and $\mathcal{C}'$ are said to be \textit{$\chi$-crossed braided/balanced/ribbon equivalent}.
\end{definition}

Finally, a \textit{$\chi$-crossed braided/balanced/ribbon natural transformation} is just an $H$-equivariant monoidal natural transformation as given in Definition~\ref{chi-crossed natural transformation}.

\begin{remark}\label{rmk: crossed functor when chi is id}
If $\chi = \id \colon G \to G$ with conjugation action, our definitions specialise to the notions of $G$-crossed braided functor and $G$-crossed braided natural transformation given in~\cite[Def.\,4.3, Def.\,4.4]{JPR20}.
\end{remark}

Recalling Remark~\ref{rem: coherence} on the strictification of $H$-crossed $G$-graded monoidal categories, we have the following generalisation of~\cite[Thm.\,5.6\,\&\,Rem.\,5.7]{Ga16}
(the proof follows the same steps):

\begin{proposition}\label{prop:chi-crossed-br-strict}
Every $\chi$-crossed braided/ribbon category is equivalent to a strict $\chi$-crossed braided/ribbon category, in the sense that the underlying $H$-crossed $G$-graded monoidal category is strict.
\end{proposition}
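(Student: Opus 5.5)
The plan is to transport the $\chi$-crossed braiding (and balancing) along the strictification equivalence of the underlying $H$-crossed $G$-graded monoidal category from Remark~\ref{rem: coherence}. Let $\mathcal C$ be $\chi$-crossed braided. By Remark~\ref{rem: coherence} (following \cite[Cor.\,5.3]{Ga16}) there is a strict $H$-crossed $G$-graded monoidal category $\mathcal C^{\mathrm{st}}$ and an $H$-equivariant $G$-graded monoidal equivalence $(F,\omega)\colon \mathcal C^{\mathrm{st}}\to\mathcal C$. By Remark~\ref{rmk: adjunction} it has an $H$-equivariant quasi-inverse $(F',\omega')$, with unit and counit being $H$-equivariant monoidal natural isomorphisms. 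Concretely, I would take the Ga16-style model: objects are finite words in pairs $(h,U)$, with $U\in\mathcal C_{\mathrm{hom}}$, and $F$ evaluates a word to an iterated tensor product of ${}^{h}U$'s. Since $F$ preserves degrees, homogeneous objects of $\mathcal C^{\mathrm{st}}$ go to homogeneous objects of the same degree.

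For $X\in\mathcal C^{\mathrm{st}}_a$ and $Y\in\mathcal C^{\mathrm{st}}_{\mathrm{hom}}$, I define $C^{\mathrm{st}}_{X,Y}$ to be the unique morphism making \eqref{crossed braided functor diagram} commute:
\[
F(C^{\mathrm{st}}_{X,Y}) = F_{{}^{\chi_a}Y,X}\circ(\omega^{\chi_a}_Y\otimes\id)\circ C_{F(X),F(Y)}\circ F_{X,Y}^{-1}.
\]
Since $F$ is fully faithful, this determines $C^{\mathrm{st}}_{X,Y}$ uniquely. By construction $(F,\omega)$ is then a $\chi$-crossed braided functor, provided $C^{\mathrm{st}}$ satisfies \eqref{B1}--\eqref{B4}.

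The verification goes axiom by axiom: apply the faithful functor $F$ to each diagram and reduce to the corresponding axiom in $\mathcal C$.
\begin{itemize}
\item \eqref{B1} follows from naturality of $F_{-,-}$, $\omega$ and $C$.
\item \eqref{B2} and \eqref{B3} in $\mathcal C^{\mathrm{st}}$ (where associators and $T$ are identities) become, after applying $F$, the diagrams \eqref{B2} and \eqref{B3} in $\mathcal C$ decorated by $F_{-,-}$ and $\omega$. The decorations cancel using the monoidal-functor hexagon, \eqref{crossed functor definition a} (monoidality of $\omega^{\chi_a}$), and \eqref{crossed functor definition c} to convert $\omega^{\chi_a}\omega^{\chi_b}$ into $\omega^{\chi_{ab}}$ together with $T^{\chi_a,\chi_b}$.
\item \eqref{B4} uses \eqref{crossed functor definition a} and \eqref{crossed functor definition c} for the pairs $(\chi_{h\rhd a},h)$ and $(h,\chi_a)$, together with the identity $\chi_{h\rhd a}h=h\chi_a$.
\end{itemize}

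In the balanced and ribbon cases, I set $F(\vartheta^{\mathrm{st}}_X)\coloneqq \omega^{\chi_a}_X\circ\vartheta_{F(X)}$. Then \eqref{BL1}--\eqref{BL3} follow the same way, and \eqref{RB1} is transported using that $F$ preserves duals, with $\nu$ compatible because duals are unique up to unique isomorphism. Hence $(F,\omega)$ is a $\chi$-crossed braided/ribbon equivalence.

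The main obstacle is the bookkeeping in \eqref{B2} and \eqref{B4}. There the structure isomorphisms $T$ of $\mathcal C$ and the components $\omega^{h}$ interact, and one must check that the induced $\omega^{\chi_{ab}}$ agrees with the composite dictated by \eqref{crossed functor definition c}. I would handle this with string-diagram pasting, treating $\omega$ as a module-functor structure: each axiom then becomes a pasting of naturality squares with one instance of the original axiom. Alternatively, it suffices to invoke coherence for $H$-equivariant monoidal functors, which Remark~\ref{rem: coherence} only conjectures, so the explicit pasting check is the safer route.
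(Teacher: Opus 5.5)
Your proposal is correct and is essentially the paper's argument: the paper only says that the proof follows the steps of \cite[Thm.\,5.6\,\&\,Rem.\,5.7]{Ga16}. That means strictifying the underlying $H$-crossed $G$-graded monoidal category via Remark~\ref{rem: coherence} and transporting the crossed braiding and balancing along the equivariant equivalence so that it becomes a $\chi$-crossed braided/ribbon functor, which is what you do, and your axiom checks fill in details the paper omits. One small correction: in a word model the letters must range over all objects of $\mathcal C$ (or you must additively complete), because words in homogeneous objects only evaluate to homogeneous objects and would not give an additive category equivalent to $\mathcal C$; your argument only uses the existence statement, so nothing else is affected.
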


\begin{corollary}\label{cor:pivotal}
    Every $\chi$-crossed ribbon category admits a natural pivotal structure $\delta \colon \mathrm{id} \Rightarrow (-)^{**}$.
For a homogeneous object $V$, $\delta_V$ is given by
\begin{align}\label{eq: pivotal isomorphism}
    \begin{split}
        & V\xrightarrow{\vartheta_V \otimes \mathrm{coev}_{V^*}}{}^{\chi_{|V|}}V \otimes V^* \otimes V^{**}\xrightarrow{C_{^{\chi_{|V|}}V,
        V^*} \otimes \mathrm{id}_{V^{**}}}{}
        ^{\chi_{|V|}}(V^*) \otimes {^{\chi_{|V|}}V} \otimes V^{**}
        \\
        &
        \xrightarrow{{\chi_{|V|}}_{V^*,V} \otimes \mathrm{id}_{V^{**}}}{}^{\chi_{|V|}}(V^* \otimes V)\otimes V^{**} \xrightarrow{^{\chi_{|V|}}\mathrm{ev}_V \otimes \mathrm{id}_{V^{**}}}
        {}^{\chi_{|V|}}(\mathbbm 1) \otimes V^{**} \xrightarrow{(\chi_{|V|})^{-1}_\mathbbm 1\otimes \mathrm{id}_{V^{**}}} V^{**}\ ,
    \end{split}
    \end{align}
where we omit associators and unitors for readability.
\end{corollary}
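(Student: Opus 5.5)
By Proposition~\ref{prop:chi-crossed-br-strict}, I would first reduce to the case where the underlying $H$-crossed $G$-graded monoidal category is strict. This means associators, unitors, $h_{U,V}$, $h_{\mathbbm 1}$, $T^{h,h'}$ and $\eta$ are all identities, and ${}^{1_H}(-)=\mathrm{id}$. Pivotal structures transport along equivalences of monoidal categories, with the defining formula transforming accordingly. So it suffices to show two things in the strict case: that $\delta_V$ from \eqref{eq: pivotal isomorphism} is a natural isomorphism, and that it is monoidal, i.e.\ $\delta_{U\otimes V}=\delta_U\otimes\delta_V$ under the canonical identification $(U\otimes V)^{**}\cong U^{**}\otimes V^{**}$. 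I would extend $\delta$ additively to non-homogeneous objects.

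Naturality for homogeneous $f\colon V\to W$ is the easy part. The component $\mathrm{coev}_{V^*}$ is dinatural, $\vartheta$ is natural by \eqref{BL1}, $C$ is natural by \eqref{B1}, and ${}^{\chi_a}\mathrm{ev}$ is dinatural. Sliding $f$ through these gives $f^{**}\circ\delta_V=\delta_W\circ f$.

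Invertibility I would show by writing down a candidate inverse. It is built from $\vartheta_V^{-1}$, the inverse crossed braiding and $\mathrm{ev}_{V^*}$, analogously to the braided case. Alternatively, one can note that $\delta_V$ is the composite of the isomorphism $\vartheta_V$ with the Drinfeld-type morphism $u\colon {}^{\chi_a}V\to V^{**}$ given by $u=({}^{\chi_a}\mathrm{ev}_V\otimes\mathrm{id})\circ(C_{{}^{\chi_a}V,V^*}\otimes\mathrm{id})\circ(\mathrm{id}\otimes\mathrm{coev}_{V^*})$. I would then show $u$ is invertible by the standard zig-zag argument, using that $C$ is invertible and that ${}^{\chi_a}(V^*)\cong({}^{\chi_a}V)^*$ via $\nu$ from Remark~\ref{rem:canoncial-iso-duals}.

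Monoidality is the main obstacle. For $U\in\mathcal C_a$ and $V\in\mathcal C_b$, I would expand $\delta_{U\otimes V}$ using \eqref{BL2}, which writes $\vartheta_{U\otimes V}$ as $C_{{}^{\chi_b}V,{}^{\chi_a}U}\circ C_{{}^{\chi_a}U,{}^{\chi_b}V}\circ(\vartheta_U\otimes\vartheta_V)$ in the strict setting, up to the $H$-actions. I would then split the braidings with $(U\otimes V)^*=V^*\otimes U^*$ using \eqref{B2} and \eqref{B3}, and use \eqref{B4} to move the actions ${}^{\chi_a}$ and ${}^{\chi_b}$ through the braidings. The strategy is the usual graphical argument for balanced categories, in which the double braiding from \eqref{BL2} cancels against the crossings introduced when splitting the braiding of the tensor product with its dual. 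Here one additionally has to track that the $H$-labels match at each step, which they do because of the crossed module identities \eqref{crossed module}: for instance $\chi_{aba^{-1}}\chi_a=\chi_a\chi_b$. I expect this bookkeeping to be the most delicate part.

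Finally, I would check the compatibility needed for the unit, namely $\delta_{\mathbbm 1}=\mathrm{id}$. This follows since $\vartheta_{\mathbbm 1}=\mathrm{id}$ (from \eqref{BL2} with $U=V=\mathbbm 1$) and $C_{\mathbbm 1,-}$ is trivial. Note that \eqref{RB1} is not needed for the existence of the pivotal structure; it is presumably only needed for compatibility with the twist and for sphericality-type properties.
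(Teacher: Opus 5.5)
Your proposal is correct and follows essentially the paper's route. Both strictify via Proposition~\ref{prop:chi-crossed-br-strict} and then run the usual balanced-category argument, in which the Drinfeld-type morphism is combined with the twist and the double braiding from \eqref{BL2} cancels; the paper merely phrases this through the induced $G$-crossed ribbon structure of Proposition~\ref{chi crrossed implies G crossed}, where ${}^g(-)={}^{\chi_g}(-)$, so its bookkeeping is literally yours.

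One small correction: the inverse of your $u\colon {}^{\chi_a}V\to V^{**}$ is $(\mathrm{id}\otimes\mathrm{ev}_{V^*})\circ(C_{V^{**},V}\otimes\mathrm{id}_{V^*})\circ(\mathrm{id}_{V^{**}}\otimes\mathrm{coev}_V)$, which uses the crossed braiding itself rather than its inverse. In any case, invertibility of $\delta$ is automatic once monoidality is shown, since monoidal natural transformations between monoidal functors on a rigid category are isomorphisms.
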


\begin{proof}
By Proposition~\ref{chi crrossed implies G crossed}, every $\chi$-crossed ribbon category $\mathcal C$ is in particular $G$-crossed ribbon. It is a known fact, stated without proof in~\cite[Lem.\,2.3]{Ki04}, that any $G$-crossed ribbon category admits a natural pivotal structure. Applying this to the underlying $G$-crossed ribbon structure of~$\mathcal C$, the pivotal structure is given precisely by~\eqref{eq: pivotal isomorphism}. To verify that $\delta$ is indeed a monoidal natural transformation, Proposition~\ref{prop:chi-crossed-br-strict} 
allows us to assume that all 
coherence morphisms
of the underlying $H$-crossed $G$-graded monoidal category
are identities. We can then compute directly via string diagrams
with over/under crossings. 
Since the graphical calculus for such open diagrams in $G$-crossed ribbon categories is identical to that of standard ribbon categories, the problem reduces to the standard ribbon case, which is detailed in~\cite[Ch.\,8.10]{EGNO15}.
\end{proof}

\subsection{$\chi$-Crossed braided structures on $\mathrm{vec}_G$}\label{VecG chi crossed classification}
Let $\chi\colon G \to H$ be a crossed module. 
We saw in 
Theorem~\ref{thm:VecG-Hcrossed-classification-H3tot} that a normalised 3-cocycle $\kappa = (\omega, \Omega, t) \in Z^3_{\mathrm{tot};1}(G\rtimes_\rhd H;\mathbb{K}^\times)$ gives rise to an $H$-crossed $G$-graded monoidal category $\mathrm{vec}_G^\kappa$. 
In this section we consider
\(\chi\)-crossed braidings on \(\mathrm{vec}_G^{\kappa}\).
This is closely related to~\cite{Na11}, where $H$-crossed braided structures on $\mathrm{vec}_G$ are considered (and a coarser notion of equivalence is used).

Recall our notation in~\eqref{eq: degree shifting notation}.
A $\chi$-crossed braiding on $\mathrm{vec}_G^{\kappa}$ for homogeneous objects is of the form
\begin{equation}
    C_{U_g,V_k}\colon U_g \otimes V_k \to {}^{\chi_g}V_k \otimes U_g\ , \qquad u_g \otimes v_k \mapsto R(g,k)\, v_k \otimes u_g
\end{equation}
for a normalised function
\begin{equation}
    R\colon G^2 \to \mathbb{K}^\times \ .
\end{equation}
Here, $u_g, v_k$ are vectors in the underlying vector spaces $U_g$ and $V_k$. The conditions~\eqref{B2}--\eqref{B4} correspond to, for all $g,k,l \in G, \ h\in H$,
\begin{subequations}\label{eq:qa-cocycle-system}
\begin{align}
& \omega(g,k,l)\,R(gk,l)\,\omega(gklk^{-1}g^{-1},g,k)
= R(k,l)\,\omega(g,klk^{-1},k)\,R(g,klk^{-1})\,t(\chi_g,\chi_k,l)\ ,
\label{eq:qa-cocycle-system-a}\\
& R(g,kl)\,\omega(gkg^{-1},g,l)
= \omega(g,k,l)\,R(g,k)\,R(g,l)\,\omega(gkg^{-1},glg^{-1},g)\,\Omega(\chi_g,k,l)\ ,
\label{eq:qa-cocycle-system-b}\\
& \Omega(h,gkg^{-1},g)\,R(h\rhd g, h\rhd k)\,t(\chi_{h\rhd g},h,k)
= \Omega(h,g,k)\,R(g,k)\,t(h,\chi_g,k) \ .
\label{eq:qa-cocycle-system-c}
\end{align}
\end{subequations}
We denote the resulting $\chi$-crossed braided category by $\mathrm{vec}_G^{(\kappa;R)}$.

Specialising Definition~\ref{crossed braided functor} to this case, $(\kappa; R)$ and $(\widetilde{\kappa}; \widetilde{R})$ define equivalent $\chi$-crossed braided structures on $\mathrm{vec}_G$ if and only if there exist normalised functions
\begin{equation}
    \alpha\colon G^2 \to \mathbb{K}^\times \ , \qquad \beta\colon H \times G \to \mathbb{K}^\times
\end{equation}
such that, for all $g,k,l \in G, \ h, h_1, h_2\in H$,
\begin{itemize}
    \item identities~\eqref{eq:vecG-functor-monoidal-cond} and~\eqref{coboundary condition}
    hold, establishing that $\kappa, \widetilde\kappa \in Z^3_{\mathrm{tot};1}(G\rtimes_\rhd H;\mathbb{K}^\times)$ are cohomologous by Theorem~\ref{thm:VecG-Hcrossed-classification-H3tot}, and
    \item condition \eqref{crossed braided functor diagram} holds, which in the present case reads
\begin{equation}\label{braiding coboundary condition}
        \alpha(gkg^{-1},g) \, \widetilde{R}(g,k) \, \beta(\chi_g,k) = R(g,k) \, \alpha(g,k) \ .
\end{equation}
\end{itemize}

We denote the group of equivalence classes of $(\kappa; R) = (\omega, \Omega, t; R)$ by
\begin{equation}
    H^3_{\mathrm{cm}}(\chi; \mathbb{K}^\times) \ .
\end{equation}

\begin{proposition}\label{prop:classification chi-crossed on vec_G}
     $\chi$-crossed braided structures on $\mathrm{vec}_G$ 
    are classified 
    up to $\chi$-crossed braided equivalence
    by $H^3_{\mathrm{cm}}(\chi; \mathbb{K}^\times)$.
\end{proposition}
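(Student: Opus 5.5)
The plan is to reduce the statement to the classification of Theorem~\ref{thm:VecG-Hcrossed-classification-H3tot} and then carry the crossed braiding along in exactly the same way. The argument has two directions: every $\chi$-crossed braided structure on $\mathrm{vec}_G$ is equivalent to some $\mathrm{vec}_G^{(\kappa;R)}$, and two such models are equivalent exactly when the pairs define the same class.

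\emph{Step 1: normal form.} Start with an arbitrary $\chi$-crossed braided structure on $\mathrm{vec}_G$. By Theorem~\ref{thm:VecG-Hcrossed-classification-H3tot} and its proof, the underlying $H$-crossed $G$-graded monoidal category is $H$-equivariantly equivalent to $\mathrm{vec}_G^\kappa$ for a normalised $\kappa=(\omega,\Omega,t)\in Z^3_{\mathrm{tot};1}$. In that normal form the functors ${}^h(-)$ are given by \eqref{eq: degree shifting notation}, and $h_{\mathbbm 1}$ and $\eta$ are identities, using Remark~\ref{free choice up to monoidal natural equiv}.

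A $\chi$-crossed braiding can be transported along an $H$-equivariant monoidal equivalence $(F,\omega)$. One defines the transported braiding by solving \eqref{crossed braided functor diagram} for $C'$; this uses that $F$ is an equivalence and Remark~\ref{rmk: adjunction}. So we may assume the category is $\mathrm{vec}_G^\kappa$ itself.

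By naturality \eqref{B1}, the braiding is determined by its values on the simple objects $\mathbb K_g\otimes\mathbb K_k$. Each such value is a scalar $R(g,k)\in\mathbb K^\times$, and this scalar extends uniquely to all homogeneous objects. Normalisation of $R$ follows from the remark after Definition~\ref{crossed braiding def}: $C_{\mathbbm 1,-}$ and $C_{-,\mathbbm 1}$ are unitors under \eqref{eq: strictness assumption}.

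Next, evaluate \eqref{B2}, \eqref{B3} and \eqref{B4} on basis vectors $\mathbf 1_g\otimes\mathbf 1_k\otimes\mathbf 1_l$. Every arrow in these diagrams is a scalar given by $\omega$, $\Omega$, $t$ or $R$, so each diagram becomes a scalar identity, namely \eqref{eq:qa-cocycle-system-a}, \eqref{eq:qa-cocycle-system-b} and \eqref{eq:qa-cocycle-system-c} respectively. In doing so one has to keep track of the arguments: degrees get conjugated, as in $\chi_g\rhd l = glg^{-1}$ by the Peiffer identity.

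\emph{Step 2: equivalences.} Any $\chi$-crossed braided equivalence between $\mathrm{vec}_G^{(\kappa;R)}$ and $\mathrm{vec}_G^{(\widetilde\kappa;\widetilde R)}$ is in particular $H$-equivariant monoidal. As in the proof of Theorem~\ref{thm:VecG-Hcrossed-classification-H3tot}, up to $H$-equivariant monoidal natural isomorphism it may be taken to have underlying functor $\mathrm{id}$, together with normalised $(\alpha,\beta)$ satisfying \eqref{eq:vecG-functor-monoidal-cond} and \eqref{coboundary condition}.

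For this we need two facts:
\begin{itemize}
\item replacing $F$ by an isomorphic functor preserves the braided condition \eqref{crossed braided functor diagram};
\item any linear autoequivalence of $\mathrm{vec}_G$ preserving degrees is isomorphic to the identity.
\end{itemize}

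Evaluating \eqref{crossed braided functor diagram} on simple objects then gives exactly \eqref{braiding coboundary condition}. Conversely, any $(\alpha,\beta)$ satisfying these identities defines such an equivalence.

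\emph{Step 3: group structure and bijection.} The set of equivalence classes is then in bijection with the quotient $H^3_{\mathrm{cm}}(\chi;\mathbb K^\times)$ as defined.

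To justify calling $H^3_{\mathrm{cm}}$ a group, one should check two things:
\begin{itemize}
\item the solutions $(\kappa;R)$ of \eqref{eq:qa-cocycle-system} form a group under pointwise multiplication, since all the equations are multiplicative;
\item the pairs $(\widetilde\kappa\kappa^{-1};\widetilde R R^{-1})$ produced by $(\alpha,\beta)$ form a subgroup.
\end{itemize}
For the subgroup, the key point is that the maps $(\alpha,\beta)\mapsto \mathrm{d}^2_{\mathrm{tot};1}(\alpha,\beta)$ and $(\alpha,\beta)\mapsto \alpha(gkg^{-1},g)^{-1}\beta(\chi_g,k)^{-1}\alpha(g,k)$ are homomorphisms. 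Together they form a single coboundary map, so the equivalence relation is the coset relation of its image. The multiplicative structure is compatible with the fibre product, by the braided analogue of Proposition~\ref{prop:Vec-addition}.

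\emph{Main obstacle.} I expect the hardest part to be the bookkeeping in Step 1. One has to derive \eqref{eq:qa-cocycle-system-a}--\eqref{eq:qa-cocycle-system-c} from the diagrams with the correct placement of the arguments of $\omega$, $\Omega$ and $t$, for instance $t(\chi_g,\chi_k,l)$ in \eqref{B2} and $t(\chi_{h\rhd g},h,k)$ against $t(h,\chi_g,k)$ in \eqref{B4}. One also has to check that the normalisations chosen earlier do not lose generality. The honest route is to verify each diagram on $\mathbf 1_g\otimes\mathbf 1_k\otimes\mathbf 1_l$ carefully.
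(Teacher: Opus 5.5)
Your proposal is correct and follows the same route as the paper, whose justification is the discussion preceding the statement: reduce to the normal form $\mathrm{vec}_G^\kappa$ of Theorem~\ref{thm:VecG-Hcrossed-classification-H3tot}, encode the braiding by a normalised $R$, unpack \eqref{B2}--\eqref{B4} into \eqref{eq:qa-cocycle-system}, and unpack \eqref{crossed braided functor diagram} for an equivalence with underlying identity functor into \eqref{braiding coboundary condition}. Your additional points fill in details the paper leaves implicit: transporting the braiding along an equivalence, reducing to $F=\mathrm{id}$ via degree-preserving autoequivalences, and checking that the coboundaries form a subgroup.
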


\begin{remark} Note that
    $\mathrm{vec}_G^{(\kappa_1;R_1)} \boxtimes_G \mathrm{vec}_G^{(\kappa_2;R_2)} \cong \mathrm{vec}_G^{(\kappa_1\kappa_2;R_1R_2)}$ as $\chi$-crossed braided categories. 
In fact, one can use the fibre product to twist $\chi$-crossed braided categories by $(\kappa;R)$ analogously to \eqref{eq:twisting-as-fibred-product}, resulting in an action of $H^3_{\mathrm{cm}}(\chi; \mathbb{K}^\times)$ on equivalence classes of $\chi$-crossed braided categories.
\end{remark}

\begin{remark}
In~\cite[Thm.\,4.4]{Na11}, the author classifies $H$-crossed braided monoidal structures on $\mathrm{vec}_G$ by
$\Aut(\chi)$-orbits in $H^3_{\mathrm{cm}}(\chi;\mathbb{K}^\times)$. This
classification is taken with respect to an equivalence relation different from
ours (see~\cite[Def.\,2.6]{Na11}).
In particular, the functors considered there are not required to be degree-preserving, and the $H$-equivariance data may be twisted by a group automorphism of $H$: the isomorphisms $\omega^h$ in~\eqref{eq: omega^h}
are replaced by isomorphisms of the form ${}^{f(h)}F(-) \Rightarrow F({}^h(-))$ for some $f\in \mathrm{Aut}(H)$.
\end{remark}

If $\chi = \mathrm{id}\colon G \to G$ with the conjugation action, then following~\cite[Lem.\,6.3]{Na11}, there is a group homomorphism
\begin{equation}\label{eq: recover G-crossed case}
    f\colon H^3(G; \mathbb{K}^\times) \longrightarrow H^3_{\mathrm{cm}}(\chi; \mathbb{K}^\times) \ , \qquad [\omega] \mapsto [\omega, \Omega_\omega, t_\omega; 1] \ ,
\end{equation}
where $\Omega_\omega, \, t_\omega\colon G^3 \to \mathbb{K}^\times$ are given by, for all $g,k,l \in G$,
\begin{equation}
    \Omega_\omega(g,k,l) = \frac{\omega(gkg^{-1},g,l)}{\omega(gkg^{-1},glg^{-1},g)\,\omega(g,k,l)} 
    ~,~~
    t_\omega(g,k,l) = \frac{\omega(g,k,l)\,\omega(gklk^{-1}g^{-1},g,k)}{\omega(g,klk^{-1},k)} \ .
\end{equation}
One can check that $f$ is well-defined, i.e.\ independent of the choice of representative of $[\omega]$.

\begin{proposition}\label{H_qa for id chi}
For $\chi = \mathrm{id}_G$ with conjugation action, we have $H^3_{\mathrm{cm}}(\chi; \mathbb{K}^\times) \cong H^3(G; \mathbb{K}^\times)$ via $f$ in~\eqref{eq: recover G-crossed case}.
\end{proposition}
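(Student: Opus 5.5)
We show that $f$ is a bijective group homomorphism; that it is well defined and multiplicative is essentially given, since the formulas for $\Omega_\omega$ and $t_\omega$ are multiplicative in $\omega$.

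\textbf{Step 1: $f$ lands in cocycles.} For $\chi=\mathrm{id}_G$ with conjugation action we have $\chi_g=g$ and $h\rhd k=hkh^{-1}$. With $R=1$, equations \eqref{eq:qa-cocycle-system-a} and \eqref{eq:qa-cocycle-system-b} become exactly the definitions of $t_\omega$ and $\Omega_\omega$. Indeed, \eqref{eq:qa-cocycle-system-a} with $R=1$ solves uniquely for $t(g,k,l)$, and \eqref{eq:qa-cocycle-system-b} solves uniquely for $\Omega(g,k,l)$. One still has to check the total cocycle conditions \eqref{eq-system-ad}, \eqref{eq-system-bce}, and \eqref{eq:qa-cocycle-system-c} for $(\omega,\Omega_\omega,t_\omega;1)$. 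These are standard $3$-cocycle manipulations: they encode that $\mathrm{vec}_G^\omega$ with the conjugation action and trivial crossed braiding is $G$-crossed braided. This is the known construction cited from \cite[Lem.\,6.3]{Na11}, so I take it as given.

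\textbf{Step 2: surjectivity.} Let $(\omega,\Omega,t;R)$ be an arbitrary normalised cocycle. I would use \eqref{braiding coboundary condition} with $\alpha=1$ and $\beta(h,k)\coloneqq R(h,k)$. For $\chi=\mathrm{id}$ this is defined on all of $H\times G=G\times G$ and is normalised. Then \eqref{braiding coboundary condition} forces $\widetilde R=1$, and \eqref{coboundary condition} defines the new data $\widetilde\Omega,\widetilde t$ with $\widetilde\omega=\omega$. So every class has a representative with $R=1$. For such a representative, \eqref{eq:qa-cocycle-system-a} and \eqref{eq:qa-cocycle-system-b} force $t=t_\omega$ and $\Omega=\Omega_\omega$, hence the class equals $f([\omega])$. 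This step uses crucially that $\chi$ is surjective, so $\beta(\chi_g,k)$ can absorb all of $R$; this is where the proof is special to $\chi=\mathrm{id}$.

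\textbf{Step 3: injectivity.} Suppose $f([\omega])$ is trivial. Then there are $(\alpha,\beta)$ relating $(\omega,\Omega_\omega,t_\omega;1)$ to $(1,1,1;1)$. In particular \eqref{eq:vecG-functor-monoidal-cond} says $\omega=\mathrm{d}_G\alpha^{\pm1}$, so $[\omega]=0$ in $H^3(G;\mathbb{K}^\times)$.

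I expect the main technical obstacle to be Step 1, namely verifying that $(\omega,\Omega_\omega,t_\omega)$ satisfies \eqref{eq-system-b}, \eqref{eq-system-c}, and \eqref{eq:qa-cocycle-system-c}. A slicker route is a consistency argument. From any cocycle with $R=1$, the equations determine $\Omega$ and $t$ from $\omega$, and Proposition~\ref{chi crrossed implies G crossed} relates everything to the usual $G$-crossed braided $\mathrm{vec}_G^\omega$. Its existence is standard, via the equivariantisation/Drinfeld centre description or direct computation. If I had to compute directly, I would expand each condition using the $3$-cocycle identity for $\omega$ in the form $\mathrm{d}\omega=1$ evaluated at conjugated arguments, and cancel term by term.
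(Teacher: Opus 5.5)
Your proof is correct and follows the paper's proof: injectivity comes from the monoidal coboundary condition \eqref{eq:vecG-functor-monoidal-cond}, and surjectivity comes from gauging with $(\alpha,\beta)=(1,R)$, after which \eqref{eq:qa-cocycle-system-a}--\eqref{eq:qa-cocycle-system-b} with $R=1$ force $t=t_\omega$ and $\Omega=\Omega_\omega$. One small correction to your side remark: Step 2 relies on $\chi$ being injective (so that $\beta(\chi_g,k)\coloneqq R(g,k)$ is well defined), not on it being surjective.
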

\begin{proof}
It is clear from \eqref{eq:vecG-functor-monoidal-cond} and \eqref{d2tot-via-alpha-beta} that $f$ is injective. For surjectivity, any valid tuple $(\omega, \Omega, t; R)$ is equivalent to $(\omega, \Omega_\omega, t_\omega; 1)$ via $(\alpha, \beta) = (1, R)$.
\end{proof}
\begin{remark}
As noted in Remarks~\ref{rem:usual-ribbon-G-usual-Gcrossed}
and~\ref{rmk: crossed functor when chi is id},
when $\chi = \mathrm{id}:G \to G$, a $\chi$-crossed braided structure is precisely a $G$-crossed braided structure, and a $\chi$-crossed braided functor specialises to a $G$-crossed braided functor.
The proposition above corresponds well to the well-known fact that $G$-crossed braided structures
on $\mathrm{vec}_G$ are classified up to $G$-crossed braided equivalence by $H^3(G;\mathbb{K}^\times)$~\cite[Thm.\,7.12]{ENOM09}.
\end{remark}

It is remarkable that a $G$-crossed braided structure on
$\mathrm{vec}_G$ is (up to the corresponding equivalence relation) fully determined by its monoidal structure. In contrast, for an abelian group~$K$, braided monoidal structures on $\mathrm{vec}_K$ are classified by the \textit{abelian third cohomology group} $H^3_{\mathrm{ab}}(K;\mathbb{K}^\times)$; see~\cite[Ch.\,8.4]{EGNO15}.

\begin{proposition}\label{compare to ab cohomology}
    For $\chi \colon K \to \{*\}$ we have $H^3_\mathrm{ab}(K; \mathbb{K}^\times) \cong H^3_{\mathrm{cm}}(\chi; \mathbb{K}^\times)$.
\end{proposition}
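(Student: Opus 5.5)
For $\chi\colon K \to \{*\}$ the group $H$ is trivial, so the plan is to show that all $H$-dependent data can be normalised away and that what remains is exactly the Eilenberg--Mac Lane abelian cocycle data $(\omega,R)$ with its usual coboundary relation. First, the cochain $\kappa=(\omega,\Omega,t)$ has $\Omega\in\mathbf C^{1,2}=\mathrm{Map}(\{*\}\times K^2,\mathbb K^\times)$ and $t\in\mathbf C^{2,1}=\mathrm{Map}(\{*\}^2\times K,\mathbb K^\times)$. The normalisation conditions~\eqref{eq-system-e} say $\Omega(1_H,-,-)=1$ and $t(1_H,1_H,-)=1$, hence $\Omega\equiv1$ and $t\equiv1$. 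Likewise, a normalised $\beta\colon H\times K\to\mathbb K^\times$ satisfies $\beta(1_H,-)=1$, so $\beta\equiv 1$.

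Next I would substitute $\Omega=t=1$, $\chi_g=1_H$ and commutativity of $K$ (so $gkg^{-1}=k$) into~\eqref{eq:qa-cocycle-system}. Equation~\eqref{eq:qa-cocycle-system-c} becomes trivial. Equations~\eqref{eq:qa-cocycle-system-a} and~\eqref{eq:qa-cocycle-system-b} become
\[
\omega(g,k,l)\,R(gk,l)\,\omega(l,g,k)=R(k,l)\,\omega(g,l,k)\,R(g,l),
\]
\[
R(g,kl)\,\omega(k,g,l)=\omega(g,k,l)\,R(g,k)\,R(g,l)\,\omega(k,l,g).
\]
The task is to match these with the two hexagon identities defining an abelian $3$-cocycle in~\cite[Ch.\,8.4]{EGNO15}, with $c(g,k)=R(g,k)$ and the associator $\omega$. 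This is essentially automatic, because these are exactly the hexagons~\eqref{B2},~\eqref{B3} with trivial action. Since~\eqref{B2} and~\eqref{B3} were specialised to $\mathrm{vec}_G$ under the paper's associator convention ($U\otimes(V\otimes W)\to(U\otimes V)\otimes W$), the one real step is to check the direction/inversion convention. It may be necessary to replace $\omega$ by $\omega^{-1}$, or $R$ by its appropriate variant, to match EGNO's normalisation. Any such fixed reparametrisation is a group isomorphism compatible with coboundaries, so it does not affect the conclusion.

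The equivalence relation also reduces correctly. Condition~\eqref{d2tot-via-alpha-beta} with $\beta=1$ collapses to $\widetilde\omega=\omega\cdot\mathrm d\alpha$, which is~\eqref{eq:vecG-functor-monoidal-cond}. The $\mathbf C^{1,2}$ component of~\eqref{coboundary condition} reads $\alpha(h\rhd g,h\rhd k)/\alpha(g,k)=1$ for $h=1_H$, which is vacuous. Finally,~\eqref{braiding coboundary condition} becomes $\widetilde R(g,k)=R(g,k)\,\alpha(g,k)/\alpha(k,g)$. These are precisely the abelian coboundaries $(\omega,c)\sim(\omega\,\mathrm d\alpha,\;c\cdot\alpha/\alpha^{\mathrm{op}})$.

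I would therefore define the map $H^3_{\mathrm{cm}}(\chi;\mathbb K^\times)\to H^3_{\mathrm{ab}}(K;\mathbb K^\times)$ by $[\omega,1,1;R]\mapsto[\omega,R]$. By the above it is well defined, injective and surjective, and it is clearly multiplicative. The only genuine obstacle is the bookkeeping of conventions in the hexagon comparison. Alternatively, one can bypass this comparison: both groups classify braided structures on $\mathrm{vec}_K$ up to grading-preserving braided equivalence, and with $H$ trivial a $\chi$-crossed braided functor is exactly a $K$-graded braided monoidal functor. This identifies the two groups via Proposition~\ref{prop:classification chi-crossed on vec_G} together with~\cite[Ch.\,8.4]{EGNO15}.
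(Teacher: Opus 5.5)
Your proposal is correct and follows the paper's proof. With $H$ trivial, normalisation forces $\Omega\equiv t\equiv\beta\equiv 1$, the system \eqref{eq:qa-cocycle-system} and the coboundary conditions reduce to the abelian cocycle and coboundary relations, and $[\omega,R]\leftrightarrow[\omega,1,1;R]$ is the isomorphism. The convention check you flag as the main obstacle is not needed, because the paper defines $H^3_{\mathrm{ab}}$ directly through \eqref{eq:qa-cocycle-system'} and \eqref{eq:ab coh bound}, which are exactly the reduced equations you computed.
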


\begin{proof}
    We briefly recall abelian cohomology; see e.g.~\cite{EM53,EM54} and~\cite[Ch.\,8.4]{EGNO15}. Denote by $C_\mathrm{ab}^3(K;\mathbb{K}^\times)$ the collection of pairs $(\omega, R)$, where
\begin{equation}
    \omega\colon K^3 \longrightarrow \mathbb{K}^\times \ , \qquad R\colon K^2 \longrightarrow \mathbb{K}^\times
\end{equation}
are functions. $C_\mathrm{ab}^3(K;\mathbb{K}^\times)$ is an abelian group under pointwise multiplication. Denote by $Z_\mathrm{ab}^3(K;\mathbb{K}^\times)$ the subgroup of $C_\mathrm{ab}^3(K;\mathbb{K}^\times)$ consisting of pairs $(\omega,R)$ satisfying the following conditions:
\begin{itemize}
    \item $\omega$ is a normalised 3-cocycle,

    \item $R$ satisfies, for all $g,k,l \in K$,
    \begin{subequations}\label{eq:qa-cocycle-system'}
        \begin{align}
            &R(0,g) = R(g,0) = 1 \ , \label{eq:qa-cocycle-system'-a}\\
            &\omega(g,k,l)\,R(g+k,l)\,\omega(l,g,k)
= R(k,l)\,\omega(g,l,k)\,R(g,l)\ ,
\label{eq:qa-cocycle-system'-b}\\
&R(g,k+l)\,\omega(k,g,l)
= \omega(g,k,l)\,R(g,k)\,R(g,l)\,\omega(k,l,g)\ .
\label{eq:qa-cocycle-system'-c}
        \end{align}
    \end{subequations}
\end{itemize}
Two such pairs $(\omega,R), (\omega',R') \in Z_\mathrm{ab}^3(K;\mathbb{K}^\times)$ are equivalent if there exists an $\alpha\colon K^2 \to \mathbb{K}^\times$ such that, for all $g,k,l \in G$,
\begin{subequations}\label{eq:ab coh bound}
    \begin{align}
        &\alpha(0,g) = \alpha(g,0) = 1 \ ,\\
        &\frac{\alpha(k,l)}{\alpha(g+k,l)}\,\frac{\alpha(g,k+l)}{\alpha(g,k)} = \frac{\omega'(g,k,l)}{\omega(g,k,l)} \ , \\
        &\frac{\alpha(g,k)}{\alpha(k,g)}
        = \frac{R'(g,k)}{R(g,k)} \ .
    \end{align}
\end{subequations}
$H^3_{\mathrm{ab}}(K;\mathbb{K}^\times)$ is defined by
\begin{equation}
    H^3_{\mathrm{ab}}(K;\mathbb{K}^\times)
\;\coloneqq\;
Z^3_{\mathrm{ab}}(K;\mathbb{K}^\times)\big/ \sim \ .
\end{equation}

\medskip

We claim that the map
\begin{equation}
    f\colon H^3_\mathrm{ab}(K; \mathbb{K}^\times) \longrightarrow H^3_{\mathrm{cm}}(\chi; \mathbb{K}^\times) \ , \qquad [\omega, R] \mapsto [\omega, 1, 1; R]
\end{equation}
is a group isomorphism. Indeed, for such $\chi$, both $\Omega$ and $t$ must be 1, hence~\eqref{eq:qa-cocycle-system} reduces to~\eqref{eq:qa-cocycle-system'}. For the coboundary condition, normalisation of $\beta$ forces $\beta \equiv 1$, thus~\eqref{eq:vecG-functor-monoidal-cond} and~\eqref{braiding coboundary condition} reduce to~\eqref{eq:ab coh bound}.
\end{proof}

\subsection{Example: $G = \mathbb{Z}/4\mathbb{Z}$}\label{different equiv relation}

We continue with our examples in Section~\ref{Z/4 example}. For each $H_j$-action on $G$, we associate a map $\chi_j\colon G \to H_j$, giving rise to the following four crossed modules:
\begin{enumerate}
    \item $\chi_1\colon\Z/4\Z \to \{*\}$ with trivial action;
    \item $\chi_2\colon\Z/4\Z \to \Z/2\Z$ the quotient map with trivial action;
    \item $\chi_3\colon\Z/4\Z \to \Z/4\Z$ the identity map with trivial action;
    \item $\chi_4\colon\Z/4\Z \to \Z/4\Z$ defined by $\chi_4(1)=2$, with action fixed by $1 \rhd k = -k$.
\end{enumerate}
As in \eqref{eq:example-i}, fix an algebraically closed field $\mathbb{K}$ with $\mathrm{char}(\mathbb{K}) \neq 2$ and a primitive 4th root of unity $i \in \mathbb{K}^\times$. We compute $H^3_{\mathrm{cm}}(\chi_j; \mathbb{K}^\times)$ for $j = 1,2,3,4$ in the following.

\medskip

\noindent
$\bullet$ \textit{Case (\romannumeral 1):} By Proposition~\ref{compare to ab cohomology}, $H^3_{\mathrm{cm}}(\chi_1; \mathbb{K}^\times) \cong H^3_{\mathrm{ab}}(\Z/4\Z; \mathbb{K}^\times) \cong \Z/8\Z$. 
Fix an 8th root of unity $\zeta \in \mathbb{K}^\times$ such that
\begin{equation}
    \zeta^2 = i \ .
\end{equation}
Define $\omega\colon(\Z/4\Z)^3 \to \mathbb{K}^\times$ as in~\eqref{generator Z/4Z-cocycle}, 
and
\begin{equation}\label{eq:R_zeta}
    R_\zeta\colon(\Z/4\Z)^2 \longrightarrow \mathbb{K}^\times \ , \qquad R_\zeta(a,b) \;\coloneqq\; \zeta^{ab} \ .
\end{equation}
Following~\cite[Ex.\,2.1.3]{Ga25}, a generator of $H^3_{\mathrm{cm}}(\chi_1; \mathbb{K}^\times)$ is given by $[\omega^2, 1 , 1; R_\zeta]$.

\medskip

\noindent
$\bullet$ \textit{Case (\romannumeral 2):} $H^3_{\mathrm{cm}}(\chi_2; \mathbb{K}^\times) \cong \Z/8\Z$. Here we just give the generator and omit the proof.
Fix a primitive 16th-root of unity $\mu \in \mathbb{K}^\times$ such that
\begin{equation}
     i = \mu^4 \ .
\end{equation}
Define $\omega\colon(\Z/4\Z)^3 \to \mathbb{K}^\times$ as in~\eqref{generator Z/4Z-cocycle}, 
$t \colon(\Z/2\Z)^2 \times \Z/4\Z \to \mathbb{K}^\times$ as in~\eqref{eq:ti}, and 
\begin{equation}
    R_\mu\colon(\Z/4\Z)^2 \longrightarrow \mathbb{K}^\times,
\qquad R_\mu(0,a) = 1 \ , \; R_\mu(1,a) = \mu^{-a} \ , \; R_\mu(2,a) = \mu^{2a} \ , \; R_\mu(3,a) = \mu^{a} \ .
\end{equation}
$[\omega, 1, t; R_\mu]$ generates $H^3_{\mathrm{cm}}(\chi_2; \mathbb{K}^\times)$ and it has order 8.

\medskip

\noindent
$\bullet$ \textit{Case (\romannumeral 3):} Following Proposition~\ref{H_qa for id chi}, $H^3_{\mathrm{cm}}(\chi_3; \mathbb{K}^\times) \cong H^3(\Z/4\Z; \mathbb{K}^\times) \cong \Z/4\Z$.

\medskip

\noindent
$\bullet$ \textit{Case (\romannumeral 4):} We compute all possible $[\kappa;R] \in H^3_{\mathrm{cm}}(\chi_4; \mathbb{K}^\times)$ by hand. 
From case (iv) in Section~\ref{Z/4 example} we know that
$\kappa$ must be of the form
$(\omega^a, \Omega^a, t_\pm)$ for some $a \in \{0,1,2,3\}$, where $\omega, \Omega, t_-$ are given as in~\eqref{generator Z/4Z-cocycle},~\eqref{eq: Omega_i},~\eqref{eq:t_-}, and $t_+ \equiv 1$.
 Then,~\eqref{eq:qa-cocycle-system} simplifies to, for all $h \in \Z/4\Z$, $g,k,l \in \Z/4\Z$,
\begin{subequations}\label{simp-eq:qa-cocycle}
    \begin{align}
        &\omega^a(g,k,l)\,R(g+k,l) = R(k,l)\,R(g,l)\,t_\pm(\chi_g,\chi_k,l) \ ;\label{simp-eq:qa-cocycle-a}\\
        &R(g,k+l)=R(g,k)\,R(g,l)\,\omega^a(k,l,g) \ ;\label{simp-eq:qa-cocycle-b} \\
        &\Omega^a(h,k,g)\,R(h \rhd g, h \rhd k) = \Omega^a(h,g,k) \, R(g,k) \label{simp-eq:qa-cocycle-c}\ .
    \end{align}
\end{subequations}
Evaluating $(g,k,l) = (1,2,2),\,(1,1,2)$ at~\eqref{simp-eq:qa-cocycle-a} gives:
\begin{equation}
    R(3,2) = R(1,2)\,R(2,2) \ , \qquad R(2,2) = R(1,2)^2 \ .
\end{equation}
Evaluating $(g,k,l) = (1,2,2)$ at~\eqref{simp-eq:qa-cocycle-b} gives:
\begin{equation}
     R(1,2)^2 = i^{-a} \ .
\end{equation}
Evaluating $(h,g,k) = (1,1,2)$ at~\eqref{simp-eq:qa-cocycle-c} gives:
\begin{equation}
     R(3,2) = i^{a}R(1,2) \ .
\end{equation}
These imply
\begin{equation}
    i^{2a} = 1 \ ,
\end{equation}
i.e. $a$ must be even.

We now assume $a \equiv 0 \,\mathrm{mod} \,2$. Evaluating $(g,k,l) = (1,1,3)$ at~\eqref{simp-eq:qa-cocycle-b} gives:
\begin{equation}
     1 = R(1,1)\,R(1,3)i^a \ .
\end{equation}
Evaluating $(h,g,k) = (1,1,3)$ at~\eqref{simp-eq:qa-cocycle-c} gives:
\begin{equation}
     R(3,1) = R(1,3) \ .
\end{equation}
Evaluating $(g,k,l) = (1,3,1)$ at~\eqref{simp-eq:qa-cocycle-a} for $t_-$ gives:
\begin{equation}
     i^a = -R(3,1)\,R(1,1) \ .
\end{equation}
These imply $i^{2a} = -1$, which contradicts to the assumption that $a$ is even. Therefore, for every~$a$, the triple $(\omega^a, \Omega^a, t_-)$ does not admit any $R\colon(\Z/4\Z)^2 \to \mathbb{K}^\times$ satisfying~\eqref{eq:qa-cocycle-system}.

Now the only possible $\kappa$ are $(\omega^a, \Omega^a, 1)$ for $a = 0,2$. It is not hard to check that for any choice of $a$, an admissible $R$ is determined by its value $R(1,1)$ which should satisfy
\begin{equation}
    R(1,1)^4 = i^a \ .
\end{equation}
Define $R_\zeta$ as in~\eqref{eq:R_zeta}. One can further check that $[\omega^2, \Omega^2, 1; R_\zeta]$ is a generator for $H^3_{\mathrm{cm}}(\chi_4; \mathbb{K}^\times)$, and it has order 8, hence We have
\begin{equation}
    H^3_{\mathrm{cm}}(\chi_4; \mathbb{K}^\times) \;\cong\; \Z/8\Z \ .
\end{equation}

\begin{remark}\label{rmk: different equiv for chi}
We now clarify Remark~\ref{equivalence remark}. There are four bicharacters of
$\mathbb Z/4\mathbb Z$, namely
\begin{equation}
    R_k\colon (\mathbb Z/4\mathbb Z)^2 \longrightarrow \mathbb K^\times \ ,
    \qquad
    R_k(a,b)= i^{kab} \ ,
\end{equation}
for $k\in\{0,1,2,3\}$. For each $j=1,2,3,4$ and each $k$, the tuple
$(1,1,1;R_k)$ defines a class in
\begin{equation}
    H^3_{\mathrm{cm}}(\chi_j;\mathbb K^\times) \ .
\end{equation}
However, for a fixed $\chi_j$, the four bicharacters $R_k$ need not determine
four distinct cohomology classes. Our computations show that they determine
four distinct classes for $\chi_1$, two distinct classes for $\chi_2$, one
class for $\chi_3$, and four distinct classes for $\chi_4$.

For the first three cases $j=1,2,3$, the representatives $(1,1,1;R_k)$
encode trivial $H_j$-actions on $\mathrm{vec}_G$ with untwisted monoidal structure. Thus the $H_3$-crossed action factors through both $\chi_1$ and $\chi_2$ in the sense of~\eqref{eq: action factors}. Hence the underlying $G$-crossed braided structures, equivalently
the $\chi_3$-crossed braided structures, represented by $(1,1,1;R_k)$ may
also be regarded as $\chi_1$- or $\chi_2$-crossed braided structures. However, as we have seen, the equivalence relation depends on the crossed module $\chi_j$. A larger acting group $H_j$ gives a coarser equivalence relation, while a smaller acting group gives a finer one. This can be seen from~\eqref{crossed braided functor diagram}, where a larger acting group gives more freedom to relate two crossed braidings via the term $\omega^{\chi_a}_V\otimes_{\mathcal C'}\id$.
\end{remark}

\subsection{Equivariantisation}\label{sec:equivar}
We recall the notion of equivariantisation, 
see e.g.~\cite{DGNO10, GLM24b, HPRW25}.
Here we treat the case of $\chi$-crossed ribbon categories
from Definition~\ref{crossed balanced},
and explain the induced grading on the equivariantised categories.

Let $H$ act on $G$ via group automorphisms and let $\mathcal{C} = \bigoplus_{g \in G}\mathcal{C}_g$ be an $H$-crossed $G$-graded monoidal category.
Here we assume that the $H$-action is strictly unital as in Remark~\ref{free choice up to monoidal natural equiv}.
We consider the \textit{$H$-equivariantisation} of $\mathcal{C}$, denoted by $\mathcal{C}^H$:
\begin{itemize}
    \item objects in $\mathcal{C}^H$ are pairs $\big(U, \{\gamma_U^h\colon{}^hU \xrightarrow{\sim} U\}_{h \in H}\big)$, where $U \in \mathcal{C}$, and $\{\gamma^h_U\}$ is a family of isomorphisms making the following diagram commutes for any $h_1, h_2 \in H$:
\begin{equation}\label{eq:equivar condition}
\begin{tikzcd}
	{^{h_1}(^{h_2}U)} && {^{h_1h_2}U} \\
	{^{h_1}U} && U
	\arrow["{T_U^{h_1,h_2}}", from=1-1, to=1-3]
	\arrow["{^{h_1}\gamma^{h_2}_U}", from=1-1, to=2-1]
	\arrow["{\gamma^{h_1h_2}_U}"', from=1-3, to=2-3]
	\arrow["{\gamma^{h_1}_U}", from=2-1, to=2-3]
\end{tikzcd}
\end{equation}

\item a morphism $f\colon(U, \gamma_U) \to (V, \gamma_V)$ is a morphism $f\colon U \to V$ in $\mathcal{C}$ such that the following diagram commutes for any $h \in H$:
\begin{equation}
\begin{tikzcd}[column sep = 4em, row sep = 1.5em]
	{^{h}U} && U \\
	{^hV} && V
	\arrow["{\gamma_U^h}", from=1-1, to=1-3]
	\arrow["{^hf}", from=1-1, to=2-1]
	\arrow["f"', from=1-3, to=2-3]
	\arrow["{\gamma_V^h}", from=2-1, to=2-3]
\end{tikzcd}
\end{equation}
\end{itemize}

Let $p\colon H \times G \to G$ denote the map $(h,g) \mapsto g$, and let $\rho\colon H \times G \to G$ denote the $H$-action on $G$. Define the coinvariant quotient $G_H$ to be the coequaliser:
\begin{equation}\label{eq: coninvarinat group}
\begin{tikzcd}[column sep = 4em]
H \times G \ar[r,shift left=.75ex,"\rho"]
  \ar[r,shift right=.75ex,swap,"p"]
&
G \ar[r,"\pi"]
&
G_H \ .
\end{tikzcd}
\end{equation}
Explicitly we have that $G_H = G/N$, where $N \subset G$ 
is the normal subgroup generated by the elements $\{ (h \rhd g)g^{-1}\}_{(h,g)\in H\times G}$. 

\begin{remark}
If $G$ acts on itself by conjugation, then $G_G \cong G^{\mathrm{ab}}$ is the abelianisation of $G$. If $\chi \colon G\to H$ is a crossed module, then
\begin{equation}
    G_H \cong (G_{\mathrm{Im}(\chi)})_{H/\mathrm{Im}(\chi)} \cong (G^{\mathrm{ab}})_{H/\mathrm{Im}(\chi)} \ ,
\end{equation}
because $\mathrm{Im}(\chi)\subset H$ is normal
(recall Lemma~\ref{lem: K central I normal}), and acts on $G$
via conjugation by~\eqref{crossed module}. In particular, if $\chi$ is surjective, then $G_H = G^{\mathrm{ab}}$; if $\chi$ is an embedding of a normal subgroup $G \subset H$, then $G_H = G/[G,H]$.
\end{remark}

Let $(U, \gamma_U)$ be an
object in $\mathcal C^H$. 
Since $\mathcal{C}=\bigoplus_{g\in G}\mathcal{C}_g$ is $G$-graded additive,
we can write
\begin{equation}
    U = \bigoplus_{[\alpha]\in G_H} \Big(\bigoplus_{g \in \pi^{-1}[\alpha]}U_g\Big)
\qquad\text{with } U_g\in \mathcal{C}_g \ ,
\end{equation}
where $\pi$ is as in~\eqref{eq: coninvarinat group}.
For each $[\alpha] \in G_H$, denote $U_{[\alpha]} \coloneqq \bigoplus_{g \in \pi^{-1}[\alpha]}U_g$. One can see that $\bigl(U_{[\alpha]},\{\gamma^h_{|U_{[\alpha]}}\}_{h \in H}\bigr)$ is also an object of $\mathcal C^H$,
where $\gamma^h_{|U_{[\alpha]}}$ denotes the restriction of $\gamma^h_U$ on $^hU_{[\alpha]}$.
This relies on the fact that $\pi(h \rhd g) = \pi(g)$, ensuring that
the $H$-action on $\mathcal C$
does not mix components from different $G_H$-classes.
Thus $\mathcal{C}^H$ admits a $G_H$-grading.

Moreover, the (rigid) monoidal structure on $\mathcal{C}$ descends to $\mathcal{C}^H$ with
\begin{itemize}
    \item $(U, \gamma_U)\otimes (V, \gamma_V) \coloneqq (U \otimes V, \gamma_{U \otimes V})$, where $\gamma_{U \otimes V}$ is defined by
\begin{equation}
    ^h(U \otimes V) \xrightarrow{h^{-1}_{U,V}} {}^hU \otimes {}^hV \xrightarrow{\gamma_U^h \otimes \gamma_V^h} U \otimes V;
\end{equation}
\item $(U, \gamma_U)^* \coloneqq (U^*, \gamma_{U^*})$, where $ \gamma_{U^*}$ is defined by (recall Remark~\ref{rem:canoncial-iso-duals})
\begin{equation}
    ^h(U^*) \xrightarrow{\nu_{h,U}} ({}^hU)^* 
    \xrightarrow{((\gamma_U^h)^*)^{-1}}U^* \ .
\end{equation}
\end{itemize}
Since $\pi$ is a group homomorphism, the $G_H$-grading is then compatible with the monoidal structure on $\mathcal{C}^H$. We conclude that $\mathcal{C}^H$ carries a $G_H$-graded monoidal structure.

\begin{proposition}\label{prop:equiv}
Let $\chi\colon G \to H$ be a crossed module, and $\mathcal{C}$ be a $\chi$-crossed ribbon category. Then the $H$-equivariantisation $\mathcal{C}^H$ is
ribbon with braiding
and balancing isomorphisms
given by
\begin{subequations}
    \begin{gather}
        U \otimes V = \bigoplus_{g \in G}U_g \otimes V \xrightarrow{\bigoplus_{g \in G}
        C_{U_g,V}} \bigoplus_{g \in G}{}^{\chi_g}V \otimes U_g \xrightarrow{\bigoplus_{g \in G}\gamma_V^{\chi_g}\otimes \mathrm{id}_{U_g}}\bigoplus_{g \in G}V \otimes U_g = V \otimes U \ , \\
        U = \bigoplus_{g \in G} U_g \xrightarrow{\bigoplus_{g \in G}\theta_{U_g}}\bigoplus_{g \in G} {}^{\chi_g}U_g\xrightarrow{\bigoplus_{g \in G}\gamma^{\chi_g}_{|U_g}}\bigoplus_{g \in G}U_g = U \ ,
\end{gather}
\end{subequations}
where
$U = \bigoplus_{g \in G} U_g$ is a decomposition of $U$ into its homogeneous components,
$C_{U_g, V}$ is the $\chi$-crossed braiding in $\mathcal{C}$, and $\theta_{U_g}$ is the $\chi$-crossed balancing isomorphism in $\mathcal{C}$.
\end{proposition}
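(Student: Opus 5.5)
By Proposition~\ref{prop:chi-crossed-br-strict} we may assume that $\mathcal C$ is strict: the associator and unitors, the isomorphisms $h_{U,V}$ and $h_{\mathbbm 1}$, the $T^{h_1,h_2}$ and $\eta$ are all identities. In this setting the equivariance condition \eqref{eq:equivar condition} reads $\gamma^{h_1h_2}_U=\gamma^{h_1}_U\circ{}^{h_1}\gamma^{h_2}_U$, and $\gamma_{U\otimes V}^h=\gamma^h_U\otimes\gamma^h_V$. Write $c_{U,V}$ and $\tilde\theta_U$ for the proposed braiding and balancing. By additivity of all structures and naturality \eqref{B1}, \eqref{BL1}, it suffices to check everything on homogeneous $U\in\mathcal C_a$ (more precisely on the restrictions of equivariant objects to homogeneous summands). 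The proof then verifies, in order, the items below.

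First, $c_{U,V}$ and $\tilde\theta_U$ are morphisms in $\mathcal C^H$, i.e.\ they commute with the $\gamma$'s. For $c$ we need, for each $h$,
\[
(\gamma^{h}_V\otimes\gamma^h_U)\circ{}^h\bigl((\gamma^{\chi_a}_V\otimes\id)\circ C_{U,V}\bigr)=(\gamma^{\chi_{h\rhd a}}_V\otimes\id)\circ C_{U,V}\circ(\gamma^h_U\otimes\gamma^h_V).
\]
Use \eqref{B1} to move $\gamma^h_U\otimes\gamma^h_V$ through $C$, which turns the right-hand side into $(\gamma^{\chi_{h\rhd a}}_V\circ{}^{\chi_{h\rhd a}}\gamma^h_V\otimes\gamma^h_U)\circ C_{{}^hU,{}^hV}$. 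Then apply \eqref{B4} to replace $C_{{}^hU,{}^hV}$ by ${}^hC_{U,V}$. Finally, the identity $\chi_{h\rhd a}h=h\chi_a$ and equivariance give $\gamma^{\chi_{h\rhd a}}\circ{}^{\chi_{h\rhd a}}\gamma^h=\gamma^{h\chi_a}=\gamma^h\circ{}^h\gamma^{\chi_a}$. The argument for $\tilde\theta$ is the same, using \eqref{BL3} in place of \eqref{B4}. Naturality of $c$ and $\tilde\theta$ in $\mathcal C^H$ follows from \eqref{B1}, \eqref{BL1} together with the fact that morphisms in $\mathcal C^H$ intertwine the $\gamma$'s.

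Second, the hexagon axioms. The axiom for $c_{U\otimes V,W}$ comes from \eqref{B2}: the degree of $U\otimes V$ is $ab$, and the composite $\gamma^{\chi_a}_W\circ{}^{\chi_a}\gamma^{\chi_b}_W=\gamma^{\chi_{ab}}_W$ is exactly what absorbs the factor $T^{\chi_a,\chi_b}$. To apply \eqref{B2} literally we need $C_{U,{}^{\chi_b}W}$ followed by $\gamma^{\chi_b}$; naturality \eqref{B1} allows us to commute $\gamma^{\chi_b}_W$ past $C_{U,-}$, which turns it into ${}^{\chi_a}\gamma^{\chi_b}_W$. The axiom for $c_{U,V\otimes W}$ comes from \eqref{B3} and $\gamma^{\chi_a}_{V\otimes W}=\gamma^{\chi_a}_V\otimes\gamma^{\chi_a}_W$.

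Third, the balancing axiom $\tilde\theta_{U\otimes V}=c_{V,U}c_{U,V}(\tilde\theta_U\otimes\tilde\theta_V)$ follows from \eqref{BL2}. Here one again uses naturality of $C$ to slide the $\gamma^{\chi_a}$ and $\gamma^{\chi_b}$ factors through the braidings, and the equivariance relation to collect them into $\gamma^{\chi_{ab}}$; note that $\chi_{aba^{-1}}\chi_a=\chi_{ab}$. Rigidity of $\mathcal C^H$ was already established above. The ribbon condition $\tilde\theta_{U^*}=(\tilde\theta_U)^*$ follows from \eqref{RB1} combined with the definition of $\gamma_{U^*}$ via $\nu$ and the relation $\gamma^{\chi_a}\circ{}^{\chi_a}\gamma^{\chi_a^{-1}}=\id$.

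The main obstacle is this last ribbon step. The dual $U^*$ has degree $a^{-1}$, so $\tilde\theta_{U^*}$ involves $\gamma^{\chi_a^{-1}}_{U^*}$, whereas \eqref{RB1} is phrased through ${}^{\chi_a}\theta_{U^*}$ and $\nu_{\chi_a,U}$. The plan here is to apply ${}^{\chi_a}(-)$ and then $\gamma^{\chi_a}_{U^*}$, and to unwind $\gamma_{U^*}=((\gamma_U)^*)^{-1}\circ\nu$, tracking the naturality of $\nu$ in $h$ with string diagrams in the strict setting.
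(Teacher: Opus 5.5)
Your proposal is correct and is essentially the paper's argument. The paper only cites the standard verification for $G$-equivariantisations of $G$-crossed ribbon categories \cite{EGNO15,GLM24b}, and your steps are exactly that verification adapted to $\chi$, with \eqref{B4}, \eqref{BL3} and $\chi_{h\rhd a}h=h\chi_a$ in place of the conjugation case. The ribbon step you flag closes quickly: equivariance of $\tilde\theta_{U^*}$ (your first step) gives $\tilde\theta_{U^*}={}^{\chi_a}\theta_{U^*}\circ(\gamma^{\chi_a}_{U^*})^{-1}$. Then \eqref{RB1} together with $\gamma^{\chi_a}_{U^*}=((\gamma^{\chi_a}_U)^*)^{-1}\circ\nu_{\chi_a,U}$ turns this into $\theta_U^*\circ(\gamma^{\chi_a}_U)^*=(\tilde\theta_U)^*$.
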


\begin{proof}
    The proof follows the arguments for the $G$-equivariantisation of a $G$-crossed ribbon category; see, for example,~\cite[Ch.\,8.24]{EGNO15} and~\cite[Sec.\,3.4]{GLM24b}.
\end{proof}

\begin{example}
Let $[\kappa; R] = [1,1,1;1]$ be the trivial class in $H^3_\mathrm{cm}(\chi; \mathbb{K}^\times)$. Unpacking the data of the $H$-equivariantisation $(\mathrm{vec}_G^{\kappa;R})^H$,
an object is a $G$-graded vector space $V = \bigoplus_{g \in G}V_g$ together with a family of isomorphisms $\{\gamma^h_V \colon {}^hV \to V\}_{h \in H}$, where $(^hV)_g \coloneqq V_{h^{-1}\rhd g}$ following our convention in~\eqref{eq: degree shifting notation}. Under this identification, one can see $V$ as a $G$-graded vector space equipped with an $H$-action. It is not hard to see that $(\mathrm{vec}_G^{\kappa;R})^H$ coincides precisely with the category $\mathcal{M}(\chi)$ of finite-dimensional representations of the crossed module $\chi$, as introduced 
in~\cite[Sec.\,3]{Ba05}, see also~\cite[Def.\,1.2.3]{Au25}.
It is further shown in~\cite{Au25} that $\mathcal{M}(\chi)$ is equivalent to the category of finite-dimensional representations of a quasitriangular Hopf algebra $D(G,H)$, whose underlying algebra is the smash product $\mathbb{K}^G \rtimes \mathbb{K}[H]$.
Another way to understand $(\mathrm{vec}_G^{\kappa;R})^H$ is via the functor category 
\begin{equation}
    \mathrm{Fun}(G \sslash H, \mathrm{vec}) \ ,
\end{equation}
whose objects are functors from the action groupoid $G \sslash H$ to $\mathrm{vec}$ and whose morphisms are natural transformations~\cite[Prop.\,1.2.4]{Au25}. Furthermore, the functor category $\mathrm{Fun}(G \sslash H, \mathrm{vec})$ naturally admits a braided monoidal structure. The tensor product is given on objects by
\begin{equation}
    (F_1 \otimes F_2)(g) = \bigoplus_{xy=g} F_1(x) \otimes F_2(y)
\end{equation}
and on morphisms via the diagonal $H$-action. The braiding $\beta_{F_1, F_2} \colon F_1 \otimes F_2 \xRightarrow{\sim} F_2 \otimes F_1$ is defined on the homogeneous components $F_1(x) \otimes F_2(y)$ by
\begin{equation}
    \beta(v \otimes w) = F_2\big(y \xrightarrow{\chi(x)} \chi(x) \rhd y\big)(w) \otimes v \ ,
\end{equation}
for $v \in F_1(x)$ and $w \in F_2(y)$. This braiding is well-defined because $\chi$ is a crossed module: the Peiffer identity ensures degree compatibility, and the other crossed module condition guarantees that $\beta_{F_1,F_2}$ is a valid natural transformation. It is a straightforward exercise to verify that with this structure, the identification $\mathrm{Fun}(G \sslash H, \mathrm{vec}) \simeq (\mathrm{vec}_G^{\kappa;R})^H$ is an equivalence of braided monoidal categories.
\end{example}

\section{Twisted-local modules}\label{section twisted-local mod}

In this section, we introduce doubly central algebras in a monoidal category and twisted-local modules over them (Definition~\ref{twisted local modules}).
We establish the conditions under which the category of twisted-local modules forms a $\chi$-crossed braided, balanced, or ribbon category (Theorem~\ref{twisted local module crossed braiding}, Proposition~\ref{twisted local module crossed balanced} and Theorem~\ref{twisted local module crossed ribbon}).
In Theorem~\ref{thm:inner-direct-sum-for-algebra-modules}
we give conditions on a doubly central algebra which ensure that every module can be uniquely decomposed into a direct sum of twisted local modules. As a class of examples, in Section~\ref{twisted local module examples} we discuss doubly central structures and twisted local modules for group algebras in graded vector spaces.

Doubly central algebras and their twisted local modules generalise the well-known construction of (twisted) local modules over a commutative algebra object in a braided tensor category~\cite{P95, Ki01a, Ki01b, KO01,Mc19}. See also~\cite{BJLP18, GLM24a} where modules over a $G$-equivariant algebra in the neutral component of a $G$-crossed braided category are studied.

\medskip

Throughout this section, we fix an abelian monoidal category $\mathcal{C}$ with right exact tensor product. We do not assume $\mathcal{C}$ to be linear, but note that $\End(\mathbbm{1})$ is automatically a commutative ring.
Without loss of generality we assume the monoidal structure of $\mathcal{C}$ to be strict. 

\subsection{Algebras and modules}\label{algebras and modules}

An object $A \in \mathcal{C}$ is called a \textit{unital associative algebra} (or simply an algebra, for brevity) if there exist a multiplication $\mu\colon A \otimes A \to A$ and a unit $\eta\colon \mathbbm{1} \to A$,
such that
\begin{equation}\label{eq: algbera def}
    \mu\circ(\mu\otimes \mathrm{id}_A) = \mu\circ(\mathrm{id}_A\otimes \mu) \ ,
\qquad 
    \mu\circ (\eta \otimes \mathrm{id}_A)=\mathrm{id}_A = \mu\circ (\mathrm{id}_A\otimes \eta) \ .
\end{equation}
If the ambient category $\mathcal{C}$ is braided, then an algebra $A$ is called \textit{commutative} if
\begin{equation}\label{eq: comm alg}
    \mu = \mu \circ C_{A,A} \ ,
\end{equation}
where $C_{A,A}$ is the braiding of $A$ with itself.

For a given algebra $A \in \mathcal{C}$, a \textit{left A-module} is an object $M\in \mathcal{C}$ with a morphism $\rho_M: A \otimes M \to M$ satisfying
\begin{equation}\label{eq: module def}
    \rho_M\circ(\mathrm{id}_A \otimes \rho_M)=\rho_M\circ(\mu\otimes\mathrm{id}_M) \ , \qquad
    \mathrm{id}_M = \rho_M\circ(\eta\otimes\mathrm{id}_M) \ .
\end{equation}

In the following, we introduce a diagrammatic presentation for algebras and their modules. 
The basic morphisms appearing above are depicted as follows:
\begin{equation}
\begin{tikzpicture}[baseline=0pt,font=\scriptsize,
  strand/.style={draw, line width=0.7pt}
]
\node[eq, scale=0.8] at (-1,0) {$\mu\,=$};
\begin{scope}[xshift=0cm, yshift=0cm, scale=0.7]
  \node at (-0.6,-1.0) {$A$};
  \node at ( 0.6,-1.0) {$A$};
  \draw[strand] (-0.6,-0.8) to[out=90,in=180] (0,0.2);
  \draw[strand] ( 0.6,-0.8) to[out=90,in=0] (0,0.2);
  \draw[strand] (0,0.2) -- (0,1.1);
  \node at (0,1.3) {$A$};
\end{scope}
\node[eq, scale=0.8] at (1.75,0) {$\eta\,=$};
\begin{scope}[xshift=2.5cm, yshift=0cm, scale=0.7]
  \node[open dot, inner sep=1.2pt] at (-0.2,-0.4) {};
  \draw[strand] (-0.2,-0.33) to (-0.2,0.6);
  \node at (-0.2,0.8) {$A$};
\end{scope}
\node[eq, scale=0.8] at (3.6,0) {$\rho_M\,=$};
\begin{scope}[xshift=4.4cm, yshift=0cm, scale=0.7]
  \node at (-0.4,-1.0) {$A$};
  \node at ( 0.6,-1.0) {$M$};
  \draw[strand, line width=0.7pt, -{Triangle[length=5pt,width=3pt]}]
  (-0.4,-0.8) to[out=90,in=190] (0.6,0.2);
  \draw[strand] ( 0.6,-0.8) to (0.6,1.1);
   \node at (0.6,1.3) {$M$};
\end{scope}
\node[eq, scale=0.8] at (6.5,0) {$C_{U,V}=$};
\begin{scope}[xshift=7.7cm, yshift=0cm, scale=0.7]
  \node at (-0.6,-1.0) {$U$};
  \node at ( 0.6,-1.0) {$V$};
  \draw[strand] (-0.6,-0.8) to (0.6,1.1);
  \draw[strand] ( 0.6,-0.8) to (0.1,0);
  \draw[strand] (-0.1,0.3) -- (-0.6,1.1);
  \node at (0.6,1.3) {$U$};
  \node at (-0.6,1.3) {$V$};
\end{scope}
\node[eq, scale=0.8] at (9.7,0) {$C^{-1}_{U,V}=$};
\begin{scope}[xshift=10.9cm, yshift=0cm, scale=0.7]
  \node at (-0.6,-1.0) {$V$};
  \node at ( 0.6,-1.0) {$U$};
  \draw[strand] (0.6,-0.8) to (-0.6,1.1);
  \draw[strand] (-0.6,-0.8) to (-0.1,0);
  \draw[strand] (0.1,0.3) -- (0.6,1.1);
  \node at (0.6,1.3) {$V$};
  \node at (-0.6,1.3) {$U$};
\end{scope}
\end{tikzpicture}
\end{equation}
The defining conditions~\eqref{eq: algbera def} for algebras read:
\begin{equation}\label{eq: algbera def graph}
\begin{tikzpicture}[baseline=0pt,font=\scriptsize,
  strand/.style={draw, line width=0.7pt}]
\begin{scope}[xshift=0cm, yshift=0cm, scale=0.7]
  \node at (-0.8,-1.0) {$A$};
  \node at ( 0.0,-1.0) {$A$};
  \node at ( 0.8,-1.0) {$A$};
  \draw[strand] (-0.8,-0.8) to[out=90,in=180] (-0.4,-0.1);
  \draw[strand] (0.0,-0.8) to[out=90,in=0] (-0.4,-0.1);
  \draw[strand] (-0.4,-0.1) to[out=90,in=180] (0.2,0.6);
  \draw[strand] (0.8,-0.8) to[out=90,in=0] (0.2,0.6);
  \draw[strand] (0.2,0.6) -- (0.2,1.1);
  \node at (0.2,1.3) {$A$};
\end{scope}
\node[eq] at (1.25,0) {$=$};
\begin{scope}[xshift=2.5cm, yshift=0cm, scale=0.7]
  \node at (-0.8,-1.0) {$A$};
  \node at (0.0,-1.0) {$A$};
  \node at (0.8,-1.0) {$A$};
  \draw[strand] (0.8,-0.8) to[out=90,in=0] (0.4,-0.1);
  \draw[strand] (0.0,-0.8) to[out=90,in=180] (0.4,-0.1);
  \draw[strand] (0.4,-0.1) to[out=90,in=0] (-0.2,0.6);
  \draw[strand] (-0.8,-0.8) to[out=90,in=180] (-0.2,0.6);
  \draw[strand] (-0.2,0.6) -- (-0.2,1.1);
  \node at (-0.2,1.3) {$A$};
\end{scope}
\begin{scope}[xshift=6.5cm, yshift=0cm, scale=0.7]
\node[open dot, inner sep=1.2pt] at (-0.6,-0.5) {};
  \node at ( 0.6,-1.0) {$A$};
  \draw[strand] (-0.6,-0.43) to[out=90,in=180] (0,0.2);
  \draw[strand] ( 0.6,-0.8) to[out=90,in=0] (0,0.2);
  \draw[strand] (0,0.2) -- (0,1.1);
  \node at (0,1.3) {$A$};
\end{scope}
\node[eq] at (7.5,0) {$=$};
\begin{scope}[xshift=8.1cm, yshift=0cm, scale=0.7]
  \node at (0,-1.0) {$A$};
  \draw[strand] (0,-0.8) -- (0,1.1);
  \node at (0,1.3) {$A$};
\end{scope}
\node[eq] at (8.7,0) {$=$};
\begin{scope}[xshift=9.7cm, yshift=0cm, scale=0.7]
  \node[open dot, inner sep=1.2pt] at (0.6,-0.5) {};
  \node at (-0.6,-1.0) {$A$};
  \draw[strand] (0.6,-0.43) to[out=90,in=0] (0,0.2);
  \draw[strand] (-0.6,-0.8) to[out=90,in=180] (0,0.2);
  \draw[strand] (0,0.2) -- (0,1.1);
  \node at (0,1.3) {$A$};
\end{scope}
\end{tikzpicture}
\end{equation}
The conditions~\eqref{eq: module def} for modules are presented as:
\begin{equation}
\begin{tikzpicture}[baseline=0pt,font=\scriptsize,
  strand/.style={draw, line width=0.7pt}]
\begin{scope}[xshift=0cm, yshift=0cm, scale=0.7]
  \node at (-1.4,-1.0) {$A$};
  \node at (-1,-1.0) {$A$};
  \node at ( 0,-1.0) {$M$};
  \draw[strand,line width=0.7pt,-{Triangle[length=5pt,width=3pt]}] (-1,-0.8) to[out=90,in=190] (0,0.2);
  \draw[strand,line width=0.7pt,-{Triangle[length=5pt,width=3pt]}] (-1.4,-0.8) to[out=90,in=190] (0,0.6);
  \draw[strand] ( 0,-0.8) to (0,1.1);
   \node at (0,1.3) {$M$};
\end{scope}
\node at (0.7,0) {$=$};
\begin{scope}[xshift=2.5cm, yshift=0cm, scale=0.7]
  \node at (-1.4,-1.0) {$A$};
  \node at (-0.6,-1.0) {$A$};
  \node at (0,-1.0) {$M$};
  \draw[strand] (-1.4,-0.8) to[out=90,in=180] (-1,0);
  \draw[strand] (-0.6,-0.8) to[out=90,in=0] (-1,0);  
  \draw[strand,line width=0.7pt,-{Triangle[length=5pt,width=3pt]}] (-1,0) to[out=90,in=190] (0,0.6);
  \draw[strand] ( 0,-0.8) to (0,1.1);
   \node at (0,1.3) {$M$};
\end{scope}
\begin{scope}[xshift=5cm, yshift=0cm, scale=0.7]
  \node at (0.6,-1.0) {$M$};
  \draw[strand] ( 0.6,-0.8) to (0.6,1.1);
  \node at (0.6,1.3) {$M$};
\end{scope}
\node at (6.3,0) {$=$};
\begin{scope}[xshift=7.3cm, yshift=0cm, scale=0.7]
  \node[open dot, inner sep=1.2pt] at (-0.4,-0.5) {};
  \node at (0.6,-1.0) {$M$};
  \draw[strand,line width=0.7pt,-{Triangle[length=5pt,width=3pt]}] (-0.4,-0.43) to[out=90,in=190] (0.6,0.2);
  \draw[strand] ( 0.6,-0.8) to (0.6,1.1);
   \node at (0.6,1.3) {$M$};
\end{scope}
\end{tikzpicture}
\end{equation}
The commutativity condition~\eqref{eq: comm alg} is expressed as:
\begin{equation}
    \begin{tikzpicture}[baseline=0pt,font=\scriptsize,
  strand/.style={draw, line width=0.7pt}]
        \begin{scope}[xshift=10.3cm, yshift=0cm, scale=0.7]
  \node at (-0.6,-1.0) {$A$};
  \node at ( 0.6,-1.0) {$A$};
  \draw[strand] (-0.6,-0.8) to[out=90,in=180] (0,0.2);
  \draw[strand] ( 0.6,-0.8) to[out=90,in=0] (0,0.2);
  \draw[strand] (0,0.2) -- (0,1.1);
  \node at (0,1.3) {$A$};
\end{scope}
\node[eq] at (11.3,0) {$=$};
\begin{scope}[xshift=12.3cm, yshift=0cm, scale=0.7]
  \node at (-0.6,-1.0) {$A$};
  \node at ( 0.6,-1.0) {$A$};
  \draw[strand] (-0.6,-0.8) to[out=90,in=225] (0,-0.2);
  \draw[strand] (0.6,-0.8) to[out=90,in=-45] (0.1,-0.3);
  \draw[strand] (0,-0.2) to[out=45,in=0] (0,0.7);
  \draw[strand] (-0.1,-0.1) to[out=135,in=180] (0,0.7);
  \draw[strand] (0,0.7) -- (-0,1.1);
  \node at (0,1.3) {$A$};
\end{scope}
\end{tikzpicture}
\end{equation}

Let $A$ be an algebra in $\mathcal{C}$. Suppose there exists a commutative algebra $\overline{A}$ in the Drinfeld centre $Z(\mathcal{C})$ of $\mathcal{C}$ whose underlying algebra structure forgets to $A$.
For any $U \in \mathcal{C}$, we denote by $\overline{\sigma}_U\colon A \otimes U \to U \otimes A$ the half-braiding associated with $\overline{A}$.
Such a pair $(A, \overline \sigma)$ is also called a \textit{central algebra} in $\mathcal C$~\cite[Def.\,5.1]{LW22}.
By definition we have
\begin{subequations}\label{eq: algebra compatible half braiding}
    \begin{gather}
         \overline{\sigma}_U \circ (\eta \otimes \mathrm{id}_U) = \mathrm{id}_U \otimes \eta \ , \label{eq: algebra compatible half braiding a}\\
         \overline{\sigma}_U \circ (\mu \otimes \mathrm{id}_U) = (\mathrm{id}_U \otimes \mu) \circ (\overline{\sigma}_U \otimes \mathrm{id}_A) \circ (\mathrm{id}_A \otimes \overline{\sigma}_U) \ . \label{eq: algebra compatible half braiding b}
    \end{gather}
\end{subequations}
We depict $\overline{\sigma}_U$ and its inverses as follows:
\begin{equation}
\begin{tikzpicture}[baseline=0pt,font=\scriptsize,
  strand/.style={draw, line width=0.7pt}]
\node[scale=1.3] at (-1.2,0) {$\overline{\sigma}_U \ = $};
\begin{scope}[xshift=0cm, yshift=0cm, scale=0.7]
  \node at (-0.6,-1.0) {$A$};
  \node at ( 0.6,-1.0) {$U$};
  \draw[strand] (-0.6,-0.8) to (0.6,1.1);
  \draw[strand] ( 0.6,-0.8) to (0.15,-0.08);
  \draw[strand] (-0.15,0.38) -- (-0.6,1.1);
  \node[circle,fill=black,inner sep=1.4pt] at (0,0.15) {};
  \node at (0.6,1.3) {$A$};
  \node at (-0.6,1.3) {$U$};
\end{scope}
\node[scale=1.3] at (2.3,0) {$\overline{\sigma}^{-1}_U \ = $};
\begin{scope}[xshift=3.5cm, yshift=0cm, scale=0.7]
  \node at (-0.6,-1.0) {$U$};
  \node at ( 0.6,-1.0) {$A$};
  \draw[strand] (0.6,-0.8) to (-0.6,1.1);
  \draw[strand] (-0.6,-0.8) to (-0.15,-0.08);
  \draw[strand] (0.15,0.38) -- (0.6,1.1);
  \node[circle,fill=black,inner sep=1.4pt] at (0,0.15){};
  \node at (0.6,1.3) {$U$};
  \node at (-0.6,1.3) {$A$};
\end{scope}
\end{tikzpicture}
\end{equation}
Equation~\eqref{eq: algebra compatible half braiding} then reads
\begin{equation}\label{eq: algebra compatible half braiding pic}
\begin{tikzpicture}[baseline=0pt,font=\scriptsize,
  strand/.style={draw, line width=0.7pt},
  eq/.style={font=\normalsize},
  open dot/.style={circle,draw,fill=white,inner sep=1.2pt}
]
\begin{scope}[xshift=0cm, yshift=0cm, scale=0.75]
  \node[open dot] at (-0.6,-0.55) {};
  \node at (0.6,-1.0) {$U$};
  \draw[strand] (-0.57,-0.48) -- (0.6,1.1);
  \draw[strand] (0.6,-0.8) -- (0.085,0.015);
  \draw[strand] (-0.155,0.395) -- (-0.6,1.1);
  \node[circle,fill=black,inner sep=1.4pt] at (-0.047,0.197) {};
  \node at (-0.6,1.3) {$U$};
  \node at (0.6,1.3) {$A$};
\end{scope}

\node[eq] at (1.2,0) {$=$};

\begin{scope}[xshift=2.6cm, yshift=0cm, scale=0.75]
  \node at (-0.6,-1.0) {$U$};
  \draw[strand] (-0.6,-0.8) -- (-0.6,1.1);
  \node at (-0.6,1.3) {$U$};

  \node[open dot] at (0.6,-0.55) {};
  \draw[strand] (0.6,-0.48) -- (0.6,1.1);
  \node at (0.6,1.3) {$A$};
\end{scope}

\begin{scope}[xshift=6cm, yshift=0cm, scale=0.75]
  \node at (-0.8,-1.0) {$A$};
  \node at (0.0,-1.0) {$A$};
  \node at (0.8,-1.0) {$U$};
  \draw[strand] (-0.8,-0.8) to[out=90,in=180] (-0.4,-0.15);
  \draw[strand] (0.0,-0.8) to[out=90,in=0] (-0.4,-0.15);
  \draw[strand] (-0.4,-0.15) -- (0.6,1.1);
  \draw[strand] (0.8,-0.8) -- (0.113,0.132);
  \draw[strand] (-0.153,0.493) -- (-0.6,1.1);
  \node[circle,fill=black,inner sep=1.4pt] at (-0.05,0.29) {};
  \node at (-0.6,1.3) {$U$};
  \node at (0.6,1.3) {$A$};
\end{scope}

\node[eq] at (7.4,0) {$=$};

\begin{scope}[xshift=9cm, yshift=0cm, scale=0.75]
  \node at (-0.9,-1.0) {$A$};
  \node at ( 0.0,-1.0) {$A$};
  \node at ( 0.9,-1.0) {$U$};

  \coordinate (c1) at ( 0.42,-0.192); 
  \coordinate (c2) at (-0.18, 0.568); 
  \coordinate (m)  at ( 0.5, 0.8);  

  \draw[strand] ( 0.9,-0.8) -- ( 0.54,-0.344);
  \draw[strand] ( 0.30,-0.040) -- (-0.06, 0.416);
  \draw[strand] (-0.30, 0.720) -- (-0.6, 1.1);

  \draw[strand]
    (0.0,-0.8)
    to[out=90,in=-150] (c1)
    to[out=30,in=0] (m);

  \draw[strand]
    (-0.9,-0.8)
    to[out=90,in=-150] (c2)
    to[out=30,in=180] (m);

  \draw[strand]
    (0.65,0.75) to[out=60,in=-90] (0.8,1.1);

  \node[circle,fill=black,inner sep=1.4pt] at (c1) {};
  \node[circle,fill=black,inner sep=1.4pt] at (c2) {};

  \node at (-0.6,1.3) {$U$};
  \node at (0.8,1.3) {$A$};
\end{scope}
\end{tikzpicture}
\end{equation}

Denote by $_{A}\mathcal{C}$ the category of left $A$-modules and $A$-intertwiners in $\mathcal C$. We define a bifunctor
\begin{gather}\label{monoidal str on _AC}
        \otimes_A\colon {}_A\mathcal{C} \times {}_A\mathcal{C} \to {}_A\mathcal{C} \ , \qquad
        (M, N) \mapsto M \otimes_A N \ , \qquad
        (f, g) \mapsto f \otimes_A g \ ,
\end{gather}
where $(M \otimes_A N, \pi_{M,N})$ is the coequaliser
\begin{equation}\label{eq: tensor over A}
\begin{tikzcd}[column sep = 9em]
A \otimes M \otimes N \ar[r,shift left=.75ex,"\rho_M \otimes \mathrm{id}_N"]
  \ar[r,shift right=.75ex,swap,"(\mathrm{id}_M \otimes \rho_N) \circ (\overline{\sigma}_{M} \otimes \mathrm{id}_N)"]
&
M \otimes N \ar[r,"\pi_{M,N}"]
&
M \otimes_A N
\end{tikzcd},
\end{equation}
or equivalently, since $\mathcal C$ is abelian,
\begin{equation}\label{A-module monoidal structure}
    M \otimes_A N = \mathrm{Coker}\big(\rho_M \otimes \mathrm{id}_N-(\mathrm{id}_M \otimes \rho_N) \circ (\overline{\sigma}_{M} \otimes \mathrm{id}_N)\big) \ ,
\end{equation}
and $f \otimes_A g$ is the unique morphism such that the following diagram commutes:
\begin{equation}\label{morphism tensor product}
\begin{tikzcd}[column sep = 4em, row sep = 1.5em]
M\otimes N \ar[r, "\pi_{M,N}"]\ar[d, "f \otimes g"]
&
M \otimes_A N \ar[d, swap, dashed, "\exists ! \, f \otimes_A g"]\\
M'\otimes N' \ar[r, "\pi_{M',N'}"]
&
M'\otimes_A N'
\end{tikzcd}
\end{equation}
The $A$-module structure on $M \otimes_A N$ is given by the unique morphism $\rho_{M \otimes_A N}$ making the following diagram commute:
\begin{equation}
\begin{tikzcd}
	{A \otimes M \otimes N} && {A \otimes M\otimes_A N} \\
	{M \otimes N} && {M \otimes_A N}
	\arrow["{\mathrm{id}_A \otimes \pi_{M,N}}", from=1-1, to=1-3]
	\arrow["{\rho_M \otimes \mathrm{id}_N}", from=1-1, to=2-1]
	\arrow["{\exists ! \, \rho_{M \otimes_A N}}"', dashed, from=1-3, to=2-3]
	\arrow["{\pi_{M,N}}", from=2-1, to=2-3]
\end{tikzcd}
\end{equation}
Indeed, since $\mathcal C$ has right exact tensor product, the functor $A\otimes -:\mathcal C\to\mathcal C$ is right exact, hence $\mathrm{id}_A \otimes \pi_{M,N}$ is again a coequaliser.
Therefore, the existence of $\rho_{M\otimes_A N}$ follows from the fact that $\pi_{M,N}\circ(\rho_M\otimes \mathrm{id}_N)$ coequalises \begin{equation}
\begin{tikzcd}[column sep = 11em]
A \otimes A \otimes M \otimes N \ar[r,shift left=.75ex,"\id_A \otimes \rho_M \otimes \mathrm{id}_N"]
  \ar[r,shift right=.75ex,swap,"\id_A \otimes \bigl((\mathrm{id}_M \otimes \rho_N) \circ (\overline{\sigma}_{M} \otimes \mathrm{id}_N)\bigr)"]
&
A \otimes M \otimes N
\end{tikzcd},
\end{equation}
which is a consequence of~\eqref{eq: algebra compatible half braiding b}.
One can further check that this is indeed a module action. This makes $\pi_{M,N}$ an $A$-intertwiner, and hence $f \otimes_A g$ defined by~\eqref{morphism tensor product} is also $A$-linear.
For further details on the category of $A$-modules, see, for example,~\cite{FS01, O01}.

Although the following result is well known~\cite{Sch01},
we include a sketch of the proof for the reader’s convenience.
\begin{proposition}\label{monoidal structure on A-mod}
     Let $(A, \overline{\sigma})$ be a
     central algebra in $\mathcal C$. Then $_{A}\mathcal{C}$
     is an abelian monoidal category with tensor product $\otimes_A$ and tensor unit the regular module $A$.
\end{proposition}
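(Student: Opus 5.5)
The plan is to identify ${}_A\mathcal{C}$ with a category of $A$-bimodules, where the monoidal structure is standard, and to transport everything back. Given a left module $(M,\rho_M)$, I define a right action by
\begin{equation*}
r_M \coloneqq \rho_M\circ\overline{\sigma}_M^{-1}\colon M\otimes A\to M \ .
\end{equation*}
This turns $M$ into an $A$-bimodule.

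Right associativity follows from \eqref{eq: algebra compatible half braiding b} after inverting: $\overline{\sigma}^{-1}_M\circ(\mathrm{id}_M\otimes\mu) = (\mu\otimes \mathrm{id}_M)\circ(\mathrm{id}_A\otimes\overline{\sigma}_M^{-1})\circ(\overline{\sigma}_M^{-1}\otimes\mathrm{id}_A)$. Using this and commutativity of $\overline{A}$ in $Z(\mathcal{C})$, i.e.\ $\mu\circ\overline{\sigma}_A=\mu$, one rewrites $r_M\circ(r_M\otimes\mathrm{id}_A)$ as $\rho_M\circ(\mu\otimes\mathrm{id}_M)\circ(\overline{\sigma}_A^{-1}\ldots)$ and obtains $r_M\circ(\mathrm{id}_M\otimes\mu)$. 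The unit axiom uses \eqref{eq: algebra compatible half braiding a}.

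The bimodule compatibility $\rho_M\circ(\mathrm{id}_A\otimes r_M)=r_M\circ(\rho_M\otimes\mathrm{id}_A)$ needs naturality of $\overline{\sigma}$ with respect to $\rho_M$. Namely, $\overline{\sigma}_M\circ(\mathrm{id}_A\otimes\rho_M)=(\rho_M\otimes\mathrm{id}_A)\circ\overline{\sigma}_{A\otimes M}$, together with the tensor-compatibility $\overline{\sigma}_{A\otimes M}=(\mathrm{id}_A\otimes\overline{\sigma}_M)\circ(\overline{\sigma}_A\otimes\mathrm{id}_M)$ of a half-braiding. Combined with commutativity, this gives the bimodule condition. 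I expect this to be the main obstacle, since it is exactly where commutativity of $\overline{A}$ enters, and it is cleanest in string diagrams.

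With this, the coequaliser \eqref{eq: tensor over A} is precisely the usual bimodule tensor product $M\otimes_A N$, since $(\mathrm{id}_M\otimes\rho_N)\circ(\overline{\sigma}_M\otimes\mathrm{id}_N)$ agrees with $r_M\otimes\mathrm{id}_N$ after precomposing with the isomorphism $\overline{\sigma}_M\otimes\mathrm{id}_N$. Any $A$-linear map is automatically a bimodule map by naturality of $\overline{\sigma}$, so ${}_A\mathcal{C}$ is a full monoidal subcategory of the category of $A$-bimodules. The subcategory is closed under $\otimes_A$ because $\pi_{M,N}$ is left $A$-linear.

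I then take the associator from the universal property. Since $\otimes$ is right exact, $\pi_{M,N}\otimes\mathrm{id}_L$ and $\mathrm{id}_M\otimes\pi_{N,L}$ are epimorphisms and again coequalisers. Hence $(M\otimes_A N)\otimes_A L$ and $M\otimes_A(N\otimes_A L)$ are both quotients of $M\otimes N\otimes L$ by the same relations, which gives a unique comparison isomorphism. The pentagon holds because all maps are induced from identities on $M\otimes N\otimes L\otimes P$, using that iterated $\pi$'s are epi.

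The unitors are induced by $\rho_N\colon A\otimes N\to N$, which exhibits $N$ as the coequaliser for $A\otimes_A N$ by the standard split-fork argument with splitting $\eta\otimes\mathrm{id}$. On the other side, $r_M\colon M\otimes A\to M$ plays the same role. The triangle identity again follows from uniqueness.

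Finally, ${}_A\mathcal{C}$ is abelian: it is the category of modules over the right exact monad $A\otimes-$, so kernels and cokernels are computed in $\mathcal{C}$. Biadditivity of $\otimes_A$ follows from that of $\otimes$ and additivity of cokernels.
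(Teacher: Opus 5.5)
Your proposal is correct and takes essentially the paper's route. Like the paper, you embed ${}_A\mathcal{C}$ fully into $A$-bimodules via the right action $\rho_M\circ\overline{\sigma}_M^{-1}$, identify the coequaliser defining $\otimes_A$ with the bimodule relative tensor product, and get abelianness from the Eilenberg--Moore category of the right exact monad $A\otimes-$. The only difference is that you check by hand what the paper imports from Schauenburg: bimodule compatibility, the associator and unitors via right exactness and split forks, and the use of commutativity of $\overline{A}$, which is genuinely needed already for associativity of your right action $r_M$. One small slip in the coequaliser comparison: it is $\rho_M\otimes\mathrm{id}_N$ that equals $(r_M\otimes\mathrm{id}_N)\circ(\overline{\sigma}_M\otimes\mathrm{id}_N)$, while $(\mathrm{id}_M\otimes\rho_N)\circ(\overline{\sigma}_M\otimes\mathrm{id}_N)$ is $\mathrm{id}_M\otimes\rho_N$ precomposed with that isomorphism.
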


\begin{proof}
${}_A\mathcal{C}$ is canonically equivalent to the Eilenberg--Moore category of the monad
\begin{equation}
    T=\bigl(A\otimes -,\, \mu\otimes \mathrm{id},\, \eta\otimes \mathrm{id}\bigr)
\end{equation}
on $\mathcal{C}$. Since $A\otimes -$ is right exact, $_A\mathcal{C}$ is abelian~\cite[Prop.\,5.3]{EM65}.

We next check the monoidal structure. Every left $A$-module $M$ admits a right
$A$-module structure with the right action
$\overline{\lambda}_M \colon M\otimes A \to M$ defined by
\begin{equation}\label{right module}
    \overline{\lambda}_M \;\coloneqq\; \rho_M \circ \overline{\sigma}_M^{-1} \ .
\end{equation}
One can check that $\overline{\lambda}_M$ satisfies the axioms of a right $A$-action; in particular, the associativity of $\overline{\lambda}_M$ follows from that of $\rho_M$, combined with the compatibility of the half-braiding $\overline{\sigma}$ with the multiplication of $A$~\eqref{eq: algebra compatible half braiding b}. This yields
a full embedding of ${}_A\mathcal{C}$ into the category of $A$-$A$ bimodules in $\mathcal{C}$:
\begin{equation}
    \iota \colon {}_A\mathcal{C} \hookrightarrow {}_A\mathcal{C}_A \ .
\end{equation}

It is well known that ${}_A\mathcal{C}_A$ is monoidal under the relative tensor product
$\otimes'_A$,
with tensor unit the regular $A$-$A$-bimodule, whose left and
right actions are both given by the multiplication of~$A$; see, for
instance,~\cite[Sec.\,2.2]{Sch01}.
Since $(A, \overline \sigma)$ is commutative in $Z(\mathcal C)$, $\iota$ sends the regular left $A$-module to the regular $A$-$A$-bimodule, and hence the tensor unit of ${}_A\mathcal C_A$. Moreover, for any $M,N\in{}_A\mathcal{C}$, the tensor product
$\iota(M)\otimes'_A \iota(N)$ in ${}_A\mathcal{C}_A$ is given by the coequaliser
\begin{equation}
\mathrm{Coker}\bigl(\overline{\lambda}_M\otimes\mathrm{id}_N-\mathrm{id}_M\otimes\rho_N\bigr) \ ,
\end{equation}
which is canonically isomorphic to $M\otimes_A N$ as defined in~\eqref{A-module monoidal structure},
via the unique isomorphism induced by $\mathrm{id}\colon M\otimes N \to M\otimes N$.
Finally, because ${}_A\mathcal{C}$ is a full subcategory of ${}_A\mathcal{C}_A$, the unitors and associators in $_A\mathcal{C}$ are simply given by those in ${}_A\mathcal{C}_A$. 
We can therefore identify ${}_A\mathcal{C}$ with a monoidal subcategory of ${}_A\mathcal{C}_A$~\cite[Lem.\,4.1]{Sch01}.
\end{proof}

For later use, we also give the unitors and the associator:
\begin{itemize}
\item The left unitor $l_M\colon A\otimes_A M \to M$ is the unique morphism making the following diagram commute:
\begin{equation}\label{left unitor}
\begin{tikzcd}[column sep = 4em, row sep = 1.5em]
A\otimes M \ar[r,"{\pi_{A,M}}"] \ar[d,swap,"{\rho_M}"] &
A\otimes_A M \ar[dl,dashed,"{\exists !\,l_M}"] \\
M &
\end{tikzcd}
\end{equation}

\item The right unitor $r_M\colon M\otimes_A A \to M$ is the unique morphism making the following diagram commute:
\begin{equation}\label{right unitor}
\begin{tikzcd}[column sep = 4em, row sep = 1.5em]
M\otimes A \ar[r,"{\pi_{M,A}}"] \ar[d,"{\overline{\sigma}_M^{-1}}"] &
M\otimes_A A \ar[d, swap, dashed,"{\exists !\, r_M}"] \\
A\otimes M \ar[r,"{\rho_M}"] &
M
\end{tikzcd}
\end{equation}

\item the associator $\alpha_{M,N,L}\colon M \otimes_A (N \otimes_A L) \to (M \otimes_A N) \otimes_A L$ is the unique morphism making the following diagram commute:
\begin{equation}\label{associator Amod}
\begin{tikzcd}
	{M \otimes N \otimes L} &&&& {M \otimes_A (N \otimes_A L)} \\
	{M \otimes N \otimes L} &&&& {(M \otimes_A N) \otimes_A L}
	\arrow["{\pi_{M,N\otimes_A L} \circ (\mathrm{id}_M \otimes \pi_{N,L})}", from=1-1, to=1-5]
	\arrow[equals, from=1-1, to=2-1]
	\arrow["{\exists ! \, \alpha_{M,N,L}}"', dashed, from=1-5, to=2-5]
	\arrow["{\pi_{M \otimes_A N, L} \circ (\pi_{M,N} \otimes \mathrm{id}_L)}", from=2-1, to=2-5]
\end{tikzcd}
\end{equation}
\end{itemize}

\begin{example}\label{ex:full-centre}
    Let $A$ be an algebra in $\mathcal{C}$. Its full centre $Z(A)\in Z(\mathcal{C})$, introduced in~\cite{Sch04,FFRS06,Da09}, is
    a commutative algebra in $Z(\mathcal{C})$ whenever it exists. In the case where $\mathcal{C}$ is a finite (multi)tensor category over any field $\mathbb{K}$, the existence of $Z(A)$
    for any algebra $A \in \mathcal C$
    is proven in~\cite[Thm.\,3.25]{FGSW26}, where an explicit
    formula of $Z(A)$ is also given.
    For example, when $A=\mathbbm{1}$ is the tensor unit, its full centre $Z(\mathbbm 1)$ is given by the end
    \begin{equation}
        \int_{X\in\mathcal C} X\otimes X^* \ ,
    \end{equation}
    equipped with its canonical half-braiding. Via the monoidal forgetful functor
    \begin{equation}
        U\colon Z(\mathcal{C})\longrightarrow \mathcal{C} \ ,
    \end{equation}
    the object $UZ(A)$ is an algebra in $\mathcal C$. Together with the half-braiding inherited from $Z(A)$, this gives a central algebra in $\mathcal C$. However, the underlying algebra $UZ(A)$ need not be commutative as an algebra in $\mathcal C$, even when $\mathcal C$ is braided.

Assume now that $\mathcal C$ is a finite multitensor category with enough
projective objects, that $A$ as regular right $A$-module is indecomposable, and that the category $\mathcal C_A$ of right $A$-modules in $\mathcal C$ is an exact left $\mathcal C$-module category in  the sense of~\cite[Def.\,7.5.1]{EGNO15}. Denote by $\mathrm{Rex}(\mathcal C_A)$ the category of right exact endofunctors of $\mathcal C_A$. There is an equivalence of
categories
\begin{equation}
        {}_{UZ(A)}\mathcal C
        \simeq
        \mathrm{Rex}(\mathcal C_A) \ .
\end{equation}
Indeed, this follows from the monadicity of the right adjoint to the action functor
\begin{equation}
        \mathcal C \longrightarrow \mathrm{Rex}(\mathcal C_A) \ ,
        \qquad
        X\longmapsto X\otimes - \ ,
\end{equation}
together with the identification of the corresponding adjoint algebra
with the full centre $Z(A)$, see \cite[Thm.\,3.4]{Sh18} and \cite[Thm.\,3.25]{FGSW26}. It would be interesting to compare the monoidal structure on ${}_{UZ(A)}\mathcal C$ induced by the central algebra structure of $UZ(A)$, as in Proposition~\ref{monoidal structure on A-mod}, with the natural monoidal structure on $\mathrm{Rex}(\mathcal C_A)$ given by composition of endofunctors.
\end{example}

\subsection{Twisted-local modules}\label{twisted-local modules}

We next define twisted-local modules by comparing the right $A$-module structures induced on a left $A$-module by two half-braidings of $A$, with one of these structures twisted by an automorphism of $A$.

\begin{definition}
    A \textit{doubly central
    algebra} in $\mathcal{C}$ is a triple $(A, \overline{\sigma}, \widetilde{\sigma})$, where $A \in \mathcal C$ is an algebra,
    and $\overline{\sigma}, \widetilde{\sigma}$ are half-braidings of $A$ such that $\overline{A} = (A, \overline{\sigma})$ and $\widetilde{A} = (A, \widetilde{\sigma})$ are commutative algebras in $Z(\mathcal{C})$ whose underlying algebra structures forget to $A$.
\end{definition}

For any $U \in \mathcal C$, we further depict
$\widetilde{\sigma}_U$, together with its inverses, as follows:
\begin{equation}
\begin{tikzpicture}[baseline=0pt,font=\scriptsize,
  strand/.style={draw, line width=0.7pt}]
\node[scale=1.3] at (5.8,0) {$\widetilde{\sigma}_U \ = $};
\begin{scope}[xshift=7cm, yshift=0cm, scale=0.7]
  \node at (-0.6,-1.0) {$A$};
  \node at ( 0.6,-1.0) {$U$};
  \draw[strand] (-0.6,-0.8) to (0.6,1.1);
  \draw[strand] ( 0.6,-0.8) to (0.15,-0.08);
  \node[open dot, inner sep=1.5pt] at (0,0.15) {};
  \draw[strand] (-0.15,0.38) -- (-0.6,1.1);
  \node at (0.6,1.3) {$A$};
  \node at (-0.6,1.3) {$U$};
\end{scope}
\node[scale=1.3] at (9.3,0) {$\widetilde{\sigma}^{-1}_U \ = $};
\begin{scope}[xshift=10.5cm, yshift=0cm, scale=0.7]
  \node at (-0.6,-1.0) {$U$};
  \node at ( 0.6,-1.0) {$A$};
  \draw[strand] (0.6,-0.8) to (-0.6,1.1);
  \draw[strand] (-0.6,-0.8) to (-0.15,-0.05);
  \node[open dot, inner sep=1.5pt] at (0,0.15) {};
  \draw[strand] (0.15,0.38) -- (0.6,1.1);
  \node at (0.6,1.3) {$U$};
  \node at (-0.6,1.3) {$A$};
\end{scope}
\end{tikzpicture}
\end{equation}
Similarly to~\eqref{right module}, for any left $A$-module $M \in {}_A\mathcal{C}$, $\widetilde{A}$ induces another right $A$-module structure on $M$ with the right $A$-action given by
\begin{equation}
    \widetilde{\lambda}_M = \rho_M \circ \widetilde{\sigma}_M^{-1} \ .
\end{equation}

\begin{example}\label{ex:sweedler}
Let $\mathcal C=H\text{-}\mathrm{mod}$, where $H$ is the Sweedler Hopf
algebra. In~\cite[Sec.\,5]{FGSW26}, a family of
exact $\mathcal C$-module categories $\{\mathcal M_\xi\}_{\xi\in\mathbb C}$ is considered. As linear categories, they are all equivalent to 
$\mathrm{vec}_{\mathbb C}$,
but they carry inequivalent
$\mathcal C$-module structures. For each $\xi$, there is an algebra $B_\xi\in\mathcal C$ such that
\begin{equation}
    \mathcal M_\xi
    \simeq
    \mathcal C_{B_\xi}
\end{equation}
as $\mathcal C$-module categories. It is shown in~\cite[Sec.\,5.3]{FGSW26} that their full centres,
\begin{equation}
    Z(B_\xi) \in Z(\mathcal C)
\end{equation}
are non-isomorphic, even as objects of $Z(\mathcal C)$. On the other hand, their images under the forgetful functor $U\colon Z(\mathcal C)\longrightarrow\mathcal C$ are all isomorphic as algebras in $\mathcal C$.
Comparing to Example \ref{ex:full-centre}, we see that
$A \coloneqq UZ(B_\xi)\in\mathcal C$ admits
infinitely many central algebra structures, and so in particular $A$ is a doubly central algebra in infinitely many ways.
\end{example}

\begin{definition}\label{def:act-on-doubly}
Let $G$ be a group and denote by $\Aut_{\mathcal C}(A)$ the group of automorphisms of $A$ as an object of $\mathcal C$. A \textit{$G$-action on a doubly central algebra} $(A, \overline{\sigma}, \widetilde{\sigma})$ is a group homomorphism
\begin{equation}
    \phi\colon G \to \mathrm{Aut}_\mathcal{C}(A)
\end{equation}
such that for each $g \in G$, 
\begin{equation}
    \phi_g \coloneqq \phi(g)
\end{equation}
lifts to an automorphism of objects in $Z(\mathcal{C})$ for both $\overline{A}$ and $\widetilde{A}$,
i.e.\ for any $U \in \mathcal C$, we have
\begin{equation}\label{eq: naturality of action}
    \overline{\sigma}_U \circ (\phi_g \otimes \id_U) = (\id_U \otimes \phi_g) \circ \overline{\sigma}_U\ , \qquad
    \widetilde{\sigma}_U \circ (\phi_g \otimes \id_U) = (\id_U \otimes \phi_g) \circ \widetilde{\sigma}_U\ .
\end{equation}
\end{definition}

We can now give the main definition of this section.

\begin{definition}\label{twisted local modules}
Let $(A, \overline{\sigma}, \widetilde{\sigma})$ be a doubly central algebra in $\mathcal{C}$ and let $\phi$ be a $G$-action on it.
For $g \in G$, an $A$-module $M$ is called \textit{$g$-local (w.r.t. $\overline{\sigma}$, $\widetilde{\sigma}$, and $\phi$}) 
if
\begin{equation}\label{eq:twist-local-condition}
    \overline{\lambda}_M = \widetilde{\lambda}_M \circ (\mathrm{id}_M \otimes \phi_g) \ ,
\end{equation}
or equivalently, using the left module convention and applying~\eqref{eq: naturality of action},
\begin{equation}\label{eq:g-local-2}
    \rho_M \circ (\phi_g^{-1} \otimes \mathrm{id}_M) = \rho_M \circ \widetilde{\sigma}^{-1}_M \circ \overline{\sigma}_M \, ,
    \qquad 
    \rho_M \circ (\phi_g \otimes \mathrm{id}_M) = \rho_M \circ \overline{\sigma}^{-1}_M \circ \widetilde{\sigma}_M \,
\end{equation}
Graphically,~\eqref{eq:twist-local-condition} reads:
\begin{equation}\label{eq: twisted local condition}
\begin{tikzpicture}[baseline=0pt,font=\scriptsize,
  strand/.style={draw, line width=0.7pt}]
\begin{scope}[xshift=0cm, yshift=0cm, scale=0.7]
  \node at (0,-1.0) {$M$};
  \node at (0.6,-1.0) {$A$};
  \draw[strand] (0.6,-0.8) to[out=90,in=90] (0.6,-0.22);
  \draw[strand] (0.6,-0.22) to[out=90,in=0] (0,0.1);
  \draw[strand] (0,0.1) to[out=180,in=-90] (-0.3,0.4);
  \draw[strand,line width=0.7pt,-{Triangle[length=5pt,width=3pt]}] (-0.3,0.4) to[out=90,in=200] (0,0.7);
  \draw[strand] (0,-0.8) -- (0,-0.1);
  \draw[strand] (0,0.3) -- (0,1.1);
  \node[circle,fill=black,inner sep=1.4pt] at (0,0.1) {};
\end{scope}

\node at (1.6,0) {$=$};

\begin{scope}[xshift=3cm, yshift=0cm, scale=0.7]
  \node at (0,-1.0) {$M$};
  \node at (0.6,-1.0) {$A$};
  \node[coupon] (f) at (0.6,-0.4) {$\phi_g$};
  \draw[strand] (0.6,-0.8) to[out=90,in=90] (0.6,-0.6);
  \draw[strand] (0.6,-0.21) to[out=90,in=0] (0,0.1);
  \draw[strand] (0,0.1) to[out=180,in=-90] (-0.3,0.4);
  \draw[strand,line width=0.7pt,-{Triangle[length=5pt,width=3pt]}] (-0.3,0.4) to[out=90,in=200] (0,0.7);
  \draw[strand] (0,-0.8) -- (0,-0.1);
  \draw[strand] (0,0.3) -- (0,1.1);
  \node[open dot, inner sep=1.4pt] at (0,0.1) {};
\end{scope}
\end{tikzpicture}
\end{equation}
For each $g \in G$, we denote by $_{A}\mathcal{C}_g$ the full subcategory of $_{A}\mathcal{C}$ consisting of $g$-local $A$-modules.
\end{definition}

\begin{remark}\label{rem:twisted-local}
We clarify several aspects of the definition above.
\begin{itemize}
    
    \item For every $g \in G$, we only require $\phi_g$ to be an automorphism of $A$ as an object, 
    not necessarily as an algebra,
    see Example~\ref{ex: non-algebra automorphism} below.
    However, the following graphical equalities still hold as consequences of \eqref{eq:twist-local-condition} and both $\overline{\lambda}_M, \, \widetilde\lambda_M$ being right actions:
\begin{equation}
\begin{tikzpicture}[baseline=0pt,font=\scriptsize,
  strand/.style={draw, line width=0.7pt}]
\begin{scope}[xshift=0cm, yshift=0cm, scale=0.7]
  \node at (0,-1.3) {$M$};
  \node at (1.5,-1.3) {$A$};
  \node at (0.5,-1.3) {$A$};
  \node[coupon] (f) at (0.5,-0.75) {$\phi_g$};
  \node[coupon] (f) at (1.5,-0.75) {$\phi_g$};
  \draw[strand] (0.5,-1.1) -- (0.5,-0.92);
  \draw[strand] (1.5,-1.1) -- (1.5,-0.92);
  \draw[strand] (0.5,-0.58) to[out=90,in=180] (1,-0.2);
  \draw[strand] (1.5,-0.58) to[out=90,in=0] (1,-0.2);
  \draw[strand] (1,-0.2) to[out=90,in=0] (0,0.3);
  \draw[strand] (0,0.3) to[out=180,in=-90] (-0.3,0.6);
  \draw[strand,line width=0.7pt,-{Triangle[length=5pt,width=3pt]}] (-0.3,0.6) to[out=90,in=200] (0,0.9);
  \draw[strand] (0,-1.1) -- (0,0.1);
  \draw[strand] (0,0.5) -- (0,1.2);
  \node[open dot, inner sep=1.4pt] at (0,0.3) {};
\end{scope}

\node at (2.2,0) {$=$};

\begin{scope}[xshift=3.5cm, yshift=0cm, scale=0.7]
  \node at (0,-1.3) {$M$};
  \node at (1.5,-1.3) {$A$};
  \node at (0.5,-1.3) {$A$};
  \draw[strand] (0.5,-1.1) to[out=90,in=180] (1,-0.6);
  \draw[strand] (1.5,-1.1) to[out=90,in=0] (1,-0.6);
  \node[coupon] (f) at (1,-0.2) {$\phi_g$};
  \draw[strand] (1,-0.6) to[out=90,in=90] (1,-0.41);
  \draw[strand] (1,-0.02) to[out=90,in=0] (0,0.3);
  \draw[strand] (0,0.3) to[out=180,in=-90] (-0.3,0.6);
  \draw[strand,line width=0.7pt,-{Triangle[length=5pt,width=3pt]}] (-0.3,0.6) to[out=90,in=200] (0,0.9);
  \draw[strand] (0,-1.1) -- (0,0.1);
  \draw[strand] (0,0.5) -- (0,1.2);
  \node[open dot, inner sep=1.4pt] at (0,0.3) {};
\end{scope}

\begin{scope}[xshift=7.5cm, yshift=0cm, scale=0.7]
  \node at (0,-1.3) {$M$};
   \node[open dot, inner sep=1.1pt] at (0.6,-0.85) {};
  \draw[strand] (0.6,-0.77) to[out=90,in=90] (0.6,-0.22);
  \draw[strand] (0.6,-0.22) to[out=90,in=0] (0,0.1);
  \draw[strand] (0,0.1) to[out=180,in=-90] (-0.3,0.4);
  \draw[strand,line width=0.7pt,-{Triangle[length=5pt,width=3pt]}] (-0.3,0.4) to[out=90,in=200] (0,0.7);
  \draw[strand] (0,-1.1) -- (0,-0.1);
  \draw[strand] (0,0.3) -- (0,1.2);
  \node[open dot,inner sep=1.4pt] at (0,0.1) {};
\end{scope}

\node at (9.1,0) {$=$};

\begin{scope}[xshift=10.5cm, yshift=0cm, scale=0.7]
  \node at (0,-1.3) {$M$};
   \node[open dot, inner sep=1.1pt] at (0.6,-0.85) {};
  \node[coupon] (f) at (0.6,-0.4) {$\phi_g$};
  \draw[strand] (0.6,-0.77) to[out=90,in=90] (0.6,-0.62);
  \draw[strand] (0.6,-0.2) to[out=90,in=0] (0,0.1);
  \draw[strand] (0,0.1) to[out=180,in=-90] (-0.3,0.4);
  \draw[strand,line width=0.7pt,-{Triangle[length=5pt,width=3pt]}] (-0.3,0.4) to[out=90,in=200] (0,0.7);
  \draw[strand] (0,-1.1) -- (0,-0.1);
  \draw[strand] (0,0.3) -- (0,1.2);
  \node[open dot, inner sep=1.4pt] at (0,0.1) {};
\end{scope}
\end{tikzpicture}
\end{equation}

\item By definition of the half-braiding, naturality with respect to morphisms on the $A$-strand is not automatic. The reason to require that each $\phi_g$ lifts to automorphisms of both $\overline{A}$ and $\widetilde{A}$ is so that we are able to move it past the corresponding half-braidings.

\item Since both $\overline{A}$ and $\widetilde{A}$ are commutative in $Z(\mathcal{C})$,
the tensor unit $A$ as the regular left $A$-module is $1$-local, i.e.
\begin{equation}
    A \in {}_{A}\mathcal{C}_{1} \ .
\end{equation}

\item It may occur that an \(A\)-module \(M\) is simultaneously \(g\)-local and \(k\)-local for distinct \(g,k\in G\).
We will see examples of this in Example~\ref{ex: non-algebra automorphism} and in Section~\ref{superalgebra example}. Also note that $_A\mathcal C_g$ may be 0 for some $g \in G$.

\item If $\mathcal{C}$ is braided, and if we choose $\overline{\sigma}_M = C_{A,M}$, $\widetilde{\sigma}_M = C^{-1}_{M,A}$, then the notion of a $1$-local module coincides with that of a dyslexic module in the sense of~\cite{P95}.
See also \cite{KO01}, where the category of such modules is denoted $\mathcal{C}_A^0$, or \cite{FRS02}, where the term ``local module'' is used.

\item One can define another monoidal structure $\widetilde{\otimes}_A$ on $_A\mathcal{C}$ via
\begin{equation}
    M \widetilde{\otimes}_A N = \mathrm{Coker}(\widetilde{\lambda}_M \otimes \mathrm{id}_N-\mathrm{id}_M \otimes \rho_N) \ .
\end{equation}
$\widetilde{\otimes}_A$ coincides with $\otimes_A$ on $_{A}\mathcal{C}_{1}$.
\end{itemize}
\end{remark}

\begin{example}\label{ex: non-algebra automorphism}
Let $\mathbb{K}$ be a field, 
$\mathcal C = \mathrm{vec}_\mathbb K$, and $A = \mathbb{K}[x]/(x^2)$. 
Recalling that $Z(\mathrm{vec}_\mathbb{K}) \cong \mathrm{vec}_\mathbb{K}$,
we choose both half-braidings
$\overline{\sigma}$ and $\widetilde{\sigma}$ to be the flip.
 Let $G=\mathbb{K}$ and define
\begin{equation} \phi\colon \mathbb K \longrightarrow \mathrm{Aut}(A) \ , \qquad \alpha \longmapsto \begin{cases} 1 \longmapsto 1+ \alpha x\ ,\\ x \longmapsto x \ . \end{cases} \end{equation}
For each $\alpha\neq 0$, $\phi_\alpha$ is only a linear map, but not an algebra map. In this setting, every $A$-module is $0$-local. Moreover, for every $\alpha\neq 0$, the
category ${}_A\mathcal C_\alpha$ is
semisimple with the (up to isomorphism) unique simple object $\mathbb{K}^0$ given by the one-dimensional
representation on which $x$ acts by $0$. Note that $\mathbb{K}^0$ admits no dual. By contradiction, suppose that $\mathbb{K}^0$ has a dual $X$. Then there exists an evaluation morphism
$\mathrm{ev}\colon X \otimes_A \mathbb K^0 \to A$. Since $x$ acts trivially on $\mathbb K^0$, the $A$-linearity of $\mathrm{ev}$ implies that $\operatorname{Im}(\mathrm{ev}) \subseteq \langle x\rangle$, contradicting the zig-zag identity. Therefore, the category ${}_A\mathcal C
$ is not rigid. In fact,
the dualisable objects in this example are precisely the finite direct sums of the tensor unit.
\end{example}

\begin{lemma}
    For each $g \in G$, the full subcategory $_A\mathcal{C}_g$ is abelian.
\end{lemma}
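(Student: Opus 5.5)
The plan is to show that ${}_A\mathcal{C}_g$ is closed in ${}_A\mathcal{C}$ under finite direct sums, kernels and cokernels, and contains the zero object. Since ${}_A\mathcal{C}$ is abelian by Proposition~\ref{monoidal structure on A-mod}, a full subcategory with these closure properties is abelian: kernels and cokernels computed in ${}_A\mathcal{C}$ serve as kernels and cokernels in ${}_A\mathcal{C}_g$, and the condition that every monomorphism is a kernel and every epimorphism a cokernel is inherited. Recall that kernels and cokernels in ${}_A\mathcal{C}$ are computed on underlying objects in $\mathcal{C}$. For cokernels this uses right exactness of $A\otimes-$. For kernels it holds because the forgetful functor from the Eilenberg--Moore category is a right adjoint, so it creates limits.

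The key observation is that the $g$-local condition \eqref{eq:g-local-2} is an equality of two natural-looking morphisms $A\otimes M\to M$:
\begin{equation*}
\rho_M\circ(\phi_g\otimes\id_M)=\rho_M\circ\overline{\sigma}_M^{-1}\circ\widetilde{\sigma}_M\ .
\end{equation*}
Both sides are natural in $M$ with respect to $A$-module maps, since the half-braidings are natural and $A$-linear maps commute with $\rho$. Let $f\colon M\to N$ be a morphism of $g$-local modules.

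For the cokernel $c\colon N\to Q$, the action $\rho_Q$ is the unique map with $\rho_Q\circ(\id_A\otimes c)=c\circ\rho_N$. Composing both sides of the $g$-local condition for $Q$ with the epimorphism $\id_A\otimes c$ (epi by right exactness of $A\otimes -$), and using naturality of $\overline{\sigma},\widetilde{\sigma}$ in $c$, reduces the identity to $c$ composed with the $g$-local condition for $N$, which holds.

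For the kernel $k\colon K\to M$, post-composing both sides of the condition for $K$ with the monomorphism $k$ and using naturality gives $k$ composed with... rather, it gives the condition for $M$ precomposed with $\id_A\otimes k$, which holds. Since $k$ is mono, the identity for $K$ follows.

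Direct sums are handled similarly via the inclusions and projections, again by naturality, and the zero module is trivially $g$-local. The only point needing care, and the expected main obstacle, is justifying that the underlying kernels and cokernels in $\mathcal{C}$ carry the module structures, and that $\id_A\otimes c$ is epi. Both points follow from right exactness of the tensor product, as already used in Section~\ref{algebras and modules}.
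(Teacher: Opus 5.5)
Your proposal is correct and follows essentially the paper's route: you show that ${}_A\mathcal{C}_g$ is closed under quotients by cancelling the epimorphism $\id_A\otimes c$ (right exactness), and under subobjects by cancelling the monomorphism $k$, in both cases using naturality of the half-braidings and $A$-linearity. Your explicit treatment of finite direct sums, and of why kernels and cokernels in ${}_A\mathcal{C}$ are computed in $\mathcal{C}$, fills in points the paper leaves implicit.
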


\begin{proof}
Let $M$ be a $g$-local $A$-module, and let $q\colon M \twoheadrightarrow Q$ be an epimorphism in ${}_A\mathcal C$. 
By naturality of the half-braidings and the fact that $q$ is $A$-linear, we have
\begin{equation}
    \overline{\lambda}_Q \circ (q \otimes \mathrm{id}_A)
    \;=\;
    \widetilde{\lambda}_Q \circ (\mathrm{id}_Q \otimes \phi_g) \circ (q \otimes \mathrm{id}_A)\ .
\end{equation}
Since the tensor product in $\mathcal C$ is right exact, $q\otimes \mathrm{id}_A$ is an epimorphism. Hence we may cancel it and obtain
\begin{equation}
    \overline{\lambda}_Q
    \;=\;
    \widetilde{\lambda}_Q \circ (\mathrm{id}_Q \otimes \phi_g)\ ,
\end{equation}
so $Q$ is $g$-local. By similar arguments, every subobject of $M$ is also $g$-local even without assuming that the tensor product in $\mathcal C$ is right exact.
Therefore ${}_A\mathcal C_g$ is closed under taking subquotients, and is in particular an abelian full subcategory of ${}_A\mathcal C$.
\end{proof}

\begin{lemma}\label{twisted-local modules G-graded monoidal}
    If $M \in {}_{A}\mathcal{C}_g$, $N \in {}_{A}\mathcal{C}_k$, then $M \otimes_A N \in {}_{A}\mathcal{C}_{gk}$.
\end{lemma}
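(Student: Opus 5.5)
The plan is to verify the $gk$-locality condition~\eqref{eq:twist-local-condition} for $M\otimes_A N$ after precomposing with $\pi_{M,N}\otimes\id_A$. Since the tensor product of $\mathcal C$ is right exact, this morphism is an epimorphism, so it can be cancelled at the end.

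The key intermediate step computes both right actions $\overline\lambda_{M\otimes_A N}$ and $\widetilde\lambda_{M\otimes_A N}$ on $M\otimes_A N$ in terms of actions on the factor $N$.

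\emph{Naturality of the half-braidings.} By naturality of $\overline\sigma$ with respect to $\pi_{M,N}$ we have $\overline\sigma^{-1}_{M\otimes_A N}\circ(\pi_{M,N}\otimes\id_A)=(\id_A\otimes\pi_{M,N})\circ\overline\sigma^{-1}_{M\otimes N}$, and the same holds for $\widetilde\sigma$. The monoidality of the half-braiding gives
\begin{equation}
\overline\sigma^{-1}_{M\otimes N}=(\overline\sigma_M^{-1}\otimes\id_N)\circ(\id_M\otimes\overline\sigma_N^{-1}) \ ,
\end{equation}
and similarly for $\widetilde\sigma$. Combined with the defining square for $\rho_{M\otimes_A N}$, this gives
\begin{equation}
\overline\lambda_{M\otimes_A N}\circ(\pi_{M,N}\otimes\id_A)=\pi_{M,N}\circ(\overline\lambda_M\otimes\id_N)\circ(\id_M\otimes\overline\sigma_N^{-1}) \ .
\end{equation}
The coequaliser relation~\eqref{eq: tensor over A} says exactly that $\pi_{M,N}\circ(\overline\lambda_M\otimes\id_N)=\pi_{M,N}\circ(\id_M\otimes\rho_N)$ as maps $M\otimes A\otimes N\to M\otimes_A N$. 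Hence
\begin{equation}
\overline\lambda_{M\otimes_A N}\circ(\pi_{M,N}\otimes\id_A)=\pi_{M,N}\circ(\id_M\otimes\overline\lambda_N) \ .
\end{equation}

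\emph{The tilde action.} The same computation gives $\widetilde\lambda_{M\otimes_A N}\circ(\pi_{M,N}\otimes\id_A)=\pi_{M,N}\circ(\widetilde\lambda_M\otimes\id_N)\circ(\id_M\otimes\widetilde\sigma_N^{-1})$. The coequaliser only relates $\overline\lambda_M$, not $\widetilde\lambda_M$, so here I use $g$-locality of $M$ in the form $\widetilde\lambda_M=\overline\lambda_M\circ(\id_M\otimes\phi_g^{-1})$. I then apply the coequaliser relation, and use~\eqref{eq: naturality of action} to slide $\phi_g^{-1}$ past $\widetilde\sigma_N^{-1}$. This yields
\begin{equation}
\widetilde\lambda_{M\otimes_A N}\circ(\pi_{M,N}\otimes\id_A)=\pi_{M,N}\circ\bigl(\id_M\otimes\widetilde\lambda_N\circ(\id_N\otimes\phi_g^{-1})\bigr) \ .
\end{equation}

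\emph{Conclusion.} Precomposing with $\id\otimes\phi_{gk}$ and using $\phi_g^{-1}\phi_{gk}=\phi_k$ (since $\phi$ is a homomorphism), the right-hand side becomes $\pi_{M,N}\circ(\id_M\otimes\widetilde\lambda_N\circ(\id_N\otimes\phi_k))$. By $k$-locality of $N$ this equals $\pi_{M,N}\circ(\id_M\otimes\overline\lambda_N)$, which is the expression obtained for $\overline\lambda_{M\otimes_A N}\circ(\pi_{M,N}\otimes\id_A)$. Cancelling the epimorphism $\pi_{M,N}\otimes\id_A$ gives $\overline\lambda_{M\otimes_A N}=\widetilde\lambda_{M\otimes_A N}\circ(\id\otimes\phi_{gk})$, i.e.\ $M\otimes_A N\in{}_A\mathcal C_{gk}$.

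The main obstacle is the tilde step. The coequaliser defining $\otimes_A$ is built from $\overline\sigma$ only, so $g$-locality of $M$ must be inserted there to convert $\widetilde\lambda_M$ into $\overline\lambda_M$. One must also track that $\phi_g^{-1}$ commutes past $\widetilde\sigma_N^{-1}$, which is where~\eqref{eq: naturality of action} is needed. Note that no algebra-map property of $\phi_g$ is used.
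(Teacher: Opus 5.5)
Your proof is correct and takes the same route as the paper. You precompose the $gk$-locality condition with the epimorphism $\pi_{M,N}\otimes\id_A$, then use naturality and tensor-compatibility of both half-braidings, the coequaliser relation, and the $g$- and $k$-locality of $M$ and $N$, and finally cancel the epimorphism by right exactness. Your step-by-step formulas (in particular using $g$-locality of $M$ to turn $\widetilde\lambda_M$ into $\overline\lambda_M$, and $\phi_g^{-1}\phi_{gk}=\phi_k$) spell out what the paper does in its string-diagram computation.
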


\begin{proof}
We repeatedly use the definitions of twisted-local modules and of the relative tensor product $\otimes_A$:
\begin{equation*}
    \raisebox{-35pt}[75pt][0pt]{\includesvg[width=0.95\textwidth]{picture/refined_tensor_product_degree}}
\end{equation*}
By naturality and tensor-compatibility of the half-braidings 
we further obtain
\begin{equation}
  \overline{\lambda}_{M \otimes_A N} \circ (\pi_{M,N} \otimes \id_A)
  =
  \widetilde{\lambda}_{M \otimes_A N}\circ(\id_{M \otimes_A N}\otimes \phi_{gk})\circ(\pi_{M,N}\otimes \id_A)\ .
\end{equation}
Since $\pi_{M,N}$ is an epimorphism and the tensor product in $\mathcal C$ is right exact, the morphism
$\pi_{M,N}\otimes \id_A$ is again an epimorphism. Cancelling it from the right yields
\begin{equation}
  \overline{\lambda}_{M \otimes_A N}
  =
  \widetilde{\lambda}_{M \otimes_A N}\circ(\id_{M \otimes_A N}\otimes \phi_{gk})\ ,
\end{equation}
which is exactly the $gk$-local condition for $M \otimes_A N$.
\end{proof}

For fixed $\big((A, \overline{\sigma}, \widetilde{\sigma}), \phi\big)$, we then obtain an abelian $G$-graded monoidal category
\begin{equation}\label{eq: _AC graded}
    {}_{A}\mathcal{C}^\mathrm{gr} \; \coloneqq \; \bigoplus_{g \in G} {}_{A}\mathcal{C}_g
\end{equation}
by taking the external direct sum. So by construction there exist no morphisms between distinct homogeneous components, even if such morphisms exist in $_A\mathcal C$.
In other words, there is a forgetful functor
\begin{equation}
    U\colon {}_{A}\mathcal{C}^\mathrm{gr} \longrightarrow {}_A\mathcal{C} \ , \qquad M \longmapsto M \ , \qquad f \longmapsto f \ ,
\end{equation}
which is faithful and exact, but not necessarily full.

Let $H$ be a group acting both on $(A, \overline{\sigma}, \widetilde{\sigma})$ (in the sense of Definition~\ref{def:act-on-doubly})
and on the group $G$ by
\begin{gather}\label{eq:H-action-conditions-for-crossed-action}
    \upsilon\colon H \to \mathrm{Aut}_\mathcal{C}(A) \ , \qquad
    \rhd\colon H \times G \to G \ ,
\end{gather}
such that the following conditions hold:
\begin{itemize}
    \item $\upsilon$ and $\rhd$ are compatible in the sense that the diagram

\begin{equation}\label{action compatibility}
\begin{tikzcd}[column sep = 4em, row sep = 1.5em]
H\times G \ar[r, "\rhd"]\ar[d, "\upsilon \times \phi"]
&
G \ar[d, swap, "\phi"]\\
\mathrm{Aut}_\mathcal{C}(A) \times \mathrm{Aut}_\mathcal{C}(A) \ar[r, "\lambda"]
&
\mathrm{Aut}_\mathcal{C}(A)
\end{tikzcd}
\end{equation}
commutes, where $\lambda(f,f')=ff'f^{-1}$ for any 
$(f,f')\in \mathrm{Aut}_\mathcal{C}(A)$;

\item for each $h \in H$, $\upsilon_h\coloneqq\upsilon(h)$ is an algebra morphism, i.e.
    \begin{equation}\label{upsilon alegbra iso}
        \upsilon_h \circ \mu = \mu \circ (\upsilon_h \otimes \upsilon_h) \ , \qquad \eta = \upsilon_h \circ \eta \ .
    \end{equation}
\end{itemize}

\begin{proposition}\label{structure morphism for h-action in AMod}
Let $G$ and $H$ be groups, let $(A,\overline{\sigma},\widetilde{\sigma})$ be a doubly central algebra in~$\mathcal C$, let $\rhd$ be an $H$-action on $G$, and let $\phi$ and $\upsilon$ be respectively $G$- and $H$-actions on $(A,\overline{\sigma},\widetilde{\sigma})$ satisfying~\eqref{action compatibility}--\eqref{upsilon alegbra iso}. Then ${}_A\mathcal C^{\mathrm{gr}}$ carries an $H$-crossed $G$-graded monoidal structure. More precisely:
\begin{enumerate}
    \item There is a strictly monoidal functor
\begin{gather}\label{eq:strict psi}
    \psi\colon \underline{H} \to \mathrm{Aut}_\otimes({_{A}\mathcal{C}}) \ , \qquad
    h \mapsto {^h(-)}
\end{gather}
sending each group element $h$ to a monoidal autoequivalence ${^h(-)}$.

\item  For any $h \in H$ and any homogeneous object $M \in {_{A}\mathcal{C}^\mathrm{gr}}$, it holds that $^hM \in {}_{A}\mathcal{C}_{h \rhd |M|}$.
\end{enumerate}
\end{proposition}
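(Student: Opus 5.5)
The plan is to define ${}^h(-)$ by twisting the $A$-action along $\upsilon$. For a module $(M,\rho_M)$ set ${}^hM \coloneqq (M, \rho_M\circ(\upsilon_h^{-1}\otimes \id_M))$, and for an intertwiner $f$ set ${}^hf\coloneqq f$. Because $\upsilon_h^{-1}$ is an algebra automorphism by \eqref{upsilon alegbra iso}, this is again a module and $f$ stays $A$-linear. Clearly ${}^{h_1}({}^{h_2}M)={}^{h_1h_2}M$ and ${}^{1_H}M=M$ on the nose, so $T$ and $\eta$ can be taken to be identities. This gives a strict functor $\underline H\to\End({}_A\mathcal C)$ by strict automorphisms, hence autoequivalences.

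Next I would make each ${}^h(-)$ strictly monoidal. The key observation is that ${}^h(M\otimes_A N)$ and ${}^hM\otimes_A{}^hN$ are cokernels of two parallel pairs on $A\otimes M\otimes N$ that differ only by precomposition with the isomorphism $\upsilon_h^{-1}\otimes\id_M\otimes\id_N$. For the second arrow, $(\id_M\otimes\rho_N)(\overline\sigma_M\otimes\id_N)$, one moves $\upsilon_h^{-1}$ through $\overline\sigma_M$ using \eqref{eq: naturality of action}, which holds for $\upsilon$ as an action on the doubly central algebra. Hence both have the same coequaliser $\pi_{M,N}$ as morphisms in $\mathcal C$, and we may take $h_{M,N}=\id$ on the underlying object. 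Its twisted $A$-action is the one induced from $\rho_M\otimes\id_N$ twisted by $\upsilon_h^{-1}$, which agrees on both sides. The unit ${}^hA$ is isomorphic to $A$ via $\upsilon_h\colon{}^hA\to A$, which is $A$-linear because $\upsilon_h\mu(\upsilon_h^{-1}\otimes\id)=\mu(\id\otimes\upsilon_h)$. To get a strict unit I would instead use the transport of Remark~\ref{free choice up to monoidal natural equiv}, or simply take $h_{\mathbbm 1}=\upsilon_h^{-1}$. In either case I then check \eqref{action C3} and \eqref{action C4} from the universal properties of the unitors \eqref{left unitor}, \eqref{right unitor} and the associator \eqref{associator Amod}: with all $h_{M,N}$ identities, the associativity square is immediate and the unit triangles reduce to $\rho_M(\upsilon_h^{-1}\otimes\id)(\upsilon_h\otimes\id)=\rho_M$. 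Since $\upsilon$ is a group homomorphism, the monoidal structures compose strictly, giving \eqref{action C}. Strictly speaking only the unit constraint is not the identity, and I would phrase "strictly monoidal" accordingly.

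For part (2), take $M\in{}_A\mathcal C_g$ and write $\rho'=\rho_M(\upsilon_h^{-1}\otimes\id)$. By naturality \eqref{eq: naturality of action} for $\upsilon$, the right actions satisfy $\overline\lambda_{{}^hM}=\overline\lambda_M(\id\otimes\upsilon_h^{-1})$, and similarly for $\widetilde\lambda$. The $g$-local condition \eqref{eq:twist-local-condition} then gives
\[\overline\lambda_{{}^hM}=\widetilde\lambda_M(\id\otimes\phi_g\upsilon_h^{-1})=\widetilde\lambda_M(\id\otimes \upsilon_h^{-1}\,\upsilon_h\phi_g\upsilon_h^{-1}) =\widetilde\lambda_{{}^hM}(\id\otimes\phi_{h\rhd g}),\]
using \eqref{action compatibility}. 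So ${}^hM\in{}_A\mathcal C_{h\rhd g}$. Because ${}^h(-)$ is the identity on morphisms, it maps each ${}_A\mathcal C_g$ into ${}_A\mathcal C_{h\rhd g}$ and hence extends componentwise to the external sum ${}_A\mathcal C^{\mathrm{gr}}$. The monoidal structure there is that of Lemma~\ref{twisted-local modules G-graded monoidal}, which yields the $H$-crossed $G$-graded structure.

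The main obstacle I expect is the identification of the two coequalisers: one has to verify carefully that $\upsilon_h^{-1}$ commutes with $\overline\sigma_M$ in the correct slot. A secondary concern is the unit bookkeeping needed to phrase $\psi$ as strictly monoidal.
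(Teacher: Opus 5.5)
Your proposal is correct and follows essentially the same route as the paper. You use the same twisted action $\rho_M\circ(\upsilon_h^{-1}\otimes\id_M)$ with ${}^hf=f$ and trivial $T,\eta$, and the same $h_{M,N}$ induced by the identity on $M\otimes N$; your identification of the two coequalisers via naturality of $\overline\sigma$ under $\upsilon_h$ is exactly what diagram \eqref{eq:h-action monoidal} encodes. The unit isomorphism $h_A=\upsilon_h^{-1}$ and the localness computation via $\upsilon_h\phi_g\upsilon_h^{-1}=\phi_{h\rhd g}$ also match. One remark: ``strictly monoidal'' in the statement refers to $\psi$ itself, i.e.\ identity $T$ and $\eta$, which you do establish, so simply taking $h_A=\upsilon_h^{-1}$, as the paper does, suffices and no transport to a strict unit is needed.
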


For any $h \in H$, the monoidal functor $^h(-) = \psi(h)$ is defined as follows: for $M, N\in {_{A}\mathcal{C}}$ and a morphism $f\colon M \to N$, 
\begin{itemize}
    \item $^hM \coloneqq M$ as the same object in $\mathcal{C}$, but with the left $A$-action twisted by $\upsilon_h^{-1}$:
    \begin{equation}\label{eq:alg-twisted-action-on-module}
        \rho_{{}^h\!M} = \rho_M \circ (\upsilon_h^{-1} \otimes \mathrm{id}_M) \ ,
    \end{equation}
    or graphically
    \begin{equation}
        \begin{tikzpicture}[baseline=0pt,font=\scriptsize,
  strand/.style={draw, line width=0.7pt}]
            \begin{scope}[xshift=0cm, yshift=0cm, scale=0.7]
            \node at (-0.4,-1.0) {$A$};
            \node at ( 0.6,-1.0) {$^hM$};
            \draw[strand,-{Triangle[length=5pt,width=3pt]}] (-0.4,-0.8) to[out=90,in=190] (0.6,0.2);
            \draw[strand] ( 0.6,-0.8) to (0.6,0.8);
            \end{scope}

             \node at (1.5,0) {$=$};

            \begin{scope}[xshift=3cm, yshift=0cm, scale=0.7]
            \node at (-0.4,-1.0) {$A$};
            \node at (0.6,-1.0) {$M$};
            \node[coupon] (f) at (-0.4,-0.35) {$\upsilon_h^{-1}$};
            \draw[strand] (-0.4,-0.8) to (-0.4,-0.62);
            \draw[strand,-{Triangle[length=5pt,width=3pt]}] (-0.4,-0.07) to[out=90,in=190] (0.6,0.4);
            \draw[strand] ( 0.6,-0.8) to (0.6,0.8);
            \end{scope}
        \end{tikzpicture}
    \end{equation}

    \item $^hf \coloneqq f$,
\end{itemize}
and the monoidal structure morphisms of $^h(-)$ are given by:
\begin{itemize}
    \item the isomorphism $h_{M,N}\colon {^hM \otimes_A {^hN}} \to {^h(M \otimes_A N)}$ and its inverse are the unique isomorphisms making the following diagram commute:
\begin{equation}
\begin{tikzcd}[column sep = 3em, row sep = 1.5em]\label{eq:h-action monoidal}
A \otimes {^hM} \otimes {^hN}\ar[d,swap,"\upsilon_h^{- 1}\otimes \mathrm{id}_{M \otimes N}"]\ar[r,shift left=.75ex,]
  \ar[r,shift right=.75ex,swap,]
&
{^hM} \otimes {^hN}\ar[d,swap,"\mathrm{id}_{M \otimes N}"]\ar[r,"\pi_{{^hM}, {^hN}}"]
&
{^hM} \otimes_A {^hN}\ar[d,dashed, "\exists \, !\ h_{M,N}"]\\
^h(A \otimes M \otimes N) \ar[r,shift left=.75ex,]
  \ar[r,shift right=.75ex,swap,]\ar[d,swap,"\upsilon_h\otimes \mathrm{id}_{M \otimes N}"]
&
^h(M \otimes N)\ar[d,swap,"\mathrm{id}_{M \otimes N}"]\ar[r,"^h\pi_{{M}, {N}}"]
&
^h(M \otimes_A N)\ar[d,dashed, "\exists \, !\ h^{-1}_{M,N}"]\\
A \otimes {^hM} \otimes {^hN} \ar[r,shift left=.75ex,]
  \ar[r,shift right=.75ex,swap,]
&
{^hM} \otimes {^hN}\ar[r,"\pi_{{^hM}, {^hN}}"]
&
{^hM} \otimes_A {^hN}
\end{tikzcd}
\end{equation}
where the missing labels are the defining morphisms of $\otimes_A$;

\item the isomorphism $h_A\colon A \to {^hA}$ is given by
\begin{equation}\label{eq: h-action unitor mod}
    \upsilon_h^{-1}\colon A \to A
\end{equation}
\end{itemize}

Note that below we will sometimes omit boxes around morphisms in the graphical notation to make the diagrams less cluttered, in particular for endomorphisms of $A$.

\begin{proof}[Proof of Proposition~\ref{structure morphism for h-action in AMod}]
(\romannumeral 1)
We check that $^h(-)$ is well-defined and is an autoequivalence.
Since $\upsilon_h$ is an algebra automorphism in $\mathcal{C}$,
it is straightforward to check that $^hM$ is indeed a left $A$-module. Moreover, for any $A$-intertwiner $f$, the morphism ${}^hf$ is also $A$-linear. Hence ${}^h(-)$ is a well-defined endofunctor, and ${}^{h^{-1}}(-)$ is its inverse, thus ${}^h(-)$ is an autoequivalence. We further have $^{1_H}(-) = \mathrm{id}$ and $^{h_1h_2}(-) = {}^{h_1}(^{h_2}(-))$, so $\psi$ is indeed strictly monoidal.
The coherence conditions in~\eqref{action C'} are routine to verify. As an example, we check the lower half of~\eqref{action C3}, leaving the rest to the reader.
This follows from the commutativity of the following diagram in $\mathcal C$:
\begin{equation}
    \begin{tikzcd}[column sep = 2em, row sep = 1.5em]
	{A \otimes {}^hM} &&&& {^hM} \\
	& {A \otimes_A {}^hM} && {^hM} \\
	& {^hA \otimes_A {}^hM} && {^h(A \otimes_A M)} \\
	{^hA \otimes {}^hM} &&&& {^h(A \otimes M)}
	\arrow["{\rho_{^hM}}", from=1-1, to=1-5]
	\arrow["{\pi_{A,{}^hM}}"', from=1-1, to=2-2]
	\arrow["{\upsilon^{-1}_h \otimes \id}"', shift right=3, from=1-1, to=4-1]
	\arrow["{l_{^hM}}", from=2-2, to=2-4]
	\arrow["{(h_A) \otimes_A \id}"', from=2-2, to=3-2]
	\arrow[equal, from=2-4, to=1-5]
	\arrow["{h_{A,M}}"', from=3-2, to=3-4]
	\arrow["{^h(l_M)}"', from=3-4, to=2-4]
	\arrow["{\pi_{{}^hA,{}^hM}}", from=4-1, to=3-2]
	\arrow["\id"', from=4-1, to=4-5]
	\arrow["{^h\rho_M}"', from=4-5, to=1-5]
	\arrow["{^h(\pi_{A,M})}"', from=4-5, to=3-4]
\end{tikzcd}
\end{equation}
where the lower half of~\eqref{action C3} is the inner square.
Here $l$ denotes the left unitor in $_A\mathcal C$ as in~\eqref{left unitor}, and $\rho$ denotes the corresponding action morphism.

\medskip

(\romannumeral 2) Denote $g = |M|$.
Applying~\eqref{eq: naturality of action} and~\eqref{eq:alg-twisted-action-on-module} repeatedly,
we have
\begin{equation}
  \vcenter{\hbox{%
    \makebox[0pt][c]{%
      \hspace*{7em}%
      \clipbox{0pt 30pt 0pt 0pt}{%
        \includesvg[width=0.85\textwidth]{picture/refined_h-action_degree}%
      }%
    }%
  }}
\end{equation}
Note that $\upsilon_h\phi_g\upsilon_h^{-1} = \phi_{h \rhd g}$ by~\eqref{action compatibility}, hence we conclude that $^hM$ is $(h \rhd g)$-local.
\end{proof}

\subsection{$\chi$-Crossed braided structures on twisted local modules}

We now assume in addition that $\mathcal{C}$ is braided, and denote the braiding for any $U,V \in \mathcal{C}$ by
\begin{equation}
    C_{U,V}\colon U \otimes V \longrightarrow V \otimes U \ .
\end{equation}
We want to endow \({}_A\mathcal C^{\mathrm{gr}}\) with the structure of a \(\chi\)-crossed braided category.

\begin{theorem}\label{twisted local module crossed braiding}
Let $(G,H,\chi,\rhd)$ be a crossed module. Assume, in addition to Proposition~\ref{structure morphism for h-action in AMod}, that $\mathcal{C}$ is braided, that $\overline{\sigma}_U = C_{A,U}$ for all $U \in \mathcal C$, and that for every $g \in G$ and $M \in {}_A\mathcal{C}_g$, one has
\begin{equation}\label{crossing change'}
    \rho_M \circ C_{M,A} \circ C_{A,M}
    = \rho_M \circ (\upsilon_{\chi(g)}^{-1} \otimes \mathrm{id}_M) \ .
\end{equation}
Then ${}_A\mathcal{C}^{\mathrm{gr}}$ admits a $\chi$-crossed braiding.
\end{theorem}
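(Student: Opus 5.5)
The plan is to define the crossed braiding on ${}_A\mathcal C^{\mathrm{gr}}$ by descending the ambient braiding of $\mathcal C$ through the coequalisers defining $\otimes_A$. For $M\in{}_A\mathcal C_g$ and $N\in{}_A\mathcal C_k$, I would look for $c_{M,N}\colon M\otimes_A N\to {}^{\chi_g}N\otimes_A M$ as the unique morphism with
\begin{equation*}
c_{M,N}\circ\pi_{M,N}\;=\;\pi_{{}^{\chi_g}N,M}\circ C_{M,N}\ ,
\end{equation*}
possibly after composing with an inverse braiding if the first choice fails. The target ${}^{\chi_g}N$ is the same object $N$ with action twisted by $\upsilon_{\chi_g}^{-1}$, and it lies in ${}_A\mathcal C_{\chi_g\rhd k}$ by Proposition~\ref{structure morphism for h-action in AMod}. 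Its tensor product with $M$ therefore has the degree $(\chi_g\rhd k)g=gk$ required by the Peiffer identity.

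The first step is well-definedness. I must show that $\pi_{{}^{\chi_g}N,M}\circ C_{M,N}$ coequalises $\rho_M\otimes\mathrm{id}_N$ and $(\mathrm{id}_M\otimes\rho_N)\circ(C_{A,M}\otimes\mathrm{id}_N)$, using $\overline\sigma=C$. For the first map, naturality gives $C_{M,N}\circ(\rho_M\otimes\mathrm{id})=(\mathrm{id}_N\otimes\rho_M)\circ C_{A\otimes M,N}$. In the target coequaliser I can then move $A$ from $M$ onto ${}^{\chi_g}N$ via $C_{A,N}^{-1}$-type moves. For the second map, the $A$ acting on $N$ must be transported through $M$. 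This is exactly where the full twist $C_{M,A}C_{A,M}$ appears, and hypothesis~\eqref{crossing change'} converts it into $\upsilon_{\chi_g}^{-1}$, matching the twisted action on ${}^{\chi_g}N$. I expect this to be the main obstacle, because getting the over/under crossings to agree is delicate. I would first try $C_{M,N}$ and, if the braid group bookkeeping fails, switch to $C_{N,M}^{-1}$ and redo the computation. Invertibility follows by the same argument applied to the inverse braiding, which should descend to a two-sided inverse.

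Next I would verify (B1)–(B4). (B1) is naturality, which is immediate from naturality of $C$ and the uniqueness in~\eqref{morphism tensor product}. For (B2) and (B3), the associator of ${}_A\mathcal C$ is induced by the identity on $M\otimes N\otimes L$, as in~\eqref{associator Amod}. Since $\psi$ is strict, $T^{h_1,h_2}=\mathrm{id}$, and the maps $(\chi_a)_{V,W}$ are induced by identities, as in~\eqref{eq:h-action monoidal}. Precomposing with the epimorphisms $\pi\circ(\mathrm{id}\otimes\pi)$, which remain epi by right exactness, reduces both hexagons to the ambient hexagon identities of $C$.

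For (B4), ${}^h c_{M,N}$ and $c_{{}^hM,{}^hN}$ are both induced by $C_{M,N}$ on the same underlying objects, and $h_{M,N}$ is induced by the identity. The diagram therefore commutes after precomposing with $\pi$. The remaining point is that the target objects ${}^{\chi_{h\rhd g}}({}^hN)$ and ${}^h({}^{\chi_g}N)$ agree, which follows from $\chi_{h\rhd g}h=h\chi_g$. I would finally check that the required $A$-linearity of $c_{M,N}$ follows from the fact that $\pi$ is $A$-linear, together with~\eqref{upsilon alegbra iso}.
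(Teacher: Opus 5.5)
Your plan is the paper's proof. You define $\widetilde C_{M,N}$ by $\widetilde C_{M,N}\circ\pi_{M,N}=\pi_{{}^{\chi_g}N,M}\circ C_{M,N}$, get well-definedness from naturality, the hexagon, the balancing relation of $\otimes_A$ with $\overline\sigma_N=C_{A,N}$ and \eqref{crossing change'}, and obtain \eqref{B1}--\eqref{B4} by precomposing with the epimorphisms built from $\pi$, using that $\psi$ is strict. Your first choice $C_{M,N}$ already works, so no fallback to $C_{N,M}^{-1}$ is needed, with two caveats: \eqref{crossing change'} is a statement about $\rho_M$, so it must be applied where $A$ acts on $M$ (e.g.\ write $\rho_M=\rho_M\circ C_{M,A}\circ C_{A,M}\circ(\upsilon_{\chi_g}\otimes\mathrm{id}_M)$ before moving $A$ onto ${}^{\chi_g}N$), and $A$-linearity of $\widetilde C_{M,N}$ also needs this balancing move across $\otimes_A$, not just $A$-linearity of $\pi$.
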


Graphically, equation~\eqref{crossing change'} reads:
\begin{equation}
  \raisebox{-0.5\height}{
  \includesvg[width=0.3\textwidth]{picture/chi-crossed_braiding_condition}
}
\end{equation}

\begin{proof}
Let $M,N \in {}_{A}\mathcal{C}^\mathrm{gr}$ and suppose that $M$ is homogeneous of degree $g$. We then have the following diagrammatic equation:
\begin{equation*}
  \raisebox{-105pt}[90pt][0pt]{
  \includesvg[width=0.95\textwidth]{picture/crossed_braiding}
}
\end{equation*}
Here, the first equality follows from~\eqref{crossing change'}, the second from the naturality of the braiding, and the third from the definition of the relative tensor product $\otimes_A$ combined with the assumption $\overline{\sigma}_N = C_{A,N}$.
Therefore, by the universal property of a coequalizer, there exists a unique morphism 
    \begin{equation}
        \widetilde{C}_{M,N}\colon M \otimes_A N \to {^{\chi(g)}N} \otimes_A M \ 
    \end{equation}
    such that the following diagram commutes:
    \begin{equation}\label{eq: crossed braiding existence}
    \begin{tikzcd}[column sep = 4em, row sep = 1.5em]
M\otimes N \ar[r, "\pi_{M,N}"]\ar[d, swap, "C_{M,N}"]
&
M \otimes_A N \ar[d, dashed, "\exists ! \, \widetilde{C}_{M,N}"]\\
N\otimes M \ar[r, "\pi_{{}^{\chi_g}N, M}"]
&
{}^{\chi_g}N \otimes_A M
\end{tikzcd}
    \end{equation}
The following equation shows that $\pi_{{}^{\chi(g)}N,M} \circ C_{M,N}$ 
is an $A$-intertwiner:
\begin{equation}
  \vcenter{\hbox{%
    \includesvg[width=0.75\textwidth]{picture/crossed_braiding_A-linear}%
  }}
\end{equation}
where in the second equality we 
apply the definition of the relative tensor product $\otimes_A$ together with the assumption that $\overline{\sigma}_N = C_{A,N}$, and in the last equality we use $A$-linearity of $\pi_{{}^{\chi(g)}N,M}$.
Therefore, $\widetilde{C}_{M,N}$ is also $A$-linear. 
By similar arguments, one can show that $\widetilde{C}_{M,N}$ is invertible.

It is left to verify that the crossed braiding constructed above satisfies conditions~\eqref{B1}--\eqref{B4} in Definition~\ref{crossed braiding def}. Note that in our case we have a strict $H$-action $\psi$ as defined in~\eqref{eq:strict psi}, hence all the $T$ terms in~\eqref{B1}--\eqref{B4} become trivial. Condition~\eqref{B1} follows directly by the naturality of braiding in $\mathcal{C}$. Condition~\eqref{B4} follows by straightforward computation using~\eqref{eq:h-action monoidal} and~\eqref{eq: crossed braiding existence}. Here we only check~\eqref{B2};~\eqref{B3} holds by similar arguments. Let $L,M,N \in {}_{A}\mathcal{C}^\mathrm{gr}$ be homogeneous objects with $|L| = g, |M| = k$. The condition~\eqref{B2} is equivalent to the commutativity of the following diagram (recall that $\chi_g = \chi(g)$):
\begin{equation}
\begin{tikzcd}[column sep = 4em, row sep = 1.5em]
L \otimes_A (M \otimes_A N)\ar[r, "\mathrm{id}_L \otimes_A \widetilde{C}_{M,N}"]\ar[d, "\alpha_{L,M,N}"]
&
L \otimes_A ({}^{\chi_k}N \otimes_A M) \ar[d, swap, "\alpha_{L,{}^{\chi_k}N,M}"]\\
(L \otimes_A M) \otimes_A N\ar[d,"\widetilde{C}_{L \otimes_A M,N}"]
&
(L \otimes_A {}^{\chi_k}N) \otimes_A M\ar[d, swap, "\widetilde{C}_{L,{}^{\chi_k}N}\otimes_A \mathrm{id}_M"]\\
{}^{\chi_{gk}}N \otimes_A (L \otimes_A M)\ar[r,swap,"\alpha_{{}^{\chi_{gk}}N,L,M}"]
&
({}^{\chi_{gk}}N \otimes_A L) \otimes_A M
\end{tikzcd}
\end{equation}
which is induced by the commutativity of the following diagram:
\begin{equation*}
\small
\begin{tikzcd}[column sep = 3.8em, row sep = 1.8em]
L \otimes M \otimes N\ar[rrr, "\mathrm{id}_L\otimes C_{M,N}"]\ar[dddd, "C_{L\otimes M,N}"]\ar[rd,"\scalebox{1.4}{$\pi$}_{L,M \otimes_A N}\circ (\mathrm{id}_L \otimes \scalebox{1.4}{$\pi$}_{M,N})"]
&
{}
&
{}
&
L \otimes {}^{\chi_k}N \otimes M\ar[dddd,swap,"C_{L,N}\otimes \mathrm{id}_M"]\ar[ld,swap,"\scalebox{1.4}{$\pi$}_{L,{^{\chi_k}N} \otimes_A M}\circ (\mathrm{id}_L \otimes \scalebox{1.4}{$\pi$}_{{^{\chi_k}N},M})"]\\
{}
&
L \otimes_A (M \otimes_A N)\ar[r, "\mathrm{id}_L \otimes_A \widetilde{C}_{M,N}"]\ar[d,swap, "\alpha_{L,M,N}"]
&
L \otimes_A ({}^{\chi_k}N \otimes_A M) \ar[d, "\alpha_{L,{}^{\chi_k}N,M}"]\\
{}
&
(L \otimes_A M) \otimes_A N\ar[d,swap,"\widetilde{C}_{L \otimes_A M,N}"]
&
(L \otimes_A {}^{\chi_k}N) \otimes_A M\ar[d, "\widetilde{C}_{L,{}^{\chi_k}N}\otimes_A \mathrm{id}_M"]\\
{}
&
{}^{\chi_{gk}}N \otimes_A (L \otimes_A M)\ar[r,swap,"\alpha_{{}^{\chi_{gk}}N,L,M}"]
&
({}^{\chi_{gk}}N \otimes_A L) \otimes_A M\\
{}^{\chi_{gk}}N \otimes L \otimes M\ar[rrr,swap, "="]\ar[ru,swap,"\scalebox{1.4}{$\pi$}_{{^{\chi_{gk}}N},L \otimes_A M}\circ (\mathrm{id}_{^{\chi_{gk}}N} \otimes \scalebox{1.4}{$\pi$}_{L,M})"]
&
{}
&
{}
&
{}^{\chi_{gk}}N \otimes L \otimes M\ar[lu,"\scalebox{1.4}{$\pi$}_{{^{\chi_{gk}}N}\otimes_A L, M}\circ (\scalebox{1.4}{$\pi$}_{{^{\chi_{gk}}N},L}\otimes \mathrm{id}_M)"]
\end{tikzcd}
\end{equation*}
due to the fact that $\pi_{L,M \otimes_A N}\circ (\mathrm{id}_L \otimes \pi_{M,N})$ is an epimorphism. To see that the second diagram indeed commutes, we need to apply~\eqref{associator Amod}.
\end{proof}

\begin{proposition}\label{twisted local module crossed balanced}   
In addition to the conditions in Theorem~\ref{twisted local module crossed braiding}, suppose that the category $\mathcal{C}$ is balanced with the balancing isomorphism denoted by $\theta$, and that $\theta_A = \mathrm{id}_A$.
Then $_{A}\mathcal{C}^\mathrm{gr}$ is $\chi$-crossed balanced.
\end{proposition}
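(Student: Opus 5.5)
The plan is to define the crossed balancing on a homogeneous $M \in {}_A\mathcal C_g$ as the underlying balancing of $\mathcal C$, that is $\vartheta_M := \theta_M \colon M \to M = {}^{\chi_g}M$. Recall that ${}^{\chi_g}M$ is $M$ with action $\rho_M\circ(\upsilon_{\chi_g}^{-1}\otimes\id_M)$.

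\emph{Step 1: $\theta_M$ is $A$-linear from $M$ to ${}^{\chi_g}M$.} We need
\[
\theta_M\circ\rho_M=\rho_M\circ(\upsilon_{\chi_g}^{-1}\otimes\id_M)\circ(\id_A\otimes\theta_M).
\]
By naturality of $\theta$ and the balancing axiom $\theta_{A\otimes M}=C_{M,A}C_{A,M}(\theta_A\otimes\theta_M)$, together with $\theta_A=\id_A$, we get
\[
\theta_M\rho_M=\rho_M\,\theta_{A\otimes M}=\rho_M\,C_{M,A}C_{A,M}(\id_A\otimes\theta_M).
\]
Condition~\eqref{crossing change'} replaces $\rho_M C_{M,A}C_{A,M}$ by $\rho_M(\upsilon_{\chi_g}^{-1}\otimes\id_M)$, which gives the claim. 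Invertibility is clear. \eqref{BL1} is just naturality of $\theta$ in $\mathcal C$, since ${}^{\chi_g}f=f$.

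\emph{Step 2: \eqref{BL3}.} The action $\psi$ is strict, so all $T$'s are identities. Both sides are then $\theta_M$ as a morphism in $\mathcal C$, because ${}^h\theta_M=\theta_M$ and $\vartheta_{{}^hM}=\theta_M$.

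\emph{Step 3: \eqref{BL2}.} This is the main step. With strict action, it says that
\[
\vartheta_{M\otimes_A N}\circ\pi_{M,N} = (\chi_{gk})_{M,N}\circ \widetilde C_{{}^{\chi_k}N\ldots}\circ\widetilde C_{M,N}\ldots
\]
after precomposing with the epimorphism $\pi_{M,N}$. By~\eqref{eq: crossed braiding existence} and the definition of $h_{M,N}$ via identities on $M\otimes N$ in~\eqref{eq:h-action monoidal}, the right-hand side lifts to
\[
\pi\circ C_{N,M}\circ C_{M,N}\circ(\theta_M\otimes\theta_N).
\]
By the balancing axiom in $\mathcal C$ this equals $\pi\circ\theta_{M\otimes N}$. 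The left-hand side is $\theta_{M\otimes_A N}\circ\pi_{M,N}=\pi_{M,N}\circ\theta_{M\otimes N}$ by naturality of $\theta$. The two sides therefore agree, and since $\pi_{M,N}$ is epi, \eqref{BL2} follows.

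The main obstacle is the bookkeeping in Step 3: checking that all the twisted module structures (${}^{\chi_a}M$, ${}^{\chi_a}({}^{\chi_b}N)$, and the $\chi_{aba^{-1}}$-twist) match up. This should work because the underlying objects and the lifts of every morphism through the coequalisers are the same as in $\mathcal C$. One must also verify that $\vartheta_{M\otimes_A N}$, defined as $\theta_{M\otimes_A N}$, is the balancing on the degree-$gk$ object $M\otimes_A N$; this is consistent by Lemma~\ref{twisted-local modules G-graded monoidal}.
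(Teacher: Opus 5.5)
Your proposal is correct and follows the paper's proof essentially step by step. You get $A$-linearity of $\theta_M\colon M\to{}^{\chi_g}M$ from naturality of $\theta$, the balancing axiom with $\theta_A=\id_A$, and \eqref{crossing change'}; \eqref{BL1} and \eqref{BL3} follow from naturality and strictness of $\psi$; and \eqref{BL2} comes from lifting both sides through the epimorphism $\pi_{M,N}$ and using $C_{N,M}C_{M,N}(\theta_M\otimes\theta_N)=\theta_{M\otimes N}$.

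One notational slip: the composite in your Step~3 display should read $(\chi_{gk})_{M,N}\circ\widetilde C_{{}^{\chi_{gk}}N,{}^{\chi_g}M}\circ\widetilde C_{{}^{\chi_g}M,{}^{\chi_k}N}\circ(\vartheta_M\otimes_A\vartheta_N)$. This is what your lift actually computes.
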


\begin{proof}
Since $\mathcal{C}$ is balanced, by the naturality of $\theta$ we have the following commutative diagram in $\mathcal{C}$:
\begin{equation}
\begin{tikzcd}[column sep = 4em, row sep = 1.5em]
A\otimes M \ar[d, "\rho_M"]\ar[r, "\theta_{A\otimes M}"]
&
A \otimes M \ar[d, swap, "\rho_M"]\\
M \ar[r, "\theta_M"]
&
M
\end{tikzcd}
\end{equation}
Suppose now that $M\in {}_A\mathcal{C}_g$.
Since $\theta_{A\otimes M} = C_{M,A} \circ C_{A,M} \circ (\theta_{A} \otimes \theta_M)$, and using our assumption~\eqref{crossing change'}, we can redraw the diagram above as
\begin{equation}\label{theta_M A-intertwiner}
\begin{tikzcd}[column sep = 4em, row sep = 1.5em]
A\otimes M \ar[d, "\rho_M"]\ar[r, "\mathrm{id}_A \otimes \theta_{M}"]
&
A \otimes {^{\chi_g}M} \ar[d, swap, "\rho_{^{\chi_g}M}"]
\\
M \ar[r, "\theta_M"]
&
^{\chi_g}M
\end{tikzcd}
\end{equation}
We define an isomorphism
\begin{equation}\label{eq: crossed twist}
    \widetilde{\theta}_M:M \longrightarrow 
    {^{\chi_g}M} 
    \qquad \textup{in }{}_A\mathcal C
\end{equation}
whose underlying morphism in $\mathcal C$ is just $\theta_M$.
Note that due to the cross-module condition~\eqref{crossed module} we have $^{\chi_g}M \in {}_A\mathcal C_g$, and~\eqref{theta_M A-intertwiner} shows that $\widetilde{\theta}_M$ is indeed an $A$-intertwiner. 

We now check that $\widetilde{\theta}$ satisfies all the conditions in Definition~\ref{crossed balanced}. Conditions~\eqref{BL1} and~\eqref{BL3} are immediate because the $H$-action functor $\psi$ from~\eqref{eq:strict psi} is strict.
We now check~\eqref{BL2}. For any $g,k \in G$ and any $M \in {}_A\mathcal{C}_g$, $N \in {}_A\mathcal{C}_k$, by~\eqref{morphism tensor product} we have the unique morphism $\widetilde{\vartheta}_M \otimes_A \widetilde{\vartheta}_N$ such that the following diagram commutes:
\begin{equation}\label{P415.1}
\begin{tikzcd}[column sep = 4em, row sep = 1.5em]
M\otimes N \ar[r, "\pi_{M,N}"]\ar[d, swap, "\vartheta_M \otimes \vartheta_N"]
&
M \otimes_A N \ar[d, dashed, "\widetilde{\vartheta}_M \otimes_A \widetilde{\vartheta}_N"]\\
M\otimes N \ar[r, "\pi_{{}^{\chi_g}M, {}^{\chi_k}N}"]
&
{}^{\chi_g}M \otimes_A {}^{\chi_k}N
\end{tikzcd}
\end{equation}
and by~\eqref{eq: crossed braiding existence} we have the unique morphisms $\widetilde{C}_{^{\chi_{gk}}N, {^{\chi_g}M}}$ and $\widetilde{C}_{^{\chi_g}M,{^{\chi_k}N}}$ such that the following digram commutes:
\begin{equation}\label{P415.2}
\begin{tikzcd}[column sep = 4em, row sep = 1.5em]
M \otimes N\ar[r, "\pi_{{}^{\chi_g}M, {}^{\chi_k}N}"]\ar[d,swap, "C_{M,N}"]
&
{}^{\chi_g}M \otimes_A {}^{\chi_k}N \ar[d, dashed, "\widetilde{C}_{^{\chi_g}M,{^{\chi_k}N}}"]\\
N \otimes M\ar[r,"\pi_{{}^{\chi_{gk}}N,{}^{\chi_g}M}"]\ar[d,swap,"C_{N,M}"]
&
{}^{\chi_{gk}}N \otimes_A {}^{\chi_g}M\ar[d, dashed, "\widetilde{C}_{^{\chi_{gk}}N, {^{\chi_g}M}}"]\\
M \otimes N\ar[r,"\pi_{{}^{\chi_{gk}}M,{}^{\chi_{gk}}N}"]
&
{}^{\chi_{gk}}M \otimes_A {}^{\chi_{gk}}N
\end{tikzcd}
\end{equation}
Here we use the fact that the functor $\psi$ as defined in~\eqref{eq:strict psi} is strictly monoidal, and that $^{\chi_{gk}}N \in {}_A\mathcal C_{gkg^{-1}}$.
Composing~\eqref{P415.1} and~\eqref{P415.2} vertically, we obtain on the left hand side $\theta_{M\otimes N}$ and on the right hand side
\begin{equation}
    \widetilde{C}_{^{\chi_{gk}}N, {^{\chi_g}M}} \circ \widetilde{C}_{^{\chi_g}M,{^{\chi_k}N}} \circ (\widetilde{\vartheta}_M \otimes_A \widetilde{\vartheta}_N) \ .
\end{equation}
We further have the following commutative diagram:
\begin{equation}
\begin{tikzcd}[column sep = 4em, row sep = 1.5em]
M\otimes N \ar[r, "\pi_{M,N}"]\ar[d, "\vartheta_{M \otimes N}"']
&
M \otimes_A N \ar[d, swap, dashed, "\widetilde{\vartheta}_{M \otimes_A N}"']\\
^{\chi_{gk}}(M\otimes N) \ar[r, "^{\chi_{gk}}\pi_{M,N}"]\ar[d, "\mathrm{id}"']
&
{}^{\chi_{gk}}(M \otimes_A N)\ar[d, dashed, "{\chi_{gk}}^{-1}_{M,N}"]\\
M\otimes N \ar[r, "\pi_{{}^{\chi_{gk}}M, {}^{\chi_{gk}}N}"]
&
{}^{\chi_{gk}}M \otimes_A {}^{\chi_{gk}}N
\end{tikzcd}
\end{equation}
where the upper rectangle commutes following~\eqref{BL1}; recall that $\tilde{\theta}_{M \otimes_A N}=\theta_{M \otimes_A N}$ as a morphism in $\mathcal{C}$,
and the lower rectangle commutes by~\eqref{eq:h-action monoidal}.
Note that in the left column we wrote $^{\chi_{gk}}(M\otimes N)$, but as objects in $\mathcal{C}$ we actually have $^{\chi_{gk}}(M\otimes N) = M\otimes N$. Writing it this way makes clear why the horizontal arrow makes sense.
Consequently, we have
\begin{equation}
    (\chi_{gk})_{M,N} \circ \widetilde{C}_{^{\chi_{gk}}N, {^{\chi_g}M}} \circ \widetilde{C}_{^{\chi_g}M,{^{\chi_k}N}} \circ (\widetilde{\vartheta}_M \otimes_A \widetilde{\vartheta}_N) = \widetilde{\vartheta}_{M \otimes_A N} \ ,
\end{equation}
which is exactly~\eqref{BL2} when the structure isomorphisms $T^{g,k}_M$ of $\psi$ are trivial.
\end{proof}

\subsection{Examples from the superalgebra $\mathbb{K}[x]/(x^2)$}\label{superalgebra example}

Let $\mathcal C$ be the category of finite-dimensional
super vector spaces over an algebraically closed field $\mathbb K$ of characteristic 0. An object $V \in \mathcal C$ can be written as $V = V_0 \oplus V_1$, where $V_0$ and $V_1$ are the degree-$0$ and degree-$1$ components, respectively. The symmetric braiding on $\mathcal C$ is given by, for all $U, V \in \mathcal C$,
    \begin{equation}
        C_{U,V}\colon U \otimes V \longrightarrow V \otimes U \ , \qquad
        u_i \otimes v_j \longmapsto (-1)^{ij}\, v_j \otimes u_i \ ,       
    \end{equation}
where $i, j \in \{0,1\}$ denote the degrees.    
Fix the algebra object $A = \mathbb{K}[x]/(x^2)$
with the unit $\mathbf 1$ in degree 0 and $x$ in degree 1. Fix the half-braiding $\overline{\sigma}_U = C_{A,U}$ for all $U \in \mathcal C$. Let $\chi = \mathrm{id}\colon \mathbb{K}^\times \to \mathbb{K}^\times$ with trivial action. For every $n \in \Z$, define
\begin{equation}
    \phi_n\colon \mathbb K^\times \longrightarrow \mathrm{Aut}(A) \ , \qquad
    \alpha \longmapsto
    \begin{cases}
        \mathbf{1} \longmapsto \mathbf{1} \ ,\\
        x \longmapsto \alpha^n\,x \ .
    \end{cases}
\end{equation}
We see in the following what happens when we make different choices for $\widetilde{\sigma}$.

\medskip

\noindent
$\bullet$ \textit{Case (\romannumeral 1):} Choose $\widetilde{\sigma}_U = (C_{U,A})^{-1} = C_{A,U}$ for all $U \in \mathcal C$. For a fixed $n \in \Z$,
condition~\eqref{eq:twist-local-condition} in this case becomes: an $A$-module $M$ is $\alpha$-local if and only if
\begin{equation}
    \rho_M \circ \big((\mathrm{id}_A-\phi_n(\alpha)) \otimes \mathrm{id}_M\big) = 0 \ .
\end{equation}
We then obtain twisted localness regarding $\big((A, \overline{\sigma}, \widetilde{\sigma}), \phi_n\big)$:
\begin{itemize}
    \item if $\alpha \in \mathbb{K}^\times$ such that $\alpha ^n = 1$, then all $A$-modules are $\alpha$-local;

    \item for other $\alpha \in \mathbb K^\times$,
the category ${}_A\mathcal C_\alpha$ is semisimple with simple objects given by the two simple $A$-modules in ${}_A\mathcal C$, namely the
one-dimensional modules of degrees $0$ and $1$ on which
$x$ acts by $0$.
\end{itemize}

For any $m \in \Z$ with $n \mid m$, we can then define
\begin{equation}\label{eq: upsilon example}
    \upsilon\colon \mathbb K^\times \longrightarrow \mathrm{Aut}(A) \ , \qquad
    \alpha \longmapsto
    \begin{cases}
        \mathbf{1} \longmapsto \mathbf{1} \ ,\\
        x \longmapsto \alpha^m\,x \ .
    \end{cases}
\end{equation}
For any $\alpha$-local module $M$, condition~\eqref{crossing change'} becomes
\begin{equation}
    \rho_M \circ \big((\mathrm{id}_A-\upsilon^{-1}(\alpha)) \otimes \mathrm{id}_M\big) = 0 \ .
\end{equation}
This condition indeed holds for all $\alpha \in \mathbb{K}^\times$: for $\alpha^n = 1$, the assumption $n \mid m$ forces $\alpha^{-m} = 1$; for $\alpha^n \neq 1$, the module $M$ itself is annihilated by $x$. Therefore, the category ${}_A\mathcal C^{\mathrm{gr}}=\bigoplus_{\alpha\in \mathbb{K}^{\times}}{}_A\mathcal C_\alpha$ forms a $\mathbb K^\times$-crossed braided category with exactly $n$ non-semisimple components, while all remaining components are semisimple.

\medskip

\noindent
$\bullet$ \textit{Case (\romannumeral 2):}
Choose $\widetilde{\sigma}$ to be the flip map $\tau$. For a fixed $n \in \Z$,
a module $M$ is $\alpha$-local if and only if
\begin{equation}
    \rho_M = \rho_M \circ \tau \circ C_{A,M} \circ \big(\phi_n(\alpha) \otimes \mathrm{id}_M\big) \ .
\end{equation}
We then conclude:
\begin{itemize}
    \item for $\alpha \in \mathbb{K}^\times$ such that $\alpha ^n = 1$, $M = M_0 \oplus M_1$ is $\alpha$-local if and only if $x$ acts on $M_1$ by~0;

    \item for $\alpha \in \mathbb{K}^\times$ such that $\alpha ^n = -1$, $M = M_0 \oplus M_1$ is $\alpha$-local if and only if $x$ acts on $M_0$ by 0;

    \item for other $\alpha \in \mathbb K^\times$,
the only $\alpha$-local modules are direct sums of the two simple $A$-modules.
\end{itemize}

For any $m \in \Z$ with $2n \mid m$, we define $\upsilon\colon \mathbb K^\times \to \mathrm{Aut}(A)$ as in~\eqref{eq: upsilon example}.
Similarly, one can check that condition~\eqref{crossing change'} is satisfied and hence ${}_A\mathcal C^{\mathrm{gr}}$ is a
$\mathbb K^\times$-crossed braided category.

\medskip

In both cases, the explicit formula for the $\mathbb K^\times$-crossed braiding for any $M\in {}_A\mathcal C_\alpha$ and $N\in {}_A\mathcal C$ is given as follows:
\begin{align}
\begin{split}
    \widetilde{C}_{M,N}\colon M \otimes_A N \longrightarrow {}^\alpha N \otimes_A M \ , \qquad
    v_i \otimes_A w_j \longmapsto (-1)^{ij}\, w_j \otimes_A v_i \ .
\end{split}
\end{align}
Although the crossed braiding does not depend on (allowed choices of) $m$,
the data defining the $\mathbb K^\times$-crossed $\mathbb K^\times$-graded monoidal structure (specifically, the $\mathbb K^\times$-action)
does depend on $m$.

\begin{remark}
    From these two examples, we see that the freedom in choosing $\widetilde{\sigma}$ can lead to interesting phenomena. Both cases produce $\mathbb K^\times$-graded categories, but with different non-semisimple fibres.
\end{remark}

\subsection{$\chi$-Crossed ribbon structures on twisted local modules}\label{sec: crossed ribbon twisted mod}

Recall from Example~\ref{ex: non-algebra automorphism} that even when $\mathcal{C}$ is rigid, the category $_A\mathcal{C}$ for a general central algebra $A \in \mathcal{C}$ is not necessarily rigid. However, if $\mathcal C$ is rigid and $A$ is separable, we show in Proposition~\ref{prop: G-graded rigid twisted mod} that $_A\mathcal{C}^{\mathrm{gr}}$ is a rigid $G$-graded monoidal category. Furthermore, if in addition to the conditions of Proposition~\ref{twisted local module crossed balanced}, $\mathcal{C}$ is ribbon and $A$ is $\Delta$-separable Frobenius, Theorem~\ref{twisted local module crossed ribbon} shows that $_A\mathcal{C}^{\mathrm{gr}}$ is $\chi$-crossed ribbon.

\medskip

An algebra $A \in \mathcal{C}$ is called \textit{separable} if it admits an $A$-bimodule
morphism $S \colon A\to A\otimes A$, which is a section of the multiplication. Explicitly, this means
\begin{subequations}\label{eq: separable algebra}
    \begin{gather}
        (\mu\otimes \id_A)\circ(\id_A\otimes S)
=
    S\circ \mu
=
(\id_A\otimes \mu)\circ(S \otimes \id_A) \label{eq:frobenius condition}\ , \\
\mu\circ S=\id_A \label{eq: separability}\ .
    \end{gather}
\end{subequations}
Graphically, we depict $S\colon A \to A \otimes A$ by:
\begin{equation}
    \begin{tikzpicture}[baseline=0pt,font=\scriptsize,
  strand/.style={draw, line width=0.7pt}
]
\node[eq, scale=0.8] at (-1.0,0) {$S \, \, \, = $};
\begin{scope}[xshift=0.3cm, scale=0.8]
  \node at (0,-1.0) {$A$};
  \node at (-0.6,0.8) {$A$};
  \node at (0.6,0.8) {$A$};
  \draw[strand] (0,-0.85) -- (0,-0.2);
  \draw[strand] (0,-0.2) to[out=0,in=-90] (0.6,0.65);
  \draw[strand] (0,-0.2) to[out=180,in=-90]   (-0.6,0.65);
\end{scope}
\end{tikzpicture}
\end{equation}
Then~\eqref{eq:frobenius condition} and~\eqref{eq: separability} become:
\begin{subequations}\label{eq:diagram-relations}
\begin{gather}
\begin{tikzpicture}[baseline=0pt,font=\scriptsize,
  strand/.style={draw, line width=0.7pt},
  eq/.style={inner sep=0pt}
]
\begin{scope}[xshift=0cm, scale=0.5]
 \node at (-0.8,-2.4) {$A$};
 \node at (1.6,-2.4) {$A$};
 \node at (0,0.8) {$A$};
 \node at (2.4,0.8) {$A$};
 \draw[strand] (-0.8,-0.8) to[out=90,in=180] (0,0);
 \draw[strand] (0,0) to[out=0,in=90] (0.8,-0.8);
 \draw[strand] (0.8,-0.8) to[out=-90,in=180] (1.6,-1.6);
 \draw[strand] (1.6,-1.6) to[out=0,in=-90] (2.4,-0.8);
 \draw[strand] (-0.8,-0.8) -- (-0.8,-2.15);
 \draw[strand] (2.4,-0.8) -- (2.4,0.55);
 \draw[strand] (1.6,-1.6) -- (1.6,-2.15);
 \draw[strand] (0,0) -- (0,0.55);
\end{scope}
\node[eq] at (2.3,-0.5) {$=$};
\begin{scope}[xshift=3cm, scale=0.5]
 \node at (0,-2.4) {$A$};
 \node at (2,-2.4) {$A$};
 \node at (0,0.8) {$A$};
 \node at (2,0.8) {$A$};
 \draw[strand] (1,-0.4) -- (1,-1.2);
 \draw[strand] (1,-0.4) to[out=0,in=-90] (2,0.55);
 \draw[strand] (1,-0.4) to[out=180,in=-90] (0,0.55);
 \draw[strand] (1,-1.2) to[out=0,in=90] (2,-2.15);
 \draw[strand] (1,-1.2) to[out=180,in=90] (0,-2.15);
\end{scope}
\node[eq] at (4.8,-0.5) {$=$};
\begin{scope}[xshift=6.1cm, scale=0.5]
 \node at (-0.8,0.8) {$A$};
 \node at (0,-2.4) {$A$};
 \node at (1.6,0.8) {$A$};
 \node at (2.4,-2.4) {$A$};
 \draw[strand] (-0.8,0.55) -- (-0.8,-0.8);
 \draw[strand] (-0.8,-0.8) to[out=-90,in=180] (0,-1.6);
 \draw[strand] (0,-1.6) to[out=0,in=-90] (0.8,-0.8);
 \draw[strand] (0.8,-0.8) to[out=90,in=180] (1.6,0);
 \draw[strand] (1.6,0) to[out=0,in=90] (2.4,-0.8);
 \draw[strand] (1.6,0) -- (1.6,0.55);
 \draw[strand] (0,-1.6) -- (0,-2.15);
 \draw[strand] (2.4,-0.8) -- (2.4,-2.15);
\end{scope}
\end{tikzpicture}
\label{eq:diagram-relations-a}
\\
\begin{tikzpicture}[baseline=0pt,font=\scriptsize,
  strand/.style={draw, line width=0.7pt}
]
\begin{scope}[xshift=0cm, scale=0.6]
  \node at (0,-1.0) {$A$};
  \node at (0,1.8) {$A$};
  \draw[strand] (0,-0.75) -- (0,-0.2);
  \draw[strand] (0,-0.2) to[out=0,in=-90] (0.6,0.4);
  \draw[strand] (0,-0.2) to[out=180,in=-90] (-0.6,0.4);
  \draw[strand] (-0.6,0.4) to[out=90,in=180] (0,1);
  \draw[strand] (0.6,0.4) to[out=90,in=0] (0,1);
  \draw[strand] (0,1) -- (0,1.55);
\end{scope}
\node at (1.0,0.3) {$=$};
\begin{scope}[xshift=1.8cm, scale=0.6]
  \node at (0,-1.0) {$A$};
  \node at (0,1.8) {$A$};
  \draw[strand] (0,-0.75) -- (0,1.55);
\end{scope}
\end{tikzpicture}
\label{eq:diagram-relations-b}
\end{gather}
\end{subequations}

\medskip

Suppose that $\mathcal C$ is rigid, and let $(A, \overline{\sigma},\widetilde{\sigma})$ be a doubly central algebra in $\mathcal C$. Consider the induction functor
\begin{equation}
    A \otimes - \colon \mathcal C \longrightarrow {}_A\mathcal C \ , \qquad X \mapsto A \otimes X \ , \qquad f \mapsto \id_A \otimes f \ ,
\end{equation}
where the $A$-action on the free module $A \otimes M$ is given by algebra multiplication. Recall that the monoidal structure on $_A\mathcal C$ is defined via~\eqref{eq: tensor over A}. The induction functor is monoidal \cite[Prop.\,5.11]{FS01}.
For any $X, Y \in \mathcal C$ the monoidal structure morphism
\begin{equation}
    \mathrm{Ind}_{X,Y} \colon (A \otimes X)\otimes_A(A \otimes Y) \to A \otimes X \otimes Y
\end{equation}
is induced by:
\begin{equation}
    \begin{tikzcd}
	{(A \otimes X)\otimes(A \otimes Y)} && {(A \otimes X)\otimes_A(A \otimes Y)} \\
	&& {A \otimes X \otimes Y}
	\arrow["{\pi_{A \otimes X, A \otimes Y}}", from=1-1, to=1-3]
	\arrow["{(\mu \otimes \id_{X \otimes Y})(\id_A \otimes \overline{\sigma}_X^{-1}\otimes \id_Y)}"', from=1-1, to=2-3]
	\arrow["{\mathrm{Ind}_{X,Y}}", dashed, from=1-3, to=2-3]
\end{tikzcd}
\end{equation}
with the inverse given by
\begin{equation}
    A \otimes X \otimes Y \xrightarrow{\id_{A \otimes X} \otimes \eta \otimes \id_Y} A\otimes X \otimes A \otimes Y \xrightarrow{\pi_{A \otimes X, A \otimes Y}} (A\otimes X) \otimes_A (A \otimes Y) \ .
\end{equation}

Since $\mathcal{C}$ is rigid and monoidal functors preserve duality, every free module $A \otimes X$ is dualisable in ${}_A\mathcal C$
with dual object $A \otimes X^*$, where $X^*$ is the dual of $X$ in $\mathcal{C}$. 
The evaluation and coevaluation morphisms are given by
\begin{subequations}
\begin{align}
\mathrm{ev}^A_{A \otimes X} = (\id_A \otimes \mathrm{ev}_X) \circ \mathrm{Ind}_{X^*,X}
&\colon
(A \otimes X^*) \otimes_A (A \otimes X) \longrightarrow A \ , \\
\mathrm{coev}^A_{A \otimes X} = \mathrm{Ind}^{-1}_{X,X^*}\circ(\id_A \otimes \mathrm{coev}_X)
&\colon
A \longrightarrow (A \otimes X) \otimes_A (A \otimes X^*) \ .
\end{align}
\end{subequations}
Here, we write $\mathrm{ev}^A$ and $\mathrm{coev}^A$ for the $A$-linear evaluation and coevaluation morphisms in $_A\mathcal{C}$, and $\mathrm{ev}_X$ and $\mathrm{coev}_X$ for the corresponding morphisms in $\mathcal{C}$.

Now suppose that $A$ is separable.
Every $A$-module $M$ is then a retract of the free module $A \otimes M$: the composition
\begin{equation}\label{eq: free retract}
    M \xrightarrow{\big(\id_A \otimes \rho_M\big) \circ \big((S \circ \eta) \otimes \id_M\big)} A \otimes M \xrightarrow{\rho_M} M
\end{equation}
is identity on $M$,
where $\rho_M$ is the action morphism, $\eta$ is the unit of $A$, and we use~\eqref{eq:diagram-relations-b}.
It follows that $M$ admits a dual in $_A\mathcal{C}$, which we denote by $M^\vee$ to distinguish it from the dual $M^*$ of its underlying object in $\mathcal{C}$. Indeed, in a general monoidal category, if an object $Y$ is a retract of a dualisable object $Z$ via morphisms $Y \xrightarrow{i} Z \xrightarrow{p} Y$,
then the morphism $(i \circ p)^* \colon Z^* \to Z^*$ is an idempotent. If the category is idempotent complete, then $(i \circ p)^*$ splits, factoring through an object $\widetilde Y$ which serves as the dual of $Y$. By Proposition~\ref{monoidal structure on A-mod}, we know that $_A\mathcal{C}$ is abelian and hence idempotent complete. Therefore, we conclude that $_A\mathcal{C}$ is rigid.

\begin{remark}\label{rem:sep-alg}
    If $\mathcal{C}$ is semisimple and $A$ is separable, then ${}_A\mathcal C$ is semisimple~\cite[Prop.\,7.8.30]{EGNO15}.
\end{remark}

\begin{proposition}\label{prop: G-graded rigid twisted mod}
Let $\mathcal{C}$ be an abelian rigid monoidal category with a right exact tensor product, and let $(A, \overline{\sigma}, \widetilde{\sigma})$ be a doubly central algebra in $\mathcal{C}$ whose underlying algebra is separable. Then the category $_A\mathcal{C}^{\mathrm{gr}}$ defined in~\eqref{eq: _AC graded} is a rigid $G$-graded monoidal category.
\end{proposition}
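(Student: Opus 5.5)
The plan is to take the rigidity of ${}_A\mathcal C$ established just above the statement and show that duals respect the twisted-local grading. The $G$-graded monoidal structure of ${}_A\mathcal C^{\mathrm{gr}}$ is already given by Lemma~\ref{twisted-local modules G-graded monoidal} and Proposition~\ref{monoidal structure on A-mod}, so what remains is the following. For $M\in{}_A\mathcal C_g$, its dual $M^\vee$ in ${}_A\mathcal C$, constructed as a retract of $A\otimes M^*$ via the separability idempotent~\eqref{eq: free retract}, should lie in ${}_A\mathcal C_{g^{-1}}$. The same must hold for the right dual ${}^\vee M$, which is obtained by the symmetric argument.

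Once this is known, the evaluation and coevaluation $\mathrm{ev}^A\colon M^\vee\otimes_A M\to A$ and $\mathrm{coev}^A\colon A\to M\otimes_A M^\vee$ are morphisms between objects of ${}_A\mathcal C_1$, by Lemma~\ref{twisted-local modules G-graded monoidal} and Remark~\ref{rem:twisted-local}. Since ${}_A\mathcal C_1$ is a full subcategory of ${}_A\mathcal C$, they are legitimate morphisms in the external direct sum ${}_A\mathcal C^{\mathrm{gr}}$. The zig-zag identities then hold there because they hold in ${}_A\mathcal C$ and the forgetful functor $U$ is faithful. The fact that $M^\vee$ might also be local for other degrees is harmless: we simply regard it as an object of the component of degree $g^{-1}$. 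Non-homogeneous objects are finite direct sums of homogeneous ones, and their duals are the direct sums of the duals.

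The core step is to prove that $M^\vee$ is $g^{-1}$-local, i.e.\ that $\overline\lambda_{M^\vee}=\widetilde\lambda_{M^\vee}\circ(\id\otimes\phi_{g^{-1}})$. I would argue by mates. Two morphisms $f,f'\colon M^\vee\otimes A\to M^\vee$ in $\mathcal C$ agree if and only if their composites $\mathrm{ev}^A\circ\pi_{M^\vee,M}\circ(f\otimes\id_M)$ and $\mathrm{ev}^A\circ\pi_{M^\vee,M}\circ(f'\otimes\id_M)$ agree as maps $M^\vee\otimes A\otimes M\to A$. This holds since one can recover $f$ by composing with $\mathrm{coev}^A$ and using the zig-zag identity, where $\pi$ and $\otimes_A$ are handled via the balancing condition in~\eqref{eq: tensor over A}. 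Using the balancing of $\pi_{M^\vee,M}$ with respect to $\overline\sigma$, the term $\overline\lambda_{M^\vee}\otimes\id_M$ becomes $\id\otimes\rho_M$ acting on $A\otimes M$. For the tilde term, I would first rewrite $\widetilde\lambda_{M^\vee}$ using the $A$-linearity of $\mathrm{ev}^A$ and the fact that $A$, as a $1$-local module, has $\overline\lambda_A=\widetilde\lambda_A=\mu$. This moves the $\widetilde\sigma$-crossing onto $M$, where the $g$-locality of $M$ in the form~\eqref{eq:g-local-2} converts it into a $\overline\sigma$-crossing together with a factor $\phi_g$. Combined with the inserted $\phi_{g^{-1}}$, these cancel because $\phi$ is a homomorphism that commutes with both half-braidings by~\eqref{eq: naturality of action}. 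Throughout, epimorphicity of $\pi\otimes\id_A$, which follows from right exactness, lets us check identities before passing to $\otimes_A$, exactly as in the proof of Lemma~\ref{twisted-local modules G-graded monoidal}.

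I expect the main obstacle to be the bookkeeping in this mate argument. The difficulty is that $\widetilde\lambda$ is not the action used to form $\otimes_A$, so $\widetilde\sigma$-crossings cannot simply be slid through $\pi$. One must instead route them through $\mathrm{ev}^A$ using $A$-linearity and the commutativity of $\widetilde A$ in $Z(\mathcal C)$. If this direct route proves awkward, a fallback is to work with the explicit model of $M^\vee$ as a retract of the free module $A\otimes M^*$. In that model one would check that the idempotent $(i\circ p)^*$ commutes with both right actions up to the appropriate $\phi_{g^{-1}}$-twist, and then invoke closure of ${}_A\mathcal C_{g^{-1}}$ under subquotients, i.e.\ the lemma that each ${}_A\mathcal C_g$ is abelian.
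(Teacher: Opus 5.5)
Your proposal is correct and follows essentially the paper's argument. The paper likewise reduces the $g^{-1}$-locality of $M^\vee$ to comparing mates under $\Hom_{{}_A\mathcal C}(M^\vee\otimes A, M^\vee)\cong\Hom_{{}_A\mathcal C}((M^\vee\otimes A)\otimes_A M, A)$, and checks that the mates agree using the balancing of $\pi_{M^\vee,M}$, the $A$-linearity of $\mathrm{ev}^A_M$, the commutativity of $\overline{A}$ and $\widetilde{A}$, and the $g$-locality of $M$ in the form \eqref{eq:g-local-2}; your extra remarks on right duals and on the duality morphisms living in ${}_A\mathcal C_1$ (so that they are legitimate in the external direct sum) fill in points the paper leaves implicit.
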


\begin{proof}
We already know that $_A\mathcal{C}$ is rigid. It remains to check that if $M \in {}_A\mathcal{C}$ is $g$-local, then its dual $M^\vee$ is $g^{-1}$-local. 
From the duality morphisms we get the isomorphism (here the $A$-action on $M^\vee \otimes A$ is given by the action on the first tensor factor)
\begin{equation}
    \Hom_{_A\mathcal C}(M^\vee \otimes A, M^\vee) \cong \Hom_{_A\mathcal C}((M^\vee \otimes A) \otimes_A M , A) \ ,
\end{equation}
and it thus suffices to establish the following identities (we draw $\mathrm{ev}^A_M$ as a bead in the following diagrams):
\begin{equation}
  \vcenter{\hbox{%
    \makebox[0pt][c]{%
      \hspace*{10em}%
      \includesvg[width=0.95\textwidth]{picture/dual_degree}%
    }%
  }}
\end{equation}
Here the first step applies~\eqref{eq: tensor over A}, the second uses $g$-localness of $M$ in~\eqref{eq:g-local-2},
the third applies~\eqref{eq: tensor over A} again, the fourth uses the $A$-linearity of $\pi_{M^\vee,M}$ and $\mathrm{ev}^A_M$ as well as naturality of the half-braiding $\overline{\sigma}$, the fifth applies the central structure of $(A, \overline \sigma)$, the sixth is the second Reidemeister move, the seventh uses naturality of the half-braiding $\widetilde{\sigma}$, and the eighth applies the central structure of $(A, \widetilde \sigma)$ together with the $A$-linearity of $\pi_{M^\vee,M}$ and $\mathrm{ev}^A_M$.
\end{proof}

A \textit{Frobenius algebra} in $\mathcal{C}$ is a tuple $(A, \mu, \eta, \Delta, \epsilon)$ consisting of an algebra $(A, \mu, \eta)$ and a coalgebra $(A, \Delta, \epsilon)$ with coproduct and counit
\begin{equation}\Delta \colon A \to A \otimes A \ , \qquad \epsilon \colon A \to \mathbbm{1} \ ,
\end{equation}
subject to the usual coassociativity and counitality conditions, such that $\Delta$ satisfies condition~\eqref{eq:frobenius condition} (with $S$ replaced by $\Delta$). If \eqref{eq: separability} also holds for the coproduct, then $A$ is called \textit{$\Delta$-separable}.

\begin{remark}
    The relation between $\Delta$-separability and separability for Frobenius algebras in general can be found in \cite[Sec.\,3.2]{Mul22}. However, let us stress that in general the section of a separable algebra need not be coassociative or counital.
\end{remark}

\begin{theorem}\label{twisted local module crossed ribbon}   
Let $\mathcal{C}$ be an abelian ribbon category with braiding $C$ and balancing isomorphism~$\theta$, and let $(G,H,\chi,\rhd)$ be a crossed module. Suppose furthermore:
\begin{itemize}
    \item $(A,\overline{\sigma},\widetilde{\sigma})$ is a doubly central algebra in $\mathcal{C}$ with $\theta_A = \mathrm{id}_A$, and such that $A$ is $\Delta$-separable Frobenius. For all $U \in \mathcal C$, we have $\overline{\sigma}_U = C_{A,U}$.
    \item 
    $\phi$ and $\upsilon$ are $G$- and $H$-actions on $(A,\overline{\sigma},\widetilde{\sigma})$, respectively, satisfying~\eqref{action compatibility} and~\eqref{upsilon alegbra iso}. For every $g \in G$ and $M \in {}_A\mathcal{C}_g$, we have $\rho_M \circ C_{M,A} \circ C_{A,M}
    = \rho_M \circ (\upsilon_{\chi(g)}^{-1} \otimes \mathrm{id}_M)$.
\end{itemize}
Then the category $_A\mathcal C^{\mathrm{gr}}$ is $\chi$-crossed ribbon.
\end{theorem}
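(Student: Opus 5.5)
By Theorem~\ref{twisted local module crossed braiding} and Proposition~\ref{twisted local module crossed balanced}, ${}_A\mathcal C^{\mathrm{gr}}$ is already $\chi$-crossed balanced, with crossed braiding $\widetilde C$ induced by $C$ via \eqref{eq: crossed braiding existence} and balancing $\widetilde\theta_M$ whose underlying morphism is $\theta_M$. Since a $\Delta$-separable Frobenius algebra is in particular separable (take $S=\Delta$), Proposition~\ref{prop: G-graded rigid twisted mod} makes ${}_A\mathcal C^{\mathrm{gr}}$ a rigid $G$-graded monoidal category. It therefore remains only to verify \eqref{RB1}. The $H$-action $\psi$ is strict, so $T$ and $\eta$ are identities, and \eqref{RB1} reduces to
\[
(\widetilde\theta_M)^\vee\circ\nu_{\chi_g,M}={}^{\chi_g}\widetilde\theta_{M^\vee}
\]
as morphisms ${}^{\chi_g}(M^\vee)\to M^\vee$ for $M\in{}_A\mathcal C_g$.

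First I will use the Frobenius structure to give an explicit model of the dual $M^\vee$, rather than the abstract idempotent-splitting construction. I take $M^\vee$ to be $M^*$ with the $A$-action obtained from the right action $\overline\lambda_M$ transposed. Using $\Delta$ and $\epsilon$, the evaluation is $\epsilon\circ$ (the action on $M^*\otimes M$ factored through $\otimes_A$), and the coevaluation is built from $\mathrm{coev}_M$ and $\Delta\circ\eta$. $\Delta$-separability supplies the zig-zag identities, as in the standard treatment of $A$-modules over special Frobenius algebras in \cite{FS01,FRS02}.

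Second, in this model $\nu_{\chi_g,M}$ (Remark~\ref{rem:canoncial-iso-duals}) will be computed to have underlying morphism $\mathrm{id}_{M^*}$, up to the invertible scalar coming from $\upsilon$ acting on $\epsilon$. To see this I would check that $\epsilon\circ\upsilon_h=\epsilon$. This should follow because $\upsilon_h$ is an algebra automorphism and, by $\Delta$-separability, the Frobenius counit is determined by the algebra structure: $\epsilon$ is proportional to a trace of left multiplication, as in \cite[Sec.\,3.2]{Mul22}. If this fails in general, I would instead carry $\nu$ explicitly along, since it cancels against the same $\upsilon$-twist on the other side.

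Third, with these identifications both sides of \eqref{RB1} have underlying morphisms in $\mathcal C$ equal to $(\theta_M)^*$ and $\theta_{M^*}$ respectively. These agree because $\mathcal C$ is ribbon. Since the forgetful functor ${}_A\mathcal C\to\mathcal C$ is faithful, equality of the underlying morphisms gives \eqref{RB1}.

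I expect the main obstacle to be the first two steps: verifying that the explicit dual and its $A$-action are compatible with the twisted action, so that $M^\vee$ really is $g^{-1}$-local with the claimed structure maps, and controlling $\nu_{\chi_g,M}$. The condition $\theta_A=\mathrm{id}_A$ is also needed here, so that the dual of the $A$-action morphism commutes with $\theta$ via naturality. I would handle all of this with string diagrams, working with representatives before passing through the epimorphisms $\pi$.
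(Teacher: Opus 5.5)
Your reduction and endgame match the paper's. Strictness of $\psi$ reduces \eqref{RB1} to $(\widetilde\theta_M)^\vee\circ\nu_{\chi_g,M}={}^{\chi_g}\widetilde\theta_{M^\vee}$, and the duals are modelled on $M^*$ as in \cite[Prop.\,5.15]{FS01}. The ribbon condition in $\mathcal C$ then identifies the underlying morphisms $(\theta_M)^*$ and $\theta_{M^*}$, so everything rests on $\nu_{\chi_g,M}$ being exactly $\mathrm{id}_{M^*}$ in $\mathcal C$.

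You differ from the paper in how you obtain that last fact. The paper computes $\nu$ directly on representatives. The only twists that occur are $(\chi_g)_A^{-1}=\upsilon_{\chi_g}$ from \eqref{eq: h-action unitor mod}, applied to the output leg of $\Delta\circ\eta$ in the lift $\widehat{\mathrm{ev}}_M$ of $\mathrm{ev}^A_M$, and the $\upsilon_{\chi_g}^{-1}$ hidden in the action on $({}^{\chi_g}M)^*$ through which the left unitor acts. These cancel by naturality of the braiding, and commutativity of $A$ together with $\mu\circ\Delta=\mathrm{id}$ reduces the rest to a zig-zag. No property of $\epsilon$ enters.

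Your route is more structural and does work once it is stated correctly. As written, ``$\epsilon\circ(\cdots)$'' lands in $\mathbbm 1$, not in the unit $A$ of ${}_A\mathcal C$. In the model it is the evaluation that uses $\Delta\circ\eta$, while the coevaluation is $\pi_{M,M^*}\circ(\rho_M\otimes\mathrm{id})\circ(\mathrm{id}_A\otimes\mathrm{coev}_M)$. The correct statement is Frobenius reciprocity: an $A$-linear map $X\to A$ is determined by its composite with $\epsilon$, and $\epsilon\circ\widehat{\mathrm{ev}}_M=\mathrm{ev}_M$.

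With that in place, ${}^{\chi_g}(M^\vee)$ and $({}^{\chi_g}M)^\vee$ carry the same action on $M^*$, by naturality of the braiding. So $\nu=\mathrm{id}$ amounts to equality of the two evaluations, whose lifts are $\upsilon_{\chi_g}\circ\widehat{\mathrm{ev}}_M$ and $\widehat{\mathrm{ev}}_{{}^{\chi_g}M}$. This follows from $\epsilon\circ\upsilon_{\chi_g}=\epsilon$.

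That invariance does hold. The cleanest reason is that a $\Delta$-separable Frobenius structure on a commutative algebra is unique: any other counit is $\epsilon\circ\mu\circ(z\otimes\mathrm{id})$ with $z\colon\mathbbm 1\to A$ invertible and, by commutativity, central. Then $\mu\circ\Delta'=\mathrm{id}$ forces $z=\eta$. Transporting the structure along the algebra automorphism $\upsilon_h$ gives the invariance.

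Drop your fallback, though. The other side of \eqref{RB1} has underlying morphism $\theta_{M^*}$ and contains no $\upsilon$, so a nontrivial $\nu$ would violate \eqref{RB1} rather than cancel.

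In short, the paper's argument is a self-contained diagram computation. Yours isolates the $\upsilon$-invariance of the Frobenius counit as the single reason $\nu$ is trivial, at the price of the uniqueness lemma and Frobenius reciprocity.
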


\begin{proof}
    We just need to verify condition~\eqref{RB1}, which in our case simplifies to, for any $M \in {}_A\mathcal C_g$:
    \begin{equation}
    \begin{tikzcd}[column sep = 2em, row sep = 1.5em]
	{^{\chi_g}(M^\vee)} && {M^\vee} \\
	{(^{\chi_g}M)^\vee}
	\arrow["{^{\chi_g}\widetilde{\theta}_{M^\vee}}", from=1-1, to=1-3]
	\arrow["{\nu_{\chi_g,M}}"', from=1-1, to=2-1]
	\arrow["{(\widetilde{\theta}_M)^\vee}"', from=2-1, to=1-3]
\end{tikzcd}
\end{equation}
where $\nu_{\chi_g,M}$ is as defined in Remark~\ref{rem:canoncial-iso-duals}.
It suffices to check the commutativity of the above diagram in the underlying category $\mathcal C$. By our definition of the $\chi_g$-action below Proposition~\ref{structure morphism for h-action in AMod} and the definition of $\widetilde \theta$ in~\eqref{eq: crossed twist}, the underlying morphisms $^{\chi_g}\widetilde{\theta}_{M^\vee}$ and $(\widetilde{\theta}_M)^\vee$ are identical in~$\mathcal C$ by the usual ribbon condition in $\mathcal C$. It is left to check that $\nu_{\chi_g,M}$ 
is identity in $\mathcal C$, which we explain in the following.

Following~\cite[Prop.\,5.15]{FS01},
when $A$ is $\Delta$-separable Frobenius and commutative,
the underlying object of $M^\vee$ is just given by $M^*$, and the $A$-action on $M^*$ is defined by
\begin{equation}
\begin{split}
    A \otimes M^* &\xrightarrow{\id_{A \otimes M^*} \otimes \mathrm{coev}_M} A \otimes M^* \otimes M \otimes M^* \xrightarrow{C_{A,M^*} \otimes \id_{M \otimes M^*}}  M^* \otimes A \otimes M \otimes M^* \\
    &\xrightarrow{\id_{M^*} \otimes \rho_M \otimes \id_{M^*}} M^* \otimes M \otimes M^* \xrightarrow{\mathrm{ev}_M \otimes \id_{M^*}} M^*\ ,
\end{split}
\end{equation}
where $C_{A,M}$ is the braiding in $\mathcal C$.
Define $\widehat{\mathrm{ev}}_M \colon M^* \otimes M \to A$ by
\begin{equation}
\begin{split}
M^* \otimes M &\xrightarrow{(\Delta\circ\eta)\otimes\id_{M^* \otimes M}}A \otimes A \otimes M^* \otimes M \xrightarrow{\id_A \otimes C_{A,M^*} \otimes \id_M} A \otimes M^* \otimes A \otimes M \\
&\xrightarrow{\id_{A \otimes M^*}\otimes\rho_M} A \otimes M^* \otimes M \xrightarrow{\id_A \otimes \mathrm{ev}_M} A
\end{split}
\end{equation}
and $\widehat{\mathrm{coev}}_M \colon A \to M \otimes M^*$ by
\begin{equation}
        A = A \otimes \mathbbm1 \xrightarrow{\id_A \otimes \mathrm{coev}_M} A \otimes M \otimes M^* \xrightarrow{\rho_M \otimes \id_{M^*}} M \otimes M^* \ .
\end{equation}
The evaluation $\mathrm{ev}^A_M \colon M^* \otimes_A M \to A$ and coevaluation $\mathrm{coev}^A_M \colon A \to M \otimes_A M^*$ are 
defined by
\begin{equation}
    \mathrm{ev}^A_M \circ \pi_{M^*,M} = \widehat{\mathrm{ev}}_M \ , \qquad \mathrm{coev}^A_M = \pi_{M,M^*} \circ \widehat{\mathrm{coev}}_M \ .
\end{equation}
Recall the $H$-action $\upsilon$ on the algebra $A$ as in~\eqref{eq:H-action-conditions-for-crossed-action}, the associator $\alpha$~\eqref{associator Amod} and the left and right unitors $l$~\eqref{left unitor} and $r$~\eqref{right unitor} in $_A\mathcal{C}$, as well as the structure morphisms of the $\chi_g$-action given in~\eqref{eq:h-action monoidal} and~\eqref{eq: h-action unitor mod}.
We transform the computation of $\nu_{\chi_g,M}$ into the unerlying categoy $\mathcal C$ via the following commutative diagram in $\mathcal C$:
\begin{equation}
    \begin{tikzcd}
	{^{\chi_g}(M^*)} && {^{\chi_g}(M^*)\otimes A} \\
	{^{\chi_g}(M^*)\otimes_A A} \\
	{^{\chi_g}(M^*)\otimes_A ({}^{\chi_g}M \otimes_A ({}^{\chi_g}M)^*)} && {^{\chi_g}(M^*)\otimes {}^{\chi_g}M \otimes ({}^{\chi_g}M)^*} \\
	{(^{\chi_g}(M^*)\otimes_A {}^{\chi_g}M) \otimes_A ({}^{\chi_g}M)^*} \\
	{^{\chi_g}(M^*\otimes_A M) \otimes_A ({}^{\chi_g}M)^*} && {^{\chi_g}(M^*\otimes M) \otimes ({}^{\chi_g}M)^*} \\
	{^{\chi_g}A \otimes_A ({}^{\chi_g}M)^*} && {^{\chi_g}A \otimes ({}^{\chi_g}M)^*} \\
	{A \otimes_A ({}^{\chi_g}M)^*} && {A \otimes ({}^{\chi_g}M)^*} \\
	{({}^{\chi_g}M)^*}
	\arrow["{\id \otimes \eta}", from=1-1, to=1-3]
	\arrow["{r^{-1}_{^{\chi_g}(M^*)}}", from=1-1, to=2-1]
	\arrow[from=1-3, to=2-1]
	\arrow["{\id \otimes \widehat{\mathrm{coev}}_{^{\chi_g}M}}"', from=1-3, to=3-3]
	\arrow["{\id \otimes_A \mathrm{coev}^A_{^{\chi_g}M}}", from=2-1, to=3-1]
	\arrow["\alpha", from=3-1, to=4-1]
	\arrow[from=3-3, to=3-1]
	\arrow[equal, from=3-3, to=5-3]
	\arrow["{(\chi_g)_{M^*,M} \otimes_A \id}", from=4-1, to=5-1]
	\arrow["{^{\chi_g}\mathrm{ev}^A_M \otimes_A \id}", from=5-1, to=6-1]
	\arrow[from=5-3, to=5-1]
	\arrow["{^{\chi_g}\widehat{ev}_M \otimes \id}"', from=5-3, to=6-3]
	\arrow["{(\chi_g)_A^{-1}\otimes_A \id}", from=6-1, to=7-1]
	\arrow["{\upsilon_{\chi_g} \otimes \id}"', from=6-3, to=7-3]
	\arrow["{l_{({}^{\chi_g}M)^*}}", from=7-1, to=8-1]
	\arrow[from=7-3, to=7-1]
	\arrow["{\rho_{({}^{\chi_g}M)^*}}", from=7-3, to=8-1]
\end{tikzcd}
\end{equation}
where we omit labels for the projectors $\pi$.
Using $\Delta$-separability it is straightforward to check that the right path is the identity morphism: the $\upsilon_{\chi_g}$ is cancelled against the twisted action $\rho_{({}^{\chi_g}M)^*}$, cf.\ \eqref{eq:alg-twisted-action-on-module}, and to remove the
coproduct and actions of $A$
one uses in particular $\Delta$-separability.
\end{proof}

\subsection{Orthogonal $G$-grading on $_A\mathcal C$}

Fix a group $G$, a doubly central algebra $(A,\overline{\sigma},\widetilde{\sigma})$ in $\mathcal{C}$, and a $G$-action $\varphi$ on $(A,\overline{\sigma},\widetilde{\sigma})$. 
We assume in addition:
\begin{quote}
    $\phi_g$ is an algebra automorphism of $A$ for each $g \in G$.
\end{quote}
Recall from Remark~\ref{rem:twisted-local}
that an $A$-module $M$ may be simultaneously $g$-local and $k$-local for distinct $g,k\in G$. It is then natural to ask for conditions under which one has
\begin{equation}
    {}_A\mathcal C = {}_A\mathcal C^{\mathrm{gr}} \ ,
\end{equation}
or, equivalently, when each $A$-module $M$ has a unique decomposition $M = \bigoplus_{g \in G} M_g$ into $g$-local submodules $M_g$.
Here we give a sufficient condition in the case where $A$ is a separable algebra
and $G$ is a finite group, generalising the arguments in~\cite{Mc19}.

Assume that the underlying algebra $A$ of the
doubly central algebra $(A,\overline{\sigma},\widetilde{\sigma})$ is in addition separable. We require that the section $S$ lifts to a morphism in $Z(\mathcal C)$ for $\overline A$, that is, for all $U \in \mathcal{C}$,
\begin{equation}\label{eq: coprod half braiding compatible}
(\id_U\otimes S)\circ\overline{\sigma}_U
=
(\overline{\sigma}_U\otimes\id_A)\circ(\id_A\otimes\overline{\sigma}_U)\circ(S\otimes\id_U) \ .
\end{equation}
For each $g \in G$ and $M \in {}_A\mathcal C$, we define a morphism $(\pi_g)_M \colon M\to M$ as
\begin{equation}\label{eq: pi_g}
  \vcenter{\hbox{%
    \includesvg[width=0.27\textwidth]{picture/g_projector}%
  }}
\end{equation}

The notation $(\pi_g)_M$ is justified as $\pi_g$ is a natural endomorphism of the identity functor on $_A\mathcal C$ for each $g \in G$. Indeed, from the definition above we see that $f \circ (\pi_g)_M = (\pi_g)_N \circ f$ for all $A$-linear morphisms $f\colon M \to N$, and in Lemma~\ref{lem:pig-intertwiner} we see that each $(\pi_g)_M$ is a morphism in~$_A\mathcal C$.

\begin{theorem}\label{thm:inner-direct-sum-for-algebra-modules}
Let $\mathcal{C}$ be an abelian monoidal category with right exact tensor product,
let $G$ be a finite group, and let $(A, \overline{\sigma}, \widetilde{\sigma})$ be a doubly central algebra with a $G$-action $\varphi$. Suppose
\begin{enumerate}
    \item $\varphi \colon G \to \Aut_{\mathcal{C}}(A)$ is injective and each $\varphi_g$ is an algebra automorphism of $A$,
    \item $A$ is separable with the section $S$ lifting to $\overline A$ as in \eqref{eq: coprod half braiding compatible}, and $A$ is simple as a
    left $A$-module,
\item $\sum_{g\in G}\pi_g = \id$ as natural endomorphisms of the identity functor on ${}_A\mathcal{C}$.
\end{enumerate} 
Then
\begin{equation}
{}_A\mathcal C
=
{}_A\mathcal C^{\mathrm{gr}}
=
\bigoplus_{g\in G}{}_A\mathcal C_g \ .
\end{equation}
\end{theorem}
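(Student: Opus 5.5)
The plan is to show that the natural endomorphisms $\pi_g$ of the identity functor on ${}_A\mathcal C$ form a complete family of orthogonal idempotents, each of which projects onto the $g$-local part. Then every module $M$ splits canonically as $M=\bigoplus_g \mathrm{Im}((\pi_g)_M)$, and morphisms respect the splitting.

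\emph{Step 1.} Following the forward reference to Lemma~\ref{lem:pig-intertwiner}, check that each $(\pi_g)_M$ is $A$-linear. This should use that $S$ is an $A$-bimodule map, the compatibility \eqref{eq: coprod half braiding compatible} of $S$ with $\overline{\sigma}$, and naturality of $\overline{\sigma},\widetilde{\sigma}$ together with \eqref{eq: naturality of action}. Naturality in $M$ is already clear from the definition. Since ${}_A\mathcal C$ is abelian, the image $M_g:=\mathrm{Im}((\pi_g)_M)$ is then a submodule.

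\emph{Step 2.} Show that $M$ is $g$-local if and only if $(\pi_g)_M=\id_M$, and that $(\pi_g)_M$ factors through a $g$-local submodule. The first implication should follow by substituting the $g$-local condition \eqref{eq:g-local-2} into the picture \eqref{eq: pi_g}, which collapses it via $\mu\circ S=\id$ and the unit. For the second, post-compose with the twisted-local defect $\overline{\lambda}-\widetilde{\lambda}\circ(\id\otimes\phi_g)$ and use that $\phi_g$ is an algebra automorphism, so that it can be absorbed into the multiplication inside $\pi_g$. These two facts give $\pi_g\circ\pi_g=\pi_g$ and show that $M_g$ is $g$-local.

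\emph{Step 3 (the main obstacle).} Prove orthogonality: $\pi_k\circ\pi_g=0$ for $k\ne g$. Equivalently, a module $N$ that is both $g$- and $k$-local with $g\neq k$ must be zero. If $N$ is both, then combining the two local conditions gives
\[
\rho_N\circ\bigl((\phi_g-\phi_k)\otimes\id_N\bigr)=0 .
\]
Set $\psi:=\phi_k^{-1}\phi_g\ne\id$, which is possible since $\phi$ is injective. The left ideal $\ker\bigl(\rho\,\text{restricted appropriately}\bigr)$, or more concretely the image of $\id_A-\psi$ in $A$, is a left $A$-submodule of $A$ because $\psi$ is an algebra automorphism. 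By simplicity of $A$ this image is either $0$ or all of $A$. It cannot be $0$, since $\psi\ne\id$. So $\id_A-\psi$ is epi, and then $\rho_N\circ(\id_A\otimes\id_N)$ vanishes after precomposing with an epimorphism (tensor product is right exact). This gives $\rho_N=0$, and unitality forces $N=0$.

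I expect the care needed here to lie in verifying that the image of $\id_A-\psi$ is a left submodule; I would try $\mu\circ(\id\otimes(\id-\psi))$ versus $(\id-\psi)\circ\mu$ and repair the mismatch using $\psi$ multiplicative, possibly after passing to the submodule generated by this image. Applying this to $N=\mathrm{Im}(\pi_k\pi_g)$, which is both $k$- and $g$-local by Step 2, yields orthogonality.

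\emph{Step 4.} Combine with hypothesis (3): $\sum_g\pi_g=\id$ and the $\pi_g$ are orthogonal idempotents, so $M\cong\bigoplus_g M_g$ with $M_g\in{}_A\mathcal C_g$. Uniqueness follows because any decomposition into $g$-local summands is fixed by the $\pi_g$ by Step 2. By naturality, every morphism decomposes componentwise, and Step 3 shows $\Hom(M_g,N_k)=0$ for $g\ne k$. Hence the forgetful functor ${}_A\mathcal C^{\mathrm{gr}}\to{}_A\mathcal C$ is an equivalence, in fact an identity.
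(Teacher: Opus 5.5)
\textbf{Step 3 fails as written, though your suggested fallback repairs it.} The image of $\id_A-\psi$ is in general not a left $A$-submodule of $A$, and $\id_A-\psi$ is in general not an epimorphism. Take the group algebra $A=\bigoplus_{b\in B}\mathbb K_b$ of Section~\ref{twisted local module examples}. There $\psi$ rescales $\mathbf 1_b$ by a nontrivial character of $B$, so $\mathrm{Im}(\id_A-\psi)$ is spanned by the $\mathbf 1_b$ with nontrivial character value. It never contains $\mathbf 1_0=\mathbf 1_{-b}\mathbf 1_b$, so it is neither a left ideal nor all of $A$, and the deduction ``$\id_A-\psi$ is epi'' breaks.

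Passing to the generated submodule is the right repair, but the conclusion must then go through module associativity:
\begin{itemize}
\item Take $\psi:=\varphi_g\varphi_k^{-1}\neq\id_A$. With this order, rather than $\varphi_k^{-1}\varphi_g$, the two conditions \eqref{eq:g-local-2} give $\rho_N\circ\bigl((\id_A-\psi)\otimes\id_N\bigr)=0$ directly.
\item Set $e:=\mu\circ\bigl(\id_A\otimes(\id_A-\psi)\bigr)\colon A\otimes A\to A$. It is $A$-linear out of the free module, and nonzero since $e\circ(\eta\otimes\id_A)=\id_A-\psi$. By simplicity of $A$ it is epi, also in $\mathcal C$ because $A\otimes-$ is right exact.
\item Associativity of $\rho_N$ gives $\rho_N\circ(e\otimes\id_N)=\rho_N\circ\bigl(\id_A\otimes\rho_N\circ((\id_A-\psi)\otimes\id_N)\bigr)=0$.
\item Since $e\otimes\id_N$ is epi, $\rho_N=0$, hence $\id_N=\rho_N\circ(\eta\otimes\id_N)=0$.
\end{itemize}
When you apply this to $N=\mathrm{Im}(\pi_k\pi_g)$, note that $N$ is $k$-local as a subobject of $\mathrm{Im}\,\pi_k$ and $g$-local as a quotient of $\mathrm{Im}\,\pi_g$. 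This uses that each ${}_A\mathcal C_g$ is closed under subquotients, which is more than Step~2 alone gives you.

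\textbf{Comparison with the paper.} With this repair your argument is correct, and it differs from the paper's. The paper gets idempotency and orthogonality in one computation, \eqref{eq:pig-pik-composite}. This rewrites $\pi_g\circ\pi_k$ with an endomorphism $\mu\circ(\id_A\otimes\varphi_h)\circ S$ of $A$ inserted, where $\varphi_h=\id_A$ exactly when $\varphi_g=\varphi_k$. Lemma~\ref{lemm:other-g-zero} then kills this endomorphism when $\varphi_h\neq\id_A$: it is an $A$-linear endomorphism $f$ of the simple module $A$ with $f=f\circ\varphi_h$. That no nonzero module is both $g$- and $k$-local is then read off from $\pi_g\pi_k=0$.

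You run the logic in reverse. You get idempotency from ``$\pi_g=\id$ on $g$-local modules'' plus naturality, and orthogonality from a direct proof that no nonzero module is simultaneously $g$- and $k$-local. Your repaired Step~3 is more elementary: it needs neither separability, nor the lift \eqref{eq: coprod half braiding compatible}, nor multiplicativity of $\varphi$. The paper's Lemma~\ref{lemm:other-g-zero}, in turn, is reused in Remark~\ref{rem:direct-sum-decomp}(ii) to give a criterion for hypothesis (iii) formulated purely in terms of $A$.
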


To prove this theorem, we first establish the following technical lemmas for a doubly central algebra $(A, \overline{\sigma}, \widetilde{\sigma})$.

\begin{lemma}\label{lem:pig-intertwiner}
Suppose that $S \colon A \to A \otimes A$ is a bimodule map which satisfies \eqref{eq: coprod half braiding compatible} (but not necessarily \eqref{eq: separability}). Then
$(\pi_g)_M$ is $A$-linear and its image 
    $\pi_g(M)$
is $g$-local.
\end{lemma}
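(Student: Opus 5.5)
The plan is to verify both claims by string-diagram manipulations on the defining picture \eqref{eq: pi_g}. As I read it, $(\pi_g)_M$ inserts $S\circ\eta\colon\mathbbm 1\to A\otimes A$ next to $M$. One leg of $S\circ\eta$ then acts on $M$ through the $\overline{\sigma}$-induced structure $\overline\lambda_M$ (equivalently via $\overline\sigma$ and $\rho_M$). The other leg is first twisted by $\phi_g$ and then acts through the $\widetilde{\sigma}$-induced right action $\widetilde\lambda_M$. If the picture's conventions differ slightly, the plan should still go through with the same steps. The only properties of $S$ I will use are:
\begin{itemize}
\item the bimodule property \eqref{eq:frobenius condition}, which in its ``unit'' form says that an $A$ multiplied into the left leg of $S\circ\eta$ can be moved to the right leg, i.e.\ $(\mu\otimes\id_A)\circ(\id_A\otimes S\eta)=S=(\id_A\otimes\mu)\circ(S\eta\otimes\id_A)$;
\item the compatibility \eqref{eq: coprod half braiding compatible} of $S$ with $\overline{\sigma}$.
\end{itemize}
Separability \eqref{eq: separability} is not needed, as the statement indicates.

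\emph{Step 1: $A$-linearity.} I will show $\rho_M\circ(\id_A\otimes(\pi_g)_M)=(\pi_g)_M\circ\rho_M$. On the left-hand side the incoming $A$ first crosses the legs of $S\circ\eta$; I move it inside as follows.
\begin{itemize}
\item Use \eqref{eq: coprod half braiding compatible} to pass the $A$-strand through the $\overline{\sigma}$-crossings and the copy of $S$ together, and naturality of $\overline\sigma,\widetilde\sigma$ in $M$ for the remaining crossings.
\item Use the half-braiding/multiplication compatibility \eqref{eq: algebra compatible half braiding b}, applied to both $\overline{\sigma}$ and $\widetilde{\sigma}$, together with associativity of $\rho_M$, to merge actions.
\item Commutativity of $\overline A$ and $\widetilde A$ in $Z(\mathcal C)$ lets $A$-multiplications be reordered where needed.
\item $\phi_g$ commutes with both half-braidings by \eqref{eq: naturality of action}, so the coupon never obstructs sliding.
\end{itemize}
After these moves the incoming $A$ sits directly on the $M$-strand below the construction, which is the right-hand side. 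Naturality of $\pi_g$ in $A$-linear maps is already noted after \eqref{eq: pi_g}, so this step completes the first claim.

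\emph{Step 2: the image is $g$-local.} By Step 1 the image $\pi_g(M)\hookrightarrow M$ is a submodule, so its two right actions are restrictions of $\overline\lambda_M$ and $\widetilde\lambda_M$. The key identity to prove is
\begin{equation*}
\overline\lambda_M\circ((\pi_g)_M\otimes\id_A)=\widetilde\lambda_M\circ(\id_M\otimes\phi_g)\circ((\pi_g)_M\otimes\id_A).
\end{equation*}
To prove it, start from the left-hand side. Write the outer $\overline\lambda$-action as $\rho_M\circ\overline\sigma_M^{-1}$, and use the associativity of $\overline\lambda_M$ (a right action) to absorb the incoming $A$ into the $\overline\sigma$-leg of $S\circ\eta$. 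Then transfer it with the bimodule property of $S$ to the other leg. In doing so it acquires a $\phi_g$ and now acts through $\widetilde\lambda_M$. Finally use associativity of $\widetilde\lambda_M$ to pull it back out as the outer action. Because $\phi_g$ need not be an algebra map here, I will avoid ever multiplying inside $\phi_g$. Instead I apply $\phi_g$ to the transferred element separately, using \eqref{eq: naturality of action} to commute it past $\widetilde\sigma$.

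\emph{Step 3: conclusion.} Compose the key identity with the epimorphism $M\twoheadrightarrow\pi_g(M)$ tensored with $\id_A$; this is still an epimorphism by right exactness of $\otimes$. Using also the monomorphism $\pi_g(M)\hookrightarrow M$, both sides may be cancelled, leaving exactly condition \eqref{eq:twist-local-condition} for $\pi_g(M)$.

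I expect the main obstacle to be the transfer in Step 2. The bimodule relation moves multiplication between the legs of $S$, but here the two legs act through different half-braidings, and one carries a $\phi_g$ that is not multiplicative. Getting the crossings and the coupon into the right order will require a careful combination of \eqref{eq: coprod half braiding compatible} with commutativity in $Z(\mathcal C)$, and this is where the argument is most likely to break if the conventions of \eqref{eq: pi_g} differ from my reading.
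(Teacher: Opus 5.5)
Your overall architecture matches the paper's: prove $\overline\lambda_M\circ((\pi_g)_M\otimes\id_A)=\widetilde\lambda_M\circ(\id_M\otimes\phi_g)\circ((\pi_g)_M\otimes\id_A)$, then cancel the monomorphism $\pi_g(M)\hookrightarrow M$ and the epimorphism $(M\twoheadrightarrow\pi_g(M))\otimes\id_A$. Step 1 causes no trouble; with your reading of \eqref{eq: pi_g} it is even immediate, since $\rho_M$ commutes with both $\overline\lambda_M$ and $\widetilde\lambda_M$ by commutativity of $\overline A$ and $\widetilde A$.

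The gap is in Step 2, where you deliberately avoid using that $\phi_g$ is multiplicative. This rests on a misreading of the hypotheses. The subsection containing the lemma imposes, as a standing assumption, that every $\phi_g$ is an algebra automorphism, and the paper's proof of $g$-locality uses it.

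More importantly, your transfer cannot work without it. Write $(\pi_g)_M=\widetilde\lambda_M\circ(\overline\lambda_M\otimes\phi_g)\circ(\id_M\otimes S\eta)$. Moving the outer $\overline\lambda_M$ inside requires
$$\overline\lambda_M\circ(\widetilde\lambda_M\otimes\id_A)=\widetilde\lambda_M\circ(\overline\lambda_M\otimes\id_A)\circ(\id_M\otimes\overline\sigma_A^{-1}).$$
This is true, by naturality and tensor compatibility of the half-braidings together with commutativity of $\overline A$ and $\widetilde A$, but you should state it. After that, the bimodule property of $S$ (with naturality of $\overline\sigma$ and commutativity of $\overline A$) turns the left-hand side into
\[
\widetilde\lambda_M\circ\bigl(\id_M\otimes(\phi_g\circ\mu)\bigr)\circ(\overline\lambda_M\otimes\id_{A\otimes A})\circ(\id_M\otimes S\eta\otimes\id_A) \ .
\]
Associativity of $\widetilde\lambda_M$ turns the right-hand side into the same expression with $\phi_g\circ\mu$ replaced by $\mu\circ(\phi_g\otimes\phi_g)$. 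The bimodule relation necessarily deposits the incoming $A$ in the leg on which $\phi_g$ acts. \eqref{eq: naturality of action} only slides $\phi_g$ past crossings, so nothing but multiplicativity can split $\phi_g(a_2 b)$ into $\phi_g(a_2)\phi_g(b)$.

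Without multiplicativity the $g$-locality claim is in fact false. Take $\mathcal C=\mathrm{vec}_{\mathbb K}$ and $A=\mathbb K\times\mathbb K$, with both half-braidings the flip. Let $S(1)=e_1\otimes e_1+e_2\otimes e_2$, $G=\mathbb Z$, and $\phi_n(x_1,x_2)=(2^n x_1,x_2)$.

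Every hypothesis of the lemma holds except that $\phi_n$ is not an algebra map. For $g=1\in\mathbb Z$, the map $(\pi_g)_A$ is multiplication by the invertible element $(2,1)$, under any of the possible conventions for \eqref{eq: pi_g}, so $\pi_g(A)=A$. But $A$ is not $g$-local, because $1\cdot e_1=e_1\neq 2e_1=1\cdot\phi_g(e_1)$.

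The fix is to invoke the standing hypothesis $\phi_g\circ\mu=\mu\circ(\phi_g\otimes\phi_g)$ at exactly this point. Then the two displayed expressions agree, and your Step 3 completes the proof.
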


\begin{proof}
We first check that $(\pi_g)_M$ is an intertwiner.
We need to show
\begin{equation}
  \vcenter{\hbox{%
    \includesvg[width=0.35\textwidth]{picture/g-projector_A-linear}%
  }}
\end{equation}
Indeed,
\begin{equation*}
    \raisebox{0pt}[105pt][0pt]{
  \includesvg[width=0.85\textwidth]{picture/g_proj_A-linear_proof}
}
\end{equation*}
where the first equality follows from the naturality of both half-braidings, the second uses the associativity of the algebra multiplication and module action alongside the central algebra structure of $(A, \widetilde{\sigma})$, the equality $(*)$ applies~\eqref{eq: coprod half braiding compatible}, and the final equality uses the central algebra structure of $(A,\overline{\sigma})$ and the associativity of the algebra multiplication and module action.

Since $_A\mathcal C$ is abelian, the image $\pi_g(M)$ always exists and is given by $\mathrm{Ker}(\mathrm{Coker}((\pi_g)_M))$. By definition, $(\pi_g)_M$ factors through its image, i.e.\ there exist a monomorphism $i_g\colon \pi_g(M) \hookrightarrow M$ and an epimorphism $p_g\colon M \twoheadrightarrow \pi_g(M)$ such that $(\pi_g)_M = i_g \circ p_g$. 
To see that $\pi_g(M)$ is $g$-local, we first establish the following identity
(recall~\eqref{eq:g-local-2}):
\begin{equation}\label{eq: g-proj g-local}
  \vcenter{\hbox{%
    \includesvg[width=0.4\textwidth]{picture/im_g-projector_g-local}%
  }}
\end{equation}
Indeed, we have:
\begin{equation*}
    \raisebox{0pt}[220pt][0pt]{
  \includesvg[width=0.9\textwidth]{picture/g-proj_g-local_proof}
}
\end{equation*}
Here, the first and the third equalities apply~\eqref{eq:diagram-relations}; the second equality uses commutativity of
$(A, \overline{\sigma})$; the fourth equality relies on the assumption that each $\phi_g$ is an algebra map; the fifth applies compatibility~\eqref{eq: algebra compatible half braiding pic} for $\overline{\sigma}$ and its analog for $\widetilde{\sigma}^{-1}$;
the third-to-last equality uses associativity of multiplication;
the second-to-last equality uses commutativity of
$(A, \widetilde{\sigma})$; and the last step follows from naturality of the half-braiding $\overline{\sigma}$.

We can replace $(\pi_g)_M$ in \eqref{eq: g-proj g-local} by $i_g \circ p_g$, and pull $i_g$ to the top of the diagram since it is $A$-linear by construction. Finally, since $i_g$ is monomorphic, $p_g$ is epic, and the tensor product in $\mathcal C$ is right exact, we conclude that $\pi_g(M)$ is $g$-local.
\end{proof}

We need a second technical lemma that only requires the bimodule and lifting properties of $S$ but not  necessarily \eqref{eq: separability}.

\begin{lemma}\label{lemm:other-g-zero}
    Suppose that $S \colon A \to A \otimes A$ is a bimodule map which satisfies \eqref{eq: coprod half braiding compatible}. 
If $A$ as regular module is a simple object in $_A\mathcal C$, then
for every $g \in G$ with $\varphi_g \neq \id_A$, we have
\begin{equation}
\begin{tikzpicture}[baseline=0pt,font=\small,
  strand/.style={draw, line width=0.7pt}
]
\begin{scope}[xshift=0cm]
    \node at (0,0.4) {$f_g \; \; \; \coloneqq$};
\end{scope}
\begin{scope}[xshift=1.5cm]
  \node at (0,-1.0) {$A$};
  \node at (0.56,0.225) {$\phi_g$};
  \node at (0,1.8) {$A$};
  \draw[strand] (0,-0.8) -- (0,-0.2);
  \draw[strand] (0,-0.2) to[out=0,in=-120] (0.519,0.1);
  \draw[strand] (0,-0.2) to[out=180,in=-90] (-0.6,0.4);
  \draw[strand] (-0.6,0.4) to[out=90,in=180] (0,1);
  \draw[strand] (0.6,0.4) to[out=90,in=0] (0,1);
  \draw[strand] (0,1) -- (0,1.6);
\end{scope}
\begin{scope}[xshift=3cm]
    \node at (0,0.4) {$= \; \; \; 0$};
\end{scope}
\end{tikzpicture}
\end{equation}
\end{lemma}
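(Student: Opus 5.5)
The morphism $f_g = \mu\circ(\mathrm{id}_A\otimes\phi_g)\circ S\colon A\to A$ should be shown to be an $A$-linear endomorphism of the regular module $A$ that is not an isomorphism. Simplicity of $A$ in ${}_A\mathcal C$ then forces $f_g=0$, since a nonzero endomorphism of a simple object is an isomorphism.

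The first step is $A$-linearity of $f_g$ with respect to left multiplication. This follows from the left bimodule property of $S$ in \eqref{eq:frobenius condition}: precomposing $f_g$ with $\mu$ lets us move the left multiplication through $S$ onto the left tensor factor. That factor is untouched by $\phi_g$, and associativity of $\mu$ then gives $f_g\circ\mu=\mu\circ(\mathrm{id}_A\otimes f_g)$. I would also record the right-hand analogue: since $\phi_g$ is an algebra automorphism (hypothesis (1) of Theorem~\ref{thm:inner-direct-sum-for-algebra-modules}), the right bimodule property gives $f_g\circ\mu = \mu\circ(f_g\otimes\phi_g)$. So $f_g$ is also a morphism of right modules when the target is twisted by $\phi_g$.

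The main obstacle is showing that $f_g$ cannot be an isomorphism when $\phi_g\neq\id_A$. Suppose it were. I would combine left linearity with the twisted right linearity, using commutativity of $(A,\overline\sigma)$ in $Z(\mathcal C)$ and the lifting \eqref{eq: coprod half braiding compatible} of $S$ to $\overline A$, to turn a right multiplication into a left one via $\overline\sigma_A$. Since $f_g$ is a left-module map, $f_g = \mu\circ(\mathrm{id}_A\otimes f_g(\eta))$, so $f_g$ is right multiplication by the element $a\coloneqq f_g\circ\eta\colon\mathbbm 1\to A$. Commutativity lets us write this as left multiplication by $a$ through $\overline\sigma$. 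The twisted relation then reads $\mu\circ(a\otimes x)\cdot y = \mu\circ(a\otimes x\,\phi_g(y))$, schematically, with $\overline\sigma$ inserted. Setting $x=\eta$ and cancelling the invertible left multiplication by $a$ (invertible because $f_g$ is) yields $\mu\circ(\mathrm{id}\otimes \phi_g)=\mu$ after precomposing with $\eta\otimes\mathrm{id}$. This gives $\phi_g=\id_A$, a contradiction.

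This last string-diagram manipulation is where care is needed. We must check that $\overline\sigma_A$ interacts correctly with $\phi_g$, which holds by \eqref{eq: naturality of action}, and with $S$, which holds by \eqref{eq: coprod half braiding compatible}. I expect the computation to go through in roughly five graphical moves.
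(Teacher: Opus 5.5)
Your argument is correct and is essentially the paper's proof: the paper also shows that $f_g$ is a left $A$-module endomorphism of the simple regular module, and then derives $f_g = f_g\circ\phi_g$ from the right bimodule property of $S$, multiplicativity of $\phi_g$, associativity, commutativity of $\overline A$ (via naturality of $\overline\sigma^{-1}$ applied to $a = f_g\circ\eta$) and left linearity, which is exactly your ``twisted right linearity plus $f_g$ is multiplication by $a$ on either side''. The lifting \eqref{eq: coprod half braiding compatible} is not actually needed, and your schematic relation should read $a(xy) = (ax)\,\phi_g(y)$ rather than $(ax)y = a(x\,\phi_g(y))$; setting $x=1$ gives the same identity $a\,y = a\,\phi_g(y)$ in both versions, so your conclusion stands.
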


\begin{proof}
The map $f_g$ is an $A$-intertwiner of the left regular module $A$.
Since $A$ is simple and $_A\mathcal{C}$ is abelian, $f_g$ is either an isomorphism or 0.
We have
\begin{equation*}
    \begin{tikzpicture}[baseline=0pt,font=\small,
  strand/.style={draw, line width=0.7pt},
  eq/.style={inner sep=0pt}
]

\begin{scope}[xshift=-1.15cm]
    \node at (0,0) {$f_g \; \; = $};
\end{scope}

\begin{scope}[xshift=0cm, scale=0.4]
 \node[open dot, inner sep=1.2pt] at (0,-2.23) {};
 \node at (2.4,-3.1) {$A$};
 \draw[strand] (-0.8,0.55) -- (-0.8,-0.8);
 \draw[strand] (-0.8,-0.8) to[out=-90,in=180] (0,-1.6);
 \draw[strand] (0,-1.6) to[out=0,in=-90] (0.8,-0.8);
 \draw[strand] (0.8,-0.8) to[out=90,in=180] (1.6,0);
 \draw[strand] (1.6,0) to[out=0,in=90] (2.4,-0.8);
 \draw[strand] (1.6,0) -- (1.6,0.55);
 \node at (1.6,0.75) {$\phi_g$};
 \draw[strand] (1.6,1.1) to[out=90,in=0] (0,1.8);
 \draw[strand] (-0.8,0.55) to[out=90,in=180] (0,1.8);
 \draw[strand] (0,-1.6) -- (0,-2.1);
 \draw[strand] (2.4,-0.8) -- (2.4,-2.7);
 \draw[strand] (0,1.8) -- (0,2.7);
\end{scope}
\begin{scope}[xshift=1.4cm]
    \node at (0,0) {$=$};
\end{scope}
\begin{scope}[xshift=2.3cm, scale=0.4]
 \node[open dot, inner sep=1.2pt] at (0,-2.23) {};
 \node at (2.4,-3.1) {$A$};
 \draw[strand] (-0.8,0.55) -- (-0.8,-0.8);
 \draw[strand] (-0.8,-0.8) to[out=-90,in=180] (0,-1.6);
 \draw[strand] (0,-1.6) to[out=0,in=-115] (0.65,-1.15);
 \draw[strand] (0.9,-0.6) to[out=75,in=180] (1.6,0);
 \draw[strand] (1.6,0) to[out=0,in=100] (2.4,-0.53);
 \node at (2.4,-0.9) {$\phi_g$};
 \node at (0.8,-0.9) {$\phi_g$};
 \draw[strand] (1.6,0) to[out=90,in=0] (0,1.8);
 \draw[strand] (-0.8,0.55) to[out=90,in=180] (0,1.8);
 \draw[strand] (0,-1.6) -- (0,-2.1);
 \draw[strand] (2.4,-1.1) -- (2.4,-2.7);
 \draw[strand] (0,1.8) -- (0,2.7);
\end{scope}
\begin{scope}[xshift=3.8cm]
    \node at (0,0) {$=$};
\end{scope}
\begin{scope}[xshift=4.7cm, scale=0.4]
 \node[open dot, inner sep=1.2pt] at (0,-2.23) {};
 \node at (2.4,-3.1) {$A$};
 \draw[strand] (-0.8,-0.4) -- (-0.8,-0.8);
 \draw[strand] (-0.8,-0.8) to[out=-90,in=180] (0,-1.6);
 \draw[strand] (0,-1.6) to[out=0,in=-120] (0.62,-1.15);
 \draw[strand] (1,-0.62) to[out=60,in=-90] (1.2,0);
 \draw[strand] (1.2,0) to[out=90,in=0] (0,1);
 \draw[strand] (0,1) to[out=90,in=180] (1.2,1.8);
 \draw[strand] (1.2,1.8) to[out=0,in=90] (2.4,-0.53);
 \node at (2.4,-0.9) {$\phi_g$};
 \node at (0.8,-0.9) {$\phi_g$};
 \draw[strand] (-0.8,-0.4) to[out=90,in=180] (0,1);
 \draw[strand] (0,-1.6) -- (0,-2.1);
 \draw[strand] (2.4,-1.1) -- (2.4,-2.7);
 \draw[strand] (1.2,1.8) -- (1.2,2.7);
\end{scope}
\begin{scope}[xshift=6.3cm]
    \node at (0,0) {$=$};
\end{scope}
\begin{scope}[xshift=7.2cm, scale=0.4]
 \node[open dot, inner sep=1.2pt] at (0,-2.23) {};
 \node at (2.4,-3.1) {$A$};
 \draw[strand] (-0.8,-0.4) -- (-0.8,-0.8);
 \draw[strand] (-0.8,-0.8) to[out=-90,in=180] (0,-1.6);
 \draw[strand] (0,-1.6) to[out=0,in=-120] (0.62,-1.15);
 \draw[strand] (1,-0.62) to[out=60,in=-90] (1.2,0);
 \draw[strand] (1.2,0) to[out=90,in=0] (0,0.8);
 \draw[strand] (0,0.8) to[out=90,in=-135] (1,1.4);
 \draw[strand] (1.2,1.6) to[out=-45,in=90] (2.4,-0.53);
 \draw[strand] (1.2,1.6) to[out=135,in=-90] (0.8,2);
 \draw[strand] (0.8,2) to[out=90,in=180] (1.2,2.3);
 \draw[strand] (1.2,2.3) to[out=0,in=90] (1.6,2);
 \draw[strand] (1.6,2) to[out=-90,in=45] (1.4,1.8);
 \node at (2.4,-0.9) {$\phi_g$};
 \node at (0.8,-0.9) {$\phi_g$};
 \draw[strand] (-0.8,-0.4) to[out=90,in=180] (0,0.8);
 \draw[strand] (0,-1.6) -- (0,-2.1);
 \draw[strand] (2.4,-1.1) -- (2.4,-2.7);
 \draw[strand] (1.2,2.3) -- (1.2,2.7);
 \node[circle,fill=black,inner sep=1.4pt] at (1.2,1.6){};
\end{scope}
\begin{scope}[xshift=8.8cm]
    \node at (0,0) {$=$};
\end{scope}
\begin{scope}[xshift=10.45cm, scale=0.4]
 \node[open dot, inner sep=1.2pt] at (0,-2.23) {};
 \node at (-2.4,-3.1) {$A$};
 \draw[strand] (-0.8,-0.4) -- (-0.8,-0.8);
 \draw[strand] (-0.8,-0.8) to[out=-90,in=180] (0,-1.6);
 \draw[strand] (0,-1.6) to[out=0,in=-120] (0.62,-1.15);
 \draw[strand] (1,-0.62) to[out=60,in=-90] (1.2,0);
 \draw[strand] (1.2,0) to[out=90,in=0] (0,1);
 \draw[strand] (0,1) to[out=90,in=0] (-1.2,1.8);
 \draw[strand] (-1.2,1.8) to[out=180,in=90] (-2.4,-0.53);
 \node at (-2.4,-0.9) {$\phi_g$};
 \node at (0.8,-0.9) {$\phi_g$};
 \draw[strand] (-0.8,-0.4) to[out=90,in=180] (0,1);
 \draw[strand] (0,-1.6) -- (0,-2.1);
 \draw[strand] (-2.4,-1.1) -- (-2.4,-2.7);
 \draw[strand] (-1.2,1.8) -- (-1.2,2.7);
\end{scope}
\begin{scope}[xshift=12.1cm]
    \node at (0,0) {$= \; \; \; f_g \circ \phi_g$};
\end{scope}
\end{tikzpicture}
\end{equation*}
where the first equality uses the unitality of $A$ and~\eqref{eq:diagram-relations}; the second uses the assumption that $\phi_g$ is an algebra morphism; the third applies the associativity of algebra multiplication; the fourth relies on the central algebra structure of $(A, \overline{\sigma})$; the second-to-last applies the naturality of $\overline{\sigma}^{-1}$;
and the final equality uses the $A$-linearity of $f_g$.
If $f_g$ were an isomorphism, this would imply $\varphi_g = \id_A$, which we excluded. Hence $f_g=0$.
\end{proof}

\begin{remark}
The map $f \mapsto f \circ \eta$ gives an isomorphism 
$\mathrm{End}_{_A\mathcal C}(A)
\xrightarrow{\sim} \Hom_\mathcal C(\mathbbm{1},A)$. If $\mathcal{C}$ is linear over an algebraically closed field and has finite-dimensional Hom-spaces, then $A$ being simple as an $A$-module is equivalent to $\mathrm{End}_{_A\mathcal C}(A)$
being one-dimensional, which in turn is equivalent to $\Hom_\mathcal C(\mathbbm{1},A)$ being one-dimensional. An algebra satisfying the latter condition is called \textit{haploid}~\cite{FS01}. 
\end{remark}

\begin{lemma}\label{lem:im-pig-direct-summand}
Suppose that $A$ is separable and that the section $S$ satisfies \eqref{eq: coprod half braiding compatible}.
Then for each $g \in G$ and $M \in {}_A\mathcal C$, the morphism $(\pi_g)_M$ is an idempotent, $\pi_g(M)$ is a direct summand of $M$, and it is the maximal $g$-local submodule of $M$.    
\end{lemma}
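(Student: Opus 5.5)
The plan is to reduce all three claims to one identity: $(\pi_g)_N=\id_N$ whenever $N$ is $g$-local. Granting that, everything else follows formally from Lemma~\ref{lem:pig-intertwiner} and naturality of $\pi_g$.

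\emph{Step 1: $\pi_g$ acts as the identity on $g$-local modules.} Let $N\in{}_A\mathcal C_g$. In the definition \eqref{eq: pi_g} of $(\pi_g)_N$, one leg of $S\circ\eta$ acts through $\phi_g$ and the half-braidings $\widetilde\sigma$ and $\overline\sigma$. I would first use the $g$-locality of $N$, in the form \eqref{eq:g-local-2}, to trade $\phi_g$ together with the $\widetilde\sigma$-crossing for a $\overline\sigma$-crossing. The resulting $\overline\sigma$-braiding should then cancel against its inverse, possibly after using the lift \eqref{eq: coprod half braiding compatible} of $S$ to move $S$ past $\overline\sigma$. After this cancellation both legs of $S\circ\eta$ act on $N$ directly. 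Associativity of $\rho_N$ and the bimodule property \eqref{eq:frobenius condition} then combine them into $\rho_N\circ((\mu\circ S\circ\eta)\otimes\id_N)$. Separability \eqref{eq: separability} gives $\mu\circ S\circ\eta=\eta$, and unitality of the action gives $\rho_N\circ(\eta\otimes\id_N)=\id_N$.

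This string-diagram manipulation is the main obstacle. The difficulty is arranging the crossings so that the locality relation applies exactly once and the two half-braidings cancel. If the direct route fails, I would first slide $\phi_g$ onto the other leg of $S$ using that $\phi_g$ is an algebra map, as in the proof of Lemma~\ref{lemm:other-g-zero}.

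\emph{Step 2: idempotence.} Write $(\pi_g)_M=i_g\circ p_g$ with $i_g$ monic and $p_g$ epic, as in the proof of Lemma~\ref{lem:pig-intertwiner}. That lemma says $\pi_g(M)$ is $g$-local, so Step~1 gives $(\pi_g)_{\pi_g(M)}=\id$. Naturality of $\pi_g$ applied to the $A$-linear map $i_g$ gives $(\pi_g)_M\circ i_g=i_g\circ(\pi_g)_{\pi_g(M)}=i_g$. Hence $(\pi_g)_M\circ(\pi_g)_M=(\pi_g)_M\circ i_g\circ p_g=i_g\circ p_g=(\pi_g)_M$.

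\emph{Step 3: direct summand.} By Lemma~\ref{lem:pig-intertwiner}, $(\pi_g)_M$ is $A$-linear, so it is an idempotent in the abelian category ${}_A\mathcal C$. It therefore splits, giving $M\cong\pi_g(M)\oplus\mathrm{Ker}((\pi_g)_M)$ in ${}_A\mathcal C$.

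\emph{Step 4: maximality.} Let $j\colon N\hookrightarrow M$ be a $g$-local submodule. Naturality and Step~1 give $(\pi_g)_M\circ j=j\circ(\pi_g)_N=j$. So $j$ factors through the image of $(\pi_g)_M$, that is, $N\subseteq\pi_g(M)$. Since $\pi_g(M)$ is itself $g$-local by Lemma~\ref{lem:pig-intertwiner}, it is the maximal $g$-local submodule of $M$.
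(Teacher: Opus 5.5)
Your proof is correct. For idempotence it takes a different route from the paper; the direct-summand and maximality steps are the same as the paper's.

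\emph{The paper's route.} It computes $\pi_g\circ\pi_k$ for arbitrary $g,k$ by a direct string-diagram calculation. This uses naturality of both half-braidings, centrality of $(A,\widetilde\sigma)$, compatibility of $\overline\sigma$ and $\widetilde\sigma^{-1}$ with $\mu$, and that $\varphi_k$ is an algebra map. It then sets $k=g$ and invokes $\mu\circ S=\id_A$. The fact that $\pi_g$ acts as the identity on $g$-local modules appears only afterwards, in the maximality step, where it is stated as ``easy to see''.

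\emph{Your route.} You put that fact first. You then derive idempotence formally from it, from the $g$-locality of $\pi_g(M)$ in Lemma~\ref{lem:pig-intertwiner}, and from naturality of $\pi_g$ applied to $i_g$.

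\emph{Comparison.} Your argument is shorter and makes the logic transparent, but it relies on the diagrammatic work already done in Lemma~\ref{lem:pig-intertwiner}. The paper's general formula for $\pi_g\circ\pi_k$ is reused for orthogonality in Lemma~\ref{lem:pigs-are-orthonormal}. Your trick adapts there as well: $k$-locality of $\pi_k(M)$ reduces $(\pi_g)_{\pi_k(M)}$ to the action of $f_{k^{-1}g}\circ\eta$, which vanishes by Lemma~\ref{lemm:other-g-zero} when $\varphi_g\neq\varphi_k$.

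\emph{On Step~1.} It is easier than you fear. In \eqref{eq: pi_g}, one leg of $S\circ\eta$ acts through $\rho_N\circ\widetilde\sigma_N^{-1}\circ\overline\sigma_N\circ(\varphi_g\otimes\id_N)$; compare \eqref{eq: g-local computation}. Precomposing the first identity in \eqref{eq:g-local-2} with $\varphi_g\otimes\id_N$ turns this into $\rho_N$ at once. Associativity of $\rho_N$, $\mu\circ S=\id_A$ and unitality then give $(\pi_g)_N=\id_N$. You do not need \eqref{eq: coprod half braiding compatible}, the bimodule property of $S$, or any sliding of $\varphi_g$ between legs.
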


\begin{proof}
To see that $(\pi_g)_M$ is an idempotent, we do a slightly more general computation which we can reuse below. Namely, for any $g,k \in G$ we have:
\begin{equation}\label{eq:pig-pik-composite}
  \vcenter{\hbox{%
    \includesvg[width=0.85\textwidth]{picture/idempotent_proof}%
  }}
\end{equation}
where the first equality applies the naturality of both half-braidings; the second uses the central algebra structure of $(A, \widetilde{\sigma})$ alongside the associativity of algebra multiplication; the third uses
compatibility~\eqref{eq: algebra compatible half braiding pic} for $\overline{\sigma}$ and its analog for $\widetilde{\sigma}^{-1}$, together with the assumption that $\phi_k$ is an algebra morphism; and the last step applies the unitality of $A$ as well as~\eqref{eq:diagram-relations}. Now if $k=g$, by~\eqref{eq: separability} $(\pi_g)_M$ is an idempotent.

It follows that the image $\pi_g(M)$ is a retract, i.e.\ a direct summand of $M$. To see that $\pi_g(M)$ is maximal, suppose that $e \colon N \to M$ is an embedding of an arbitrary $g$-local module $N$. It is easy to see that 
$e = e \circ (\pi_g)_N = (\pi_g)_M \circ e$,
using naturality of both half-braidings,
thus $N$ is also a submodule of $\pi_g(M)$. 
\end{proof}

\begin{lemma}\label{lem:pigs-are-orthonormal}
Suppose that $A$ is separable with section $S$ satisfying \eqref{eq: coprod half braiding compatible}, and that $A$ is simple as an $A$-module. 
Then for every $g,k \in G$ we have the following identity for natural endomorphisms of the identity functor $\mathrm{id}_{_A\mathcal C}$:
\begin{equation}\label{eq: pi_g orthogonal}
    \pi_g \circ \pi_k =
\delta_{\varphi_g,\varphi_k}\pi_g \ .
\end{equation}
\end{lemma}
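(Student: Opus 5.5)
The plan is to reuse the general composite computation~\eqref{eq:pig-pik-composite} from the proof of Lemma~\ref{lem:im-pig-direct-summand}. That computation was carried out for arbitrary $g,k\in G$ without invoking~\eqref{eq: separability}. It expresses $(\pi_g)_M\circ(\pi_k)_M$ as a single diagram of the same shape as $(\pi_g)_M$. The only difference is in the $A$-strand: the separability loop $\mu\circ S$ is replaced by a loop in which one leg carries a twist. Concretely, I expect this twist to be $\varphi_g^{-1}\varphi_k$, or something conjugate to it.

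More precisely, I expect the composite to take the form
\begin{equation}
(\pi_g)_M\circ(\pi_k)_M \;=\; \text{(diagram of $(\pi_g)_M$ with the $A$-input replaced by } \mu\circ(\id_A\otimes \varphi_{g^{-1}k})\circ S\circ\eta) .
\end{equation}
The twisted element inserted into the $A$-strand is therefore precisely $f_{g^{-1}k}\circ\eta$, in the notation of Lemma~\ref{lemm:other-g-zero}. I will first re-read~\eqref{eq:pig-pik-composite} to pin down exactly which automorphism appears and on which leg of $S$ it sits. If the twist lands on the left leg rather than the right, I will move it across using the bimodule property~\eqref{eq:frobenius condition} together with the fact that $\varphi$ is an algebra automorphism. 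This converts the loop into the form $f_{g'}$.

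With this in hand, the argument splits into two cases. If $\varphi_g=\varphi_k$, the twist is trivial, because $\varphi$ is a homomorphism and so $\varphi_{g^{-1}k}=\id_A$. The loop then collapses by~\eqref{eq: separability} to $\eta$, and the diagram reduces to $(\pi_g)_M$. If $\varphi_g\neq\varphi_k$, then $\varphi_{g^{-1}k}\neq\id_A$, and Lemma~\ref{lemm:other-g-zero} gives $f_{g^{-1}k}=0$. This lemma uses that $A$ is simple as a left $A$-module and that $S$ satisfies~\eqref{eq: coprod half braiding compatible}. Since the composite factors through $f_{g^{-1}k}\circ\eta=0$, it vanishes. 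Together the two cases give~\eqref{eq: pi_g orthogonal}, naturally in $M$.

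The main obstacle is the bookkeeping step. I must verify that the final diagram in~\eqref{eq:pig-pik-composite} really contains $f_{g^{-1}k}$ as an isolated subdiagram feeding into the rest, rather than having the twisted loop entangled with the half-braidings $\overline{\sigma}$ and $\widetilde{\sigma}$. If it is entangled, I will first use naturality of the half-braidings, via~\eqref{eq: naturality of action}, to slide $\varphi_k$ and $\varphi_g^{-1}$ together onto one leg. I will then use the centrality of $S$, namely~\eqref{eq: coprod half braiding compatible}, to pull the loop $\mu\circ(\id\otimes\varphi_{g^{-1}k})\circ S$ out from behind the crossings, so that the lemma applies.
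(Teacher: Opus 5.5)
Your proposal is correct and is the paper's argument. You reuse the general composite computation~\eqref{eq:pig-pik-composite}, which leaves a separability loop twisted by an automorphism that is trivial exactly when $\varphi_g=\varphi_k$; you then conclude via Lemma~\ref{lemm:other-g-zero} when $\varphi_g\neq\varphi_k$ and via~\eqref{eq: separability} when $\varphi_g=\varphi_k$. Your bookkeeping contingencies, such as moving the twist to the other leg of $S$ (which only needs that $\varphi$ acts by algebra automorphisms), are harmless but not needed beyond what~\eqref{eq:pig-pik-composite} already provides.
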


\begin{proof}
This is the place where we use the more general computation in \eqref{eq:pig-pik-composite}. By Lemma~\ref{lemm:other-g-zero}, the right hand side is zero when
$\varphi_k \neq \varphi_g$; 
and reduces to $\pi_g$ when 
$\varphi_k = \varphi_g$.
\end{proof}

We are now ready to prove Theorem~\ref{thm:inner-direct-sum-for-algebra-modules}. The proof broadly follows that in \cite[Sec.\,3]{Mc19}, although our categorical setting is more general (e.g.\ in \cite{Mc19} the category is linear and braided with absolutely simple tensor unit, while here we assume none of these) and our assumptions on $A$ are weaker.

\begin{proof}[Proof of Theorem~\ref{thm:inner-direct-sum-for-algebra-modules}]
It suffices to show that each $M \in {}_A\mathcal C$ admits a decomposition
\begin{equation}\label{eq:M-sum-decomp}
    M = \bigoplus_{g \in G} \pi_g(M)
\end{equation}
into $g$-local modules,
and that if an $A$-module $N$ is both $g$-local and $k$-local for $g \neq k$, then $N=0$.

First note that assumption (iii) applied to $M$ gives
\begin{equation}
    \sum_{g \in G} (\pi_g)_M = 
    \id_M \ .
\end{equation}
Furthermore, Lemma~\ref{lem:pigs-are-orthonormal} and assumption (i) on injectivity of $\phi$
gives
\begin{equation}
	\pi_g \circ \pi_k = \delta_{g,k} \pi_g
\end{equation}
as natural endomorphisms, and hence on every module $M$. In particular, a non-zero module cannot be at the same time $g$-local and $k$-local for $g \neq k$.
Altogether we have a complete set of orthonormal idempotents, and by Lemma~\ref{lem:im-pig-direct-summand}, the image of
$\pi_g(M)$ is $g$-local. This shows~\eqref{eq:M-sum-decomp}.
\end{proof}

\begin{remark}\label{rem:direct-sum-decomp}
Let us comment more on assumptions (ii) and (iii)
in Theorem~\ref{thm:inner-direct-sum-for-algebra-modules}:
\begin{enumerate}

\item 
For $\mathcal{C}$ as in Theorem~\ref{thm:inner-direct-sum-for-algebra-modules},
in the assumption (iii)
we could have demanded the seemingly weaker condition that
$P \coloneqq \sum_{g\in G}\pi_g = \gamma \,\id$ for some invertible element $\gamma \in \End_\mathcal{C}(\mathbbm{1})$. However, since $\pi_g \circ \pi_k = \delta_{g,k} \pi_k$ we have $P\circ P = P$, which implies $\gamma=1$.

\item
Assuming conditions (i) and (ii),
we now give a sufficient condition for assumption (iii)
which is formulated in terms of $A$, rather than as a condition for all $A$-modules. Suppose the image of 
$\Phi \coloneqq \sum_{g \in G} \varphi_g \colon A \to A$ 
is transparent in the following sense: for all $U \in \mathcal{C}$ it holds that
\begin{equation}\label{eq:Phi-transparent}
		\widetilde\sigma_U^{-1} \circ \overline\sigma_U \circ (\Phi \otimes \id_U)
        = \Phi \otimes \id_U \ .
	\end{equation}
We then have
\begin{equation}
  \vcenter{\hbox{%
    \includesvg[width=0.7\textwidth]{picture/sum_pi_g_sufficient}%
  }}
\end{equation}
where in the last equality we apply Lemma~\ref{lemm:other-g-zero}. We note that 
the condition~\eqref{eq:Phi-transparent} is weaker than the corresponding condition demanded in \cite[Thm.\,4.2]{Ki04} or \cite[Sec.\,3]{Mc19}, where one requires that the image of $\Phi$ is just the tensor unit. This weaker condition will be useful for our example in the next section.

\item
Let $\mathcal{C}$ be a fusion category\footnote{i.e.\ a $\mathbb{K}$-linear semisimple finite tensor category, which is in particular rigid with a simple tensor unit.}.
Concerning condition (ii), any separable algebra $A$ in $\mathcal{C}$ that is simple as a left $A$-module is Morita equivalent to a separable algebra that is merely indecomposable  (i.e.\ not a direct sum of two non-zero algebras in $\mathcal{C}$). Indeed, the notion of separable algebras is Morita invariant in the fusion case~\cite[Sec.\,2.5]{DSPS13}: for any non-zero $M \in {}_A\mathcal C$, the internal Hom algebra $A_M \coloneqq \underline{\mathrm{Hom}}(M,M)\in \mathcal C$ is again separable and has equivalent category of modules, i.e.\ ${}_A\mathcal C \cong {}_{A_M}\mathcal C$ as right $\mathcal C$-module categories, using~\cite[Thm.\,7.10.1\,\&\,\text{top p.152}]{EGNO15}.
In particular, any simple $A$-module $M$ (e.g.\ $M=A$) gives a Morita-equivalent separable algebra $A_M$ that is simple as a left $A_M$-module~\cite[Lem.\,4.2]{EO03}, using the fact that ${}_A\mathcal C$ is semisimple (Remark~\ref{rem:sep-alg}) and thus exact as a right $\mathcal{C}$-module category. However, since we are interested in ${}_A\mathcal C$ up to equivalence, we are free to choose any (non-zero and) non-simple $M$ yielding necessarily 
an indecomposable algebra $A_M$; compare with~\cite[Ex.\,7.10.6]{EGNO15}.
It would be interesting to understand how the other structures and conditions required by Theorem~\ref{thm:inner-direct-sum-for-algebra-modules} behave under Morita transport to develop an appropriate Morita theory for doubly central algebras.
 
If $\mathcal{C}$ is a finite tensor category (not necessarily semisimple) and $A$ is a separable algebra in $\mathcal{C}$, then the category ${}_A\mathcal C$ remains an exact right $\mathcal C$-module category~\cite[Lem.\,4.4\,\&\,4.9]{GL26}, and the algebra $\underline{\mathrm{Hom}}(M,M)\in \mathcal C$ is still simple as its own left module for every simple module $M\in {}_A\mathcal C$. However, the notion of separable algebras is not Morita invariant if $\mathcal C$ is not semisimple~\cite[Ex.\,4.16]{GL26}. 
Therefore, we can only conjecture that every indecomposable separable algebra in $\mathcal{C}$ is Morita-equivalent to an algebra which is both separable and simple as its left module.
\end{enumerate}
\end{remark}

\subsection{Examples from abelian group algebras}\label{twisted local module examples}

Fix an algebraically closed field $\mathbb{K}$ of characteristic 0. Let $K$ be an abelian group and let $\mathcal{C} \coloneqq \mathrm{vec}_K$ be the category of finite-dimensional $K$-graded vector spaces over $\mathbb{K}$ with untwisted associator.
In this subsection, we produce explicit examples of $\chi$-crossed ribbon categories built from twisted local modules over group algebras in $\mathrm{vec}_K$.
As in Section~\ref{sec:H-crossed-vec_G}, we take $\{\mathbb{K}_k\}_{k\in K}$ as representatives of the isomorphism classes of simple objects, and fix the unit of $\mathbb{K}$ as a preferred basis $\mathbf{1}_k \in \mathbb{K}_k$ in each $\mathbb{K}_k$. 

\subsubsection{Twisted-local modules}
Let $B \subset K$ be a finite
subgroup. We define an algebra object
\begin{equation}
    A \; \coloneqq \; \bigoplus_{b \in B} \mathbb{K}_b
\end{equation}
in $\mathcal C$ with unit $\mathbf{1}_0$ and multiplication given on simple objects by
\begin{equation}
    \mathbf{1}_b \otimes \mathbf{1}_{b'} \mapsto \mathbf{1}_{b+b'}  \qquad b,b' \in B \ .
\end{equation}

\begin{remark}
Note that for any normalised 2-cocycle $\gamma\colon B^2 \to \mathbb{K}^\times$, we can define a new algebra $A_\gamma$ with a twisted multiplication given by $\mathbf{1}_b \otimes \mathbf{1}_{b'} \mapsto \gamma(b,b') \, \mathbf{1}_{b+b'}$.
This would lead to a more general class of crossed graded monoidal structures
in Proposition~\ref{prop: _AC equiv Vec_G} below, but also to more technical computations and we will not pursue this in the present paper.
\end{remark}

\begin{lemma}\label{lem:group-algebra-centre-lift}
    A lift of the algebra $A$ to a commutative algebra $(\overline{A}, \overline{\sigma})$ in the Drinfeld centre $Z(\mathrm{vec}_K)$ is specified by a bicharacter
\begin{equation}
    \overline{\delta}\colon B \times K/B \to \mathbb{K}^\times \ ,
\end{equation}
and the corresponding half-braiding is given by
\begin{gather}
    \overline{\sigma}_{V} \colon A \otimes V \to V \otimes A \ , \qquad \mathbf{1}_b \otimes v_k \mapsto \overline{\delta}(b,[k]) \, v_k \otimes \mathbf{1}_b \ ,
\end{gather}
where $[k]$ denotes the image of $k$ under the quotient map $\pi\colon K \to K/B$.
\end{lemma}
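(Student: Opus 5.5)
The proof reduces everything to scalars. Since $\mathrm{vec}_K$ is semisimple with simple objects $\mathbb{K}_k$, a half-braiding $\overline{\sigma}_V\colon A\otimes V\to V\otimes A$ is determined by its values on simple $V=\mathbb{K}_k$. Because $A\otimes\mathbb{K}_k\cong\bigoplus_{b\in B}\mathbb{K}_{b+k}$ and $\mathbb{K}_k\otimes A\cong\bigoplus_{b\in B}\mathbb{K}_{k+b}$ have the same multiplicity-free graded components, any degree-preserving isomorphism between them has the form $\mathbf{1}_b\otimes v_k\mapsto \delta(b,k)\,v_k\otimes\mathbf{1}_b$ for a function $\delta\colon B\times K\to\mathbb{K}^\times$. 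Naturality in $V$ is then automatic: morphisms are sums of scalars on isotypic pieces, and $\overline{\sigma}_V$ is defined componentwise. So the task is to identify exactly which $\delta$ are allowed.

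The first step is the half-braiding (tensor-compatibility) axiom $\overline{\sigma}_{V\otimes W}=(\id_V\otimes\overline{\sigma}_W)\circ(\overline{\sigma}_V\otimes\id_W)$. The associator is trivial, so on $\mathbf{1}_b\otimes v_k\otimes w_l$ this reads $\delta(b,k+l)=\delta(b,k)\delta(b,l)$. Together with $\delta(b,0)=1$, this says $\delta$ is a character in the second variable.

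Next comes the requirement that $\mu$ and $\eta$ be morphisms in $Z(\mathcal{C})$, i.e.\ \eqref{eq: algebra compatible half braiding}. Condition \eqref{eq: algebra compatible half braiding a} gives $\delta(0,k)=1$. Condition \eqref{eq: algebra compatible half braiding b}, evaluated on $\mathbf{1}_b\otimes\mathbf{1}_{b'}\otimes v_k$, gives $\delta(b+b',k)=\delta(b,k)\delta(b',k)$. So $\delta$ is a bicharacter $B\times K\to\mathbb{K}^\times$. I also need $\overline{\sigma}_V$ to be invertible, which is automatic because its values lie in $\mathbb{K}^\times$.

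Finally I impose commutativity in $Z(\mathcal{C})$. Here the braiding of $\overline{A}$ with itself is $\overline{\sigma}_A$, so the condition is $\mu\circ\overline{\sigma}_A=\mu$. On $\mathbf{1}_b\otimes\mathbf{1}_{b'}$ this gives $\delta(b,b')\,\mathbf{1}_{b'+b}=\mathbf{1}_{b+b'}$. Since $K$ is abelian, this is exactly $\delta(b,b')=1$ for all $b,b'\in B$. A bicharacter on $B\times K$ that is trivial on $B\times B$ is the same thing as a bicharacter $\overline{\delta}$ on $B\times K/B$, via $\delta(b,k)=\overline{\delta}(b,[k])$; this is the universal property of the quotient applied in the second variable.

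The converse follows by reading each step backwards: every bicharacter $\overline{\delta}$ yields a natural family $\overline{\sigma}$ satisfying all of the above conditions. The argument is straightforward throughout. The only point needing care is the first reduction, namely that a half-braiding on $A$ is entirely determined by scalars $\delta(b,k)$ with no additional matrix freedom. This holds because $\mathrm{vec}_K$ is pointed and $A\otimes\mathbb{K}_k$ is multiplicity-free.
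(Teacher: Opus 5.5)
Your proof is correct and follows the same route as the paper. You reduce the half-braiding to scalars $\delta(b,k)$ by semisimplicity, obtain multiplicativity in $K$ from tensor compatibility and in $B$ from compatibility with $\mu$, and use commutativity in $Z(\mathrm{vec}_K)$ to force $\delta|_{B\times B}\equiv 1$, so that $\delta$ descends to a bicharacter on $B\times K/B$; you also spell out the unit condition and the converse, which the paper leaves implicit.
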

\begin{proof}
    Since $\mathrm{vec}_K$ is semisimple, a half-braiding $\overline{\sigma}$ of $A$ is specified by a function $\delta\colon B \times K \to \mathbb{K}^\times$ such that $\overline{\sigma}_{V_k}(\mathbf{1}_b \otimes v_k) = \delta(b,k) \, v_k \otimes \mathbf{1}_b$. $\delta$ is multiplicative in each variable because we require that, for all $k, l \in K$,
    \begin{subequations}
        \begin{gather}
       \overline{\sigma}_{V_k} \circ (\mu \otimes \mathrm{id}_{V_k}) = (\mathrm{id}_{V_k} \otimes \mu)\circ(\overline{\sigma}_{V_k} \otimes \mathrm{id}_A)\circ(\mathrm{id}_A \otimes \overline{\sigma}_{V_k}) \ ,  \\
       \overline{\sigma}_{V_k\otimes U_l} = (\mathrm{id}_{V_k} \otimes \overline{\sigma}_{U_l})\circ(\overline{\sigma}_{V_k} \otimes \mathrm{id}_{U_l}) \ ,
    \end{gather}
    \end{subequations}
where the first condition is \eqref{eq: algebra compatible half braiding b} and the second condition is the compatibility of half-braidings with the tensor product.
    Moreover, since $(\overline{A}, \overline{\sigma})$ is commutative in $Z(\mathrm{vec}_K)$, we need
    \begin{equation}
        \delta(b,c) \equiv 1 \; \; \textup{ for all $b,c \in B$} \ .
    \end{equation}
    Therefore, $\overline{\delta}(b,[k]) \coloneqq \delta(b,k)$ defines a bicharacter of $B \times K/B$.
\end{proof}

Fix two bicharacters $\overline{\delta}, \widetilde{\delta}\colon B \times K/B \to \mathbb{K}^\times$, and denote by $\overline\sigma$ and $\widetilde\sigma$ their corresponding half-braidings obtained by Lemma~\ref{lem:group-algebra-centre-lift}. We consider the doubly central algebra
\begin{equation}
    (A, \overline{\sigma}, \widetilde{\sigma}) \ .
\end{equation}
Take $G:= K/B$, and define a $B$-character for each $g \in G$,
\begin{equation}\label{eq:beta_g-character}
    \beta_g \colon B \to \mathbb{K}^\times \ , \qquad
    b \mapsto \frac{\widetilde{\delta}(b,g)}{\overline{\delta}(b,g)} \ .
\end{equation}
Define a $G$-action on $(A, \overline{\sigma}, \widetilde{\sigma})$ by
\begin{equation}\label{eq: phi G-action formula}
    \phi\colon G = K/B \to \mathrm{Aut}_\mathcal{C}(A) \ , \qquad g \; \mapsto \; \big(\mathbf{1}_b \mapsto \beta_g(b) \, \mathbf{1}_b \big) \ .
\end{equation}
Since the map $(g,b) \mapsto \beta_g(b)$ is a bicharacter on $G \times B$, $\phi_g$ is actually an algebra automorphism for each $g \in G$. 
Compatibility with the half-braidings as in \eqref{eq: naturality of action} is immediate.
Furthermore, $\phi_g = \id$ if and only if $\beta_g \equiv 1$ is the trivial character.

The regular $A$-module $A$ is simple, and it is $\Delta$-separable Frobenius with the coproduct 
    \begin{equation}\label{eq: group algebra coprod}
        \Delta(\mathbf 1_b) = \frac{1}{|B|}\sum_{i \in B}\mathbf 1_{b-i} \otimes \mathbf 1_i \ ,
        \qquad \forall \, b \in B \ .
    \end{equation}
and the counit $\epsilon(\mathbf 1_b) = |B| \, \delta_{b,0}$. The coproduct $\Delta$
lifts to $\overline A$: for any $u_k \in U$ of degree $k$, the two sides of
\eqref{eq: coprod half braiding compatible} become
\begin{align}
& (\id_U\otimes \Delta)\circ\overline{\sigma}_U(\mathbf 1_b \otimes u_k)
= \frac{1}{|B|}\sum_{i \in B} \overline{\delta}(b,[k]) \,
u_k \otimes \mathbf 1_{b-i} \otimes \mathbf 1_i \ ,
\\
& (\overline{\sigma}_U\otimes\id_A)\circ(\id_A\otimes\overline{\sigma}_U)\circ(\Delta\otimes\id_U)(\mathbf 1_b \otimes u_k)
=
\frac{1}{|B|}\sum_{i \in B} \overline{\delta}(i,[k]) \, \overline{\delta}(b-i,[k]) \,
u_k \otimes \mathbf 1_{b-i} \otimes \mathbf 1_i  \ ,
\nonumber
\end{align}
which agree because $\overline{\delta}$ is a bicharacter.

Let $M$ be an $A$-module and $g \in G$. From Lemma~\ref{lem:im-pig-direct-summand} we see that $(\pi_g)_M$ in \eqref{eq: pi_g} is an idempotent and that its image is the maximal $g$-local submodule of $M$. Explicitly, for any $m_k \in M$ homogeneous of degree $k \in K$, we have
\begin{equation}\label{eq: g-local computation}
    (\pi_g)_M(m_k) 
    = \frac{1}{|B|}\sum_{i \in B}
    \frac{\overline{\delta}(i,[k])}{\widetilde{\delta}(i,[k])} \beta_g(i) \, m_k
    = \frac{1}{|B|}\sum_{i \in B} \beta_{g-[k]}(i)
    \, m_k
    = \delta_{\varphi_{g},\varphi_{[k]}}\,m_k \ .
\end{equation}
In the last step we use the fact from \eqref{eq:beta_g-character} that each $\beta_h$ 
is a character of $B$; thus, its sum over $B$ vanishes unless $\beta_h$ 
is the trivial character. By definition, $\beta_{g-[k]}$ is trivial if and only if $\phi_g = \phi_{[k]}$.\footnote{
We do not assume $\phi$ to be injective, hence the Kronecker delta $\delta_{\phi_g,\phi_{[k]}}$ cannot be replaced by $\delta_{g,[k]}$ in general.}
Consequently, an $A$-module $N$ is $g$-local for some $g \in G$ if and only if $N$ can be decomposed in $\mathcal C$ as $N=\bigoplus_k N_k$ where the sum is over all $k\in K$ with $\phi_{[k]} = \phi_g$
(or just $[k] = g$ if $\phi$ is injective)
and $N_k$ is homogeneous of degree $k$.
Furthermore, if $\phi$ is injective, and thus $K$ is necessarily finite,\footnote{ 
\label{fn:K-finite-phi-injective}
Injectivity of $\varphi \colon G \to \mathrm{Aut}_{\mathcal{C}}(A)$ already implies that $K$ and $G$ are finite. Indeed, $G$ embeds into the subgroup of algebra automorphisms inside $\mathrm{Aut}_{\mathcal{C}}(A)$, which is isomorphic to the character group $\widehat B$ of $B$.
Hence $G$ must be finite and as $G=K/B$ also $K$ is finite. 
}
we easily check condition (iii) of Theorem~\ref{thm:inner-direct-sum-for-algebra-modules} using~\eqref{eq: g-local computation} and thus conclude that in this case $_A\mathcal C^{\mathrm{gr}} = {}_A\mathcal C$.
In particular, the only $0$-local $A$-modules in this case are direct sums of $A$.

\subsubsection{Crossed actions}
Take $H \coloneqq \widehat{B}$. Define an $H$-action on the algebra $A$ by
\begin{equation}\label{eq:alg-crossed-action}
    \upsilon\colon H \to \mathrm{Aut}_{\mathcal{C}}(A) \ , \qquad h \; \mapsto \; \Big(\mathbf{1}_b \mapsto h(b)\mathbf{1}_b \Big) \ .
\end{equation}
Let $H$ act on $G$ trivially. 
Compatibility with the half-braidings as in
\eqref{eq: naturality of action}, as well as the conditions~\eqref{action compatibility} and~\eqref{upsilon alegbra iso} 
are immediate,
and by Proposition~\ref{structure morphism for h-action in AMod}, ${}_A\mathcal{C}^{\mathrm{gr}}$ is a $\widehat{B}$-crossed $K/B$-graded monoidal category.
In what follows, for injective $\varphi$ in \eqref{eq: phi G-action formula}
we describe ${}_A\mathcal{C}^{\mathrm{gr}}$ explicitly in terms of graded vector spaces and the above fixed data.

\medskip

Fix a set-theoretic section $s \colon G\to K$ with $s(0)=0$ for the quotient map $\pi\colon K \to G$. Define $\kappa\colon G^2 \to B$ by
\begin{equation}\label{eq:kappa_def}
    \kappa(g_1,g_2) \;\coloneqq\; s(g_1)+s(g_2)-s(g_1+g_2) \ .
\end{equation}
The image of $\kappa$ is indeed in $B$, because $s$ is a section and hence $\pi(\kappa(g_1,g_2)) \equiv 0$.
It is straightforward to check that $\kappa$ is a normalised $2$-cocycle 
in $Z^2(G,B)$,
where $G$ acts on $B$ trivially.
Define $\omega:G^3\to \mathbb{K}^\times$ and $\Omega\colon H \times G^2 \to \mathbb{K}^\times$ by
\begin{subequations}\label{eq:omega_def}
\begin{align}
\omega(g_1,g_2,g_3)&\coloneqq\overline{\delta}\big(\kappa(g_2,g_3),\,g_1\big) \ , \label{eq:omega_def a}\\
    \Omega(h,g_1,g_2)&\coloneqq h\big(\kappa(g_1,g_2)\big)\ .\label{eq:omega_def b}
\end{align}
\end{subequations}

Recall the cohomology groups $H^n_{\mathrm{tot};1}(G \times H;\mathbb{K}^\times)$ introduced prior to Proposition~\ref{group cohomology classification H-crossed action} (specialised to trivial $H$-action on $G$).

\begin{lemma}\label{lem:omega_Omega_total_cocycle_minimal}
$(\omega,\Omega,1)$ is a normalised element of $Z^3_{\mathrm{tot};1}(G \times H;\mathbb{K}^\times)$, and its class
$[\omega,\Omega,1] \in H^3_{\mathrm{tot};1}(G \times H;\mathbb{K}^\times)$ is independent of the choice of the section $s$.
\end{lemma}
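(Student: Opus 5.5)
Since the $H$-action on $G$ is trivial and $t=1$, the cocycle condition $\mathrm{d}^3_{\mathrm{tot};1}(\omega,\Omega,1)=1$ splits into three components, which I would check one at a time. In $\mathbf C^{0,4}$ it says $\mathrm{d}_G\omega=1$. In $\mathbf C^{1,3}$ it says $\omega/\omega^h=\mathrm{d}_G\Omega(h,-,-)$, which here reduces to $\mathrm{d}_G\Omega(h,-,-)=1$ because $\omega^h=\omega$. In $\mathbf C^{2,2}$ it says $\mathrm{d}_H\Omega=\mathrm{d}_G t=1$; this is the identity $\Omega(h_2,g,k)\Omega(h_1,g,k)=\Omega(h_1h_2,g,k)$ from \eqref{eq-system-b}. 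The component in $\mathbf C^{3,1}$ is automatic for $t=1$. Normalisation is immediate from $s(0)=0$, which forces $\kappa(0,g)=\kappa(g,0)=0$.

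For the individual components:
\begin{itemize}
\item[(a)] $\mathrm{d}_H\Omega=1$ holds because $h\mapsto h(\kappa(g,k))$ is a homomorphism $\widehat B\to\mathbb K^\times$ (pointwise multiplication of characters).
\item[(b)] $\mathrm{d}_G\Omega(h,-,-)=h\circ\mathrm{d}_G\kappa=1$, since $h$ is a character and $\kappa\in Z^2(G;B)$.
\item[(c)] For $\mathrm{d}_G\omega$, write
\[
(\mathrm{d}_G\omega)(g_1,g_2,g_3,g_4)=\frac{\overline\delta(\kappa(g_3,g_4),g_2)\,\overline\delta(\kappa(g_2,g_3+g_4),g_1)\,\overline\delta(\kappa(g_2,g_3),g_1)}{\overline\delta(\kappa(g_3,g_4),g_1+g_2)\,\overline\delta(\kappa(g_2+g_3,g_4),g_1)}
\]
(up to carefully matching the sign pattern of \eqref{eq:dG}). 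Using bimultiplicativity of $\overline\delta$, the factor $\overline\delta(\kappa(g_3,g_4),g_1+g_2)$ splits as $\overline\delta(\kappa(g_3,g_4),g_1)\,\overline\delta(\kappa(g_3,g_4),g_2)$. The remaining $g_1$-terms then combine to $\overline\delta\big((\mathrm{d}\kappa)(g_2,g_3,g_4),g_1\big)=1$. Equivalently, $\omega$ is the cup product of $\kappa\in Z^2(G;B)$ with $\overline\delta\in Z^1(G;\widehat B)$ under the pairing $B\otimes\widehat B\to\mathbb K^\times$, and cup products of cocycles are cocycles.
\end{itemize}

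For independence of the section, take another section $s'$ with $s'(0)=0$. Then $s'=s+c$ for a normalised function $c\colon G\to B$, and $\kappa'=\kappa+\mathrm{d}c$. I would exhibit a normalised $2$-cochain $(\alpha,\beta)\in\mathbf C^{0,2}\oplus\mathbf C^{1,1}$ with $\mathrm{d}^2_{\mathrm{tot};1}(\alpha,\beta)=(\omega',\Omega',1)(\omega,\Omega,1)^{-1}$, taking
\[
\beta(h,g)\coloneqq h(c(g))^{\pm1},\qquad \alpha(g_1,g_2)\coloneqq\overline\delta\big(c(g_2),g_1\big)^{\pm1},
\]
with signs fixed by \eqref{eq:vecG-functor-monoidal-cond} and \eqref{coboundary condition}. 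The checks are:
\begin{itemize}
\item $\beta$ is multiplicative in $h$, so the $\mathbf C^{2,1}$-component $\mathrm{d}_H\beta$ is trivial, matching $t'/t=1$.
\item Since $\alpha(h\rhd g,h\rhd k)/\alpha(g,k)=1$, the $\mathbf C^{1,2}$-component reduces to $\mathrm{d}_G\beta(h,-)=h(\mathrm{d}c)=\Omega'/\Omega$.
\item The $\mathbf C^{0,3}$-component requires $\mathrm{d}_G\alpha=\omega'/\omega=\overline\delta(\mathrm{d}c(g_2,g_3),g_1)$. This is again a cup-product statement, $(\mathrm{d}c)\cup\overline\delta=\pm\,\mathrm{d}(c\cup\overline\delta)$ because $\overline\delta$ is a $1$-cocycle, i.e.\ a homomorphism $G\to\widehat B$. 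I would verify it directly by expanding with bimultiplicativity.
\end{itemize}

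The main obstacle is bookkeeping: the sign and variable-order conventions in \eqref{eq:dG} and \eqref{eq:vecG-functor-monoidal-cond}, together with the placement of $g_1$ in the second slot of $\overline\delta$, decide whether $\alpha$ should be $\overline\delta(c(g_2),g_1)$ or its inverse, or use $c(g_1)$ paired with $g_2$. I would first settle this on the $\mathbf C^{0,3}$-component by direct expansion, and only afterwards fix the sign of $\beta$ to match.
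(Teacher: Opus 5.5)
Your proposal is correct and follows essentially the same route as the paper: verify $\mathrm d_G\omega=1$, $\mathrm d_G\Omega=1$ and $\mathrm d_H\Omega=1$ directly. For independence of the section you set $c=s'-s$ and use $\alpha(g_1,g_2)=\overline{\delta}(c(g_2),g_1)^{\pm1}$ and $\beta(h,g)=h(c(g))$, which are exactly the paper's $u$, $\alpha$, $\beta$.

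Two small bookkeeping corrections. In your displayed formula for $\mathrm d_G\omega$, the factors $\overline{\delta}(\kappa(g_2,g_3+g_4),g_1)$ and $\overline{\delta}(\kappa(g_2+g_3,g_4),g_1)$ should be swapped; your cup-product argument is unaffected. The sign you defer resolves to exponent $-1$ on $\alpha$, because $\mathrm d_G\bigl[\overline{\delta}(c(g_2),g_1)\bigr]=\omega/\omega'$ while $\mathrm d_G\beta=\Omega'/\Omega$.
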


\begin{proof}
Since $s(0)=0$, the fact that $\omega$ and $\Omega$ are normalised follows immediately from $\kappa(0,g)=\kappa(g,0)=0$ and $\overline{\delta}(0,-)=h(0)=1$. By straightforward computation one can check that
\begin{equation}
    \mathrm d_G\omega=1 \ , \qquad \mathrm d_G\Omega=1 \ , \qquad \mathrm d_H\Omega=1 \ ,
\end{equation}
which implies that $(\omega, \Omega, 1) \in Z^3_{\mathrm{tot};1}(G \times H;\mathbb{K}^\times)$.

Let $s'\colon G \to K$ be another section with $s'(0)=0$, and let $(\omega', \Omega',1)$ be the associated normalised total cocycle.
Define
\begin{equation}
    u(g)\coloneqq s'(g)-s(g)\in B \ ,
\end{equation}
and $\alpha\colon G^2 \to \mathbb{K}^\times, \, \beta\colon H \times G \to \mathbb{K}^\times$ by
\begin{equation}
\alpha(g_1,g_2)\coloneqq\overline{\delta}(u(g_2),g_1)\ ,
\qquad
\beta(h,g)\coloneqq h(u(g)) \ .
\end{equation}
A direct computation gives the explicit coboundary relations
\begin{equation}
     \mathrm d_G\alpha = \frac{\omega}{\omega'} \ ,
\qquad \mathrm d_H\alpha = 1 \ , \qquad \mathrm d_G\beta = \frac{\Omega'}{\Omega} \ , \qquad \mathrm d_H\beta = 1 \ ,
\end{equation}
hence \((\omega',\Omega',1)\) 
and \((\omega,\Omega,1)\) define the
same cohomology class in $H^3_{\mathrm{tot};1}(G \times H;\mathbb{K}^\times)$.
\end{proof}

For a normalised $3$-cocycle $(\omega, \Omega, t) \in Z^3_{\mathrm{tot};1}(G \times H; \mathbb{K}^\times)$,
recall the notation $\mathrm{vec}_G^{(\omega,\Omega,t)}$ from~\eqref{eq:vecG-kappa-def}
for the $H$-crossed $G$-graded monoidal category based on $\mathrm{vec}_G$.
We state a proposition in a similar spirit to~\cite[Thm.\,4.1]{CGR17}, whose proof extends ideas from~\cite[Sec.\,6]{CGR17} to our setting of $g$-local modules.

\begin{proposition}\label{prop: _AC equiv Vec_G}
If $\phi$ in~\eqref{eq: phi G-action formula} is injective, then $_A\mathcal C = {}_A\mathcal{C}^\mathrm{gr} \cong \mathrm{vec}_G^{(\omega,\Omega,1)}$ as $\widehat{B}$-crossed $K/B$-graded monoidal categories,
for $\omega$ and $\Omega$ as in~\eqref{eq:omega_def}.
\end{proposition}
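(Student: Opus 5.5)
The plan is to build an explicit $\widehat{B}$-equivariant $K/B$-graded monoidal equivalence $(F,\xi)\colon \mathrm{vec}_G^{(\omega,\Omega,1)} \to {}_A\mathcal C$ out of free modules. The equality ${}_A\mathcal C = {}_A\mathcal C^{\mathrm{gr}}$ is already known: it follows from Theorem~\ref{thm:inner-direct-sum-for-algebra-modules} via the computation \eqref{eq: g-local computation}, using that $\phi$ is injective. Throughout, fix the section $s$ with $s(0)=0$ from \eqref{eq:kappa_def}; by Lemma~\ref{lem:omega_Omega_total_cocycle_minimal} the class of $(\omega,\Omega,1)$ does not depend on this choice.

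\emph{Underlying linear equivalence.} On simple objects I set $F(\mathbb K_g) \coloneqq A\otimes \mathbb K_{s(g)} = \bigoplus_{b\in B}\mathbb K_{b+s(g)}$, the free module. It is $g$-local because all its degrees lie in the coset $\pi^{-1}(g)$. Let $N$ be any $g$-local module. By \eqref{eq: g-local computation} its homogeneous components sit in degrees $k$ with $[k]=g$. Since the elements $\mathbf 1_b$ act invertibly, the action map $A\otimes N_{s(g)} \to N$ is an isomorphism of $A$-modules. Hence ${}_A\mathcal C_g \simeq \mathrm{vec}$ via $N\mapsto N_{s(g)}$, and $F$ is a $\mathbb K$-linear, degree preserving equivalence.

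\emph{Monoidal structure.} I compose two isomorphisms. The first is the induction isomorphism $\mathrm{Ind}$, giving $(A\otimes \mathbb K_{s(g_1)})\otimes_A(A\otimes \mathbb K_{s(g_2)}) \cong A\otimes \mathbb K_{s(g_1)+s(g_2)}$. The second is the $A$-linear shift $A\otimes \mathbb K_{s(g_1)+s(g_2)} \to A\otimes \mathbb K_{s(g_1+g_2)}$, $\mathbf 1_b\otimes \mathbf 1 \mapsto \mathbf 1_{b+\kappa(g_1,g_2)}\otimes \mathbf 1$. Their composite, together with the identity unit constraint (using $s(0)=0$), defines $F_{\mathbb K_{g_1},\mathbb K_{g_2}}$.

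Next I compare the two routes through the associator \eqref{associator Amod} for three simples $g_1,g_2,g_3$. The two ways of bracketing produce the same shift by $\kappa(g_1,g_2)+\kappa(g_1+g_2,g_3) = \kappa(g_2,g_3)+\kappa(g_1,g_2+g_3)$. They differ only in where the element $\mathbf 1_{\kappa(g_2,g_3)}$ is created. In one bracketing it is created to the right of $\mathbb K_{s(g_1)}$, and $\mathrm{Ind}$ (which contains $\overline{\sigma}^{-1}$) has to move it past that factor, costing $\overline{\delta}(\kappa(g_2,g_3),g_1)^{\pm1}$. This should reproduce \eqref{eq:vecG-functor-monoidal-cond} with $\alpha\equiv 1$ and cocycle $\omega$ from \eqref{eq:omega_def a}.

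\emph{Equivariance.} Recall ${}^h(A\otimes \mathbb K_{s(g)})$ is the same object with action twisted by $\upsilon_h^{-1}$, as in \eqref{eq:alg-twisted-action-on-module}. The map $\upsilon_h\otimes\id$ is an $A$-linear isomorphism ${}^h(A\otimes\mathbb K_{s(g)}) \to A\otimes \mathbb K_{s(g)} = F({}^h\mathbb K_g)$, since $H$ acts trivially on $G$; I take this as $\xi^h$.

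Because $\upsilon$ is a group homomorphism and the action $\psi$ is strict, condition \eqref{crossed functor definition c} holds with $t=1$. Checking \eqref{crossed functor definition a} against the monoidal structure from the previous step, $\upsilon_h$ commutes with $\mathrm{Ind}$. On the shift by $\kappa(g_1,g_2)$, however, it contributes the scalar $h(\kappa(g_1,g_2)) = \Omega(h,g_1,g_2)$. This is exactly \eqref{eq:vecG-omegah-monoidal} with $\beta\equiv1$, $\alpha\equiv 1$, and with $\widetilde\Omega$ the trivial coherence data on ${}_A\mathcal C$ given by \eqref{eq:h-action monoidal}.

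\emph{Main obstacle.} I expect the hard part to be the bookkeeping of scalars in the associator comparison: getting $\omega$ rather than $\omega^{-1}$, and the correct argument order in $\overline{\delta}$. This depends on the convention that $\alpha$ maps $U\otimes(V\otimes W)\to(U\otimes V)\otimes W$, and on $\mathrm{Ind}$ using $\overline{\sigma}^{-1}$. If a sign or inverse comes out wrong, I would first try redefining the shift in the monoidal structure as $\mathbf 1_b \mapsto \mathbf 1_{b-\kappa(g_1,g_2)}$. Failing that, I would absorb the discrepancy into a coboundary $(\alpha,\beta)$ and appeal to Theorem~\ref{thm:VecG-Hcrossed-classification-H3tot}.
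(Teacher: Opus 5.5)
Your proposal is correct and is essentially the paper's proof. Your $F_{\mathbb K_{g_1},\mathbb K_{g_2}}$ (induction isomorphism followed by the shift by $\kappa(g_1,g_2)$) is exactly the inverse of the paper's $F^{-1}_{g,g'}$, and your $\xi^h=\upsilon_h\otimes\id$ is the paper's $\eta^h_g$. The paper likewise checks the pentagon by moving $\mathbf 1_{-\kappa(g_2,g_3)}$ leftwards across $A\otimes\mathbb K_{s(g_1)}$ using the relation defining $\otimes_A$, and it inverts $F$ via $M\mapsto M_{(0)}$. With your conventions that move produces exactly $\overline{\delta}(\kappa(g_2,g_3),g_1)=\omega(g_1,g_2,g_3)$, not its inverse, so none of your fallbacks are needed. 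One small correction: in the equivariance check, the scalar $h(\kappa(g_1,g_2))$ should be matched directly against $F(h_{U,V})=\Omega(h,g_1,g_2)\,\id$ in \eqref{crossed functor definition a}, not against \eqref{eq:vecG-omegah-monoidal} with trivial $\widetilde\Omega$, since the latter would force $\Omega=1$.
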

\begin{proof}
Recall from Footnote~\ref{fn:K-finite-phi-injective}
that for $\varphi$ injective we have that $K$ is finite and that ${}_A\mathcal C = {}_A\mathcal{C}^\mathrm{gr}$.
We define a $\widehat{B}$-equivariant $K/B$-graded monoidal functor $F \colon \mathrm{vec}_{K/B}^{(\omega,\Omega,1)} \to {}_A\mathcal{C}$ as follows
(recall conventions in Definition~\ref{crossed functor}):
\begin{itemize}
    \item We first define a $G$-graded $\mathbb{K}$-linear functor, which on simple objects is given by
\begin{equation}
    F\colon \mathrm{vec}_{K/B} \to {}_A\mathcal{C} \ , \qquad
    \mathbb{K}_g \mapsto A \otimes \mathbb{K}_{s(g)} \ .
\end{equation}
Here, $\otimes$ denotes the tensor product in $\mathcal{C}$, and the $A$-action on $A \otimes \mathbb{K}_{s(g)}$ is by left multiplication: $\mathbf 1_b \otimes (\mathbf 1_{b'}\otimes \mathbf 1_{s(g)}) \mapsto \mathbf 1_{b+b'}\otimes \mathbf 1_{s(g)}$.

\item We define a monoidal structure on $F$ by
\begin{equation}
    F_\mathbbm 1\colon A \to A \otimes \mathbb{K}_0 \ , \qquad \mathbf{1}_b \mapsto  \mathbf{1}_b \otimes \mathbf{1}_0 \ ,
\end{equation}
and for any $g, g' \in G$,
\begin{align}
    \begin{split}
        F_{g,g'}^{-1}\colon A \otimes \mathbb{K}_{s(g+g')} &\to \big(A \otimes \mathbb{K}_{s(g)}\big) \otimes_A \big(A \otimes \mathbb{K}_{s(g')}\big) \ ,\\
        \mathbf{1}_{b} \otimes \mathbf{1}_{s(g+g')} &\mapsto (\mathbf{1}_{b-\kappa(g,g')} \otimes \mathbf{1}_{s(g)}) \otimes_A (\mathbf{1}_{0} \otimes \mathbf{1}_{s(g')}) \ .
    \end{split}
\end{align}
One can check that these are indeed degree preserving $A$-linear maps.

\item 
For $\widehat B$-equivariance, we define, for any $h \in \widehat B$, 
$g \in G$, 
\begin{align}
    \begin{split}
    \eta^h_g\colon {}^h(A \otimes \mathbb{K}_{s(g)}) &\to A \otimes {}^h\mathbb{K}_{s(g)} \ , \\
        \mathbf{1}_b \otimes \mathbf{1}_{s(g)} &\mapsto h(b) \, \mathbf{1}_b \otimes \mathbf{1}_{s(g)} \ .
    \end{split}
\end{align}
which is $A$-linear following \eqref{eq:alg-twisted-action-on-module} and \eqref{eq:alg-crossed-action}.
\end{itemize}

The coherence conditions for $F$ to be a $\widehat{B}$-equivariant $K/B$-graded monoidal functor are straightforward to verify. For instance, the pentagon identity reads:
\begin{equation*}
    \hspace*{-2.5mm}
    \small
\begin{tikzcd}[column sep = 1em, row sep = 1.8em]
	{A \otimes \mathbb{K}_{s(g_1+g_2+g_3)}} && {A \otimes \mathbb{K}_{s(g_1+g_2+g_3)}} \\
	{\big(A \otimes \mathbb{K}_{s(g_1)}\big) \otimes_A \big(A \otimes \mathbb{K}_{s(g_2+g_3)}\big)} && {\big(A \otimes \mathbb{K}_{s(g_1+g_2)}\big) \otimes_A \big(A \otimes \mathbb{K}_{s(g_3)}\big)} \\
	{\big(A \otimes \mathbb{K}_{s(g_1)}\big) \otimes_A \Big(\big(A \otimes \mathbb{K}_{s(g_2)}\big) \otimes_A \big(A \otimes \mathbb{K}_{s(g_3)}\big)\Big)} && {\Big(\big(A \otimes \mathbb{K}_{s(g_1)}\big) \otimes_A \big(A \otimes \mathbb{K}_{s(g_2)}\big)\Big) \otimes_A \big(A \otimes \mathbb{K}_{s(g_3)}\big)}
	\arrow["{\omega(g_1,g_2,g_3)\mathrm{id}}", from=1-1, to=1-3]
	\arrow["{F^{-1}_{g_1,g_2+g_3}}"', from=1-1, to=2-1]
	\arrow["{F^{-1}_{g_1+g_2,g_3}}", from=1-3, to=2-3]
	\arrow["{\mathrm{id} \otimes_A F^{-1}_{g_2,g_3}}"', from=2-1, to=3-1]
	\arrow["{F^{-1}_{g_1,g_2}\otimes_A \mathrm{id}}", from=2-3, to=3-3]
	\arrow["{\alpha}", from=3-1, to=3-3]
\end{tikzcd}
\end{equation*}
where $\omega$ is defined as in~\eqref{eq:omega_def a}, and the associator $\alpha$ as in~\eqref{associator Amod}.
Starting with a vector $\mathbf{1}_{b} \otimes \mathbf{1}_{s(g_1+g_2+g_3)}$ in the upper left corner and following the paths of the diagram, the source of $\alpha$ is found to be
\begin{equation}\label{eq: source of alpha}
    \big(\mathbf{1}_{b-\kappa(g_1,g_2+g_3)} \otimes \mathbf{1}_{s(g_1)}\big)
    \otimes_A
    \big(
    (\mathbf{1}_{-\kappa(g_2,g_3)} \otimes \mathbf{1}_{s(g_2)})
    \otimes_A
    (\mathbf{1}_{0} \otimes \mathbf{1}_{s(g_3)})
    \big) \ ,
\end{equation}
and the target of $\alpha$ is
\begin{equation}
\omega(g_1,g_2,g_3)
    \big(
    (\mathbf{1}_{b-\kappa(g_1+g_2,g_3)-\kappa(g_1,g_2)} \otimes \mathbf{1}_{s(g_1)})
    \otimes_A
    (\mathbf{1}_{0} \otimes \mathbf{1}_{s(g_2)})
    \big)
    \otimes_A
    \big(\mathbf{1}_{0} \otimes \mathbf{1}_{s(g_3)}\big) \ .
\end{equation}
To move $\mathbf{1}_{-\kappa(g_2,g_3)}$ from the middle tensor factor in~\eqref{eq: source of alpha} to the left, we apply \eqref{eq: tensor over A} to obtain
\begin{equation}
\overline{\delta}\big(\kappa(g_2,g_3),\,g_1\big)
\big(\mathbf{1}_{b-\kappa(g_1,g_2+g_3)-\kappa(g_2,g_3)} \otimes \mathbf{1}_{s(g_1)}\big)
\otimes_A
\big(
(\mathbf{1}_{0} \otimes \mathbf{1}_{s(g_2)})
\otimes_A
(\mathbf{1}_{0} \otimes \mathbf{1}_{s(g_3)})
\big) \ .
\end{equation}
Note that $\kappa(g_1,g_2+g_3)+\kappa(g_2,g_3) = \kappa(g_1+g_2,g_3)+\kappa(g_1,g_2)$.
Moreover, since $\omega(g_1,g_2,g_3) = \overline{\delta}\big(\kappa(g_2,g_3),\,g_1\big)$ as defined in~\eqref{eq:omega_def a}, and since $\alpha$ merely re-parenthesises the tensor factors, we conclude that the diagram commutes.

$\widehat{B}$-equivariance conditions~\eqref{crossed functor definition} are
straightforward to check and are
left to the reader.

\medskip

Next we show that $F$ is a $\mathbb{K}$-linear equivalence. Any $M  \in {}_A\mathcal C$ can be
uniquely decomposed (as a graded vector space) into
\begin{equation}
    M = \bigoplus_{k \in K}M_k = \bigoplus_{b \in B}\Big(\bigoplus_{g \in K/B}M_{s(g)+b}\Big) \ ,
\end{equation}
where each $M_k$ is homogeneous of degree $k$. Denote
\begin{equation}
   \mathrm{vec}_{K/B} \ni M_{(0)} 
\coloneqq  \bigoplus_{g \in K/B}M_{s(g)}\ ,
\end{equation}
where $M_{s(g)}$ is the degree-$g$ component of $M_{(0)}$.
The inverse of $F$ is given by
\begin{equation}
     G\colon {}_A\mathcal C \to \mathrm{vec}_{K/B} \ , \qquad M \mapsto M_{(0)} \ .
\end{equation}
\end{proof}

\begin{example}
Take $K = \Z/16\Z$ and $B = \Z/4\Z$, so that $G = K/B \cong \Z/4\Z$, $H = \widehat B \cong \Z/4\Z$, and $H$ acts trivially on $G$. This corresponds to case (\romannumeral 3) in Section~\ref{Z/4 example},
and we can compare $_A\mathcal C$ to the classification there.

First note that $[1,\Omega,1] = [1,1,t']
\in H^3_{\mathrm{tot};1}(\Z/4\Z \times \Z/4\Z;\mathbb{K}^\times)$, where $\Omega$ is defined as in~\eqref{eq:omega_def b}, and $t'$ as in~\eqref{generator ti Z/4Z-cocycle}.
Indeed, fix a 16th root of unity $\mu \in \mathbb{K}^\times$ such that
\begin{equation}
    \mu^4 = i \ ,
\end{equation}
where $i$ is the fixed 4th root of unity as in~\eqref{eq:example-i}.
Choose $s:\Z/4\Z \to \Z/16\Z$ with $s(g) = g$, and define
\begin{equation}
    \beta\colon \Z/4\Z \times \Z/4\Z \longrightarrow \mathbb{K}^\times \ , \qquad \beta(h,g) \; = \; \mu^{hg} \ ,
\end{equation}
then one can check that $(1,\Omega,1)$ and $(1,1,t')$ are cohomologous via $(1, \beta)$ in the sense of \eqref{d2tot-via-alpha-beta}.

For $a \in \{0,1,2,3\}$, there are four possible bicharacters $\overline{\delta}_a \colon \mathbb{Z}/4\mathbb{Z} \times \mathbb{Z}/4\mathbb{Z} \to \mathbb{K}^\times$, uniquely determined by the value $\overline{\delta}_a(1,1)=i^a$. For each $\overline{\delta}_a$, there are two choices of $\widetilde{\delta}$, namely
$\widetilde{\delta}_{a\pm}
=\overline{\delta}_{a\pm1}$, for which $\phi$ in~\eqref{eq: phi G-action formula} is injective. For every pair $(\overline{\delta}_a, \, \widetilde{\delta}_{a\pm})$, the 3-cocycle defined in~\eqref{eq:omega_def a} is given by $\omega^a$, where $\omega$ is as in~\eqref{generator Z/4Z-cocycle}. Note that $\Omega$ in~\eqref{eq:omega_def b} is independent of $a$.
Therefore, by Proposition~\ref{prop: _AC equiv Vec_G}, the $\mathbb{Z}/4\mathbb{Z}$-crossed category ${}_A\mathcal{C}$ associated to the corresponding doubly central algebra $(A,\overline{\sigma}_a,\widetilde{\sigma}_{a\pm})$ is equivalent to $\mathrm{vec}_{\mathbb{Z}/4\mathbb{Z}}^{(\omega^a,\Omega,1)}$, and hence to $\mathrm{vec}_{\mathbb{Z}/4\mathbb{Z}}^{(\omega^a,1,t')}$.
\end{example}

\subsubsection{Crossed ribbon structures}

We now consider $\chi$-crossed braided and ribbon
structures on $_A\mathcal{C}^{\mathrm{gr}}$ (note that $\chi$ is not specified yet). We first need a braiding on $\mathrm{vec}_K$, which is determined by a bicharacter $\tau\colon K \times K \to \mathbb{K}^\times$ 
via
\begin{equation}\label{eq: tau braiding}
   C_{V,U}\colon V \otimes U \to U \otimes V \ , \qquad v_k \otimes u_l \mapsto \tau(k,l)\, u_l \otimes v_k \ ;
\end{equation}
recall~\eqref{eq:qa-cocycle-system'} for trivial $\omega$ with $R$ replaced by $\tau$.
To apply Theorem~\ref{twisted local module crossed braiding} we need $\overline{\sigma}$ to coincide with the braiding in $\mathrm{vec}_K$. Therefore, $\tau$ and $\overline{\delta}$ must satisfy
\begin{equation}\label{eq: tau bar-tilde compatible}
    \overline{\delta}(b,[k]) = \tau(b,k) \ , \qquad \forall\; (b,k) \in B \times K \ .
\end{equation}
This implies that $\tau$ restricted on $B \times B$ is trivial. Fix such a $\tau$, and define $\overline{\delta}$ as in~\eqref{eq: tau bar-tilde compatible}.

Define a bicharacter $\chi^*\colon B \times K/B \to \mathbb{K}^\times$ by
\begin{equation}
    \chi^*(b,[k]) = \big(\tau(b,k)\tau(k,b)\big)^{-1} \ .
\end{equation}
This induces a canonical group homomorphism:
\begin{equation}\label{eq:chi-B-def}
    \chi\colon K/B \to \widehat{B} \ , \qquad g \mapsto \chi^*(-,g)\ .
\end{equation}
Then $\chi$ with trivial $\widehat{B}$-action is a crossed module. One can further check that condition~\eqref{crossing change'} is satisfied, hence $_A\mathcal{C}^{\mathrm{gr}}$ admits a $\chi$-crossed braiding following Theorem~\ref{twisted local module crossed braiding}.

Recall $G \coloneqq K/B$ and the section $s$ introduced above~\eqref{eq:kappa_def}. Define $R\colon G \times G \to \mathbb{K}^\times$ by
\begin{equation}
    R(g_1,g_2) = \tau\big(s(g_1),s(g_2)\big) \ .
\end{equation}

\begin{proposition}\label{prop:CA-to-vecG}
    If $\phi$ in~\eqref{eq: phi G-action formula} is injective, then $_A\mathcal C = {}_A\mathcal{C}^\mathrm{gr} \cong \mathrm{vec}_G^{
    (\omega,\Omega,1;R)}$ as $\chi$-crossed braided categories, for $\omega$ and $\Omega$ as in~\eqref{eq:omega_def}.
\end{proposition}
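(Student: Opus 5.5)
The plan is to reuse the $\widehat B$-equivariant $K/B$-graded monoidal equivalence $(F,\eta)\colon \mathrm{vec}_G^{(\omega,\Omega,1)}\to {}_A\mathcal C$ constructed in the proof of Proposition~\ref{prop: _AC equiv Vec_G}, and to verify only the one extra condition in Definition~\ref{crossed braided functor}, namely diagram~\eqref{crossed braided functor diagram}. Before that, one has to make sure both sides are honest $\chi$-crossed braided categories for the same crossed module $\chi\colon K/B\to\widehat B$ from~\eqref{eq:chi-B-def}. On the module side this is Theorem~\ref{twisted local module crossed braiding}, with crossed braiding $\widetilde C$ characterised by~\eqref{eq: crossed braiding existence}. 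On the $\mathrm{vec}_G$ side we must check that $R(g_1,g_2)=\tau(s(g_1),s(g_2))$ satisfies~\eqref{eq:qa-cocycle-system} with $t=1$. Since $H$ acts trivially on $G$, conjugations disappear. Using~\eqref{eq:omega_def}, \eqref{eq: tau bar-tilde compatible} and bimultiplicativity of $\tau$, the identities reduce to comparing $\tau(s(g_1+g_2),s(g_3))$ with $\tau(s(g_1),s(g_3))\tau(s(g_2),s(g_3))$ via $s(g_1+g_2)=s(g_1)+s(g_2)-\kappa(g_1,g_2)$. The discrepancy factors $\tau(\kappa,\cdot)$ and $\tau(\cdot,\kappa)$ are then matched with $\omega=\overline\delta(\kappa,\cdot)$, with $\Omega(\chi_g,\cdot,\cdot)=\chi_g(\kappa)$, and with the definition of $\chi^*$. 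Alternatively, this cocycle check can be skipped by transporting $\widetilde C$ along $F$ and reading off $R$.

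The main step is computing both paths of~\eqref{crossed braided functor diagram} on simple objects $\mathbb K_{g_1},\mathbb K_{g_2}$. Here it is convenient to use $F^{-1}_{g_1,g_2}$, which is given explicitly. Start from $\mathbf 1_b\otimes\mathbf 1_{s(g_1+g_2)}$, apply $F^{-1}_{g_1,g_2}$, and lift along the projection to $(\mathbf 1_{b-\kappa}\otimes\mathbf 1_{s(g_1)})\otimes(\mathbf 1_0\otimes\mathbf 1_{s(g_2)})$. Next apply the braiding $C$ of $\mathrm{vec}_K$, with scalars given by $\tau$ of the total degrees. Then move the $A$-factor $\mathbf 1_{b-\kappa}$ across using the relation~\eqref{eq: tensor over A}, which contributes $\overline\delta=\tau(b-\kappa,\cdot)$. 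The twist $\eta^{\chi_{g_1}}$ contributes a character value $\chi_{g_1}(\cdot)$. Finally compare with $F^{-1}_{g_2,g_1}$, remembering that $\kappa$ is symmetric because $K$ is abelian, so $F^{-1}_{g_2,g_1}$ again involves $\kappa(g_1,g_2)$. The resulting scalar should be $\tau(s(g_1),s(g_2))=R(g_1,g_2)$ once the $B$-dependence cancels. This cancellation uses~\eqref{eq: tau bar-tilde compatible}, the triviality of $\tau$ on $B\times B$, and the definition of $\chi^*(b,g_1)=(\tau(b,s(g_1))\tau(s(g_1),b))^{-1}$.

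I expect this scalar bookkeeping to be the main obstacle. In particular, the term $\omega^{\chi_a}_V$ must exactly absorb the asymmetry between $\tau(b,k)$ and $\tau(k,b)$. If the factors do not cancel as predicted, the fix is to adjust the convention for $\eta^h$ (inverting $h(b)$) or to rechoose $R$ by a coboundary as in~\eqref{braiding coboundary condition}. Naturality reduces the check to simples, and associator compatibility is already part of Proposition~\ref{prop: _AC equiv Vec_G}, which completes the equivalence.
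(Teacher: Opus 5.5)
Your proposal is correct and is essentially the paper's argument: the paper also takes the $\widehat B$-equivariant monoidal equivalence $(F,\eta)$ from the proof of Proposition~\ref{prop: _AC equiv Vec_G} and checks \eqref{crossed braided functor diagram} on simple objects. Your extra check that $R$ satisfies \eqref{eq:qa-cocycle-system}, or alternatively transporting it from $\widetilde C$, is left implicit in the paper, and the bookkeeping is simpler than you expect. With your lift, the factor from $\eta^{\chi_{g_1}}$ is trivial: either it acts on the $\mathbf 1_0$ component, or it cancels the twist in the relation of ${}^{\chi_{g_1}}F(V)\otimes_A F(U)$. Bimultiplicativity of $\tau$ alone then turns $\tau(b-\kappa+s(g_1),s(g_2))\,\tau(b-\kappa,s(g_2))^{-1}$ into $\tau(s(g_1),s(g_2))=R(g_1,g_2)$, so your fallback of adjusting $\eta^h$ or $R$ is never needed.
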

\begin{proof}
    By straightforward computation, 
    one can check that the $\widehat{B}$-equivariant $K/B$-graded monoidal functor $(F,\eta)\colon \mathrm{vec}_{K/B} \to {}_A\mathcal C$ defined as in the proof of Proposition~\ref{prop: _AC equiv Vec_G} satisfies~\eqref{crossed braided functor diagram}, i.e.\ $(F,\eta)$ is a $\chi$-crossed braided equivalence.
\end{proof}

Fixing a choice of $\tau$ (and hence of the braiding) as in~\eqref{eq: tau braiding}, we can further equip $\mathrm{vec}_K$ with a ribbon structure as follows. By the isomorphism between classes in the third abelian group cohomology and quadratic forms (see \cite{EM53} and \cite[Thm.\,4.4]{Ga25} -- though for trivial $\omega$ this is also immediate by explicit computation), $q(k) = \tau(k,k)$ is a quadratic form on $K$. It satisfies, for all $k,l \in K$,
\begin{equation}\label{eq:q-ribbon}
\frac{q(k+l)}{q(k)q(l)} = \tau(k,l)\tau(l,k) \ , \qquad q(k) = q(-k) \ .
\end{equation}
These are precisely the ribbon conditions on $\mathrm{vec}_K$, and so 
\begin{equation}
\theta_U \colon U \to U \ , \qquad u_k \mapsto  q(k)\,u_k \ ,
\end{equation}
defines a ribbon twist on $\mathrm{vec}_K$.

Next recall from below \eqref{eq: tau bar-tilde compatible} that $q(b)=\tau(b,b)=1$ for all $b \in B$, and hence $\theta_A = \id_A$. As we have discussed around~\eqref{eq: group algebra coprod}, $A$ is indeed $\Delta$-separable Frobenius. Altogether, we can apply Theorem~\ref{twisted local module crossed ribbon} to get that $_A\mathcal C$ is $\chi$-crossed ribbon
for the crossed module $\chi$ defined in~\eqref{eq:chi-B-def}.
Under the equivalence in Proposition~\ref{prop:CA-to-vecG}, the ribbon twist on $\mathbb{K}_g \in \mathrm{vec}_G^{(\omega,\Omega,1;R)}$ is given by multiplication with $q(s(g))=\tau(s(g),s(g))$, and with the section $s$ fixed as above~\eqref{eq:kappa_def}.

\end{document}